\documentclass[11pt]{amsart}
\usepackage{amsmath}
\usepackage{amsfonts}
\usepackage{amssymb}
\usepackage{amsthm}
\usepackage{url}
\usepackage[all]{xy}
\usepackage{graphicx}
\usepackage{caption}
\usepackage{subcaption}
\usepackage{comment} 
\usepackage{hyperref}
\newcommand{\llbracket}{\mathopen{[\![}}
\newcommand{\rrbracket}{\mathclose{]\!]}}
\usepackage{todonotes}
\usepackage{color}
\usepackage{enumerate,tikz-cd}
\usepackage{comment}
\usepackage{mathrsfs}
\allowdisplaybreaks
\usepackage{geometry}
\numberwithin{equation}{section}

\newtheorem{proposition}{Proposition}[section]

\newtheorem{lemma}[proposition]{Lemma}
\newtheorem{theorem}[proposition]{Theorem}
\newtheorem{corollary}[proposition]{Corollary}

\theoremstyle{definition}
\newtheorem{remark}[proposition]{Remark}
\newtheorem{definition}[proposition]{Definition}

\newtheorem{claim}{Claim}

\DeclareMathOperator{\Aut}{Aut}

\DeclareMathOperator{\Proj}{Proj}
\DeclareMathOperator{\Spec}{Spec}

\DeclareMathOperator{\codim}{codim}

\DeclareMathOperator{\rk}{rk}

\DeclareMathOperator{\DF}{DF}

\DeclareMathOperator{\lct}{lct}
\DeclareMathOperator{\Ding}{Ding}

\DeclareMathOperator{\ord}{ord}

\DeclareMathOperator{\coeff}{coeff}

\DeclareMathOperator{\ST}{ST}

\DeclareMathOperator{\PGL}{PGL}

\DeclareMathOperator{\Val}{Val}

\newcommand{\N}{\mathbb{N}}
\newcommand{\R}{\mathbb{R}}
\newcommand{\C}{\mathbb{C}}
\newcommand{\A}{\mathbb{A}}
\newcommand{\Z}{\mathbb{Z}}

\newcommand{\Q}{\mathbb{Q}}
\newcommand{\G}{\mathbb{G}}

\newcommand{\PP}{\mathbb{P}}
\newcommand{\QQ}{\mathbb{Q}}
\newcommand{\OO}{\mathcal{O}}
\newcommand{\GG}{\mathbb{G}}
\renewcommand{\epsilon}{\varepsilon}

\newcommand{\M}{\mathcal{M}}

\newcommand{\I}{\mathcal{I}}

\newcommand{\F}{\mathcal{F}}
\newcommand{\B}{\mathcal{B}}

\newcommand{\E}{\mathcal{E}}
\renewcommand{\L}{\mathcal{L}}
\newcommand{\X}{\mathcal{X}}
\newcommand{\Y}{\mathcal{Y}}

\renewcommand{\phi}{\varphi}

\newcommand\cA{\mathcal{A}}
\newcommand\cB{\mathcal{B}}
\newcommand\cC{\mathcal{C}}

\newcommand\cG{\mathcal{G}}
\newcommand\cH{\mathcal{H}}

\newcommand\cO{\mathcal{O}}

\newcommand\cZ{\mathcal{Z}}

\newcommand\NA{^{\mathrm{NA}}}

\newcommand\red{{\mathrm{red}}}

\title{$\Theta$-reductivity and $S$-completeness for adjoint Fano foliated structures}

\author[T. S. Papazachariou]{Theodoros Stylianos Papazachariou}
\address{Yau Mathematical Sciences Center, Jingzhai, Tsinghua University, Haidian District, Beijing, China.}
\email{tpapazachariou@mail.tsinghua.edu.cn}

\begin{document}

\begin{abstract}
We prove the valuative criteria of $\Theta$-reductivity and $S$-completeness for the moduli problem of $t$-K-semistable adjoint Fano foliated structures. We develop a mixed Ding theory for arbitrary linearly bounded multiplicative filtrations, prove inversion of adjunction with arbitrary ideals for adjoint foliated structures, and establish a relative extraction and finite generation theorem. Together, these results yield the required relative extension theorems for families. As applications, we prove uniqueness of $t$-K-polystable degenerations, reductivity of the automorphism group of $t$-K-polystable adjoint Fano foliated structures, and finiteness of the automorphism group in the $t$-K-stable case.
\end{abstract}

\maketitle

\setcounter{tocdepth}{1}
{\hypersetup{hidelinks}
\tableofcontents}

\section{Introduction}
One of the main achievements of the theory of K-stability for Fano varieties is the construction of a moduli theory for K-semistable Fano varieties that admits a good projective K-moduli space parametrising K-polystable Fanos. The main inputs for the existence of such K-moduli spaces include the verification of the valuative criteria of $\Theta$-reductivity and $S$-completeness \cite{BX19, ABHLX20}, which give sufficient conditions for the existence of good moduli spaces for Artin stacks \cite{AlperHalpernLeistnerHeinloth2023}. 
% This construction built upon prior results on K-stability in families, which showed that the set of K-semistable Fano varieties with bounded volume is bounded \cite{Jiang2020}, and that the locus of K-semistable Fano varieties is Zariski open \cite{BLX22}, and allowed the construction of a K-moduli Artin stack parametrising K-semistable Fano varieties.

In recent work, the author defined a K-stability notion for adjoint Fano foliated structures \cite{Pap26} and showed that K-semistable adjoint Fano foliated structures are klt \cite[Theorem 1.2(3)]{Pap26b}. Adjoint Fano foliated structures are triples $(X,\F,t)$, where $X$ is a normal variety, $\F$ is an algebraically integrable foliation on $X$ and $t\in (0,1)\cap \Q$ is a rational number, and the adjoint anticanonical divisor $-K_{X,\F}^{[t]}$ is ample. Due to recent developments in the MMP and birational geometry of these structures \cite{CHLMSSX24,CHLMSSX25, CLSV26}, these provide a natural framework for extending Fano geometry to the foliated setting, since Fano foliations, i.e. foliations with $-K_\F$ ample, exhibit a number of pathologies. 

In this paper, as an extension of the results in \cite{Pap26, Pap26b}, we prove the valuative criteria for $\Theta$-reductivity and $S$-completeness for the moduli problem of $t$-K-semistable adjoint Fano foliated structures, with respect to essentially of finite type DVRs. This choice is anavoidable since we rely on techniques from the adjoint foliated MMP which are unavailable over every DVR $R$. Although an Artin stack parametrising $t$-K-semistable adjoint Fano foliated structures has not been constructed yet, as openness of the $t$-K-semistable locus requires substantial new input from the adjoint foliated MMP, the above criteria can be verified valuatively by studying specific extensions of families of $t$-K-semistable adjoint Fano foliated structures, and as such do not require the prior construction of the stack itself. 
% We expect that the stack itself is a finite type Artin stack, in which case using \cite[Theorem A, Remark 5.5]{AlperHalpernLeistnerHeinloth2023}, the results of this paper will imply that the stack of K-semistable adjoint Fano foliated structures satisfies the valuative criteria for $S$-completeness and $\Theta$-reductivity for all DVRs $R$. 
We expect that this moduli problem is represented by a finite-type Artin stack with affine diagonal over $k$. Under this additional assumption, \cite[Theorem 5.4]{AlperHalpernLeistnerHeinloth2023} implies that it is enough to verify $\Theta$-reductivity and $S$-completeness for DVRs essentially of finite type over $k$. Consequently, the results of this paper would imply the corresponding valuative criteria for arbitrary DVRs. We prompt the reader to \cite[Remark 2.10, Remark 3.4]{ABHLX20} and the surrounding discussions for more details in the K-moduli setting. The following theorems are the main results of the paper. 

\begin{theorem}[Valuative criterion for $\Theta$-reductivity. See Theorem \ref{thm:adjoint-foliated-theta-extension}]
\label{thm:adjoint-foliated-theta-extension-intro}
Let $R$ be DVR essentially of finite type over $k$, with fraction field $K$ and residue field $\kappa$. Let $(\X,\F,t)\to \Spec R$ be an admissible family of $t$-K-semistable adjoint Fano foliated structures, and set $\L:=-K^{[t]}_{\X/R,\F}$. Let $(\Y_K,\F_{\Y_K},\M_K) \to \A^1_K$ be a special $t$-K-semistable degeneration of the generic fibre $(X_K,\F_K,L_K)$. Then, after replacing $R$ by a finite extension and abusing notation, there exists a unique admissible family $(\Y,\F_{\Y},\M) \to \A^1_R$ such that
\[
        (\Y,\F_{\Y},\M)\times_R K \simeq (\Y_K,\F_{\Y_K},\M_K),
\]
and
\[
        (\Y,\F_{\Y},\M)|_{\{1\}\times \Spec R} \simeq (\X,\F,\L).
\]
Moreover, every geometric fibre of $(\Y,\F_{\Y},\M) \to \A^1_R$ is $t$-K-semistable, and $\M \sim_{\Q,\A^1_R} -K^{[t]}_{\Y/\A^1_R,\F_{\Y}}$.
\end{theorem}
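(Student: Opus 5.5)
The plan follows the strategy of \cite{ABHLX20} for K-semistable Fano varieties, with the foliated inputs developed in this paper substituted at each step.

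\textbf{Step 1: translate the degeneration into a filtration.} The special degeneration $(\Y_K,\F_{\Y_K},\M_K)\to\A^1_K$ corresponds, via the Rees construction, to a finitely generated $\Z$-filtration $\F_K$ on the section ring
\[
R_K=\bigoplus_m H^0(X_K,mrL_K).
\]
Special test configurations of $t$-K-semistable adjoint Fano foliated structures are induced by divisorial valuations $v=c\cdot\ord_E$. Here $E$ is a prime divisor over $X_K$ with $\beta^{[t]}(E)=0$, meaning it is computed by the $t$-adjoint $\beta$-invariant, equivalently a Ding-type invariant. I will use this description, which I expect to follow from \cite{Pap26,Pap26b}.

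\textbf{Step 2: extend the filtration over $R$.} Let $\mathcal{R}=\bigoplus_m H^0(\X,mr\L)$ be the section ring of the family. Define a filtration on $\mathcal{R}$ by
\[
\F^\lambda\mathcal{R}_m=\mathcal{R}_m\cap\F_K^\lambda R_{K,m}.
\]
Equivalently, extend $v$ to the valuation $w$ on $\X$ given by quasi-monomial combinations of $v$ and $\ord_{X_\kappa}$, and take the filtration it induces. This filtration is linearly bounded and multiplicative.

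The key point is that its restriction to the special fibre $X_\kappa$ is again Ding-semistable. For this I use the mixed Ding theory for arbitrary linearly bounded multiplicative filtrations developed in this paper. The Ding invariant of the restricted filtration is compared with that of the generic one by the relative lct computation. The inequality $\lct\ge$ the required bound is supplied by inversion of adjunction with arbitrary ideals for adjoint foliated structures, applied to $(\X,\F,X_\kappa)$ together with the base ideals of the filtration.

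Since $(X_\kappa,\F_\kappa,t)$ is $t$-K-semistable and $\Ding\ge0$ for all filtrations, one gets $\Ding=0$ on the special fibre. The mixed theory then forces the restricted filtration to be induced by a divisorial valuation computing the $t$-adjoint $\delta$-invariant, i.e.\ by an lc place of a $t$-adjoint complement.

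\textbf{Step 3: finite generation.} I will apply the relative extraction and finite generation theorem of this paper. The extended filtration is attached to the lc place of a relative complement on $(\X,\F,t)$ over $R$. After a finite base change of $R$, which reduces multiplicities of $X_\kappa$ and makes the place defined over $R$, the relative extraction realises this place as the lc place of a relative complement over $R$. The theorem then gives that $\mathrm{gr}_{\F}\mathcal{R}$ is finitely generated over $R[t]$.

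Set $\Y=\Proj_{\A^1_R}\mathrm{Rees}_\F\mathcal{R}$, with $\M$ the tautological $\O(1)$ scaled by $1/r$ and $\F_\Y$ the induced foliation. By construction its generic fibre is $\Y_K$ and its restriction over $1\times\Spec R$ is $\X$. Uniqueness holds because any such family is normal, so it is determined by its restriction to the complement of the codimension-two point $(0,\kappa)$, via $S_2$ and Hartogs. The same reasoning gives $\M\sim_{\Q,\A^1_R}-K^{[t]}_{\Y/\A^1_R,\F_\Y}$, since the two agree off codimension two.

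\textbf{Step 4: semistability and admissibility.} The central fibre over $\kappa$ is the special degeneration of $X_\kappa$ induced by a valuation with $\beta^{[t]}=0$. Hence it is $t$-K-semistable, using the analogue of \cite{LWX}-type results for this setting from \cite{Pap26,Pap26b}. Its klt-ness comes from \cite[Theorem 1.2(3)]{Pap26b}, and admissibility of the family follows from flatness of the Rees algebra.

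The main obstacle is Step 2. One must show that the naive extension of $\F_K$ restricts on $X_\kappa$ to a filtration with vanishing Ding invariant, and hence that it is induced by a single valuation. This is exactly where inversion of adjunction with arbitrary ideals in the adjoint foliated setting is needed, and I would first attempt it via a relative $\lct$-semicontinuity argument on $\X$.
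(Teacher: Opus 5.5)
Your outline follows the paper's architecture: the valuation filtration $\cG^pV_m=V_m\cap\cG_K^pV_{K,m}$, restriction to $\X_\kappa$, $\mu^{[t]}(\cG_\kappa)\le\mu^{[t]}(\cG_K)$ via inversion of adjunction, $\Ding^{[t]}(\cG_\kappa)\ge0$ to force equality, then extraction, finite generation, relative $\Proj$, and Corollary \ref{cor:zero-DF-special-limit-semistable}. Your codimension-two/Hartogs arguments for uniqueness and for $\M\sim_{\Q,\A^1_R}-K^{[t]}_{\Y/\A^1_R,\F_\Y}$ are fine.

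\textbf{The gap: from $\Ding^{[t]}(\cG_\kappa)=0$ to finite generation.} You claim that vanishing of the Ding invariant ``forces'' the special-fibre filtration to be induced by a divisorial valuation, namely an lc place of a $t$-adjoint complement on $\X_\kappa$. You then claim this place lifts to an lc place of a relative complement on $\X$. Neither claim is justified.
\begin{enumerate}
\item The mixed Ding theory only gives the numerical identity $\mu^{[t]}(\cG_\kappa)=S(\cG_\kappa)$. It does not identify $\cG_\kappa$ with a valuation filtration. For general linearly bounded filtrations with vanishing Ding invariant one should not even expect a divisorial valuation.
\item Lifting a complement with a prescribed place from $\X_\kappa$ to $\X$ is a separate problem that you do not address.
\item More importantly, the divisor to which Theorem \ref{thm:relative-non-terminal-extraction-fg} must be applied is not a place over $\X_\kappa$. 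It is the horizontal divisor $G$ over $\X$ whose generic fibre is $G_K$, since $\cG^pV_m=\{s\in V_m\mid \ord_G(s)\ge p\}$.
\end{enumerate}
Your ``equivalent'' description via quasi-monomial combinations of $v$ and $\ord_{\X_\kappa}$ is also wrong: any positive weight on $\ord_{\X_\kappa}$ changes the filtration. As a result, hypotheses (7) and (8) of the extraction theorem are never verified.

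\textbf{What is needed instead: an $\varepsilon$ of slack, which replaces any complement.}
\begin{enumerate}
\item Since $\ord_G(\X_\kappa)=0$, one has $A_{\cA_0}(G)=A^{[t]}_{\X_K,\F_K}(G_K)=\mu$ (Claim \ref{claim:generic-discrepancy-extension}).
\item Specialness of $G_K$ gives $\lambda_{\max}(\cG_K)>\mu$. The lattice argument gives $\lambda_{\max}(\cG_\kappa)=\lambda_{\max}(\cG_K)$. By continuity (Lemma \ref{lem:mixed-Xu-346}), $\lct^{[t]}(\X_\kappa,\F_\kappa;I^{(\mu-\varepsilon)}_\bullet(\cG_\kappa))>1$ for small $\varepsilon$.
\item Theorem \ref{thm:lct-special-fibre-filtrations}, Lemma \ref{lem:finite-level-mixed-lct-approx} and a Bertini argument then produce a general $D\in\cG^{(\mu-\varepsilon)m}V_m$ with $\cA_D$ lc near $\X_\kappa$.
\end{enumerate}
Hence $A_{\cA_D}(G)\le\mu-(\mu-\varepsilon)=\varepsilon<1-t$, while $A_{\cA_0}(G)=\mu>0$. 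So $G$ is a non-terminal, not necessarily lc, place of $\cA_D$, which is exactly what Theorem \ref{thm:relative-non-terminal-extraction-fg} requires. The case where $G$ is not exceptional over $\X$ must be handled separately.

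\textbf{Smaller points.}
\begin{enumerate}
\item Turning $\mu^{[t]}(\cG_\kappa)\le\mu^{[t]}(\cG_K)$ into $\Ding^{[t]}(\cG_\kappa)\le0$ also needs $S(\cG_\kappa)=S(\cG_K)$. This follows because the $\cG^pV_m$ are $R$-lattices.
\item Specialness of the central fibre over $0_\kappa$ should be derived only after finite generation. It comes from $\Ding^{[t]}=0$ for the induced test configuration via \cite[Theorem 7.7]{Pap26}, before invoking Corollary \ref{cor:zero-DF-special-limit-semistable}.
\item Your stated reason for the finite base change does not apply. There are no multiplicities to reduce, since $\X_\kappa$ is already reduced and normal by admissibility.
\end{enumerate}
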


\begin{theorem}[Valuative criterion for $S$-completeness. See Theorem \ref{thm:adjoint-foliated-S-completeness}]
\label{thm:adjoint-foliated-S-completeness-intro}
Let $(\X,\F,t)\to \Spec R$ and $(\X',\F',t)\to \Spec R$ be two admissible families of $t$-K-semistable adjoint Fano foliated structures, where $R$ is a DVR essentially of finite type over $k$. Set $\L:=-K^{[t]}_{\X/R,\F}$ and $\L':=-K^{[t]}_{\X'/R,\F'}$, and assume that the generic fibres are isomorphic $(X_K,\F_K,L_K)\simeq (X'_K,\F'_K,L'_K)$. Then the induced family over $\ST_R^\circ$ extends uniquely to an admissible family $(\cZ,\F_\cZ,\M)\to \ST_R$ whose geometric fibres are $t$-K-semistable.
\end{theorem}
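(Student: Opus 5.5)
We will follow the strategy of Blum--Halpern-Leistner--Liu--Xu (ABHLX20) for the S-completeness of K-semistable Fanos, adapting each input to the adjoint foliated setting. By Artin--Rees/Rees-algebra considerations, an extension over $\ST_R=[\Spec R[s,t']/(st'-\pi)/\G_m]$ of the family over $\ST_R^\circ$ is the same as a finitely generated, $\Z$-graded $R$-algebra. Concretely, let $\L$ and $\L'$ be as in the statement and identify the generic fibres. Then the section ring $\bigoplus_m H^0(\X,m r\L)$ carries a filtration $\F_{\X'}$ induced by the divisorial valuation $\ord_{\X'_\kappa}$ of the central fibre of $\X'$, restricted to $K(\X)=K(\X')$. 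Dually, $\X$ induces a filtration on the section ring of $\X'$. The candidate extension is
\[
\cZ:=\Proj_{R[s,t']}\bigoplus_{m}\bigoplus_{p\in\Z} \F^p_{\X'}H^0(\X,mr\L)\, s^{-p}.
\]
Its fibre over $s=0$ is the $\G_m$-degeneration of $X_\kappa$ given by the restricted filtration, and similarly over $t'=0$ for $X'_\kappa$. The foliation $\F_\cZ$ is obtained as the closure of the pulled-back foliation, which remains algebraically integrable. Uniqueness follows because $\cZ$ minus the closed point has codimension-two complement in a normal $S_2$ family, so section rings are determined by $\ST_R^\circ$.

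\textbf{Step 1 (finite generation).} Restrict the filtration $\F_{\X'}$ to the central fibre, obtaining a linearly bounded multiplicative filtration $\F$ on $R(X_\kappa,L_\kappa)$. Using the mixed Ding theory for arbitrary linearly bounded filtrations developed in the paper, compare the Ding invariants of $\F$ and of the dual filtration on $X'_\kappa$. The $t$-K-semistability of both central fibres, combined with the additivity of $\mathbf{L}$- and $\mathbf{E}$-type functionals under the product filtration, forces $\Ding(\F)=0$, i.e.\ $\mu(\F)=\mathbf{L}$ is computed by $S$. Then the saturation of $\F$ computes the $\delta$-invariant, and the associated base ideal sequence has log canonical threshold attained by a quasimonomial valuation. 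Apply the relative extraction and finite generation theorem, which relies on the adjoint foliated MMP and is the reason we need $R$ to be essentially of finite type. This yields finite generation of the graded algebra $\bigoplus\F^p$, and hence of the total algebra defining $\cZ$, after a finite base change of $R$.

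\textbf{Step 2 (klt and semistability of the fibre over $0$).} The fibre of $\cZ$ over the closed point $0\in\ST_R$ is $\Proj\operatorname{gr}_\F R(X_\kappa,L_\kappa)$. We must show that it is an adjoint Fano foliated structure with klt singularities and $t$-K-semistable, and that $\M\sim_{\Q}-K^{[t]}_{\cZ/\ST_R,\F_\cZ}$. We argue by inversion of adjunction with arbitrary ideals for adjoint foliated structures. Applied to $\cZ$ and the divisor $\cZ_0$, it shows that $(\cZ,\F_\cZ,t;\cZ_0)$ is lc near $\cZ_0$ with the expected coefficients. Because $\Ding=0$, this upgrades to showing that the degeneration is a special test configuration of both $X_\kappa$ and $X'_\kappa$ with vanishing Futaki invariant. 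Then $t$-K-semistability of the central fibre follows from the standard result that a special degeneration with zero generalized Futaki invariant preserves $t$-K-semistability, which the paper should provide (as in Theorem \ref{thm:adjoint-foliated-theta-extension-intro}). Klt-ness then follows from \cite[Theorem 1.2(3)]{Pap26b}.

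\textbf{Main obstacle.} The hardest step is Step 1: proving $\Ding(\F)=0$ and the finite generation of $\operatorname{gr}_\F$. In the classical case this uses Fujita--Li-type valuative arguments and the existence of lc places of complements computing $\delta$. Here we need the mixed Ding inequality $\Ding(\F)+\Ding(\F^{\vee})\le 0$ for the pair of dual filtrations. We would first attempt it by the convexity/Okounkov-body argument of ABHLX20, with $\lct$ replaced by the adjoint lct $\lct(X,\F,t;I_\bullet)$. Finite generation is then reduced to the relative extraction theorem applied to a valuation computing this adjoint lct.
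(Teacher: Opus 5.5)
Your overall architecture matches the paper's: the comparison filtration from $v=\ord_{\X'_\kappa}$, the extension as a relative $\Proj$ over $\ST_R$, the saturated extension of the foliation, vanishing of $\Ding^{[t]}$, finite generation by relative extraction, zero-$\DF^{[t]}$ special branches, and semistability of the closed fibre. However, the two load-bearing steps are either left open or routed through mechanisms that do not apply here.

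\textbf{Vanishing of $\Ding^{[t]}$.} Because $\operatorname{Gr}_{\cH}R_\kappa\simeq\operatorname{Gr}_{\cH'}R'_\kappa$ with weights reversed, the $S$-terms cancel: $S(\cH')=-S(\cH)$. The $\mu^{[t]}$-terms, however, obey no such additivity. What you actually need is the separate bound $\mu^{[t]}(\cH)\le0$ and $\mu^{[t]}(\cH')\le0$; then Theorem \ref{thm:mixed-ding-compatible-filtrations} forces $\Ding^{[t]}=\mu^{[t]}=0$.

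This bound does not come from a convexity or Okounkov-body argument; it is valuative. Set $a:=A^{[t]}_{\X,\F;(1-t)\X_\kappa}(v)$, computed by comparing the two mixed canonical divisors on a common model. Then $\cG^pE_m=\{s\in E_m\mid v(s)\ge ma+p\}$ (Lemma \ref{lem:comparison-filtration-valuation-description}). So the relevant base ideals of $\cH$ are restrictions to $\X_\kappa$ of the valuation ideals $\mathfrak a_m(v)=\{v\ge ma\}$ on the total space, and $v$ itself gives $\lct^{[t]}_{\X,\F;(1-t)\X_\kappa}(\mathfrak a_\bullet(v))\le a/a=1$. Inversion of adjunction for graded sequences on the DVR family moves this bound to $\X_\kappa$. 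This is Corollary \ref{cor:F-compatible-IOA-graded}, whose hypotheses come from Theorem \ref{thm:dvr-fibre-ioa} and klt-ness of the $t$-K-semistable special fibres. Strict monotonicity then gives $\mu^{[t]}(\cH)\le0$.

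So inversion of adjunction belongs on $\X\supset\X_\kappa$ and $\X'\supset\X'_\kappa$, not on $\cZ$ as in your Step 2. There $\cZ_0$ lies over a codimension-two point of $\ST_R$ and is not a divisor on $\cZ$, so the pair $(\cZ,\F_\cZ,t;\cZ_0)$ is not meaningful.

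\textbf{Finite generation.} Passing through a $\delta$-computing, quasi-monomial valuation is the Liu--Xu--Zhuang route. It does not connect to Theorem \ref{thm:relative-non-terminal-extraction-fg}, which needs a prime divisor $G$ over $\X$ together with an explicit lc complement $D$. Moreover, finite generation of $\operatorname{gr}_w$ for some lct-computing valuation $w$ on $\X_\kappa$ would say nothing about $\operatorname{Gr}_{\cH}R_\kappa$ unless $\cH$ were induced by $w$.

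The divisor to extract is already at hand: $G=\X'_\kappa$. Combine $\mu^{[t]}(\cH)=0$, continuity (Lemma \ref{lem:mixed-Xu-346}), inversion of adjunction, Lemma \ref{lem:finite-level-mixed-lct-approx} and Bertini. For $0<\varepsilon<1-t$, this makes $\cA_D=(\X,\F,(1-t)\X_\kappa+\frac1mD,t)$ lc for a general $D\in\cG^{(a-\varepsilon)m}E_m$. Hence $A_{\cA_D}(v)\le\varepsilon<t\varepsilon_{\F}(v)+(1-t)$. Meanwhile $A_{\cA_0}(v)=a>0$, and $\X_\kappa$ is the unique lc place of $\cA_0$ by the plt statement.

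The extraction theorem then gives finite generation of $\bigoplus_{m,p\ge0}H^0(\cO(m\rho^*L_r-pG))$ on the extraction $\rho$ of $G$. Up to the shift by $ma$, this is the Rees algebra of $\cG$, so $\operatorname{Gr}_{\cH}R_\kappa$ is finitely generated, and no finite base change is needed.

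\textbf{Admissibility.} Admissibility of the extension is part of the statement, but your proposal only asserts it. This covers normality of $\cZ$, Koll\'ar's condition for the mixed sheaf $\mathscr K_{r,t}$ over the atlas, and $\M\sim_{\Q}-K^{[t]}_{\cZ/\ST_R,\F_\cZ}$. The paper obtains these from three inputs:
\begin{itemize}
\item $S_2$ together with normality in codimension one;
\item the hull-flat stratum, which contains the punctured atlas and both special branches;
\item comparison of reflexive rank-one sheaves at codimension-one points.
\end{itemize}
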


The notion of admissible families is technical and necessary for our results, and is defined in Section \ref{sec: admissible families}. We omit their precise definitions here, noting that they are the foliated analogues of the stability and compatibility conditions imposed on families and test configurations in the usual K-moduli theory (see also \cite[\S 5]{Xu2025} and \cite[\S 3]{Kol23}). We refer the reader to \cite[\S 8.2.1 and \S 8.2.2]{Xu2025} for a detailed explanation on why the above theorems satisfy the valuative criteria of $\Theta$-reductivity and $S$-completeness. 

The proofs of the above theorems follow the ideas presented in \cite{BX19, ABHLX20}, and later streamlined in \cite[\S 8.2.1 and \S 8.2.2]{Xu2025}. In more detail, we first study the notion of $t$-Ding stability for adjoint Fano foliated structures defined in \cite[\S 7]{Pap26}, which we reinterpret in terms of arbitrary linearly bounded multiplicative filtrations. We define the Ding invariant using mixed log canonical slopes and show that $t$-K-semistability implies its non-negativity for every such filtration. The main advantage of the Ding invariant is its birational nature. 

In particular, since Theorems \ref{thm:adjoint-foliated-theta-extension-intro} and \ref{thm:adjoint-foliated-S-completeness-intro} both require finite generation properties, we aim to use Ding stability to derive finite generation using the main results in \cite{CHLMSSX25}. In order to do so we prove an inversion of adjunction statement in families, for arbitrary ideals.

\begin{theorem}[Inversion of adjunction with ideals. See Theorem \ref{thm:F-compatible-IOA-finite}]\label{thm:F-compatible-IOA-finite-intro}
Let $C$ be a smooth pointed curve with closed point $0$, and let $f:(X,\F,t)\to C$ be an admissible family, and let $S=X_0$ be a reduced normal Cartier fibre. Assume:
\begin{enumerate}
\item $T_{\F}\subset T_{X/C}$;
\item $X$ is potentially klt near $S$;
\item the divisor $K^{[t]}_{\X/C,\mathcal F}+(1-t)S$ is $\mathbb Q$-Cartier;
\item $(X,\F,(1-t)S,t)$ is lc near $S$.
\end{enumerate}
Let $\mathfrak a\subset \cO_X$ be any nonzero coherent ideal not vanishing identically along $S$, and let $\F_S$ be the induced foliation on $S$. Then
\[
        \lct^{[t]}_{X,\F;(1-t)S}(\mathfrak a) =  \lct^{[t]}_{S,\F_S}(\mathfrak a_S).
\]
\end{theorem}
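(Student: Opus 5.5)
We prove the two inequalities separately; throughout, $\lct^{[t]}$ is the largest $c$ such that the mixed pair with $\mathfrak a^c$ added is lc near $S$. We write $\mathfrak a_S:=\mathfrak a\cdot\cO_S$.

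\textbf{Easy inequality.} We first show $\lct^{[t]}_{X,\F;(1-t)S}(\mathfrak a)\le \lct^{[t]}_{S,\F_S}(\mathfrak a_S)$. This direction comes from adjunction and needs no MMP. Hypothesis (1), $T_\F\subset T_{X/C}$, says $\F$ is tangent to the fibres, so $S$ is $\F$-invariant. Hence the adjoint divisor $K^{[t]}_{X,\F}+(1-t)S=tK_\F+(1-t)(K_X+S)$ restricts along $S$ to $K^{[t]}_{S,\F_S}$. The restriction is up to a different, which is effective by the foliated adjunction formulas for invariant divisors; hypothesis (3) is used exactly here, so that the restriction makes sense. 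Suppose $(X,\F,(1-t)S+\mathfrak a^{c},t)$ is lc near $S$. By the standard (easy) direction of adjunction for lc centres, the restricted structure $(S,\F_S,\mathfrak a_S^{c},t)$ is lc. This is the adjoint-foliated analogue of \cite[Kollár]{Kol23} type statements and holds on a common log resolution, computing discrepancies of divisors over $S$.

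\textbf{Reverse inequality.} Fix $c<\lct^{[t]}_{S,\F_S}(\mathfrak a_S)$; we show $(X,\F,(1-t)S+\mathfrak a^c,t)$ is lc near $S$. We first reduce from ideals to divisors. Take $m\gg0$ and general elements $g_1,\dots,g_N$ of $\mathfrak a$. Set $D=\frac{c}{N}\sum_i\operatorname{div}(g_i)$ with $N\gg0$; then lc thresholds of $\mathfrak a$ are computed by $D$, and $D|_S$ computes the threshold of $\mathfrak a_S$. Because $\mathfrak a$ does not vanish along $S$, general members do not contain $S$, and $D|_S$ is the average of general members of $\mathfrak a_S$. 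So it suffices to prove inversion of adjunction for an effective $\Q$-Cartier divisor $D$ not containing $S$. That statement is: if $(S,\F_S,D|_S,t)$ is lc, then $(X,\F,(1-t)S+D,t)$ is lc near $S$.

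To prove it, use hypothesis (2) together with the adjoint foliated MMP of \cite{CHLMSSX24,CHLMSSX25}. Argue by contradiction: suppose some divisor $E$ over $X$, with centre meeting $S$, has adjoint discrepancy $<-\epsilon_{\mathrm{adj}}$. Here the lc threshold for adjoint structures weights invariant and non-invariant divisors differently; it is $-1$ for non-invariant and $-(1-t)$-type weighting for invariant divisors. Now run a dlt modification. Extract all such non-lc places together with $S$, using the relative extraction for adjoint foliated structures (available since $X$ is potentially klt and the pair is lc). Then apply the connectedness principle in the Kollár–Shokurov form, adapted to adjoint foliated structures, relative to $C$. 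The non-klt locus of $(X,(1-t)S+D)$ over a neighbourhood of a point of $S$ is connected. It contains $S$, so it meets the strict transform of $S$. Restricting to $S$ then produces a non-lc place of $(S,\F_S,D|_S)$, which is a contradiction.

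\textbf{Main obstacle.} The principal difficulty is the connectedness / Kawamata–Viehweg vanishing step in the foliated setting. The usual proof uses $-(K+\Delta)$ being relatively nef over $X$ plus vanishing, which is unavailable for foliations. The alternative is to run an MMP with scaling on the dlt model, contracting until $S$ itself is the only lc centre. We would do this using that $K^{[t]}_{X,\F}+(1-t)S+D\equiv_X0$ and the existence of adjoint foliated MMP for algebraically integrable foliations. We then use hypothesis (4) to guarantee the lc (rather than klt) setup is preserved. Invariance of $S$, from (1), is needed so that the different on $S$ matches the restricted foliation $\F_S$.
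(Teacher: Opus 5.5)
Your first inequality is essentially the paper's argument: crepant transform on a resolution principalising $\mathfrak a$, then adjunction along the invariant component $S_W$.

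The reverse inequality has a genuine gap. It rests on a Koll\'ar--Shokurov connectedness principle for adjoint foliated structures, which you neither prove nor cite. As you note yourself, its usual proof goes through Kawamata--Viehweg vanishing, which is not available here. Running an MMP with scaling ``until $S$ is the only lc centre'' is not an argument. Even granting connectedness, it only gives a divisor of critical coefficient meeting $S_Y$, and hence a \emph{non-klt} place on $S$. To contradict log canonicity of $(S,\F_S,c\,\mathfrak a_S,t)$ you need a non-lc place, and your sketch does not produce one.

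The missing idea is that hypothesis (4) makes connectedness unnecessary.
\begin{enumerate}
\item Let $\lambda:=\lct^{[t]}_{X,\F;(1-t)S}(\mathfrak a)$ and suppose $\lambda<c<\lct^{[t]}_{S,\F_S}(\mathfrak a_S)$. On a log resolution principalising $\mathfrak a$, the threshold is a minimum of finitely many ratios. So, using (4), it is attained by a prime divisor $E\neq S$ with centre meeting $S$ and $q:=\ord_E(\mathfrak a)>0$. This $E$ is an lc place of $(X,\F,(1-t)S+\lambda\mathfrak a,t)$.
\item If $E$ is exceptional, use (2) to extract $E$ \emph{alone}. Replace $\lambda\mathfrak a$ by a general divisorial representative keeping $E$ an lc place (foliated Bertini, \cite[Theorem 3.28(1)]{CHLMSSX25}). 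Then apply \cite[Theorem 2.2.3]{CHLMSSX25} to get $\rho\colon Y\to X$ with $Y$ $\Q$-factorial and $E$ the only exceptional divisor.
\item Then $E\cap S_Y\neq\varnothing$ simply because the fibres of $\rho$ are connected, so no vanishing theorem is needed. Moreover, $\Q$-factoriality makes $E|_{S_Y^\nu}$ a nonzero effective divisor.
\item The local computation of the different in \cite[Theorem 1.8]{CHLMSSX24} shows the following. With coefficient $t\varepsilon_{\F}(E)+(1-t)$ along $E$, the restriction to $S_Y^\nu$ is non-klt along a component of $E\cap S_Y$. Since the different is linear in the boundary, raising the coefficient by $(c-\lambda)q>0$ makes it non-lc.
\item Restricting $\mathfrak a\cO_Y\subseteq\cO_Y(-qE)$ to $S_Y^\nu$ shows that $(S,\F_S,c\,\mathfrak a_S,t)$ is at least as singular. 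This is the contradiction.
\item If $E$ is a divisor on $X$, take a component $P\subseteq E\cap S$. Then $\ord_P(\mathfrak a_S)\geq q$ and $t\varepsilon_{\F_S}(P)+(1-t)\leq t\varepsilon_{\F}(E)+(1-t)=\lambda q$, which gives the contradiction directly.
\end{enumerate}

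Separately, your reduction from $\mathfrak a$ to an average of general members also needs a foliated Bertini theorem to preserve thresholds. In the route above, Bertini is only used to build the auxiliary boundary for the extraction.
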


The proof of Theorem \ref{thm:F-compatible-IOA-finite-intro} relies on the adjunction and MMP results of \cite{CHLMSSX24, CHLMSSX25}. After principalising an arbitrary ideal, adjunction is applied to its full divisorial transform. The theorem controls the mixed log canonical thresholds needed below and gives upper bounds on the relevant mixed log discrepancies. We then extend \cite[Corollaries 1.68 and 1.70]{Xu2025} to our adjoint foliated setting using the results of \cite{CHLMSSX24,CHLMSSX25}, which allows us to extract divisors in specific birational models preserving the foliations and obtain finite generation of section rings. In particular, we show the following:

\begin{theorem}[Relative non-terminal extraction and finite generation. See Theorem \ref{thm:relative-non-terminal-extraction-fg}]
\label{thm:relative-non-terminal-extraction-fg-intro}
Let $f:(\X,\F,t)\to \Spec R$ be an admissible DVR family. Assume:
\begin{enumerate}
\item $\X$ is potentially klt;
\item $T_{\F}\subset T_{\X/R}$;
\item $\F$ is algebraically integrable;
\item $L:=-K^{[t]}_{\X/R,\F}$ is $f$-ample and
$\Q$-Cartier, and $L_r=rL$ is Cartier;
\item $\cA_0=(\X,\F,(1-t)\X_\kappa,t)$ is lc with
$\X_\kappa$ as its unique lc place;
\item $D\geq 0$ is an effective $\Q$-Cartier divisor with
$D\sim_{\Q,R}qL_r$;
\item the adjoint foliated structure
\[
        \cA_D  = \left(\X,\F, (1-t)\X_\kappa+\frac{1}{rq}D, t\right)
\]
is lc near $\X_\kappa$;
\item $G$ is an exceptional prime divisor over $\X$ such that
\[
 A_{\cA_D}(G)<t\varepsilon_{\F}(G)+(1-t), \qquad A_{\cA_0}(G)>0.
\]
\end{enumerate}
Then there exists a projective birational morphism
$\rho:\cZ\to\X$ such that:
\begin{enumerate}
\item $\cZ$ is normal and $\Q$-factorial;
\item $G$ appears as a $\rho$-exceptional prime divisor on $\cZ$;
\item $G$ is the unique $\rho$-exceptional prime divisor;
\item $-G$ is $\rho$-ample;
\item the induced foliation $\F_{\cZ}$ is algebraically
integrable and relative over $\Spec R$;
\item the bigraded algebra
\[
        \mathcal R_G(\X,L_r):=\bigoplus_{a\geq 0}\bigoplus_{p\geq 0} H^0\!\left(\cZ, \cO_{\cZ}(a\rho^*L_r-pG)\right)
\]
is finitely generated, where the divisorial sheaves are understood reflexively.
\end{enumerate}
\end{theorem}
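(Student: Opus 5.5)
The plan is to follow the proof of \cite[Corollaries 1.68 and 1.70]{Xu2025}: first extract $G$ via an MMP for adjoint foliated structures, then deduce finite generation from a boundedness statement of Fano type relative to $\X$.

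\textbf{Step 1: an auxiliary lc structure where $G$ is non-terminal.} For rational $0<s\le 1$ set
\[
\cA_s:=\left(\X,\F,(1-t)\X_\kappa+\tfrac{s}{rq}D,t\right),
\]
which interpolates linearly between $\cA_0$ and $\cA_D$. Log discrepancies are affine in $s$, so $A_{\cA_s}(G)=(1-s)A_{\cA_0}(G)+sA_{\cA_D}(G)$. The hypothesis $A_{\cA_D}(G)<t\varepsilon_\F(G)+(1-t)$ says that $G$ is a non-terminal place of $\cA_D$, in the sense of the mixed discrepancy threshold $t\varepsilon_\F(G)+(1-t)$. Since $\X_\kappa$ is the unique lc place of $\cA_0$ and $G$ is exceptional, $\cA_0$ is klt near $\X_\kappa$ away from that one place. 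Hence for $s$ close to $1$, $G$ is still non-terminal for $\cA_s$, $A_{\cA_s}(G)>0$, and $\cA_s$ is lc near $\X_\kappa$. The lc property comes from convexity, using hypothesis (7) and Theorem \ref{thm:F-compatible-IOA-finite}. Because $\X$ is local over $R$, the lc property near $\X_\kappa$ is all that is needed.

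\textbf{Step 2: extraction.} Take a foliated log resolution $\pi:W\to\X$ on which $G$ appears, with $\F_W$ algebraically integrable and relative over $R$. This uses hypotheses (2)--(3) and the existence of ACSS/foliated log smooth models from \cite{CHLMSSX24}. On $W$, put the boundary with coefficient (lc threshold, i.e.\ coefficient $1$ in the adjoint sense) on every $\pi$-exceptional divisor except $G$, and the coefficient forced by $A_{\cA_s}(G)$ on $G$. Then run the relative MMP over $\X$ for adjoint foliated structures \cite{CHLMSSX25}. The negativity lemma, together with the inequality $A(G)<t\varepsilon_\F(G)+(1-t)$, shows that every exceptional divisor other than $G$ is contracted, while $G$ survives.

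This yields a $\Q$-factorial model $\cZ\to\X$ with $G$ as the unique exceptional divisor. Because $\rho$ has relative Picard number one, $-G$ is $\rho$-ample; if necessary, pass to the $-G$ ample model, which exists by the same MMP. The foliation $\F_\cZ$ is the birational transform of $\F$, so it stays algebraically integrable and relative over $R$. This gives conclusions (1)--(5). The main obstacle is making the MMP work. The adjoint foliated MMP requires a potentially klt ambient space and lc structures with an ample-twisted boundary, and both must be preserved on $W$. I would first try to run the MMP with scaling of a $\rho$-ample divisor, using the klt-ness of $\X$ from hypothesis (1) to perturb.

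\textbf{Step 3: finite generation.} Write $\rho^*L_r-\lambda G$ as a combination of $-(K^{[t]}_{\cZ,\F_\cZ}+\Delta_\cZ)$ and $-G$. This is possible because $L\sim_{\Q,R}-K^{[t]}$ and $\rho^*(K^{[t]}+\Delta_s)=K^{[t]}_{\cZ}+\Delta_\cZ+(1-A_{\cA_s}(G))G$. With $A_{\cA_s}(G)>0$, the structure on $\cZ$ is then of ``adjoint Fano type'' over $R$ on the relevant cone. Finite generation of the multi-section ring of adjoint lc structures with big boundaries, from \cite{CHLMSSX25}, then shows that the bigraded algebra $\mathcal R_G(\X,L_r)$ is finitely generated. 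The argument mirrors the proof of \cite[Corollary 1.70]{Xu2025} via finitely many ample models as $p/a$ varies.
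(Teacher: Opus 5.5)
The gap is in Step 3. You never make ``adjoint Fano type on the relevant cone'' precise. You then appeal to a finite generation theorem for multi-section rings of adjoint \emph{lc} structures in the foliated setting, and this has three problems. The paper neither proves nor uses such a statement. The standard multigraded finite generation results (BCHM) are klt statements even classically. And you would still have to realise every class $a\rho^*L_r-pG$ as a multiple of an adjoint divisor with an admissible boundary.

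The missing idea is to leave the foliated world.
\begin{itemize}
\item In your normalisation, set $\cB_{s,\delta}:=(\cA_s)_\cZ+\delta G$ with $0<\delta<\min\{\delta_0,A_{\cA_s}(G)\}$, where $\delta_0$ is given by \cite[Lemma 3.25]{CHLMSSX25}.
\item This structure is lc by convexity, and its unique lc place is the strict transform $S_\cZ$ of $\X_\kappa$.
\item Since $K_{\cA_s}\sim_{\Q,R}-(1-s)L$, you get $-K_{\cB_{s,\delta}}\sim_{\Q,R}(1-s)\rho^*L-\delta G$, which is ample over $R$ for small $\delta$. This is where $A_{\cA_s}(G)>0$ and the $\rho$-ampleness of $-G$ are actually used.
\item Lowering the coefficient of $S_\cZ$ gives a klt structure, so $\cZ$ is potentially klt by \cite[Theorem 3.9]{CHLMSSX25}.
\item Then \cite[Theorem 2.4.2]{CHLMSSX25} shows that $\cZ$ is of Fano type over $R$ in the classical sense: there is a klt pair $(\cZ,\Delta_\cZ)$ with $-(K_\cZ+\Delta_\cZ)$ ample over $R$.
\end{itemize}
Now the classical finite generation result \cite[Corollary 1.70]{Xu2025} applies directly to $\rho^*L_r$ and $-G$. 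No foliated finite generation input is needed.

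Steps 1 and 2 otherwise match the paper. The interpolation is the same, with your $s$ near $1$ corresponding to the paper's $s$ near $0$. The paper takes the extraction as a black box from \cite[Theorem 2.2.3]{CHLMSSX25}, applied to $\cA_s-\epsilon\X_\kappa$. Two points need fixing.
\begin{itemize}
\item You only record that $\X_\kappa$ is the unique lc place of $\cA_0$, but your MMP and the klt perturbation both need this for $\cA_s$. An exceptional $E$ with $A_{\cA_s}(E)=0$ would not be forced out by the negativity lemma and would survive. The perturbation that produces a klt structure is subtracting $\epsilon\X_\kappa$; hypothesis (1) is needed to run the MMP but does not by itself give this. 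Uniqueness for $\cA_s$ follows from convexity, since $s<1$ and both $\cA_0$ and $\cA_D$ are lc. Theorem \ref{thm:F-compatible-IOA-finite} plays no role in Step 1: convexity and properness over $R$ suffice.
\item $\X$ is not assumed $\Q$-factorial, so ``relative Picard number one'' is unjustified. The paper obtains $\rho$-ampleness of $-G$ differently: it lowers the coefficient of $G$ so that $K_{\cC_\eta}\sim_{\Q,\X}-\eta G$, then passes to the relative canonical model.
\end{itemize}
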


For the proof of Theorems \ref{thm:adjoint-foliated-theta-extension-intro} and \ref{thm:adjoint-foliated-S-completeness-intro}, we then study the filtrations that arise naturally in the study of morphisms to $\Theta^{\circ}$ and $\ST_R^{\circ}$ as in \cite{BX19, ABHLX20} and \cite[\S 8.2.1 and \S 8.2.2]{Xu2025}. Using the K-semistability of the fibres of each family we reduce to the assumptions of Theorem \ref{thm:relative-non-terminal-extraction-fg-intro}; this allows us to prove the existence and uniqueness of the necessary extensions over $\Theta$ and $\ST_R$.

Furthermore, using Theorem \ref{thm:adjoint-foliated-S-completeness-intro} we are able to study the K-polystable degenerations of adjoint Fano foliated structures and show that they are unique. This extends the main result of \cite{BX19} in the adjoint Fano foliated setting. In particular, we show the following.

\begin{theorem}[Uniqueness of $t$-K-polystable degenerations. See Theorem \ref{thm:uniqueness-polystable-degeneration}] \label{thm:uniqueness-polystable-degeneration-intro}
Let $(X,\F,t)$ be a $t$-K-semistable adjoint Fano foliated structure, with $L\sim_{\Q}-K^{[t]}_{X,\F}$ an ample $\Q$-Cartier polarisation. Let $(\X_1,\F_1,\L_1)\to \A^1$ and $(\X_2,\F_2,\L_2)\to \A^1$ be two $\F$-compatible special test configurations of $(X,\F,L)$ with $\DF^{[t]}(\X_i,\F_i,\L_i)=0$ and with $t$-K-polystable central fibres $(Y_i,\F_{Y_i},M_i)$. Then
\[
        (Y_1,\F_{Y_1},M_1) \simeq (Y_2,\F_{Y_2},M_2)
\]
as polarised foliated varieties.
\end{theorem}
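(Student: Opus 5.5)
We follow the strategy of \cite{BX19} as streamlined in \cite[\S 8.2.2]{Xu2025}: we reduce uniqueness of polystable degenerations to $S$-completeness (Theorem \ref{thm:adjoint-foliated-S-completeness-intro}) combined with $\Theta$-reductivity (Theorem \ref{thm:adjoint-foliated-theta-extension-intro}).

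\emph{Step 1: glue the test configurations into a family over $\ST_R$.} Restrict each test configuration $(\X_i,\F_i,\L_i)\to\A^1$ to $\Spec R$, where $R=k[t]_{(t)}$ is the local ring at $0$. This gives two admissible families over $R$. Their generic fibres are both isomorphic to $(X_K,\F_K,L_K)$, via the $\G_m$-equivariant trivialisations away from $0$. Both families have $t$-K-semistable geometric fibres: the fibres over $K$ are base changes of $(X,\F,L)$, and the special fibres $Y_i$ are $t$-K-polystable, hence $t$-K-semistable. Since $\DF^{[t]}=0$, $\L_i$ is $\Q$-linearly the relative adjoint anticanonical divisor. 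Theorem \ref{thm:adjoint-foliated-S-completeness-intro} then yields an admissible family $(\cZ,\F_\cZ,\M)\to\ST_R$ with $t$-K-semistable geometric fibres. Restricting to the closed point $0\in\ST_R=[\Spec R[s,u]/(su-\pi)/\G_m]$ gives a $t$-K-semistable structure $(Z_0,\F_{Z_0},M_0)$ with a $\G_m$-action. The two coordinate axes $\{u=0\}$ and $\{s=0\}$ through $0$ give $\G_m$-equivariant degenerations, i.e.\ test configurations, from $Y_1$ to $Z_0$ and from $Y_2$ to $Z_0$. Following \cite{ABHLX20}, the first is induced by a one-parameter subgroup of $\Aut$ and the second by its inverse.

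\emph{Step 2: compute the Futaki invariants.} The Donaldson--Futaki invariant of these two degenerations is computed from the $\G_m$-weight on $Z_0$, so the two invariants are negatives of each other. Both are $\ge 0$, since $Y_1$ and $Y_2$ are $t$-K-semistable. Alternatively, since $Z_0$ is $t$-K-semistable, the product test configuration of $Z_0$ induced by this $\G_m$ has vanishing Futaki invariant. Either way both degenerations have $\DF^{[t]}=0$.

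\emph{Step 3: use polystability.} By definition of $t$-K-polystability for $Y_i$ (from \cite{Pap26}), a special test configuration with $\DF^{[t]}=0$ and $t$-K-semistable central fibre must be a product. Hence $Y_1\simeq Z_0\simeq Y_2$ as polarised foliated varieties. The foliations must match at each stage: the family over $\ST_R$ carries $\F_\cZ$, and its restriction to each axis is the induced foliation. This follows from admissibility, since the foliation is relative and restricts well to the fibres.

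\emph{Main obstacle.} The delicate point is Step 1's identification of the axis restrictions as $\F$-compatible special test configurations of $Y_i$. Concretely, we must check that the $\ST_R$-family near the axes is normal, with the correct central fibre, and carries the restricted foliation. We would verify this using the explicit construction of $\cZ$ as $\Proj$ of the finitely generated bigraded algebra $\mathcal R_G(\X,L_r)$ from Theorem \ref{thm:relative-non-terminal-extraction-fg-intro}, as in \cite[Lemma 3.?]{ABHLX20}. A further subtlety is whether polystability is defined via special test configurations only, in which case the degenerations constructed above must be shown to be special. This holds because $Z_0$ is klt by \cite[Theorem 1.2(3)]{Pap26b}.
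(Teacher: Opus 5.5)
Your proposal is correct and follows essentially the paper's own proof. The paper also base changes to $R=k[\pi]_{(\pi)}$, applies Theorem \ref{thm:adjoint-foliated-S-completeness}, and uses that the two axis restrictions are special $\F$-compatible test configurations of $Y_1,Y_2$ with common central fibre and zero $\DF^{[t]}$; your Step 2 opposite-weight argument and your ``main obstacle'' are exactly what the paper packages into Lemma \ref{lem:comparison-branches-special}, after which polystability makes both products. One small slip: $\L_i\sim_{\Q}-K^{[t]}_{\X_i/\A^1,\F_i}$ comes from the test configurations being special, not from $\DF^{[t]}=0$.
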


Using this result and the argument via Iwahori decompositions detailed in \cite[\S 4]{ABHLX20} we also prove that K-polystable adjoint Fano foliated structures have reductive automorphism groups.

\begin{theorem}[Reductivity of the automorphism group. See Theorem \ref{thm:aut-reductive-adjoint-foliated}] \label{thm:aut-reductive-adjoint-foliated-intro}
Let $(X,\F,t)$ be a $t$-K-polystable adjoint Fano foliated structure. Then $\Aut(X,\F)$ is reductive.
\end{theorem}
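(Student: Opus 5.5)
The plan is to follow the Iwahori-decomposition argument of \cite[\S 4]{ABHLX20}, using Theorem \ref{thm:adjoint-foliated-S-completeness-intro} and Theorem \ref{thm:uniqueness-polystable-degeneration-intro} as the geometric inputs. Let $G:=\Aut(X,\F)$, a linear algebraic group acting on $H^0(X,-rK^{[t]}_{X,\F})$ for $r$ divisible, since it preserves the adjoint canonical class. The group-theoretic criterion we aim for is the following: a linear algebraic group $G$ over $k$ is reductive if, for $R=k[\![\pi]\!]$ (or an essentially finite type DVR with enough roots) and every $g\in G(K)$, there is a decomposition $g=g_1\,\lambda(\pi)\,g_2$ with $g_1,g_2\in G(R)$ and $\lambda$ a cocharacter. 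If $G$ is not reductive, its unipotent radical contains a $\G_a$. Taking $g=\exp(\pi^{-1}u)$ in that $\G_a$ then gives an element which admits no such decomposition. This is \cite[Proposition 4.?]{ABHLX20}, which we would quote.

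Given $g\in G(K)$, we form the trivial family $(X_R,\F_R,t)$ and glue two copies of it over $\ST_R^\circ$ by the automorphism $g$ of the generic fibre. Both families are admissible and have $t$-K-semistable fibres, so Theorem \ref{thm:adjoint-foliated-S-completeness-intro} extends the glued family to an admissible $(\cZ,\F_\cZ,\M)\to\ST_R$. The fibre over the closed point $0\in\ST_R$ carries a $\G_m^2$-action. Restricting to the two coordinate axes gives two $\F$-compatible special test configurations of $(X,\F,L)$. Following \cite{ABHLX20}, these are product-type degenerations with $\DF^{[t]}=0$ and $t$-K-semistable central fibre $(Y,\F_Y,M)$. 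Because $(X,\F,t)$ is $t$-K-polystable, any such degeneration with vanishing Futaki invariant has central fibre isomorphic to $X$. We get this either directly from the definition of polystability, or by applying Theorem \ref{thm:uniqueness-polystable-degeneration-intro} with the trivial test configuration as the second one. Hence the central fibre is $(X,\F)$ itself, with a $\G_m^2$-action through $G$.

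Since every fibre of $\cZ\to\ST_R$ is isomorphic to $(X,\F)$, we next show that $\cZ\simeq X\times\ST_R$ equivariantly, at least after an étale or finite base change on $R$. The route is to show that the isomorphism functor $\mathrm{Isom}_{\ST_R}(\cZ,X\times\ST_R)$ is a $G$-torsor over the quotient stack $\ST_R=[\Spec R[s,t]/(st-\pi)\,/\,\G_m]$. Such torsors correspond to $\G_m$-equivariant $G$-torsors on $\Spec R[s,t]/(st-\pi)$. Using that this scheme is local and contractible in the appropriate sense, we trivialise the torsor and read off the equivariant structure as a cocharacter $\lambda$. Comparing the two trivialisations restricted to $\ST_R^\circ$ then yields $g=g_1\lambda(\pi)g_2$, exactly as in \cite[\S 4]{ABHLX20}. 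The criterion above then gives reductivity of $G$.

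I expect the main obstacle to be the middle step. There we must control the central fibre of the $S$-completion with enough precision to conclude it is $(X,\F)$ as a foliated variety, which requires matching the induced foliation on the central fibre with $\F$ and not merely the underlying varieties. We must also check that the isomorphism scheme of adjoint foliated structures is a smooth affine group scheme, so that the torsor argument applies. The first thing I would try for the smoothness is to embed $\Aut(X,\F)$ as a closed subgroup of $\PGL(H^0(-rK^{[t]}_{X,\F}))$, cut out by preservation of the subsheaf $T_\F\subset T_X$. Closedness of that condition in families then gives representability and affineness.
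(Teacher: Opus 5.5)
Your outline matches the paper's proof. You use the criterion \cite[Proposition 4.2]{ABHLX20}, glue two trivial families by $g$, and apply $S$-completeness. Polystability forces both coordinate branches to be product test configurations, so the closed fibre is $(X,\F,L)$ as a foliated variety. You then trivialise the Isom $G$-torsor equivariantly and compare the gluing data to get $g=a^{-1}\lambda(\pi)b$.

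\textbf{Gap: the choice of DVR.} The criterion of \cite[Proposition 4.2]{ABHLX20} concerns $G(k((\pi)))$ and $G(k\llbracket\pi\rrbracket)$. The paper's $S$-completeness theorem, however, is only proved for DVRs essentially of finite type over $k$, because the adjoint foliated MMP inputs are unavailable otherwise. So you cannot glue by an arbitrary $g\in G(k((\pi)))$ and invoke $S$-completeness over $k\llbracket\pi\rrbracket$. Your parenthetical alternative (an essentially finite type DVR) does not match the criterion as stated.

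\textbf{The missing bridge} is \cite[Lemma 4.3]{ABHLX20}, which the paper uses. Every $g\in G(k((\pi)))$ is a $G(k\llbracket\pi\rrbracket)$-translate of an algebraic point $g_0\in G(K_0)$, where $K_0$ is the function field of a smooth pointed curve $(C,x)$. It therefore suffices to decompose such $g_0$:
\begin{enumerate}
\item glue over $R_0=\mathcal O_{C,x}$, and apply $S$-completeness and polystability there;
\item base change to $\widehat{R_0}\simeq k\llbracket\pi\rrbracket$ before trivialising the torsor.
\end{enumerate}

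\textbf{Why completion rather than an étale or finite base change.} The equivariant section at the origin is lifted to all infinitesimal neighbourhoods using smoothness of $I\to S$ and linear reductivity of $\G_m$. It is then algebraised using that $I$ is affine over $S$ and that the base is complete. This is the precise content of your ``local and contractible in the appropriate sense''. No root extraction is needed, since $S$-completeness involves no finite base change.

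\textbf{A minor correction.} The closed point of $\ST_R$ has stabiliser $\G_m$, not $\G_m^2$. That single $\G_m$-action on the closed fibre $(X,\F,L)$ gives the one cocharacter $\lambda$.
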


Furthermore, by studying K-stable degenerations we also show that they are unique, and that they admit a finite automorphism group.

\begin{theorem}[See Theorem \ref{thm:stable-separatedness-adjoint-foliated} and Corollary \ref{cor:finite-aut-t-K-stable}] \label{thm:stable-separatedness-adjoint-foliated-intro}
Let $C$ be a smooth pointed curve with closed point $0$, and let $(\X,\F,t)\to C$ and $(\X',\F',t)\to C$ be admissible families of $t$-K-semistable adjoint Fano foliated structures. Suppose there is an isomorphism over the punctured curve $C^\circ=C\setminus\{0\}$:
\[
        (\X,\F)|_{C^\circ}\simeq (\X',\F')|_{C^\circ}.
\]
If the central fibre $(X_0,\F_0,t)$ is $t$-K-stable, then the isomorphism over $C^\circ$ extends uniquely to an isomorphism
\[
        (\X,\F,t)\simeq (\X',\F',t)
\]
over $C$. As a consequence, if $(X,\F,t)$ is $t$-K-stable, then $\Aut(X,\F)$ is finite. 
\end{theorem}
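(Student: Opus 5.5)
The plan is to follow the separatedness argument of Blum--Xu for K-stable Fanos, using the $S$-completeness result (Theorem \ref{thm:adjoint-foliated-S-completeness-intro}) as the main input. First, after a base change of $C$ if needed, localise at $0$ to reduce to $R=\cO_{C,0}$, which is a DVR essentially of finite type over $k$. Glue the two families along the given isomorphism of generic fibres. This gives a family over $\ST_R^\circ$, and Theorem \ref{thm:adjoint-foliated-S-completeness-intro} extends it to an admissible family $(\cZ,\F_\cZ,\M)\to\ST_R$ whose geometric fibres are all $t$-K-semistable.

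Next, look at the fibre over the origin $0\in\ST_R=[\Spec(R[s,t]/(st-\pi))/\G_m]$. Restricting to the two coordinate axes $\{s=0\}$ and $\{t=0\}$ yields two $\G_m$-equivariant families over $\A^1$. They degenerate $(X_0,\F_0)$ and $(X'_0,\F'_0)$ respectively to the same central fibre $(Z_0,\F_{Z_0})$. These are $\F$-compatible special test configurations: specialness comes from admissibility and $\M\sim_{\Q}-K^{[t]}_{\cZ/\ST_R,\F_\cZ}$. Since the central fibre is $t$-K-semistable, their $t$-Futaki invariants $\DF^{[t]}$ vanish. Because $(X_0,\F_0,t)$ is $t$-K-stable, a special test configuration with $\DF^{[t]}=0$ must be a product. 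Hence the test configuration along that axis is trivial, and $Z_0\simeq X_0$ compatibly with the foliations.

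The main obstacle is turning triviality of one test configuration into an honest isomorphism of families. Following \cite[\S 8.2.2]{Xu2025}, triviality of the test configuration means the filtration of $\bigoplus_m H^0(X_0,mL_0)$ induced by the valuation of $\X'_\kappa$ is trivial. Equivalently, the bigraded algebra $\bigoplus_{a,p}$ defining $\cZ$ equals the filtration given by $\ord_{\X_\kappa}$ on $\X$. Unwinding this, $\ord_{\X'_\kappa}$ and $\ord_{\X_\kappa}$ coincide as valuations on $K(\X)=K(\X')$, so the birational map $\X\dashrightarrow\X'$ is an isomorphism in codimension one.

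Both $\L$ and $\L'$ are the relatively ample $-K^{[t]}$ divisors of the respective families. These agree under any isomorphism in codimension one, so taking $\Proj$ of the common section ring $\bigoplus_m H^0(mr\L)$ shows $\X\simeq\X'$ over $R$. The foliations agree because they agree generically and are saturated. Spreading out from $\Spec\cO_{C,0}$ to $C$ is immediate since the isomorphism over $C^\circ$ is given. Uniqueness holds because $\X'$ is separated and $\X^\circ$ is dense.

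For the corollary, apply the theorem to the constant family $X\times C$ with two identifications over $C^\circ$. Fix a one-parameter subgroup $\G_m\subset\Aut(X,\F)$, or more generally a map $\G_m\to\Aut$ coming from an affine curve. The isomorphism $(x,c)\mapsto(\lambda(c)x,c)$ over $C^\circ$ then extends over $C$, so every such map from a punctured curve extends. Hence $\Aut(X,\F)$, an affine algebraic group by ampleness of $-K^{[t]}_{X,\F}$, is proper. An affine proper group is finite.
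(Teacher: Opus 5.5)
Your proof is correct and has the same skeleton as the paper's. Both localise at $0$, apply $S$-completeness, use $t$-K-stability of $X_0$ to make the branch through $X_0$ trivial, deduce $\X\simeq\X'$, and finally apply the valuative criterion to the affine group $\Aut(X,\F)$. Where you differ is the mechanism for the key step.

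\textbf{The paper's route.} It works with lattices throughout. After normalising the shift, triviality of $\cH$ gives $E_m=(E_m\cap E'_m)+\pi E_m$, and Nakayama gives $E_m\subset E'_m$. The weight-reversing isomorphism $\operatorname{Gr}_{\cH}R_\kappa\simeq\operatorname{Gr}_{\cH'}R'_\kappa$ makes $\cH'$ trivial, which gives $E'_m\subset E_m$. Hence $\X\simeq\Proj_R\bigoplus_m E_m=\Proj_R\bigoplus_m E'_m\simeq\X'$. The foliations are then matched using separatedness of a relative Quot scheme.

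\textbf{Your route.} You pass to valuations: $\ord_{\X'_\kappa}=\ord_{\X_\kappa}$, so $\X\dashrightarrow\X'$ contracts no divisor. The polarisations then correspond on the common codimension-one open set, either via $-K^{[t]}$ or because the only vertical prime divisor is $\X_\kappa=\operatorname{div}(\pi)$. So the section rings agree. This is the classical Blum--Xu picture and explains geometrically why nothing is extracted. Your treatment of the foliations (a saturated subsheaf satisfies $T_{\F}=T_{\X/R}\cap(T_{\F})_\eta$) is more elementary than the paper's Quot-scheme citation. Likewise, your uniqueness argument (reduced source, separated target, dense open) is more direct than its appeal to the foliation-extension lemma.

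\textbf{The step you call \emph{unwinding} needs an argument.} Set $v=\ord_{\X'_\kappa}$ and $w=\ord_{\X_\kappa}$, so that $\cG^pE_m=\{s\in E_m\mid v(s)\ge ma+p\}$. Suppose $\cH$ is trivial with weight $cm$ in degree $m$.
\begin{enumerate}
\item The same Nakayama step as in the paper gives $E_m=\cG^{cm}E_m$.
\item Triviality also gives $\cG^{cm+1}E_m\subset\pi E_m$.
\item Together these yield $v(s)-w(s)\in[m(a+c),m(a+c)+1)$ for every nonzero $s\in E_m$.
\item Hence $|v(f)-w(f)|<1$ for every ratio $f$ of two sections of equal degree.
\item Applying this to $f^n$ for all $n$ forces $v=w$.
\end{enumerate}
Since the paper just runs that Nakayama step symmetrically to get $E_m=E'_m$, its route is shorter, while yours gives the geometric explanation.

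\textbf{Smaller points.} Your vanishing of $\DF^{[t]}$, via $t$-K-semistability of $Z_0$ and linearity of the Futaki invariant in the cocharacter, is equivalent to the paper's opposite-action argument. The preliminary base change of $C$ is unnecessary. In the corollary, only arbitrary maps $C^\circ\to\Aut(X,\F)$ matter; the one-parameter-subgroup case is just a special instance.
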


Together with the boundedness result of \cite[Theorem 1.6]{Pap26}, the results of this paper provide several of the key valuative inputs expected in the construction of a K-moduli space of $t$-K-polystable objects. The main remaining inputs, as mentioned before, is the openness of the $t$-K-semistable locus, which is expected to follow using new developments in the MMP of adjoint Fano foliated structures, as well as the construction and representability of the relative moduli stack.

\subsection*{Structure of the paper}
In Section \ref{sec:preliminaries} we recall the basic notions of foliations and adjoint foliated structures and their birational invariants such as mixed log discrepancies and mixed log canonical thresholds. We also recall the key K-stability definitions from \cite{Pap26} and their consequences for the singularities of adjoint Fano foliated structures presented in \cite{Pap26b}. We also recall some information on filtrations that will be used throughout.

In Section \ref{sec: admissible families} we define admissible families, which naturally extend Koll\'ar's notion of stable families to the adjoint foliated setting. In Section \ref{sec: F-compatible filtrations} we study the filtrations, base ideals, and flag ideals that play a central role in the proof, and we define the mixed Ding functional for arbitrary linearly bounded multiplicative filtrations.

In Section \ref{sec:F-compatible-IOA} we prove Theorem \ref{thm:F-compatible-IOA-finite-intro}, which is the first main technical result of the paper. Then, in Section \ref{sec:relative-extraction-fg} we extend some results of \cite{CHLMSSX24,CHLMSSX25} and we prove Theorem \ref{thm:relative-non-terminal-extraction-fg-intro}. In this section we also detail how the filtrations arising in the study of $\Theta$-reductivity and $S$-completeness satisfy our main technical assumptions.

Having obtained the main technical results we need, we prove Theorem \ref{thm:adjoint-foliated-theta-extension-intro} in Section \ref{sec: theta red}, and Theorem \ref{thm:adjoint-foliated-S-completeness-intro} in Section \ref{sec: s-completeness}. In particular, in both sections we study how filtrations give us finite generation and unique extensions necessary to prove the main theorems. We then use these results to prove Theorems \ref{thm:uniqueness-polystable-degeneration-intro}, \ref{thm:aut-reductive-adjoint-foliated-intro} and \ref{thm:stable-separatedness-adjoint-foliated-intro} in Section \ref{sec:reductivity-aut}.

\subsection*{Acknowledgments}
I would like to thank Caucher Birkar, Federico Bongiorno, Paolo Cascini, Ruadhai Dervan, Jihao Liu and Calum Spicer for many helpful conversations and valuable comments. I am supported by the Beijing Natural Science Foundation Project IS25037 and a Shuimu Scholar Programme Scholarship at Tsinghua University.

\section{Preliminaries}\label{sec:preliminaries}
\subsection{Foliations and adjoint foliated structures}

In this subsection we recall the basic definitions concerning foliations and adjoint foliated structures. We follow the standard conventions in the birational geometry of foliations; for general background on foliations we refer, for instance, to \cite{Brunella15,AD13, ACSS21, CS25}, and for adjoint foliated structures to \cite{CHLMSSX25,CHLMSSX24}. Throughout the paper, we fix $t\in \mathbb Q$ with $0<t<1$.

\begin{definition}\label{def: foliation}
Let $X$ be a normal variety. A \emph{foliation} on $X$ is a saturated coherent subsheaf $T_\F\subseteq T_X$ which is closed under the Lie bracket. Its rank is the generic rank of $T_\F$, and its codimension is $\codim(\F):=\dim X-\rk({\F})$. Equivalently, we can regard $\F$ as an integrable distribution on $X$.
\end{definition}

Since $T_{\F}$ is saturated in the reflexive sheaf $T_X$, it is reflexive. In particular, $\det(T_{\F}):=(\wedge^{\operatorname{rk}\F}T_{\F})^{**}$ is a rank-one reflexive sheaf, hence corresponds to a Weil divisor class on $X$.

\begin{definition}
Let $X$ be a normal variety and let $\F$ be a foliation on $X$. We say that $\F$ is \emph{algebraically integrable} if the leaf of $\F$ through a very general point of $X$ is Zariski open in an algebraic subvariety of $X$. Equivalently, the closure of the leaf through a very general point of $X_{\mathrm{reg}}$ is an algebraic subvariety.
\end{definition}

\begin{definition}\label{def: canonical class of foliation}
Let $X$ be a normal variety and let $\F$ be a foliation on $X$. The \emph{canonical class} of $\F$ is the Weil divisor class $K_{\F}$ determined by
\[
\OO_X(-K_{\F})\simeq \det(T_{\F}),
\]
or equivalently
\[
\OO_X(K_{\F})\simeq \det(T_{\F})^{\vee}.
\]
When $X$ is smooth, if $N_{\F}^{\vee}:=(T_X/T_{\F})^{\vee}$ denotes the conormal sheaf of $\F$, then
\[
K_X = K_{\F} + \det(N_{\F}^{\vee}).
\]
\end{definition}

We will usually also impose as an extra condition that $K_{\F}$ is $\Q$-Cartier.

\begin{definition}
A \emph{foliated pair} consists of a normal variety $X$ together with a foliation $\F$ such that $K_{\F}$ is $\QQ$-Cartier. A \emph{foliated polarised variety} is a triple $(X,\F,L)$ such that $(X,\F)$ is a foliated pair, and $L$ is a $\Q$-ample divisor on $X$.
\end{definition}

We now recall the main object of interest in this paper.

\begin{definition}\label{def: adjoint foliated structure}
An \emph{adjoint foliated structure} is a triple $(X,\F,t)$, where $X$ is a normal variety, $\F$ is a foliation on $X$, and $t\in[0,1]\cap \Q$. To such a triple we associate the $\QQ$-divisor
\[
K_{X,\F}^{[t]}:= tK_{\F}+(1-t)K_X.
\]
We say that $(X,\F,t)$ is $\QQ$-Gorenstein if $K_{X,\F}^{[t]}$ is $\QQ$-Cartier.
\end{definition}

For our purposes we will always implicitly assume that $\F$ is algebraically integrable. Unless explicitly stated otherwise, we work with adjoint foliated structures for which both $K_X$ and $K_{\F}$ are $\Q$-Cartier. Thus our standing assumptions are slightly stronger than $\mathbb Q$-Gorensteinness of the adjoint combination.

\begin{definition}
    We say that the adjoint foliated structure $(X,\F,t)$ is an \emph{adjoint Fano foliated structure} if it is $\Q$-Gorenstein and $-K_{X,\F}^{[t]}$ is ample. 
\end{definition}

\subsection{Foliated log discrepancy}

Let $E$ be a prime divisor over $X$, and let $\mu:Y\to X$ be a birational model on which $E$ appears. Denote by $\F_Y$ the induced foliation on $Y$. We write
\[
        K_{\F_Y}=\mu^*K_\F+\sum_i a(E_i,\F)E_i
\]
and define
\[
        \varepsilon_\F(E)=
        \begin{cases}
        0, & E \text{ is } \F_Y\text{-invariant},\\
        1, & E \text{ is } \F_Y\text{-transverse, i.e. not } \F_Y\text{-invariant}.
        \end{cases}
\]
The foliated log discrepancy is
\[
        A_{X,\F}(E):=a(E,\F)+\varepsilon_\F(E).
\]

For an adjoint foliated structure $(X,\F,t)$ we define the \emph{mixed log discrepancy},
\[
A^{[t]}_{X,\F}(E) = t\bigl(a(E,\F)+\varepsilon_\F(E)\bigr)+(1-t)\bigl(a(E,X)+1\bigr).
\]
Equivalently,
\[
        A^{[t]}_{X,\F}(E) :=  (1-t)A_X(E)+tA_{X,\F}(E).
\]

\begin{definition}
Let $(X,\F,t)$ be an adjoint foliated structure with $t\in[0,1]\cap \Q$. We say that $(X,\F,t)$ is
\begin{enumerate}
    \item \emph{log canonical} if $A^{[t]}_{X,\F}(E)\ge 0$ for every prime divisor $E$ over $X$;
\item \emph{klt} if $A^{[t]}_{X,\F}(E)> 0$ for every prime divisor $E$ over $X$;
\item \emph{$\epsilon$-log canonical} if $A^{[t]}_{X,\F}(E)\ge \epsilon$ for every prime divisor $E$ over $X$;
\item \emph{$\epsilon$-klt} if $A^{[t]}_{X,\F}(E)> \epsilon$ for every prime divisor $E$ over $X$.
\end{enumerate}
\end{definition}

When $u=0$, this definition recovers the usual singularity condition for $X$. When $t=1$, this definition recovers the foliated singularity condition. We will say that $X$ is \emph{potentially klt} if there exists an effective $\mathbb Q$-divisor $\Delta$ such that $(X,\Delta)$ is klt.

We will now define a mixed log canonical threshold for adjoint foliated structures. We extend $A^{[t]}_{X,\F}$ to real valuations by taking the usual lower-semicontinuous extension of the divisorial discrepancy function; equivalently, in the computations below it suffices to take the infimum over divisorial valuations.

\begin{definition}
\label{def:mixed-lct}
Let $D\ge 0$ be an effective $\QQ$-divisor on $X$. We define the \emph{mixed log canonical threshold} of $D$ by
\begin{equation}\label{eq: mixed lct}
    \lct^{[t]}(X,\F;D):= \inf_{v\in \Val_X^*}\frac{A^{[t]}_{X,\F}(v)}{v(D)},
\end{equation}
where $\Val_X^*$ denotes the set of nontrivial valuations of finite mixed discrepancy. Equivalently,
\[
\lct^{[t]}(X,\F;D) = \inf_{v \in \Val_X^{\mathrm{div}, *}} \frac{A^{[t]}_{X,\F}(v)}{v(D)}.
\]
\end{definition}

The same convention applies to ideals. If $\mathfrak a\subset \mathcal O_X$ is a nonzero ideal, then
\[
\lct^{[t]}(X,\F;\mathfrak a) = \inf_v \frac{A^{[t]}_{X,\F}(v)}{v(\mathfrak a)}.
\]
Valuations with $v(\mathfrak a)=0$ contribute $+\infty$.

\subsection{K-stability for adjoint foliated structures}\label{sec: prelim K-stab}

We recall the test configurations and stability notions used below.

Let $X$ be a normal projective variety of dimension $n$ over $\mathbb{C}$ and $L$ an ample $\mathbb{Q}$-line bundle on $X$. Let $\mathcal{F}\subset T_X$ be an algebraically integrable foliation such that the canonical divisor $K_{\mathcal{F}}$ is $\mathbb{Q}$-Cartier. 

\begin{definition}\label{def: foliated t.c.}
A \emph{foliated test configuration} for the polarised foliated variety $(X,\F,L)$ consists of a triple $(\pi:\X\to \A^1,\ \F_{\X},\ \L)$ such that:

\begin{enumerate}
\item $(\X,\L)$ is a normal test configuration for $(X,L)$, i.e.\ $\pi:\X\to\A^1$ is a flat projective morphism endowed with a $\G_m$-action lifting the standard action on $\A^1$, $\L$ is a relatively ample $\Q$-line bundle, and
\[
(\X,\L)|_{\pi^{-1}(\A^1\setminus\{0\})} \simeq (X,L)\times (\A^1\setminus\{0\})
\]
$\G_m$-equivariantly;

\item $\F_{\X}\subset T_{\X/\A^1}$ is a
$\G_m$-equivariant saturated integrable subsheaf;

\item over $\A^1\setminus\{0\}$ the foliation coincides with the product foliation: $$(\X,\F_{\X})|_{\pi^{-1}(\A^1\setminus\{0\})}
\simeq (X,\F)\times (\A^1\setminus\{0\});$$

\item $\F_{\X}$ is algebraically integrable.
\end{enumerate}

Such a test configuration will be called \emph{$\F$-compatible}.
\end{definition}

Given an arbitrary $\F$-compatible test configuration $\pi:(\X,\F_{\X},\L)\rightarrow \A^1$, let $\bar{\pi}:(\bar{\X},\F_{\bar{\X}},\bar{\L})\rightarrow \PP^1$ be its natural compactification and set $V:=L^n$ and 
\[\mu(X,\F,L) = \frac{-K^{[t]}_{X,\F}\cdot L^{n-1}}{L^n}\]
for the \emph{foliated slope}. Furthermore, we define the \emph{relative mixed canonical divisor} by
\[
K^{[t]}_{\bar{\X}/\PP^1,\F_{\bar{\X}}}:= (1-t)K_{\bar{\X}/\PP^1}+tK_{\F_{\bar{\X}}}.
\]

\begin{definition}\label{def:DFwt-corrected}
Let $(\X,\ \F_{\X},\ \L)$ be a normal $\F$-compatible test configuration for $(X,\F,L)$. We fix $t\in (0,1)$. The \emph{$t$-foliated Donaldson--Futaki invariant} is defined by
\[
\DF^{[t]}(\X,\F_{\X},\L) := \frac{1}{V}\left( \frac{n}{n+1}\,\mu(X,\F,L)\bar{\L}^{n+1} + K^{[t]}_{\bar{\X}/\PP^1,\F_{\bar{\X}}}\cdot \bar{\L}^n \right).
\]
\end{definition}

We will also call this invariant the ``mixed Donaldson--Futaki'' invariant for brevity. An $\F$-compatible test configuration is called product, respectively trivial, if the underlying test configuration is product, respectively trivial, and the foliation is the induced product foliation. It is called special if the underlying test configuration is special and the central fibre, equipped with the induced foliation and adjoint anticanonical polarisation, is an adjoint Fano foliated structure. The divisor $K^{[t]}_{\overline{\X}/\mathbb P^1, \F_{\overline{\X}}}$ is understood as a $\mathbb Q$-Weil divisor class. The intersection number $K^{[t]}_{\overline{\X}/\mathbb P^1, \F_{\overline{\X}}} \cdot \overline{\mathcal L}^{\,n}$ is interpreted in the usual Odaka--Wang sense, as the intersection of a Weil divisor/rank-one reflexive sheaf with $n$ $\mathbb Q$-Cartier polarisation classes \cite{Wang2012, Odaka12}. In particular, the definition of an $\F$-compatible test configuration does not require $K_{\overline{\X}/\mathbb P^1}$, $K_{\F_{\overline{\X}}}$, or their mixed combination to be $\mathbb Q$-Cartier.

\begin{definition}\label{def: k-stability definition}
Let $(X,\F,L)$ be a polarised foliated variety.

\begin{enumerate}
\item $(X,\F,L)$ is \emph{$t$-K-semistable} if $\DF^{[t]}(\X,\F_{\X},\L)\ge0$ for all normal $\F$-compatible test configurations.

\item $(X,\F,L)$ is \emph{$t$-K-stable} if $\DF^{[t]}(\X,\F_{\X},\L)>0$ for all non-trivial $\F$-compatible test configurations.

\item $(X,\F,L)$ is \emph{uniformly $t$-K-stable} if there exists $\zeta>0$ such that $\DF^{[t]}(\X,\F_{\X},\L) \ge \zeta\,J^{NA}(\X,\L)$ for all normal $\F$-compatible test configurations.
\item $(X,\F,L)$ is \emph{$t$-K-polystable} if it is $t$-K-semistable and every normal $\F$-compatible test configuration with $\DF^{[t]}(\X,\F_\X,\L)=0$ is of product type.
\end{enumerate}
\end{definition}
The non-Archimedean $J$-functional of the test configuration $J^{\NA}(\X,\L)$ is defined as follows: if
\[
\xymatrix{
& \mathcal Z \ar[dl]_{\Theta} \ar[dr]^{\Pi} & \\
\bar{\X} \ar@{-->}[rr] && X\times \PP^1
}
\]
is a normal common resolution, then
\[
J^{\NA}(\X,\L) := \frac{1}{V} \left(\Theta^*\bar{\L}\cdot \Pi^*p_1^*L^n- \frac{1}{n+1}\bar{\L}^{n+1} \right).
\]
Note that we will interchangeably say that either $(X,\F,L)$ is $t$-K-semistable, or that the adjoint foliated structure $(X,\F,t)$ is $t$-K-semistable. When $(X,\F,t)$ is an adjoint Fano foliated structure, the implicit polarisation is always $L\sim_{\mathbb Q}-K^{[t]}_{X,\F}$.

We also recall the following result from \cite{Pap26b}.

\begin{theorem}[{\cite[Theorem 1.2(3)]{Pap26b}}]\label{thm: k-stab implies klt}
    Let $(X,\F,t)$ be a normal projective $\Q$-Gorenstein adjoint Fano foliated structure for $0<t<1$. If $(X,\F,t)$ is $t$-K-semistable for $L = -K_{X,\F}^{[t]}$, then $(X,\F,t)$ is klt. In particular, $X$ is potentially klt and of Fano type.
\end{theorem}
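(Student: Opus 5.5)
We argue by contradiction via a valuative (Fujita--Li type) argument adapted to the mixed discrepancy $A^{[t]}_{X,\F}$. Suppose $(X,\F,t)$ is $t$-K-semistable for $L=-K^{[t]}_{X,\F}$ but not klt. Then there is a prime divisor $E$ over $X$ with $A^{[t]}_{X,\F}(E)\le 0$. Set
\[
S_L(E)=\frac{1}{L^n}\int_0^\infty \mathrm{vol}(L-xE)\,dx>0 ,
\]
and $\beta^{[t]}(E):=A^{[t]}_{X,\F}(E)-S_L(E)$. Under this assumption $\beta^{[t]}(E)<0$. The goal is to show that $t$-K-semistability forces $\beta^{[t]}(E)\ge 0$ for every prime divisor $E$ over $X$.

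\textbf{Step 1: test configurations from $E$.} Let $\mathcal F_E$ be the filtration of $R(X,L)$ induced by $\ord_E$, and $I_{m,x}$ its base ideals. For each $m$, take the normalised blowup $\X_m$ of $X\times\A^1$ along the $\G_m$-invariant flag ideal $\sum_j I_{m,j}\,s^{-j}$, which is a Fujita-type truncation. Equip $\X_m$ with $\F_{\X_m}$, the saturation in $T_{\X_m/\A^1}$ of the pulled-back product foliation. It is $\G_m$-equivariant and integrable, and it is algebraically integrable because $\X_m\to X\times\A^1$ is birational and the product foliation is algebraically integrable. Thus $(\X_m,\F_{\X_m},\L_m)$ is $\F$-compatible, and $t$-K-semistability gives $\DF^{[t]}\ge 0$ for each $m$.

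\textbf{Step 2: computing the invariants.} Write $K^{[t]}_{\bar\X_m/\PP^1,\F_{\bar\X_m}}$ as the pullback of $K^{[t]}_{X\times\PP^1/\PP^1}$ plus a combination of the central-fibre components $F$. The coefficient of each component is $A^{[t]}_{X\times\A^1,\F\times\A^1}(F)-1$. Here one uses that the foliated discrepancy of an exceptional divisor includes the term $\varepsilon_\F$, which the mixed log discrepancy is designed to record. The key observation is that $\varepsilon$ of the components over $E$ agrees with $\varepsilon_\F(E)$. This is the foliated analogue of the fact that discrepancies of quasi-monomial extensions of $\ord_E$ to $X\times\A^1$ are computed by $A_X(E)+1$.

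Following Fujita/Li, I would use the Ding-type inequality $\DF^{[t]}\ge \Ding^{[t]}$ to avoid computing $\DF$ directly. Then I would bound the mixed $\lct$ term from above by evaluating it on the valuation extending $\ord_E$. As $m\to\infty$, this yields
\[
0\le \liminf_m \Ding^{[t]}(\X_m,\F_{\X_m},\L_m)\le A^{[t]}_{X,\F}(E)-S_L(E)=\beta^{[t]}(E)<0,
\]
which is a contradiction. Hence $(X,\F,t)$ is klt.

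\textbf{Step 3: consequences for $X$.} For algebraically integrable $\F$, klt adjoint foliated structures are known, by the results of \cite{CHLMSSX24,CHLMSSX25}, to admit an effective $\Q$-divisor $\Delta$ with $(X,\Delta)$ klt and $K_X+\Delta\sim_\Q K^{[t]}_{X,\F}+(\text{small perturbation})$. This comes from the comparison $K_\F$ versus $K_X$ via a foliated canonical bundle formula for the family of leaves. So $X$ is potentially klt. Since $-K^{[t]}_{X,\F}$ is ample, perturbing within the ample cone gives $-(K_X+\Delta')$ ample with $(X,\Delta')$ klt, so $X$ is of Fano type.

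\textbf{Main obstacle.} I expect the hardest part to be Step 2. The filtration $\mathcal F_E$ need not be finitely generated, and the central fibres of $\X_m$ need not be $\Q$-Gorenstein. I must therefore check that the Odaka--Wang intersection number with the Weil class $K_{\F_{\bar\X_m}}$ really behaves as in the Ding comparison. I also need the $\varepsilon$-terms of all central-fibre components to be controlled; invariant versus transverse could in principle vary among components. I would first try to absorb both issues into the mixed Ding functional, whose birational nature avoids any $\Q$-Cartier requirement on the central fibre.
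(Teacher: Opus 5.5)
\textbf{The gap is in Step 2, at the inequality $0\le\liminf_m\Ding^{[t]}(\X_m,\F_{\X_m},\L_m)$.} You have $\DF^{[t]}\ge 0$ from $t$-K-semistability and $\DF^{[t]}\ge\Ding^{[t]}$ from the Berman-type inequality. These two facts say nothing about the sign of $\Ding^{[t]}$. The inequality $\DF^{[t]}\ge\Ding^{[t]}$ bounds $\DF^{[t]}$ from \emph{below}. So an upper bound on $\Ding^{[t]}(\X_m,\F_{\X_m},\L_m)$ tending to $\beta^{[t]}(E)<0$ produces no test configuration with negative $\DF^{[t]}$, and hence no contradiction.

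In the Fujita--Li argument you are imitating, the lower bound on the Ding invariant comes from Ding-semistability. That is the hard implication ``K-semistable $\Rightarrow$ Ding-semistable'' (here \cite[Corollary 7.10]{Pap26}), which classically is proved through special test configurations and the MMP for klt Fano varieties. It cannot be invoked here without circularity. If $(X,\F,t)$ is not lc, then over $\PP^1\setminus\{0\}$ every compactified test configuration is $(X,\F,t)\times(\PP^1\setminus\{0\})$. So $(\overline\X,\overline\F,D+c\,\overline\X_0,t)$ is sub-lc for no $c$, and $\Ding^{[t]}=-\infty$ identically. Thus Ding-semistability already contains (at least the lc part of) the statement you are trying to prove. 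Consistently with this, the paper does not prove the theorem but imports it from \cite{Pap26b}. Its whole Ding and filtration machinery in Section~\ref{sec: F-compatible filtrations} is run under a klt hypothesis that is justified by this very theorem.

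\textbf{What is needed instead} is an argument that bounds $\DF^{[t]}$ itself from above on suitable $\F$-compatible test configurations and shows it is negative, in the spirit of Odaka's proof that K-semistable $\Q$-Fano varieties are klt. Typically the non-lc and the lc-but-not-klt cases are treated separately. One must control the mixed discrepancies of \emph{all} central-fibre components entering $K^{[t]}_{\overline\X/\PP^1,\F_{\overline\X}}\cdot\overline\L^{\,n}$, not just the one extending $\ord_E$. This is exactly the obstacle you flag at the end, and it cannot be absorbed into the Ding functional.

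Two smaller corrections:
\begin{enumerate}
\item Central-fibre components are vertical and hence invariant. So the coefficient of such a component $F$ in $K^{[t]}_{\overline\X/\PP^1,\F_{\overline\X}}$, relative to the pullback from $X\times\PP^1$, is $A^{[t]}_{X\times\A^1,\F\times\A^1}(F)-(1-t)$, not $A^{[t]}(F)-1$. The term $\varepsilon_\F(E)$ enters only through $A^{[t]}_{X\times\A^1,\F\times\A^1}(\widetilde v)=A^{[t]}_{X,\F}(v)+(1-t)\xi$.
\item In Step 3 you give no reason why $K^{[t]}_{X,\F}-K_X=t(K_\F-K_X)$ should be $\Q$-linearly equivalent to a klt boundary. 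It is cleaner to quote \cite[Theorem 1.10(1)]{CHLMSSX24} for potentially klt singularities and a Fano-type criterion such as \cite[Theorem 2.4.2]{CHLMSSX25}, as the paper does elsewhere.
\end{enumerate}
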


This theorem allows us to restrict to klt adjoint Fano foliated structures in birational geometric arguments below.

\subsection{Filtrations of section rings}
\label{subsec:filtrations-section-rings}

We recall the filtration conventions used throughout the paper, following \cite[\S3.4]{Xu2025}. Let $X$ be a normal projective variety and let $L$ be an ample $\Q$-Cartier divisor. We fix an integer $r>0$ such that $rL$ is Cartier, and write
\[
        R=R(X,L):=\bigoplus_{m\in r\N}R_m, \qquad R_m:=H^0(X,mL).
\]

\begin{definition}
\label{def:multiplicative-filtration}
A decreasing $\R$-filtration $\cG^\lambda R_m\subset R_m$ for $\lambda\in\R$ is a collection of subspaces satisfying $\cG^{\lambda'}R_m\subset \cG^\lambda R_m$ when $\lambda\le \lambda'$. We require it to be left-continuous:
\[
        \cG^\lambda  R_m=\bigcap_{\lambda'<\lambda}\cG^{\lambda'}R_m.
\]
It is called \emph{multiplicative} if $\cG^\lambda R_m\cdot \cG^\mu R_\ell \subset \cG^{\lambda+\mu}R_{m+\ell}$ for all $m,\ell\in r\N$ and all $\lambda,\mu\in\R$.

The filtration is called \emph{linearly bounded} if there exist constants $C_-<C_+$ such that, for all $m\in r\N$,
\[
        \cG^{mC_-}R_m=R_m,
        \qquad
        \cG^{mC_+}R_m=0.
\]
\end{definition}

\begin{definition}
\label{def:jumping-numbers-filtration}
Let $\cG^\bullet R$ be a linearly bounded multiplicative filtration. For $m\in r\N$, we set $N_m:=h^0(X,mL)$. The jumping numbers of $\cG^\bullet R_m$ are the real numbers $a_{m,1}\le \cdots \le a_{m,N_m}$ defined by
\[
        a_{m,j} :=\inf\left\{\lambda\in\R\ \middle|\dim \cG^\lambda R_m\le N_m-j\right\}.
\]
\end{definition}

\begin{definition}
\label{def:DH-measure-filtration}
For $m\in r\N$, we define the probability measure
\[
        \nu_m(\cG) := \frac{1}{N_m} \sum_{j=1}^{N_m} \delta_{a_{m,j}/m}
\]
on $\R$. The measures $\nu_m(\cG)$ converge weakly to a compactly supported probability measure $\nu_{\operatorname{DH},\cG}$, called the \emph{Duistermaat--Heckman measure} of $\cG$ (cf. \cite[\S3.4]{Xu2025}).
\end{definition}

\begin{definition}\label{def:S-J-filtration}
Let $\cG^\bullet R$ be linearly bounded and multiplicative. We define
\[
        S_L(\cG) := \int_{\R}\lambda\,d\nu_{\cG}(\lambda).
\]
\end{definition}
Equivalently,
\[
        S_L(\cG) = \lim_{m\to\infty} \frac{1}{mN_m}\sum_{j=1}^{N_m}a_{m,j}.
\]

\subsection{The stacky test curves \texorpdfstring{$\Theta_R$}{ThetaR} and \texorpdfstring{$ST_R$}{STR}}\label{subsec:stacky-test-curves}

We recall the stack-theoretic valuative criteria used later in the paper. We follow \cite[\S 8.1.2]{Xu2025} and \cite{ABHLX20}. Let $R$ be a DVR with fraction field $K$, residue field $\kappa$, and uniformiser $\pi$. We define
\[
        \Theta_R:=[\mathbb A^1_R/\mathbb G_m] = [\operatorname{Spec}R[s]/\mathbb G_m],
\]
where $s$ has weight $-1$, and let
\[
        \Theta_R^\circ:=\Theta_R\setminus [0_\kappa/\mathbb G_m].
\]
An Artin stack $\X$ is called \emph{$\Theta$-reductive} if every
morphism $\Theta_R^\circ\to \X$ extends uniquely to a morphism $\Theta_R\to \X$.

We will use the following elementary description of coherent sheaves on $\Theta_R$. Let $j:\Theta_R^\circ\hookrightarrow \Theta_R$ be the open immersion. If $\E$ is a flat coherent sheaf on $\Theta_R^\circ$, then $\E$ corresponds to a finite $R$-module $E$ together with a $\mathbb Z$-filtration $\cG^\bullet E_K$ of $E_K:=E\otimes_RK$. Under this identification,
\[
        j_*\E \simeq \bigoplus_{p\in\mathbb Z} \bigl(E\cap \cG^pE_K\bigr)s^{-p}.
\]
Thus the extension over $\Theta_R$ is governed by the $R$-filtration $\cG^pE:=E\cap \cG^pE_K$.

We also recall the stacky test curve used in the valuative criterion for $S$-completeness. Let
\[
        \ST_R:= [\operatorname{Spec}R[s,t]/(st-\pi)/\mathbb G_m],
\]
where $\lambda\in \mathbb G_m$ acts by $\lambda\cdot(s,t)=(\lambda s,\lambda^{-1}t)$. Let $\ST_R^\circ:=\ST_R\setminus [(0,0)/\mathbb G_m]$. Then
\[
        \ST_R^\circ \simeq \operatorname{Spec}R\cup_{\operatorname{Spec}K}\operatorname{Spec}R.
\]
An Artin stack $\X$ is called $S$-complete if every morphism $\ST_R^\circ\to \X$ extends uniquely to a morphism $\ST_R\to \X$.

Let $j:\ST_R^\circ\hookrightarrow \ST_R$ be the open immersion. If $\E$ is a flat coherent sheaf on $\ST_R^\circ$, corresponding to two finite $R$-modules $E,E'$ together with an identification $E_K\simeq E'_K$, then, after viewing $E$ and $E'$ as $R$-lattices inside $E_K$, we have
\[
        j_*\E\simeq \bigoplus_{p\in\mathbb Z} \bigl(E\cap \pi^pE'\bigr)t^{-p}.
\]
Equivalently, the relevant comparison filtration is $\cG^pE:=E\cap \pi^pE'$.

\section{Admissible families}\label{sec: admissible families}

In this section, we define the notion of admissible families, which plays a central role in this paper. We follow the divisorial-sheaf and hull conventions of \cite[\S 7.1.1]{Xu2025} and \cite[\S 3]{Kol23}. We first recall Koll{\'a}r's condition for reflexive rank-one sheaves in families. For $m\in\mathbb Z$, we set
\[
\mathscr M^{[m]} :=
\begin{cases}
(\mathscr M^{\otimes m})^{**}, & m\geq 0,\\
((\mathscr M^\vee)^{\otimes (-m)})^{**}, & m<0.
\end{cases}
\]

\begin{definition} \label{def:kollar-condition-reflexive-sheaf}
Let $f:\X\to S$ be a flat morphism of locally Noetherian schemes, with $\X$ satisfying Serre's condition $S_2$, and let $\mathscr M$ be a reflexive rank-one sheaf on $\X$. We say that $\mathscr M$ satisfies the Koll\'ar condition over $S$ if, for every $m\in\mathbb Z$, the reflexive power $\mathscr M^{[m]}$ is flat over $S$ and commutes with arbitrary base change. That is, for
every morphism $T\to S$, with $\X_T:=\X\times_S T,$ the natural map
\[
        \left.\mathscr M^{[m]}\right|_{\X_T}  \longrightarrow \left(\left.\mathscr M\right|_{\X_T}\right)^{[m]}
\]
is an isomorphism.
\end{definition}

\begin{definition}\label{def:fibrewise-family-adjoint-foliated}
Let $S$ be a locally Noetherian scheme, let $0<t<1$ be rational, and choose a sufficiently divisible integer $r>0$ such that $r(1-t),\,rt\in\mathbb Z$. A \emph{fibrewise family of adjoint Fano foliated structures} over $S$ consists of:
\begin{enumerate}
\item a flat projective morphism $f:\X\to S$ where $\X$ is $S_2$;

\item a coherent subsheaf $T_{\F/S}\subset T_{\X/S}$;

\item a coherent rank-one reflexive sheaf $\mathscr K_{r,t}$ on $\X$ satisfying Koll\'ar's condition over $S$;
\end{enumerate}
such that for every geometric point $\bar s\to S$:
\begin{enumerate}
\item[(a)] the fibre $\X_{\bar s}$ is normal;

\item[(b)] the saturation of the image of $T_{\F/S}|_{\X_{\bar s}}\longrightarrow T_{\X_{\bar s}}$ defines an algebraically integrable foliation $\F_{\bar s}$ on $\X_{\bar s}$;

\item[(c)] there is an isomorphism
\[
        \mathscr K_{r,t}|_{\X_{\bar s}} \simeq \mathcal O_{\X_{\bar s}}  \left(  rK^{[t]}_{\X_{\bar s},\F_{\bar s}}\right);
\]

\item[(d)] the dual $\mathcal L_r:=\mathscr K_{r,t}^{\vee}$ is an $f$-ample line bundle.
\end{enumerate}
We write $\mathcal L:=\frac1r\mathcal L_r$ for the corresponding relatively ample $\mathbb Q$-line bundle. Thus, on every geometric fibre, $\mathcal L_{\bar s}\sim_{\mathbb Q}-K^{[t]}_{\X_{\bar s},\F_{\bar s}}$. If every geometric fibre $(\X_{\bar s},\F_{\bar s},t)$ is $t$-K-semistable, we call this a \emph{fibrewise $t$-K-semistable family}.
\end{definition}

The preceding notion is intended to provide a general framework for families of adjoint foliated structures and to formulate the moduli problem of $t$-K-semistable adjoint Fano foliated structures. For the valuative criteria of $\Theta$-reductivity and $S$-completeness, however, we work with the more restrictive notion below.

We will first fix some notation. Let $j\colon \X^{\mathrm{sm}}\hookrightarrow\X$ denote the relative smooth locus. For $m\in\mathbb Z$, we set $\omega_{\X/S}^{[m]} := j_*\omega_{\X^{\mathrm{sm}}/S}^{\otimes m}$. If $T_{\mathcal F/S}\subset T_{\X/S}$ is a saturated relative foliation, we set $\omega_{\mathcal F/S} :=\bigl(\det T_{\mathcal F/S}\bigr)^\vee$. For rank-one reflexive sheaves $\mathcal A$ and $\mathcal B$, we write $\mathcal A\widehat\otimes\mathcal B:=(\mathcal A\otimes\mathcal B)^{**}$.

\begin{definition}
\label{def:admissible-family-adjoint-foliated}
An \emph{admissible family of $t$-adjoint Fano foliated structures} over $S$ is a fibrewise family $f:(\X,\F,t)\to S$ such that:
\begin{enumerate}
\item the total space $\X$ is normal;

\item the subsheaf $ T_{\F/S}\subset T_{\X/S}$ is saturated and closed under the Lie bracket, and hence defines a relative foliation $\F$ on $\X$;
\item the relative foliation $\F$ is algebraically integrable;

\item the natural mixed reflexive sheaf on the total space satisfies
\[
\mathcal K_{r,t} \simeq \omega_{\mathcal X/S}^{[r(1-t)]} \widehat\otimes \omega_{\mathcal F/S}^{[rt]}.
\]
\end{enumerate}
\end{definition}

The last item along with Definition \ref{def:fibrewise-family-adjoint-foliated}(3) specifies that $K^{[t]}_{\X/S,\F} := (1-t)K_{\X/S}+tK_{\F/S}$ is $\mathbb Q$-Cartier, its reflexive multiples satisfy Koll\'ar's condition, and $\mathcal L \sim_{\mathbb Q,S} -K^{[t]}_{\X/S,\F}$ is $f$-ample. When the base is $\Theta_R$ or $\operatorname{ST}_R$, we say that a family is admissible if its pullback to the standard smooth atlas is an admissible family in the above sense and all the relevant data are $\mathbb G_m$-equivariant. We next refine this definition for families over the spectrum of a DVR.

\begin{definition}
\label{def:admissible-DVR-family-kollar}
Let $R$ be a DVR. An admissible DVR family $(\X,\F,t)\to \Spec R$ is an admissible family such that, for some sufficiently divisible $r>0$ and every $m\geq 0$, the $R$-module $f_*\mathcal L_r^{\otimes m}$ is finite free and the natural maps
\[
f_*\mathcal L_r^{\otimes m}\otimes_R k(p) \longrightarrow H^0\!\left(\mathcal X_p, \mathcal L_{r,p}^{\otimes m}\right)
\]
are isomorphisms for $p=\eta,\kappa$.
\end{definition}

We record some consequences of the above definitions.

\begin{lemma}[{\cite[Tag 0C22]{StacksProject}}]\label{lem:normal-total-space}
Let $R$ be a DVR and let $f\colon\mathcal X\to\Spec R$ be flat and of finite type. If its generic and special fibres are normal, then $\mathcal X$ is normal.
\end{lemma}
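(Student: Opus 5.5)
We will use Serre's criterion: $\mathcal X$ is normal if and only if it satisfies $R_1$ and $S_2$. Let $\pi$ be a uniformiser of $R$, so that $X_\kappa=V(\pi)$ is an effective Cartier divisor in $\mathcal X$; flatness says exactly that $\pi$ is a nonzerodivisor on $\mathcal O_{\mathcal X}$. We check the two conditions separately at points of the generic fibre and at points of the special fibre.

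\textbf{Points of the generic fibre.} A point $x\in X_K$ has local ring $\mathcal O_{\mathcal X,x}=\mathcal O_{X_K,x}$, because $X_K$ is the open subscheme $\mathcal X[\pi^{-1}]$. This ring is normal by hypothesis, so $R_1$ and $S_2$ hold there.

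\textbf{Points of the special fibre.} Let $x\in X_\kappa$ and set $A=\mathcal O_{\mathcal X,x}$. Then $\pi\in A$ is a nonzerodivisor and $A/\pi A=\mathcal O_{X_\kappa,x}$ is normal.

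First, $A$ is a domain. Since $A/\pi A$ is a normal local ring, it is a domain. A local ring $A$ with a nonzerodivisor $\pi$ in the maximal ideal and $A/\pi A$ a domain is itself a domain. To see this, the prime $\mathfrak p=\pi A$ has $\mathrm{ht}\,\mathfrak p\le1$. Any minimal prime $\mathfrak q\subset\mathfrak p$ satisfies $\mathfrak q=\pi\mathfrak q$, because $\pi\notin\mathfrak q$ and elements of $\mathfrak q$ are divisible by $\pi$ with quotient again in $\mathfrak q$. Nakayama then gives $\mathfrak q=0$.

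Second, depth lifts along $\pi$: $\operatorname{depth}A=\operatorname{depth}(A/\pi A)+1$ and $\dim A=\dim(A/\pi A)+1$. Hence $S_2$ for $A/\pi A$ yields $S_2$ at $x$, since localisations at primes of the special fibre are handled by the same argument.

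Third, for $R_1$ we consider a height-one prime $\mathfrak P$ of $A$. If $\pi\notin\mathfrak P$, the localisation $A_{\mathfrak P}$ is a local ring of the generic fibre and is therefore regular. If $\pi\in\mathfrak P$, then $\mathfrak P$ is minimal over $\pi$, so $A_{\mathfrak P}/\pi$ is a field, because $A/\pi A$ is a domain locally and reduced. Thus $\mathfrak P A_{\mathfrak P}=(\pi)$ is principal, and $A_{\mathfrak P}$ is a DVR.

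\textbf{Conclusion and main point of care.} Serre's criterion now gives normality of $\mathcal X$. The only delicate step is the domain and $R_1$ argument at special points, which uses that $\pi$ is a nonzerodivisor; this is exactly flatness over $R$. Alternatively, we can simply cite \cite[Tag 0C22]{StacksProject} as stated.
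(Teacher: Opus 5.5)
Your proof is correct. The paper gives no argument of its own beyond quoting \cite[Tag 0C22]{StacksProject}, so what you have written is a self-contained replacement for the citation, not a variant of a proof in the text.

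The result one would normally invoke is the general ascent statement: a flat morphism of locally Noetherian schemes with normal base and normal fibres has normal source. That is proved via additivity of depth and dimension along flat local homomorphisms, together with ascent of regularity. You instead specialise to a DVR and use only that the special fibre is the Cartier divisor $V(\pi)$ cut out by a nonzerodivisor. Then depth and dimension drop by exactly one, and a height-one prime containing $\pi$ must be the prime $\pi A$ itself. So $A_{\mathfrak P}$ is a one-dimensional local domain with principal maximal ideal $(\pi)$, while points of the generic fibre are covered directly by hypothesis. This is more elementary and transparent, but less general.

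Two cosmetic points. First, you are tacitly using that $\mathcal X$ is Noetherian (it is of finite type over $R$) for Krull's theorem, Nakayama, the depth formula and Serre's criterion. Second, the $R_1$ step is cleaner stated as: ``$\mathfrak P$ is minimal over the prime ideal $\pi A$, hence $\mathfrak P=\pi A$ and $A_{\mathfrak P}/\pi A_{\mathfrak P}=\operatorname{Frac}(A/\pi A)$''. The separate domain step is harmless, but it is not needed once you invoke Serre's criterion.
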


\begin{remark}\label{rem:relative-algebraic-integrability}
Let $\mathcal X\to\Spec R$ be integral, and let $T_{\mathcal F/R}\subseteq T_{\mathcal X/R}$ be a relative foliation. Since $K(\mathcal X)=K(\mathcal X_K)$, the rational distribution of $\mathcal F$ is determined by its generic fibre. Algebraic integrability is invariant under extension of the ground field; see \cite[\S 2.2.2]{Bos01} and \cite[Definition 3.2.3 and Lemma 3.2.4]{Bon21}. Thus algebraic integrability of the geometric generic fibre implies algebraic integrability of $\mathcal F$.
\end{remark}

\section{Filtrations and mixed Ding semistability}\label{sec: F-compatible filtrations}

Let $(X,\F,t)$ be an adjoint Fano foliated structure, and let $L:=-K^{[t]}_{X,\F}$. Thus $L$ is an ample $\Q$-Cartier divisor. We choose a sufficiently divisible integer $r_0>0$ so that $r_0L$ is very ample and the Veronese section ring below is generated by $R_{r_0}$, and we work with $R$ and $R_m$, as before.If we work with the Cartier divisor $L_{r_0}:=r_0L$ and write $D\sim_{\mathbb Q} mL_{r_0}=mr_0L$, then the coefficient $\frac{1}{m}D$ in the Cartier polarisation corresponds to $\frac{1}{mr_0}D$ in the polarisation $L=-K^{[t]}_{X,\F}$. We use the $\Q$-Cartier polarisation $L=-K^{[t]}_{X,\F}$ throughout, in order to avoid this extra factor.

For a real-valued filtration $\cG$, we denote by $\cG_{\mathbb Z}$ its associated integer-valued filtration
\[
        \cG_{\mathbb Z}^{\lambda}R_m :=\cG^{\lceil\lambda\rceil}R_m
\]
as in \cite[Definition 3.16]{Xu2025}.

\subsection{Base ideals and flag ideals}

\begin{definition}\label{def: ass trivial ass zero}
Let $\mathfrak a_\bullet=\{\mathfrak a_m\}_{m\in r_0\N}$ be a graded sequence of ideals on $X$. We say that $\mathfrak a_\bullet$ is \emph{asymptotically trivial} if $\mathfrak a_m=\cO_X$ for all sufficiently divisible $m$. We say that $\mathfrak a_\bullet$ is \emph{asymptotically zero} if $\mathfrak a_m=0$ for all sufficiently divisible $m$.
\end{definition}

For a graded sequence of ideals $\mathfrak a_\bullet=\{\mathfrak a_m\}_{m\in r_0\mathbb N}$, we set
\[
v(\mathfrak a_\bullet) := \lim_{m\to\infty}\frac{v(\mathfrak a_m)}{m}
\]
and
\[
\operatorname{lct}^{[t]} (X,\mathcal F;\mathfrak a_\bullet) := \inf_v \frac{A^{[t]}_{X,\mathcal F}(v)}{v(\mathfrak a_\bullet)}.
\]

\begin{definition}
Let $\cG^\bullet R$ be a linearly bounded multiplicative
filtration. For $m\in r_0\N$ and $\lambda\in \R$, we define the base ideal
\[
        I_{m,\lambda}(\cG) := \operatorname{Im} \left(\cG^\lambda R_m\otimes \cO_X(-mL) \longrightarrow \cO_X\right).
\]
Equivalently, $I_{m,\lambda}(\cG)$ is the ideal generated by sections in $\cG^\lambda R_m$.
\end{definition}

\begin{definition}
We choose integers $e_-<e_+$, outside the linear bounds of $\mathcal G$, such that $\cG^{me_-}R_m=R_m$, $\cG^{me_+}R_m=0$ for all sufficiently divisible $m\in r_0\N$. For such $m$, we define the \emph{flag ideal}
\[
\begin{aligned}
        \mathfrak I_m(\cG)&:={} I_{m,me_+}(\cG)+I_{m,me_+-1}(\cG)\tau +\cdots \\
        &\quad +I_{m,me_-+1}(\cG)\tau^{m(e_+-e_-)-1} +(\tau^{m(e_+-e_-)}) \subset \cO_{X\times \A^1},
\end{aligned}
\]
where $\tau$ is the coordinate on $\A^1$. Replacing $\mathfrak I_m(\cG)$ by its integral closure does not change its divisorial orders and hence does not change any mixed log canonical threshold below. It also does not change the normalised blow-up, since their Rees algebras have the same integral closure.
\end{definition}

Multiplicativity of $G$ gives $\mathfrak I_m(G)\cdot \mathfrak I_\ell(G) \subseteq \mathfrak I_{m+\ell}(G)$ so $\{\mathfrak I_m(G)\}_{m\in r_0\mathbb N}$ is a graded sequence of ideals on $X\times\mathbb A^1$.

\subsection{Finitely generated approximants}

Fix an integer $e_-$ below the lower linear bound of $\cG$. For $q\in r_0\N$, let $\cG^{(q)}$ be the $q$-th minimal approximating filtration of $\cG_{\mathbb Z}$ in the sense of \cite[Definition--Lemma 3.56]{Xu2025}. Explicitly, for $d\in r_0\N$:
\begin{enumerate}
\item if $d<q$, then $\cG^{(q),\lambda}R_d=R_d$ for $\lambda\le e_-d$ and is zero for $\lambda>e_-d$;
\item if $d=q$, then $\cG^{(q),\lambda}R_q=\cG_{\mathbb Z}^{\lambda}R_q$;
\item if $d>q$, then
\[
\cG^{(q),\lambda}R_d =\sum_{\substack{s\ge1,\;sq\le d\\
\mu_1,\ldots,\mu_s\in\mathbb Z\\
\mu_1+\cdots+\mu_s\ge \lambda-e_-(d-sq)}}\cG_{\mathbb Z}^{\mu_1}R_q\cdots \cG_{\mathbb Z}^{\mu_s}R_q\cdot R_{d-sq}.
\]
\end{enumerate}
Then $\{\cG^{(q)}\}_{q\in r_0\N}$ is an approximating sequence in the sense of \cite[Definition 3.55]{Xu2025}. Since $R$ is generated by $R_{r_0}$ and the filtration is linearly bounded, each $\cG^{(q)}$ is finitely generated.

\subsection{Flag ideals and foliated test configurations}

We first recall the following result from \cite{Pap26}.

\begin{lemma}[{\cite[Lemma 7.11]{Pap26}}]\label{lem:compatible-flag-blowup}
Let $\mathfrak I\subset \cO_{X\times\A^1}$ be any $\G_m$-invariant flag ideal supported on $X\times\{0\}$. Let
\[
        \Pi:\B:=\operatorname{Bl}_{\mathfrak I}(X\times\A^1)
        \to X\times\A^1
\]
be the blow-up, and let $\nu:\X:=\B^\nu\to \B$ be the normalisation. Then $\X$ carries a natural saturated integrable relative foliation $\F_{\X}\subset T_{\X/\A^1}$ which restricts to the product foliation on
\[
        \X|_{\A^1\setminus\{0\}}
        \simeq
        X\times(\A^1\setminus\{0\}).
\]
If $\F$ is algebraically integrable, then $\F_{\X}$ is algebraically integrable.
\end{lemma}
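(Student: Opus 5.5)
The plan is to build $\F_{\X}$ in two steps: first as a rational (generic) foliation, then as a saturated subsheaf on $\X$. Integrability and algebraic integrability will both be checked on the dense open set where $\X$ agrees with the product.

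\textbf{Step 1: the sheaf.} The composite $\Pi\circ\nu:\X\to X\times\A^1$ is projective and birational. It is an isomorphism over $X\times(\A^1\setminus\{0\})$, because $\mathfrak I$ is supported on $X\times\{0\}$ and $X$ is normal. On $X\times\A^1$ we have the product foliation $p_1^{-1}\F$, whose tangent sheaf is $p_1^*T_\F\subset p_1^*T_X=T_{X\times\A^1/\A^1}$. Let $U\subset\X$ be the preimage of $X\times(\A^1\setminus\{0\})$, and let $j:U\hookrightarrow\X$ be the inclusion. Define
\[
T_{\F_\X}:=T_{\X/\A^1}\cap j_*\bigl(T_{\F\times(\A^1\setminus\{0\})}\bigr),
\]
with the intersection taken inside $j_*(T_{U/\A^1\setminus\{0\}})$. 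Equivalently, $T_{\F_\X}$ is the saturation in $T_{\X/\A^1}$ of the subsheaf agreeing with the product foliation on $U$. Here $T_{\X/\A^1}:=\mathcal{H}om(\Omega_{\X/\A^1},\cO_\X)$ is reflexive, since $\X$ is normal. The intersection is therefore saturated and reflexive, and by construction it restricts to the product foliation over $\A^1\setminus\{0\}$.

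\textbf{Step 2: integrability.} Let $\xi,\eta$ be local sections of $T_{\F_\X}$. The bracket $[\xi,\eta]$ is again a section of $T_{\X/\A^1}$, since relative derivations are closed under bracket. On $U$ it lies in the product foliation, which is involutive. Hence $[\xi,\eta]$ lies in the saturation, because a section of $T_{\X/\A^1}$ whose restriction to a dense open set lies in $T_{\F_\X}$ lies in $T_{\F_\X}$. This gives integrability.

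\textbf{Step 3: $\G_m$-equivariance.} The ideal $\mathfrak I$ is $\G_m$-invariant, so the $\G_m$-action lifts to the blow-up and then to the normalisation by functoriality. The product foliation is invariant, so its saturation is too.

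\textbf{Step 4: algebraic integrability.} This is a generic condition. A very general point of $\X$ lies in $U\simeq X\times(\A^1\setminus\{0\})$. The leaf of the product foliation through $(x,\tau)$ is $\ell_x\times\{\tau\}$, where $\ell_x$ is the leaf of $\F$ through $x$. If $\F$ is algebraically integrable, then for very general $x$ the closure of $\ell_x$ is algebraic. Hence the leaf of $\F_\X$ through a very general point is Zariski open in the algebraic subvariety $\overline{\ell_x}\times\{\tau\}$, and its closure in $\X$ is algebraic.

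\textbf{Main obstacle.} The step I expect to need the most care is making saturation and relative tangent sheaves behave correctly on the possibly non-flat normalisation $\X$. I would first check that $\X\to\A^1$ is flat, which holds because $\X$ is integral and dominates $\A^1$. Then I would work with reflexive hulls on the big open set where $\X$ is smooth over $\A^1$ and extend by reflexivity.
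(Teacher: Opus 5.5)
Your argument is correct and takes essentially the same route as the paper. The paper quotes this lemma without proof, but it builds the foliation in exactly this way in the analogous places: it saturates the birational transform of the product foliation inside the relative tangent sheaf, and checks bracket closure and algebraic integrability on the dense open set where the family is a product (see Lemma~\ref{lem:foliation-extends-over-STR} and the proof of Theorem~\ref{thm:adjoint-foliated-theta-extension}). One small correction to your last paragraph: the locus where $\X\to\A^1$ is smooth need not be big, since the central fibre of a normalised flag-ideal blow-up can have non-reduced components (for the ideal $(x^2,\tau)$ the exceptional divisor appears with multiplicity $2$); you never need that locus, though, because $\mathcal{H}om(\Omega_{\X/\A^1},\cO_\X)$ is already reflexive on the normal variety $\X$ and your Step~1 works globally.
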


\begin{proposition}
\label{prop:approximants-TC}
Let $\cG^\bullet R$ be a linearly bounded multiplicative filtration. For every sufficiently divisible $q\in r_0\N$, the minimal approximant $\cG^{(q)}$ determines a normal semiample $\F$-compatible test configuration $(\X_q,\F_q,\L_q)\to \A^1$ of $(X,\F,L)$.
\end{proposition}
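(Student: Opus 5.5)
The plan is to construct the test configuration from the finitely generated Rees algebra of $\cG^{(q)}$, identify it with the normalised blow-up of a flag ideal, and then take the foliation from Lemma \ref{lem:compatible-flag-blowup}.

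\emph{Step 1: the underlying test configuration.} Fix a sufficiently divisible $q\in r_0\N$. By construction $\cG^{(q)}$ is a $\Z$-valued, linearly bounded, multiplicative filtration, and it is finitely generated. So the extended Rees algebra
\[
\mathcal R(\cG^{(q)}):=\bigoplus_{m\in r_0\N}\bigoplus_{\lambda\in\Z}\cG^{(q),\lambda}R_m\,\tau^{-\lambda}
\]
is a finitely generated $k[\tau]$-algebra. It is torsion free over $k[\tau]$, and inverting $\tau$ gives $R\otimes k[\tau^{\pm1}]$. Setting $\X'_q:=\Proj_{\A^1}\mathcal R(\cG^{(q)})$ gives a flat projective $\G_m$-equivariant family over $\A^1$, trivial away from $0$, with relatively ample $\OO(1)$; this is the standard filtration/test-configuration dictionary of \cite[\S3.4]{Xu2025}. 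Let $\X_q$ be its normalisation and $\L_q$ the pullback of $\frac{1}{q}\OO(q)$ (rescaled to $L$). Then $\L_q$ is semiample, and $(\X_q,\L_q)$ is a normal semiample test configuration of $(X,L)$.

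\emph{Step 2: comparison with a flag ideal.} Using the finitely generated filtration $\cG^{(q)}$, form the flag ideal $\mathfrak I:=\mathfrak I_d(\cG^{(q)})$ for $d$ a sufficiently divisible multiple of $q$ that generates $\mathcal R(\cG^{(q)})$. This ideal is $\G_m$-invariant and cosupported on $X\times\{0\}$, since its top term is $\tau^{d(e_+-e_-)}$. By the standard argument of Odaka and \cite[Lemma 3.5x]{Xu2025}, the normalised blow-up $\B^\nu$ of $\mathfrak I$ dominates $\X_q$. The pullback of $\L_q$ equals $p_1^*L-\frac1d E$ up to a twist by $\tau$, where $E$ is the exceptional divisor. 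In other words, $\X_q$ is the relative ample model over $\A^1$ of $p_1^*L-\frac1dE$ on $\B^\nu$. Alternatively, one can replace $\X_q$ by $\B^\nu$ itself with the semiample class $p_1^*L-\frac1dE$; the statement only asks for a semiample test configuration, so this is allowed.

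\emph{Step 3: the foliation.} Apply Lemma \ref{lem:compatible-flag-blowup} to $\mathfrak I$. This gives a saturated integrable $\G_m$-equivariant relative foliation on $\B^\nu$ that is the product foliation away from $0$, and it is algebraically integrable because $\F$ is. If we work with the model $\X_q$ rather than $\B^\nu$, we transport the foliation along the birational morphism $\B^\nu\to\X_q$. Since $\X_q$ is normal, the foliation on $\X_q$ is determined by the rational distribution, and we take the saturation of the induced subsheaf of $T_{\X_q/\A^1}$. Integrability, equivariance, the product condition on $\X_q|_{\A^1\setminus 0}$, and algebraic integrability are all birational properties and so pass to $\X_q$. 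All conditions of Definition \ref{def: foliated t.c.} then hold.

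\emph{Main obstacle.} The delicate step is Step 2. One has to check that the finitely generated Rees algebra is recovered from the single flag ideal $\mathfrak I_d$, with the normalisation correctly identified. One must also confirm that sufficient divisibility makes $\OO(q)$ (or $p_1^*L-\frac1dE$) semiample rather than merely big. I would first try the cleanest route: take $\B^\nu$ with the class $p_1^*L-\frac1dE$, which is relatively globally generated since $\mathfrak I_d\cdot\OO(dp_1^*L)$ is generated by the sections $\cG^{(q),\lambda}R_d\,\tau^{\cdot}$. This sidesteps the ample model comparison, because Lemma \ref{lem:compatible-flag-blowup} applies directly on $\B^\nu$.
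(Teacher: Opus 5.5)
Your proposal is correct and follows essentially the same route as the paper. Both take the normalised blow-up of the flag ideal of $\cG^{(q)}$, use relative base-point-freeness of the twisted pullback of $L$ (as in \cite[Lemma 3.61]{Xu2025}), and obtain the foliation from Lemma~\ref{lem:compatible-flag-blowup}, pushed forward and saturated on the semiample model.

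There are two small differences. First, the paper avoids the step you flag as the ``main obstacle'' by invoking \cite[Lemma 3.59]{Xu2025}, which gives $\mathfrak I_{q\ell}(\cG^{(q)})=\mathfrak I_q(\cG^{(q)})^\ell$; this lets it work directly with $d=q$ instead of choosing a generating degree $d$. Second, the paper cites \cite[Proposition 7.12]{Pap26} for $\F$-compatibility rather than checking the conditions of the definition by hand.
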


\begin{proof}
The approximant is integer-valued, and \cite[Lemma 3.59]{Xu2025} gives $\mathfrak I_{q\ell}(\cG^{(q)}) =\mathfrak  I_q(\cG^{(q)})^\ell$ for all $\ell\ge1$. Let $\Y_q\to X\times\A^1$ be the normalised blow-up of $\mathfrak I_q(\cG^{(q)})$. The pullback of $q p_1^*L$ twisted by the exceptional Cartier divisor is relatively base-point-free by the argument of \cite[Lemma 3.61]{Xu2025}; its semiample model is the normal test configuration $(\X_q,\L_q)$ of \cite[Definition 3.62]{Xu2025}. The $\Q$-polarisation is divided by $q$, so that its general fibre is $(X,L)$.

Lemma \ref{lem:compatible-flag-blowup} gives the saturated algebraically integrable transform on $\Y_q$; its pushforward and saturation on the semiample model gives $\F_q\subset T_{\X_q/\A^1}$. By \cite[Proposition 7.12]{Pap26}, $(\X_q,\F_q,\L_q)\to\A^1$ is a normal semiample $\F$-compatible test configuration.
\end{proof}

\begin{remark}\label{rem: ding in terms of semiample}
Although the definitions of $t$-K-stability and $t$-Ding stability in \cite{Pap26} are stated for ample test configurations, the preceding semiample test configurations may be replaced by their ample models. The mixed Ding functional and the non-Archimedean $J$-functional are unchanged under this passage. Thus testing semiample $\F$-compatible test configurations is equivalent to testing ample ones; see \cite[Proposition 7.5 and Remark 7.6]{Pap26}.
\end{remark}

\subsection{Mixed log canonical slopes of filtrations}

We keep the notation introduced at the beginning of the section. Thus $\cG^\bullet R$ is a linearly bounded multiplicative filtration of $R=R(X,L)$. For $a\in\R$ we use the notation $I^{(a)}_{m,\cG}:=I_{m,ma}(\cG)$ and $I^{(a)}_{\bullet,\cG}:=\{I^{(a)}_{m,\cG}\}_{m}$. Throughout this subsection, we assume that $(X,\F,t)$ is klt. This will be automatic in the applications below, since $t$-K-semistability implies klt by Theorem \ref{thm: k-stab implies klt}. Recall the following definitions.

\begin{definition}
\label{def:lambda-max-filtration}
Let $L$ be an ample $\Q$-Cartier divisor on a normal projective variety $X$, and let $\cG^\bullet R$ be a linearly bounded multiplicative decreasing filtration.

For $m\in r_0\N$, we define
\[
        \lambda_{\max,m}(\cG) := \sup\left\{ \lambda\in\R  \ \middle|\ \cG^\lambda R_m\neq 0 \right\},
\]
The asymptotic maximal weight is
\[
        \lambda_{\max}(\cG)  := \lim_{m\to\infty}\frac{\lambda_{\max,m}(\cG)}{m},
\]
where the limit is taken over sufficiently divisible $m\in r_0\N$.
\end{definition}

The limit exists because multiplicativity gives
\[
\lambda_{\max,m+\ell}(\cG)\ge \lambda_{\max,m}(\cG)+\lambda_{\max,\ell}(\cG),
\]
and hence Fekete's lemma applies.

Moreover,
\[
        \lambda_{\max}(\cG)  = \inf \left\{a\in\R\ \middle|\I^{(a)}_{\bullet,\cG}\text{ is asymptotically zero}\right\}.
\]

\begin{lemma}\label{lem:mixed-Xu-344}
For every linearly bounded multiplicative filtration $\cG^\bullet R$, the function
\[
        a\longmapsto \lct^{[t]} \left(X,\F; I^{(a)}_{\bullet,\cG}\right)
\]
is non-increasing and continuous, with values in $(0,+\infty]$, on $(-\infty,\lambda_{\max}(\cG))$. If
\[
\mu_{+\infty}^{[t]}(\cG):=\sup\left\{a<\lambda_{\max}(\cG)\ \middle|\ \lct^{[t]}(X,\F;I^{(a)}_{\bullet,\cG})=+\infty \right\},
\]
then the function is strictly decreasing on $[\mu_{+\infty}^{[t]}(\cG),\lambda_{\max}(\cG))$.
\end{lemma}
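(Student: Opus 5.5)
We will follow the proof of the analogous statement in the usual (non-foliated) setting, \cite[Lemma 3.44]{Xu2025}, replacing $A_X$ by the mixed log discrepancy $A^{[t]}_{X,\F}$ throughout. The ingredients are all formal consequences of the definitions, so the main work is to check that each step uses only the positivity and linearity of the discrepancy function, and not adjunction or any MMP input.

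\emph{Monotonicity and positivity.} If $a\le a'$, then $\cG^{ma'}R_m\subset \cG^{ma}R_m$. Hence $I^{(a')}_{m,\cG}\subset I^{(a)}_{m,\cG}$ and $v(I^{(a')}_{\bullet,\cG})\ge v(I^{(a)}_{\bullet,\cG})$ for every valuation $v$, which gives that $a\mapsto \lct^{[t]}(X,\F;I^{(a)}_{\bullet,\cG})$ is non-increasing.

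For $a<\lambda_{\max}(\cG)$ the sequence $I^{(a)}_{\bullet,\cG}$ is not asymptotically zero. I will then bound $v(I^{(a)}_{\bullet,\cG})$ from above, uniformly in terms of $v$. Pick a nonzero section $s\in\cG^{ma}R_m$. Since $X$ is klt, hence potentially klt (Theorem \ref{thm: k-stab implies klt}), we have $v(s)\le C\,A_X(v)\cdot m$ by the usual Izumi-type/linear boundedness estimate $v(D)\le T_L(v)$ with $T_L(v)\le C A_X(v)$. Because $(X,\F,t)$ is klt and $t<1$, there is a comparison $A^{[t]}_{X,\F}(v)\ge c\,A_X(v)$ with $c>0$: indeed, on a log resolution $A^{[t]}_{X,\F}$ is a positive combination of $A_X$ and $A_{X,\F}$ plus a bounded correction. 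This yields $\lct^{[t]}>0$.

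I expect this comparison to be the main obstacle. If a direct argument is unavailable, I would instead use that the klt condition gives $\lct^{[t]}(X,\F;\mathfrak m_x)>0$ uniformly, together with the containment of $I^{(a)}_{m}$ in a power of the ideal of a fixed divisor from $\cG^{ma}R_m$ to bound orders.

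\emph{Continuity.} For continuity, write $\phi(a):=1/\lct^{[t]}(X,\F;I^{(a)}_{\bullet,\cG})=\sup_v v(I^{(a)}_{\bullet,\cG})/A^{[t]}_{X,\F}(v)$. Multiplicativity gives
\[
I^{(a)}_{m}\cdot I^{(b)}_{\ell}\subset I^{((ma+\ell b)/(m+\ell))}_{m+\ell},
\]
so for $s\in[0,1]$ we have $v(I^{(sa+(1-s)b)}_\bullet)\le s\,v(I^{(a)}_\bullet)+(1-s)\,v(I^{(b)}_\bullet)$. Taking the supremum over $v$ shows that $\phi$ is convex on $(-\infty,\lambda_{\max}(\cG))$ with values in $[0,\infty)$, hence continuous there. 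Since $\phi$ is non-decreasing, $\lct^{[t]}$ is continuous as a map to $(0,+\infty]$.

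\emph{Strict decrease.} On $[\mu^{[t]}_{+\infty}(\cG),\lambda_{\max}(\cG))$ the convex function $\phi$ is non-decreasing. It is zero at $\mu^{[t]}_{+\infty}$ (by continuity) and positive afterwards. A convex non-decreasing function that vanishes at a point and is positive to the right of it must be strictly increasing to the right of that point, because a flat piece at a positive value would force constancy back to the left, contradicting the vanishing at $\mu^{[t]}_{+\infty}$. Hence $\lct^{[t]}$ is strictly decreasing on $[\mu^{[t]}_{+\infty}(\cG),\lambda_{\max}(\cG))$.
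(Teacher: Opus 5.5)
Your monotonicity and continuity steps are the paper's argument: convexity of $a\mapsto v(I^{(a)}_{\bullet,\cG})$ from multiplicativity, and $1/\lct^{[t]}$ as a supremum of convex functions. Your three-point convexity argument for strict decrease is a correct, explicit version of the step the paper only defers to \cite[Proof of Proposition 3.44]{Xu2025}. Note that your product inclusion gives the convexity inequality only for rational $s$. The real case follows by one-sided approximation, using that $a\mapsto v(I^{(a)}_{\bullet,\cG})$ is non-decreasing.

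The genuine gap is positivity, and it is load-bearing, because convexity yields continuity only once you know $\phi<+\infty$ on $(-\infty,\lambda_{\max}(\cG))$. The comparison $A^{[t]}_{X,\F}(v)\ge c\,A_X(v)$ over all valuations does not follow formally from klt, and you give no argument for it. The term $A_{X,\F}(v)$ can be negative, and klt only gives $A^{[t]}_{X,\F}>0$ divisor by divisor. A uniform ratio would require controlling valuations that are not monomial on a foliated log resolution, which ``a positive combination plus a bounded correction'' does not do. Your fallback also has the containment the wrong way round: what bounds $v(I^{(a)}_{m,\cG})$ from above is that $I^{(a)}_{m,\cG}$ contains the local ideal of $\operatorname{div}(s)$ for $s\in\cG^{ma}R_m$.

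In fact no Izumi or $T_L$ estimate is needed. For $a<\lambda_{\max}(\cG)$, pick $m_0$ with $I^{(a)}_{m_0,\cG}\neq 0$. Since $(I^{(a)}_{m_0,\cG})^k\subseteq I^{(a)}_{km_0,\cG}$, every $v$ satisfies $v(I^{(a)}_{\bullet,\cG})\le v(I^{(a)}_{m_0,\cG})/m_0$, so
\[
\lct^{[t]}\bigl(X,\F;I^{(a)}_{\bullet,\cG}\bigr)\ \ge\ m_0\,\lct^{[t]}\bigl(X,\F;I^{(a)}_{m_0,\cG}\bigr).
\]
This is also the inequality in Lemma \ref{lem:finite-level-mixed-lct-approx}. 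The right-hand side is positive. On a foliated log resolution principalising this single ideal, the threshold is the minimum of finitely many ratios $A^{[t]}_{X,\F}(E_i)/\ord_{E_i}(I^{(a)}_{m_0,\cG})$. This is the same computation as in Lemmas \ref{lem:equivariant-mixed-lct} and \ref{lem:mixed-ideal-threshold-attained}, via \cite[Lemma 2.14]{CHLMSSX24}. Each numerator is positive by the standing klt hypothesis. The paper's proof is also silent on positivity, but your continuity step depends on it, so you should close it this way.
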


\begin{proof}
We follow the proof of \cite[Proposition 3.44]{Xu2025}, replacing the usual log discrepancy by the mixed log discrepancy.

First, if $a_1\ge a_0$, then $\cG^{ma_1}R_m\subset \cG^{ma_0}R_m$, hence $I^{(a_1)}_{m,\cG}\subset I^{(a_0)}_{m,\cG}$. Therefore
\[
\lct^{[t]}\left(X,\F;I^{(a_1)}_{\bullet,\cG}\right)\le\lct^{[t]}\left(X,\F;I^{(a_0)}_{\bullet,\cG}\right),
\]
proving monotonicity.

For continuity, we use the valuation formula
\[
        \lct^{[t]} \left(X,\F;I^{(a)}_{\bullet,\cG}\right)=\inf_v\frac{A^{[t]}_{X,\F}(v)}        {v\!\left(I^{(a)}_{\bullet,\cG}\right)}.
\]
Equivalently,
\[
        \frac{1}{\lct^{[t]}\left(X,\F;I^{(a)}_{\bullet,\cG}\right)} =  \sup_{A^{[t]}_{X,\F}(v)=1} v\!\left(I^{(a)}_{\bullet,\cG}\right).
\]

We will now show that for every valuation $v$, the function
\[
        a\longmapsto v\!\left(I^{(a)}_{\bullet,\cG}\right)
\]
is convex. To see this, take $a=\theta a_0+(1-\theta)a_1$ with $\theta\in\Q\cap[0,1]$. Then, after taking $m$ sufficiently divisible, multiplicativity gives
\[
I^{(a_0)}_{\theta m,\cG} \cdot I^{(a_1)}_{(1-\theta)m,\cG} \subset I^{(a)}_{m,\cG}.
\]
Applying $v$, dividing by $m$, and passing to the limit gives
\[
        v\!\left(I^{(a)}_{\bullet,\cG}\right)\le\theta v\!\left(I^{(a_0)}_{\bullet,\cG}\right) + (1-\theta) v\!\left(I^{(a_1)}_{\bullet,\cG}\right).
\]
The case of real $\theta$ follows from the one-sided approximation argument of \cite[Lemma 3.43]{Xu2025}. Hence the reciprocal of the mixed lct function is a supremum of convex functions, and is therefore convex. Since convex functions are continuous on the interior of their domain, this proves continuity.

The proof that the function is strictly decreasing follows directly from the above argument and from the relevant argument in \cite[Proof of Proposition 3.44]{Xu2025}.
\end{proof}

\begin{definition}
\label{def:mixed-delta-lc-slope}
Let $0<\delta\le 1$. The mixed $\delta$-log canonical slope of
$\cG$ is
\[
        \mu^{[t]}(\cG,\delta) := \sup \left\{a\in\R \ \middle|\ \lct^{[t]} \left(X,\F; I^{(a)}_{\bullet,\cG} \right)  \ge  \delta \right\}.
\]
When $\delta=1$, we write
\[
        \mu^{[t]}(\cG) := \mu^{[t]}(\cG,1)
\]
and call it the mixed log canonical slope of $\cG$.
\end{definition}

\begin{lemma}
\label{lem:mixed-Xu-346}
Let $\cG^\bullet R$ be a linearly bounded multiplicative
filtration. Then the function
\[
        \delta\longmapsto \mu^{[t]}(\cG,\delta)
\]
is continuous on $(0,1]$. 
\end{lemma}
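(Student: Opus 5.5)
The function $\delta\mapsto\mu^{[t]}(\cG,\delta)$ is essentially a generalised inverse of $\phi(a):=\lct^{[t]}(X,\F;I^{(a)}_{\bullet,\cG})$. We will therefore deduce continuity from Lemma \ref{lem:mixed-Xu-346}'s predecessor, Lemma \ref{lem:mixed-Xu-344}, following the proof of \cite[Lemma 3.46]{Xu2025} with the usual log discrepancy replaced by the mixed one.

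\emph{Monotonicity and finiteness.} If $\delta\le\delta'$, then $\{a\mid\phi(a)\ge\delta'\}\subset\{a\mid\phi(a)\ge\delta\}$, so $\delta\mapsto\mu^{[t]}(\cG,\delta)$ is non-increasing. To see that it is finite-valued and lies in $[\mu^{[t]}_{+\infty}(\cG),\lambda_{\max}(\cG)]$, we argue as follows.
\begin{itemize}
\item For $a<e_-$ (below the linear bound), $I^{(a)}_{m,\cG}=\cO_X$, so $\phi(a)=+\infty\ge\delta$. Hence the defining set is nonempty.
\item For $a>\lambda_{\max}(\cG)$, the ideals are asymptotically zero, so $v(I^{(a)}_{\bullet,\cG})=+\infty$ and $\phi(a)=0<\delta$.
\end{itemize}
The klt hypothesis on $(X,\F,t)$ guarantees $A^{[t]}_{X,\F}>0$, so $\phi$ takes values in $(0,+\infty]$ as in Lemma \ref{lem:mixed-Xu-344}.

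\emph{Identifying $\mu^{[t]}(\cG,\delta)$ via $\phi$.} By Lemma \ref{lem:mixed-Xu-344}, $\phi$ is continuous and non-increasing on $(-\infty,\lambda_{\max})$, and strictly decreasing on $[\mu_{+\infty}^{[t]},\lambda_{\max})$. We also need the behaviour at the endpoint: $\phi(a)\to 0$ (or at least $\phi(a)\to c\le$ the relevant values) as $a\to\lambda_{\max}^-$. We would obtain this from convexity of $a\mapsto 1/\phi(a)$ together with the jump to $+\infty$ past $\lambda_{\max}$; alternatively, if $\lim_{a\to\lambda_{\max}^-}\phi=c_0>0$, then $\mu^{[t]}(\cG,\delta)=\lambda_{\max}$ for all $\delta\le c_0$, which is constant and hence harmless for continuity.

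Consequently, on the range where $\phi$ is finite, $\phi$ restricts to a continuous strictly decreasing bijection
\[
\phi:[\mu_{+\infty}^{[t]},\lambda_{\max})\longrightarrow (c_0,\phi(\mu_{+\infty}^{[t]})].
\]
Here $\phi(\mu_{+\infty}^{[t]})$ may be $+\infty$ as a limit; continuity of the reciprocal $1/\phi$ handles this uniformly. For $\delta$ in this range, $\mu^{[t]}(\cG,\delta)=\phi^{-1}(\delta)$, because strict decrease makes $\{\phi\ge\delta\}$ equal to an interval $(-\infty,\phi^{-1}(\delta)]$. The inverse of a continuous strictly monotone function on an interval is continuous, so $\mu^{[t]}(\cG,\cdot)$ is continuous there. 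Outside this range, $\mu^{[t]}(\cG,\cdot)$ is constant, equal to $\lambda_{\max}$ for small $\delta$ or to $\mu_{+\infty}^{[t]}$ for large $\delta$. At the boundary points the one-sided limits match because $\phi$ is continuous up to these points, which gives continuity on all of $(0,1]$.

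\emph{Main obstacle.} The delicate step is the boundary behaviour at $a=\mu_{+\infty}^{[t]}$ and $a\to\lambda_{\max}^-$, namely checking that $\phi$ has no jump there so the inverse does not skip values. We would handle this by working with $\psi:=1/\phi$, which is convex, finite and non-negative on $(-\infty,\lambda_{\max})$. Being convex, $\psi$ is continuous on the open interval and monotone, so its limits at the ends exist. The same inversion argument then applies to $\psi$ with $\delta^{-1}\in[1,\infty)$, exactly as in \cite[Lemma 3.46]{Xu2025}.
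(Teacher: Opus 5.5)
Your proposal is correct and is essentially the paper's argument: you invert the continuous function $\phi(a)=\lct^{[t]}(X,\F;I^{(a)}_{\bullet,\cG})$, which is strictly decreasing on $[\mu^{[t]}_{+\infty}(\cG),\lambda_{\max}(\cG))$ by Lemma \ref{lem:mixed-Xu-344}, and you treat $\delta\le c_0:=\lim_{a\to\lambda_{\max}(\cG)^-}\phi(a)$ separately, where $\mu^{[t]}(\cG,\delta)=\lambda_{\max}(\cG)$; the paper does exactly this with $c^{[t]}_{\cG}$. Two small corrections: drop your first suggestion that $\phi(a)\to 0$ as $a\to\lambda_{\max}(\cG)^-$, since it fails in general (e.g.\ for the trivial filtration $\phi\equiv+\infty$ there), so your fallback with $c_0>0$ is the argument you actually need; and there is no separate constant region with value $\mu^{[t]}_{+\infty}(\cG)$ for large $\delta$, because continuity of the convex function $1/\phi$ at $\mu^{[t]}_{+\infty}(\cG)$ forces $\phi(\mu^{[t]}_{+\infty}(\cG))=+\infty$ whenever $\mu^{[t]}_{+\infty}(\cG)<\lambda_{\max}(\cG)$.
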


\begin{proof}
By Lemma \ref{lem:mixed-Xu-344}, the function
\[
        a\longmapsto \lct^{[t]} \left( X,\F;  I^{(a)}_{\bullet,\cG} \right)
\]
is non-increasing and continuous on $(-\infty,\lambda_{\max}(\cG))$, and strictly decreasing after $\mu_{+\infty}^{[t]}(\cG)$. Hence for $c_{\cG}^{[t]}<\delta\le 1$ there is a unique value of $a$ with threshold equal to $\delta$, and this value is precisely $\mu^{[t]}(\cG,\delta)$. Let
\[c_{\cG}^{[t]}: = \lim_{a\to \lambda_{\max}(G)^-} \lct^{[t]}(X,\F,\mathfrak J_{\bullet,G}^{(a)}.\]
For $0<\delta\le c_{\cG}^{[t]}$, the inequality holds for all $a<\lambda_{\max}(\cG)$, so the supremum is $\lambda_{\max}(\cG)$. Continuity in $\delta$ follows from continuity and strict monotonicity.
\end{proof}

\begin{lemma}
\label{lem:finite-level-mixed-lct-approx}
Let $\mathfrak a_\bullet=\{\mathfrak a_m\}_{m\in r\N}$ be a graded
sequence of ideals. Then
\[
        \lct^{[t]} \left(X,\F;\mathfrak a_\bullet\right)  = \lim_{m\to\infty} m\lct^{[t]} \left(X,\F;\mathfrak a_m\right) = \sup_{m\in r\N} m\lct^{[t]} \left(X,\F;\mathfrak a_m\right).
\]
% Equivalently, for every $\eta>0$, all sufficiently large divisible $m$ satisfy
% \[
%         m\lct^{[t]} \left(X,\F;\mathfrak a_m\right) \ge \lct^{[t]} \left(X,\F;\mathfrak a_\bullet\right)-\eta.
% \]
\end{lemma}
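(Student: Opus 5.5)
The plan follows the classical argument for graded sequences (Jonsson--Musta\c{t}\u{a}, \cite[Lemma 3.45]{Xu2025}), with the mixed log discrepancy $A^{[t]}_{X,\F}$ in place of $A_X$. The only properties of $A^{[t]}_{X,\F}$ we need are that it is nonnegative (indeed positive, since $(X,\F,t)$ is klt) and independent of the ideal. The argument then reduces to the behaviour of $v(\mathfrak a_m)$ under the valuation formula for $\lct^{[t]}$.

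\emph{Step 1: the lower bound $m\lct^{[t]}(\mathfrak a_m)\le \lct^{[t]}(\mathfrak a_\bullet)$.} By the graded property $\mathfrak a_m^k\subset\mathfrak a_{km}$, we have $v(\mathfrak a_{km})\le k\,v(\mathfrak a_m)$. Fekete's lemma then gives
\[
v(\mathfrak a_\bullet)=\lim_{k}\frac{v(\mathfrak a_{k})}{k}=\inf_m \frac{v(\mathfrak a_m)}{m}.
\]
Hence for every $v$ and every $m$,
\[
\frac{A^{[t]}_{X,\F}(v)}{v(\mathfrak a_\bullet)}\ \ge\ m\,\frac{A^{[t]}_{X,\F}(v)}{v(\mathfrak a_m)},
\]
since $A^{[t]}\ge0$. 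Taking the infimum over $v$ gives $\lct^{[t]}(\mathfrak a_\bullet)\ge m\lct^{[t]}(\mathfrak a_m)$. Here we must check the conventions when $v(\mathfrak a_m)=0$ or $\mathfrak a_m=0$: the latter has $\lct^{[t]}=0$, and if $\mathfrak a_\bullet$ is not asymptotically zero we restrict to $m$ with $\mathfrak a_m\neq0$. This yields $\sup_m m\lct^{[t]}(\mathfrak a_m)\le\lct^{[t]}(\mathfrak a_\bullet)$.

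\emph{Step 2: superadditivity and the limit.} From $\mathfrak a_m\mathfrak a_\ell\subset\mathfrak a_{m+\ell}$ we get $v(\mathfrak a_{m+\ell})\le v(\mathfrak a_m)+v(\mathfrak a_\ell)$. Consequently
\[
\frac{1}{\lct^{[t]}(\mathfrak a_{m+\ell})}=\sup_{A^{[t]}(v)=1}v(\mathfrak a_{m+\ell})\le \frac1{\lct^{[t]}(\mathfrak a_m)}+\frac1{\lct^{[t]}(\mathfrak a_\ell)}.
\]
So $m\mapsto 1/\lct^{[t]}(\mathfrak a_m)$ is subadditive, and Fekete's lemma gives that $\lim_m m\lct^{[t]}(\mathfrak a_m)$ exists and equals $\sup_m m\lct^{[t]}(\mathfrak a_m)$, restricted to the divisible sequence $r\N$.

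\emph{Step 3: the reverse inequality $\lct^{[t]}(\mathfrak a_\bullet)\le\sup_m m\lct^{[t]}(\mathfrak a_m)$.} This is the main obstacle, since the interchange
\[
\sup_v\lim_m \frac{v(\mathfrak a_m)}{m}=\lim_m\sup_v\frac{v(\mathfrak a_m)}{m}
\]
(normalised by $A^{[t]}(v)=1$) is not formal. Set $c:=\sup_m m\lct^{[t]}(\mathfrak a_m)$ and suppose $c<\infty$. For each $m$, choose $v_m$ with $A^{[t]}(v_m)=1$ and $v_m(\mathfrak a_m)\ge m/c$; this is possible since $1/\lct^{[t]}(\mathfrak a_m)\ge m/c$, after taking near-extremal valuations.

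We then want a single $v$ with $v(\mathfrak a_\bullet)\ge 1/c$. I would follow \cite[Lemma 3.45]{Xu2025}/Jonsson--Musta\c{t}\u{a} and use compactness of $\{v:A_X(v)\le C\}$, which holds after normalising by $A_X$. Two facts are needed for this. First, $A^{[t]}_{X,\F}$ is comparable to $A_X$: because $(X,\F,t)$ is klt and $X$ is potentially klt, we expect $A^{[t]}\ge \epsilon A_X$, and $A^{[t]}\le C A_X$ because $A_{X,\F}\le A_X+$const on divisors. Second, $A^{[t]}$ is lower semicontinuous, which holds by its definition as the lsc extension.

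With these, a limit point $v$ of the $v_m$ satisfies $A^{[t]}(v)\le1$, using lower semicontinuity. For fixed $k$, and $m$ a multiple of $k$, we have $v_m(\mathfrak a_k)\ge \frac{k}{m}v_m(\mathfrak a_m)$ only in the wrong direction. The key trick is therefore instead to compare at a fixed level using $v(\mathfrak a_k)/k\ge v(\mathfrak a_\bullet)$ together with continuity of $v\mapsto v(\mathfrak a_k)$.

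If compactness proves delicate, I would instead reduce to the finite level argument of \cite[Lemma 3.45]{Xu2025} directly, replacing $A_X$ by $A^{[t]}$ throughout. That argument uses only the lsc, positivity and homogeneity of the discrepancy function, all of which hold for $A^{[t]}_{X,\F}$ as a convex combination of $A_X$ and $A_{X,\F}$.
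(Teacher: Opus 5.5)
Your Steps 1 and 2 are correct. Step 1 is the formal half of the minimax. Step 2 is the same Fekete argument the paper uses, applied directly to the subadditive sequence $m\mapsto 1/\lct^{[t]}(X,\F;\mathfrak a_m)$ on $r\N$; this is cleaner than the paper's auxiliary ideals $\mathfrak b_p$ and sandwich. The $\lim=\sup$ statement is all the paper's written proof actually argues; it takes the first equality from ``the valuation formula''. Since $\lct^{[t]}(X,\F;\mathfrak a_\bullet)$ is \emph{defined} as $\inf_v A^{[t]}_{X,\F}(v)/v(\mathfrak a_\bullet)$, the inequality $\lct^{[t]}(X,\F;\mathfrak a_\bullet)\le\sup_m m\lct^{[t]}(X,\F;\mathfrak a_m)$ is the non-formal direction of an interchange of $\inf_v$ and $\sup_m$. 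Your Step 3, which is meant to supply it, has a genuine gap. You dismiss $v_m(\mathfrak a_k)\ge\frac km v_m(\mathfrak a_m)$ as ``the wrong direction'', but it is exactly the inequality the argument needs. Your replacement $v(\mathfrak a_k)/k\ge v(\mathfrak a_\bullet)$ only bounds $v(\mathfrak a_\bullet)$ from \emph{above}, so it can never give the lower bound $v(\mathfrak a_\bullet)\ge 1/c$. As written, Step 3 ends without producing a valuation $v$ with $A^{[t]}_{X,\F}(v)/v(\mathfrak a_\bullet)\le c$.

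\textbf{The repair.} Put $c:=\sup_m m\lct^{[t]}(X,\F;\mathfrak a_m)$; if $c=\infty$, Step 1 already gives equality, so assume $c<\infty$. Set $K_m:=\{w: A^{[t]}_{X,\F}(w)\le 1,\ w(\mathfrak a_m)\ge m/c\}$.
\begin{itemize}
\item Each $K_m$ is nonempty: normalise a divisor computing the threshold of $\mathfrak a_m$ on a resolution principalising $\mathfrak a_m$.
\item Each $K_m$ is closed, by continuity of $w\mapsto w(\mathfrak a_m)$ and lower semicontinuity of $A^{[t]}_{X,\F}$.
\item For $k\mid m$ we have $\mathfrak a_k^{m/k}\subseteq\mathfrak a_m$, hence $w(\mathfrak a_k)/k\ge w(\mathfrak a_m)/m$ and $K_m\subseteq K_k$. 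So any finite intersection contains $K_{\operatorname{lcm}}\neq\varnothing$.
\end{itemize}
Granted compactness of a set of valuations containing all the $K_m$, there is some $v\in\bigcap_m K_m$. It satisfies $A^{[t]}_{X,\F}(v)\le1$ and $v(\mathfrak a_\bullet)=\inf_m v(\mathfrak a_m)/m\ge 1/c$, which is the missing inequality.

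That compactness is the second unproved input. You only say you ``expect'' $A^{[t]}_{X,\F}\ge\epsilon A_X$. This does not follow immediately from $(X,\F,t)$ being klt, because the foliated part $A_{X,\F}$ can be negative when $\F$ is not lc. It needs an actual argument, for instance on a foliated log resolution. Your fallback to Xu's finite-level lemma, ``using only lsc, positivity and homogeneity'', omits exactly this compactness input, so it does not close the gap either.
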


\begin{proof}
By the valuation formula,
\[
        \lct^{[t]} \left(X,\F;\mathfrak a_\bullet\right) = \inf_v \frac{A^{[t]}_{X,\F}(v)} {v(\mathfrak a_\bullet)},
\]
where
\[
        v(\mathfrak a_\bullet) :=  \lim_{m\to\infty} \frac{1}{m}v(\mathfrak a_m),
\]
and the limit exists by Fekete's Lemma. For any $p\in \N$, we define $\mathfrak b_p = \cup_{m\geq p}\mathfrak a$. For any valuation $v$
\[v(\mathfrak b_p)+v(\mathfrak b_{p'})\geq v(\mathfrak b_{p+p'}),\]
so, if we set $a_p:= \lct^{[t]}(X,\F;\mathfrak b_p)$ we have $\frac{1}{a_p}+\frac{1}{a_{p'}}\geq \frac{1}{a_{p+p'}}$, i.e. $\{\frac{1}{a_p}\}_{p\in \N}$ is subadditive. Hence, Fekete's lemma applies and $\lim_{p\to \infty}\frac{1}{pa_p}$ exists, which implies that $\lim_{p\to \infty }p\cdot \lct^{[t]}(X,\F;\mathfrak b_p)$ exists and is equal to $\sup_pp\cdot \lct^{[t]}(X,\F;\mathfrak b_p)$. Since for any $m\in [p, p+1)$ we have $\mathfrak b_{p+1}\subseteq \mathfrak a_m \subseteq \mathfrak b_p $ we have 
\[\lct^{[t]}(X,\F;\mathfrak b_p)\leq \lct^{[t]}(X,\F;\mathfrak a_m)\leq \lct^{[t]}(X,\F;\mathfrak b_{p+1})\]
thus $\lim_{m\to \infty} m\cdot \lct^{[t]}(X,\F;\mathfrak a_m)$ exists and is equal to $\sup_mm\cdot \lct^{[t]}(X,\F;\mathfrak a_m)$, as required. 
\end{proof}

\subsection{Mixed Ding functionals of filtrations}

We first recall how test configurations enter the picture. By \cite[Definitions 7.1 and 7.2]{Pap26} The mixed Ding invariant of a normal $\F$-compatible test configuration $(\X,\F_{\X},\L)\to \A^1$ is defined birationally by using the vertical correction divisor
\[
        D^{[t]}_{\X,\F_{\X},\L}  \sim_{\Q} -\L - K^{[t]}_{\X/\A^1,\F_{\X}},
\]
supported on the central fibre, and the mixed log canonical threshold of $(\X,\F_{\X},D^{[t]}_{\X,\F_{\X},\L};\X_0,t)$. For a vertical $\mathbb Q$-divisor $D$ on a test configuration, we use the notation $\lct^{[t]}(\X,\F_\X;D;\X_0)$ for
\[
        \sup\left\{ c\in\mathbb R \ \middle|\ (\X,\F_\X,D+c\X_0,t)\text{ is sub-lc} \right\}.
\]
Here, all  lcts are computed on the natural compactification $(\overline{\X},\overline{\F},\overline{\L})\to \mathbb P^1$.

We have
\[
\Ding^{[t]}(\X,\F_{\X},\L) := -\frac{\overline{\L}^{\,n+1}}{(n+1)V} -(1-t) + \lct^{[t]} \!\left( \overline{\X},\overline{\F}; D^{[t]}_{\X,\F_{\X},\L}; \overline{\X}_0 \right).
\]
By \cite[Corollary 7.10]{Pap26}, $t$-K-semistability is equivalent to $t$-Ding-semistability for normal $\F$-compatible test configurations. In this section we compare this test-configuration Ding invariant with the filtration invariant below.

Let $S_L(\cG)$ denote the expected vanishing number of the Duistermaat--Heckman measure of $\cG$ as in Definition \ref{def:S-J-filtration}, and let
\[
        J(\cG) :=  \lambda_{\max}(\cG)-S_L(\cG)
\]
be the $J$-invariant of the filtration, which agrees with the usual non-Archimedean $J$-functional for finitely generated filtrations. The \emph{mixed Ding invariant} of $\cG$ is
\[
        \Ding^{[t]}(\cG)  :=  \mu^{[t]}(\cG)-S_L(\cG).
\]
Theorem \ref{thm:mixed-ding-compatible-filtrations} below shows that $t$-K-semistability implies the non-negativity of this invariant on all linearly bounded multiplicative filtrations.

\begin{definition}\label{def:mixed-cm-cinfty}
Let $\cG^\bullet R$ be a linearly bounded multiplicative filtration. Choose integers $e_-<e_+$ such that $ \cG^{me_-}R_m=R_m$ and $\cG^{me_+}R_m=0$ for all sufficiently divisible $m$. Let $X_0:=X\times\{0\}$. We define
\[
        c_m^{[t]}(\cG,e_+)  := \lct^{[t]}  \left(X\times\A^1,\F\times\A^1; \mathfrak I_m(\cG)^{1/m}; X_0\right).
\]
The limit
\[
        c_\infty^{[t]}(\cG,e_+)  := \lim_{m\to\infty}c_m^{[t]}(\cG,e_+)
\]
exists, and we set
\[
        \mathbf{L}^{[t]}(\cG) := c_\infty^{[t]}(\cG,e_+)+e_+-(1-t).
\]
\end{definition}

The shift by $(1-t)$ in the definition above reflects the fact that the product foliation is relative over $\A^1$, so the divisor $X_0$ contributes only to the ordinary canonical part of the mixed discrepancy. The existence of the limit above follows from an argument identical to \cite[Lemma 1.50]{Xu2025}, which we omit.

\begin{lemma}\label{lem:equivariant-mixed-lct}
Let $(Y,\mathcal G,t)$ be an algebraically integrable adjoint foliated structure endowed with a $\mathbb G_m$-action preserving $\mathcal G$. Let $\mathfrak a\subset\mathcal O_Y$ be a $\mathbb G_m$-invariant nonzero ideal, let $H$ be a $\mathbb G_m$-invariant effective Cartier divisor, and let $\lambda>0$. Assume that $c:= \operatorname{lct}^{[t]} \bigl(Y,\mathcal G;\mathfrak a^\lambda;H\bigr)$ is finite. Then $c$ is computed by a $\mathbb G_m$-invariant divisorial valuation.
\end{lemma}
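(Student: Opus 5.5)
Since $c$ is finite, $\lct^{[t]}(Y,\mathcal G;\mathfrak a^\lambda;H)$ is the supremum of those $s$ for which $(Y,\mathcal G,\lambda\cdot\mathfrak a+sH,t)$ is sub-lc. The plan is to show that it is attained on a log resolution (more precisely, a foliated log resolution) chosen $\mathbb G_m$-equivariantly. On such a model every exceptional divisor is $\mathbb G_m$-invariant, so the infimum
\[
c=\inf_E \frac{A^{[t]}_{Y,\mathcal G}(E)-\lambda\,\ord_E(\mathfrak a)}{\ord_E(H)}
\]
is realised by one of finitely many invariant divisors.

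First I take an equivariant principalisation of $\mathfrak a$ and $H$. Since $\mathfrak a$ and $H$ are $\mathbb G_m$-invariant and $\mathbb G_m$ is connected, functorial (canonical) resolution gives a $\mathbb G_m$-equivariant log resolution $\mu:Y'\to Y$. On $Y'$, $\mathfrak a\cdot\mathcal O_{Y'}=\mathcal O_{Y'}(-F)$, and $\mu^{-1}_*H+F+\mathrm{Exc}(\mu)$ is snc. Every component of $F$, of $\mathrm{Exc}(\mu)$ and of $\mu^*H$ is then stable under the connected group, so it is $\mathbb G_m$-invariant as a divisor, and its divisorial valuation is $\mathbb G_m$-invariant.

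Next comes the foliated input. The foliation $\mathcal G$ is algebraically integrable, so I use the existence of foliated log resolutions (Property $(*)$ modifications, as in \cite{CHLMSSX24,CHLMSSX25}). These are obtained from an equidimensional toroidal model of the associated family of leaves. I would perform this construction $\mathbb G_m$-equivariantly, by equivariant weak semistable reduction applied to the $\mathbb G_m$-invariant family of leaves, or alternatively by taking the family of leaves with its induced action. It may also be dominated by a further equivariant resolution. On the resulting model $Y''$, the mixed discrepancy is a combination of the log smooth (usual) and toroidal (foliated) discrepancies. The standard computation for log smooth pairs and for foliated Property $(*)$ pairs then shows that the sub-lc condition for $(Y,\mathcal G,\lambda\mathfrak a+sH,t)$ is tested on the divisors of $Y''$ alone. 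Concretely, for any divisor $E$ over $Y''$, the value $A^{[t]}(E)-\lambda\,\ord_E(\mathfrak a)-s\,\ord_E(H)$ is at least a nonnegative combination of the corresponding quantities for the components of $Y''$ through the centre of $E$. By convexity, the mixed quantity then inherits this inequality from its ordinary and foliated parts.

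Hence the infimum defining $c$ is a minimum over the finitely many $\mathbb G_m$-invariant divisors on $Y''$ with $\ord_E(H)>0$, and any minimiser computes $c$. Finiteness of $c$ guarantees that some divisor with $\ord_E H>0$ exists.

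The main obstacle is the second step: producing an equivariant foliated log resolution. Equivariance of the toroidal reduction for the family of leaves is not automatic. If that fails, I would fall back on a degeneration argument. Take any divisorial $E$ computing $c$, which exists by the previous step on a non-equivariant model. Then degenerate $\ord_E$ under the $\mathbb G_m$-action to its initial valuation $\mathrm{in}(\ord_E)$, which is $\mathbb G_m$-invariant and quasi-monomial. Invariance of $\mathfrak a$ and $H$ gives equal values on them, and lower semicontinuity together with the torus-averaging inequality gives $A^{[t]}(\mathrm{in}(\ord_E))\le A^{[t]}(\ord_E)$. A quasi-monomial minimiser can then be replaced by a divisorial one on the same simplicial cone, because the ratio is piecewise linear-fractional there.
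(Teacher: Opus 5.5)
Your main route is the paper's proof. The paper takes a $\mathbb G_m$-equivariant foliated log resolution that principalises $\mathfrak a$ and whose toroidal boundary contains the exceptional locus and the total transform of $H$. It notes that sub-log-canonicity is then tested by the finitely many inequalities $A^{[t]}_{Y,\mathcal G}(E_i)-\lambda a_i-sb_i\geq 0$, so $c$ is the minimum of the ratios $(A^{[t]}_{Y,\mathcal G}(E_i)-\lambda a_i)/b_i$ over components with $b_i>0$. A minimising $E_i$ is invariant because the connected group $\mathbb G_m$ acts trivially on the finite set of boundary components.

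The paper assumes the existence of such an equivariant foliated log resolution without further justification, just as your main step does, so your fallback degeneration argument is not needed to match it. If you kept the fallback, its last step would also have to show that the divisorial replacement chosen on the cone is again $\mathbb G_m$-invariant.
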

\begin{proof}
We choose a $\mathbb G_m$-equivariant foliated log resolution
\[
\pi\colon (W,\mathcal G_W)\longrightarrow (Y,\mathcal G)
\]
which principalises $\mathfrak a$ and such that the union of the exceptional locus, the support of $\mathfrak a\mathcal O_W$, and the total transform of $H$ is contained in the toroidal boundary of the foliated log-smooth structure. We set $\mathfrak a\mathcal O_W=\mathcal O_W(-A)$ and let $E_1,\ldots,E_N$ be the irreducible components of this boundary. Let $a_i:=\ord_{E_i}(\mathfrak a)$ and $b_i:=\ord_{E_i}(H)$. 

Since $(W,\mathcal G_W)$ is foliated log smooth, the adjoint
foliated structure $(Y,\mathcal G,\mathfrak a^\lambda+sH,t)$ is sub-lc if and only if $A^{[t]}_{Y,\mathcal G}(E_i)-\lambda a_i-sb_i\geq 0$ for every $i$. Thus,
\[
c=\min_{\substack{1\leq i\leq N\\ b_i>0}}\frac{A^{[t]}_{Y,\mathcal G}(E_i)-\lambda a_i}{b_i}.
\]
Hence, $c$ is computed by one of the divisorial valuations
$\ord_{E_i}$.

Finally, $\mathbb G_m$ permutes the finite set $\{E_1,\ldots,E_N\}$. Since $\mathbb G_m$ is connected, its action on this finite set is trivial. Hence every $E_i$, and in particular a divisor computing $c$, is $\mathbb G_m$-invariant.
\end{proof}

\begin{lemma} \label{lem:slope-flag-ideal-comparison}
Let $(X,\F,t)$ be klt. With the above normalisation of the flag ideals,
\[
        \mu^{[t]}(\cG)=\mathbf{L}^{[t]}(\cG) = c_\infty^{[t]}(\cG,e_+)+e_+-(1-t).
\]
In particular, the right-hand side is independent of the choice of $e_+$.
\end{lemma}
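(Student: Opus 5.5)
We follow the proof of \cite[Proposition 3.47]{Xu2025}, using the mixed log discrepancy in place of the usual one. The key tool is the correspondence between $\mathbb G_m$-invariant divisorial valuations on $X\times\A^1$ and pairs $(v,s)$, where $v$ is a valuation on $X$ (or trivial) and $s\ge 0$. Write $w=G(v,s)$ for the Gauss extension: $w(\sum f_i\tau^i)=\min_i\{v(f_i)+si\}$. By Lemma \ref{lem:equivariant-mixed-lct}, the threshold $c_m^{[t]}(\cG,e_+)$ is computed by a $\mathbb G_m$-invariant divisorial valuation, so we may restrict to valuations of the form $G(v,s)$ with $s>0$. After rescaling we may take $s=1$, since $w(X_0)=s$.

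The first step is to compute the relevant quantities for $w=G(v,1)$. For the log discrepancy we expect
\[
A^{[t]}_{X\times\A^1,\F\times\A^1}(w)=A^{[t]}_{X,\F}(v)+(1-t).
\]
The reason is that the product foliation is relative over $\A^1$. The $\tau$-direction is $\F$-invariant, so it contributes $1$ to $A_X$ but $0$ to $A_{\F}$ (with $\varepsilon=0$ for the extra direction). This is the first point that needs care. It should follow by computing on a product foliated log resolution $W\times\A^1$, where $w$ is a quasi-monomial valuation in the coordinates of $W$ and $\tau$, and using the toroidal formula for foliated discrepancies. For the flag ideal we have
\[
w(\mathfrak I_m)=\min_{\lambda}\bigl\{v(I_{m,\lambda})+(me_+-\lambda)\bigr\},
\]
hence
\[
\tfrac1m w(\mathfrak I_m)\to \phi(v):=\inf_a\bigl\{v(I^{(a)}_{\bullet,\cG})+e_+-a\bigr\}.
\]
Interchanging the limit with the infimum over $a$ should follow from the convexity established in Lemma \ref{lem:mixed-Xu-344}, exactly as in \cite{Xu2025}. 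Together with Lemma \ref{lem:finite-level-mixed-lct-approx} and the existence of the limit $c_\infty^{[t]}$, this gives
\[
c_\infty^{[t]}(\cG,e_+)=\inf_v\bigl(A^{[t]}_{X,\F}(v)+(1-t)-\phi(v)\bigr),
\]
with the infimum including the trivial valuation $v$.

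The second step compares this with $\mu:=\mu^{[t]}(\cG)$. Consequently
\[
c_\infty^{[t]}+e_+-(1-t)=\inf_{v,a}\bigl(A^{[t]}_{X,\F}(v)-v(I^{(a)}_{\bullet,\cG})+a\bigr).
\]
For the bound $\ge\mu$ we split into two cases.
\begin{itemize}
\item If $a\le\mu$, then $A^{[t]}(v)-v(I^{(a)})\ge 0$ by monotonicity together with $\lct^{[t]}(I^{(a)})\ge1$ for $a<\mu$ (continuity handles $a=\mu$). Hence the expression is $\ge a+0$. This alone is not enough, so we instead use convexity of $a\mapsto v(I^{(a)})$: the expression is at least $\mu$ whenever the slope condition holds.
\item More cleanly, for $a\ge\mu$ write $a=\mu+\delta$. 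Convexity together with $v(I^{(a)})\le A^{[t]}(v)\cdot(\text{threshold})$ gives the bound. This is the same argument as in \cite{Xu2025}, using that $\lct^{[t]}(I^{(a)})$ is strictly decreasing, which is Lemma \ref{lem:mixed-Xu-344}. Alternatively, use the identity $v(I^{(a)})\le v(I^{(\mu)})+(a-\mu)$, which follows from multiplicativity with trivial sections.
\end{itemize}
For the bound $\le\mu$, take $a$ slightly larger than $\mu$. Then $\lct^{[t]}(I^{(a)})<1$, so there is a valuation $v$ with $A^{[t]}(v)<v(I^{(a)})$, and the expression is then $<a$. Letting $a\downarrow\mu$ gives $\le\mu$. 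If $\mu=\lambda_{\max}$, use instead $a>\lambda_{\max}$, where $v(I^{(a)})=\infty$.

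I expect the main obstacle to be the lower bound $\ge\mu$ in the regime $a>\mu$. It requires the inequality $v(I^{(a)}_{\bullet,\cG})\le v(I^{(\mu)}_{\bullet,\cG})+(a-\mu)$, or an equivalent convexity argument, uniformly in $v$. The natural source is the linear boundedness $e_-$ and multiplicativity, in the form $I^{(\mu)}_{m}\cdot\mathfrak m$-type inclusions. I would first try to derive it directly from the Gauss-extension description, following \cite[Proof of Proposition 3.47]{Xu2025} verbatim. Independence of $e_+$ is then immediate, because $\mu^{[t]}(\cG)$ does not involve $e_+$.
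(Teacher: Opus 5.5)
There is a genuine gap. It comes from a sign error in your key formula, and that error reverses which direction is hard.

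\textbf{The sign error.} Your expression $c^{[t]}_\infty=\inf_v\bigl(A^{[t]}_{X,\F}(v)+(1-t)-\phi(v)\bigr)$ is fine. But $\phi(v)=\inf_a\{v(I^{(a)}_{\bullet,\cG})+e_+-a\}$, so $-\phi(v)$ is a \emph{supremum}, and what you actually get is
\[
\mathbf L^{[t]}(\cG)=\inf_v\,\sup_a\bigl(A^{[t]}_{X,\F}(v)-v(I^{(a)}_{\bullet,\cG})+a\bigr),
\]
not $\inf_{v,a}$. The $\inf_{v,a}$ version is false: for the trivial valuation the expression equals $a$ for every $a<\lambda_{\max}(\cG)$, so that infimum is not $\mu^{[t]}(\cG)$.

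\textbf{Consequences for your argument.}
\begin{itemize}
\item Your proof of $\le\mu$ collapses. A single pair $(v,a)$ with $A^{[t]}_{X,\F}(v)<v(I^{(a)}_{\bullet,\cG})$ says nothing about the supremum over $a$ for that $v$.
\item The direction you flagged as the main obstacle is immediate at the asymptotic level. For fixed $v$ and $a<\mu$, the bound $\lct^{[t]}(X,\F;I^{(a)}_{\bullet,\cG})\ge 1$ gives $A^{[t]}_{X,\F}(v)-v(I^{(a)}_{\bullet,\cG})+a\ge a$. Hence $\sup_a\ge\mu$.
\item The inequality $v(I^{(a)}_{\bullet,\cG})\le v(I^{(\mu)}_{\bullet,\cG})+(a-\mu)$ you hoped for cannot hold, because it is not invariant under $v\mapsto cv$. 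Writing $f_v(a):=v(I^{(a)}_{\bullet,\cG})$, convexity gives supporting-line \emph{lower} bounds $f_v(a)\ge f_v(\mu)+\xi(a-\mu)$. It gives no scale-free upper bound of that shape.
\end{itemize}

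\textbf{The missing idea for $\mathbf L^{[t]}(\cG)\le\mu^{[t]}(\cG)$.} You need one valuation that works for all $a$ simultaneously. If $\mu=\lambda_{\max}(\cG)$, the trivial valuation suffices. Otherwise the paper proceeds as follows.
\begin{itemize}
\item Choose $v$ with $\alpha A^{[t]}_{X,\F}(v)\le f_v(\mu)$, and let $\xi$ be a left derivative of $f_v$ at $\mu$.
\item Use the extension with $\widetilde v(\tau)=\xi$. In your normalisation this is the valuation $v/\xi$, so the scaling is essential.
\item For all $a$ one then has $\tfrac1m v(I_{m,ma})\ge f_v(a)\ge\alpha A^{[t]}_{X,\F}(v)+\xi(a-\mu)$. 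This gives $c_m^{[t]}\le\mu-e_++(1-t)+(1-\alpha)A^{[t]}_{X,\F}(v)/\xi$ for every $m$.
\item The error term is at most $\frac{1-\alpha}{\alpha}(\mu-e_-)$, since $\xi(\mu-e_-)\ge f_v(\mu)$.
\end{itemize}

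\textbf{The real work for $\mu\le\mathbf L^{[t]}$.} What remains is the limit interchange $\lim_m c^{[t]}_m\ge\inf_v(\cdots)$. Lemma~\ref{lem:finite-level-mixed-lct-approx} does not give this directly: $c^{[t]}_m$ is a threshold of $X_0$ with boundary $\mathfrak I_m(\cG)^{1/m}$, not the lct of a graded sequence. The paper handles it in four steps.
\begin{itemize}
\item Take $\mathbb G_m$-invariant valuations $\widetilde w_m$ computing $c_m^{[t]}$.
\item Derive $\frac1m w_m(I_{m,\lceil ma\rceil})\ge A^{[t]}_{X,\F}(w_m)-\bigl(c_m^{[t]}+e_+-(1-t)\bigr)+a+O(1/m)$.
\item Prove the uniform bound $A^{[t]}_{X,\F}(w_m)\ge\delta_0>0$, using an auxiliary slope $a_0$ between $\mathbf L^{[t]}(\cG)$ and $\lambda_{\max}(\cG)$.
\item Only then apply Lemma~\ref{lem:finite-level-mixed-lct-approx} to the single graded sequence $I^{(\mathbf L)}_{\bullet,\cG}$. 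This gives threshold $\le 1$, and strict monotonicity finishes the argument.
\end{itemize}
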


\begin{proof}
We first record the product discrepancy computation used below. Let $v$ be a divisorial valuation over $X$, let $\xi>0$, and let $\widetilde v$ be the extension to $X\times\A^1$ defined by
\[
        \widetilde v\left(\sum_i f_i\tau^i\right) :=
        \min_i\{v(f_i)+i\xi\}.
\]
Note that it is enough to take $\xi\in\Q_{>0}$, since the left derivatives used below may be approximated by rational slopes; alternatively we may use the valuative formula for lcts over quasi-monomial valuations. We use the rational approximation formulation below. Then $\widetilde v(X_0)=\xi$. Recall that by the local calculation in \cite[Proposition 4.3 and Theorem 7.13, Claim 3]{Pap26} we have 
\begin{equation}\label{eq: mixed ld extension}
    A^{[t]}_{X\times\A^1,\F\times\A^1}(\widetilde v) =  A^{[t]}_{X,\F}(v)+(1-t)\xi
\end{equation}

We first show that $\mathbf{L}^{[t]}(\cG)\le \mu^{[t]}(\cG)$. Let $ \mu:=\mu^{[t]}(\cG)$. If $\mu=\lambda_{\max}(\cG)$, then the inequality follows from the same elementary estimate as in \cite[Equation (3.30)]{Xu2025}, with $1$ replaced by $1-t$. Thus we may assume that $\mu<\lambda_{\max}(\cG)$.

By Lemma \ref{lem:mixed-Xu-346}, $\lct^{[t]} \left(X,\F;I^{(\mu)}_{\bullet,\cG}\right)=1$. Hence, for every $0<\alpha<1$, there exists a divisorial valuation $v$ over $X$ such that
\[
        0<\alpha A^{[t]}_{X,\F}(v) \le  v\!\left(I^{(\mu)}_{\bullet,\cG}\right).
\]
We now define $f_v(a):=v\!\left(I^{(a)}_{\bullet,\cG}\right)$. By the proof of Lemma \ref{lem:mixed-Xu-344}, $f_v$ is convex and non-decreasing on $(-\infty,\lambda_{\max}(\cG))$. Let $\xi>0$ be a left derivative of $f_v$ at $a=\mu$. Then for all $a<\lambda_{\max}(\cG)$,
\[
        f_v(a) \ge f_v(\mu)+\xi(a-\mu) \ge \alpha A^{[t]}_{X,\F}(v)+\xi(a-\mu).
\]

For $I_{m,ma}\tau^{m(e_+-a)} \subset \mathfrak I_m(\cG)$, $\widetilde v$ we have
\[
\begin{aligned}
\frac{1}{m}\widetilde v \left(I_{m,ma}\tau^{m(e_+-a)}\right)&= \frac{1}{m}v(I_{m,ma}) +\xi(e_+-a)        \\
&\ge f_v(a)+\xi(e_+-a) \\
&\ge \alpha A^{[t]}_{X,\F}(v)+\xi(e_+-\mu).
\end{aligned}
\]
Taking the minimum over all summands gives
\[
        \frac{1}{m}\widetilde v(\mathfrak I_m(\cG))\ge\alpha A^{[t]}_{X,\F}(v)+\xi(e_+-\mu).
\]
Therefore, by the valuative formula for the threshold of $X_0$,
\[
\begin{aligned}
        c_m^{[t]}(\cG,e_+)&\le \frac{A^{[t]}_{X\times\A^1,\F\times\A^1}(\widetilde v) -\frac{1}{m}\widetilde v(\mathfrak I_m(\cG))}{\widetilde v(X_0)}  \\
        &\le\mu-e_++(1-t)+\frac{(1-\alpha)A^{[t]}_{X,\F}(v)}{\xi}.
\end{aligned}
\]
Since $f_v(e_-)=0$, while $f_v(\mu)\ge\alpha A^{[t]}_{X,\F}(v)$, convexity  gives
\[
        \xi(\mu-e_-)  \ge f_v(\mu)  \ge \alpha A^{[t]}_{X,\F}(v),
\]
and hence
\[
\frac{(1-\alpha)A^{[t]}_{X,\F}(v)}{\xi} \le \frac{1-\alpha}{\alpha}(\mu-e_-) \longrightarrow0.
\]
Passing first to the limit in $m$ and then letting $\alpha\to 1$, we get
\[
        c_\infty^{[t]}(\cG,e_+)  \le   \mu-e_++(1-t),
\]
which equivalently shows that $\mathbf{L}^{[t]}(\cG)\le \mu^{[t]}(\cG)$.

We now prove the reverse inequality $\mu^{[t]}(\cG)\le \mathbf{L}^{[t]}(\cG)$. If $\mathbf{L}^{[t]}(\cG)=\lambda_{\max}(\cG)$ there is nothing to prove. Thus, we assume that $\mathbf{L}^{[t]}(\cG)<\lambda_{\max}(\cG)$. 

Since both $\mathfrak I_m(\cG)$ and $X_0$ are $\G_m$-invariant, we compute the threshold on a $\G_m$-equivariant foliated log resolution which principalises $\mathfrak I_m(\cG)$. By Lemma \ref{lem:equivariant-mixed-lct}, the mixed log canonical threshold is the minimum of finitely many ratios attached to divisors on this resolution. Since $\G_m$ is connected, it fixes each divisor in this finite set. Thus, for each sufficiently divisible $m$, we may choose a $\G_m$-invariant divisorial valuation $\widetilde w_m$ computing $c_m^{[t]}(\cG,e_+)$, normalised by $\widetilde w_m(X_0)=1$.

By \cite[Lemma 1.33]{Xu2025}, $\widetilde w_m$ is the weight-one  extension of its restriction $w_m:=\widetilde w_m|_{K(X)}$. Hence Equation \eqref{eq: mixed ld extension} gives
\[
A^{[t]}_{X\times\mathbb A^1,\F\times\mathbb A^1}(\widetilde w_m)= A^{[t]}_{X,\F}(w_m)+(1-t).
\]
By the definition of $c_m^{[t]}$,
\[
\begin{aligned}
        c_m^{[t]}(\cG,e_+)+e_+-(1-t)&={}A^{[t]}_{X,\F}(w_m) - \frac{1}{m}\widetilde w_m(\mathfrak I_m(\cG)) +e_+ .
\end{aligned}
\]
On the other hand,
\[
        \frac{1}{m}\widetilde w_m(\mathfrak I_m(\cG))  =  \min_i \left\{ \frac{1}{m}w_m(I_{m,me_+-i})+\frac{i}{m} \right\}.
\]
Thus, for every real number $a$,
\[
        \frac{1}{m}w_m(I_{m,\lceil ma\rceil})  \ge  A^{[t]}_{X,\F}(w_m) -  \bigl(c_m^{[t]}(\cG,e_+)+e_+-(1-t)\bigr) +a  +O(1/m).
\]
We next prove the uniform discrepancy estimate needed to pass to the limit. Set
\[
a_0:=\frac12\left(\lambda_{\max}(\cG)+\mathbf L^{[t]}(\cG)\right), \qquad \gamma:=\lambda_{\max}(\cG)-\mathbf L^{[t]}(\cG)>0.
\]
Since
\[
c_m^{[t]}(\cG,e_+)+e_+-(1-t) \longrightarrow \mathbf L^{[t]}(\cG),
\]
the preceding inequality with $a=a_0$ implies, for all large $m$,
\[
\frac1m w_m(I_{m,\lceil ma_0\rceil}) -A^{[t]}_{X,\F}(w_m) \ge \frac{\gamma}{4}.
\]
We now put
\[
c_0:=\lct^{[t]} \left(X,\F;I^{(a_0)}_{\bullet,\cG}\right)>0.
\]
By Lemma \ref{lem:finite-level-mixed-lct-approx}, for all large $m$, $m\lct^{[t]} \left(X,\F;I_{m,\lceil ma_0\rceil}\right)\ge \frac{c_0}{2}$. Hence,
\[
\frac1m w_m(I_{m,\lceil ma_0\rceil}) \le \frac{2}{c_0}A^{[t]}_{X,\F}(w_m).
\]
Combining the last two inequalities gives the uniform bound
\[
        A^{[t]}_{X,\F}(w_m) \ge \delta_0:=\frac{c_0\gamma}{8}>0.
\]

We may now take $a=\mathbf L^{[t]}(\cG)$ in the preceding base-ideal estimate. The convergence of $c_m^{[t]}$ and the bound $A^{[t]}_{X,\F}(w_m)\ge\delta_0$ give
\[
        \liminf_m\frac{w_m(I_{m,\lceil m\mathbf{L}^{[t]}(\cG)\rceil})}{mA^{[t]}_{X,\F}(w_m)}\ge 1.
\]

Now, the finite-level valuative formula gives
\[
m\lct^{[t]} \left(X,\F;I_{m,\lceil m\mathbf L^{[t]}(\cG)\rceil}\right) \le \frac{mA^{[t]}_{X,\F}(w_m)} {w_m(I_{m,\lceil m\mathbf L^{[t]}(\cG)\rceil})},
\]
and Lemma \ref{lem:finite-level-mixed-lct-approx} identifies the limit of the left side with the asymptotic threshold. Consequently, $\lct^{[t]} \left( X,\F; I^{(\mathbf{L}^{[t]}(\cG))}_{\bullet,\cG} \right) \le 1$. Because the threshold at $\mathbf L^{[t]}(\cG)$ is finite, the strict-decrease statement in Lemma \ref{lem:mixed-Xu-344} therefore shows that the inequality above implies $\mu^{[t]}(\cG)\le \mathbf{L}^{[t]}(\cG)$.

Combining the two inequalities gives
\[
        \mu^{[t]}(\cG)=\mathbf{L}^{[t]}(\cG) = c_\infty^{[t]}(\cG,e_+)+e_+-(1-t).
\]
The independence of $e_+$ follows immediately from the formula, since $\mu^{[t]}(\cG)$ was defined without reference to $e_+$. Since the integer flag ideals of $\cG$ and $\cG_{\mathbb Z}$ agree, the same identity also gives
\[
\mu^{[t]}(\cG)=\mu^{[t]}(\cG_{\mathbb Z}), \qquad \Ding^{[t]}(\cG)=\Ding^{[t]}(\cG_{\mathbb Z}).
\]
\end{proof}

\begin{lemma}
\label{lem:mixed-filtration-approximation}
Let $\cG^\bullet R$ be a linearly bounded multiplicative filtration and let $\cG^{(q)}$ be its finitely generated approximants. Then 
\[
        \Ding^{[t]}(\cG^{(q)})\longrightarrow \Ding^{[t]}(\cG),
\]
and
\[
        J(\cG^{(q)})\longrightarrow J(\cG).
\]
\end{lemma}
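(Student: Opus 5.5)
We split the statement into its three ingredients: $S_L(\cG^{(q)})\to S_L(\cG)$, $\lambda_{\max}(\cG^{(q)})\to\lambda_{\max}(\cG)$, and $\mu^{[t]}(\cG^{(q)})\to\mu^{[t]}(\cG)$. Since $\Ding^{[t]}=\mu^{[t]}-S_L$ and $J=\lambda_{\max}-S_L$, these three convergences suffice. By the last part of Lemma~\ref{lem:slope-flag-ideal-comparison}, $\mu^{[t]}(\cG)=\mu^{[t]}(\cG_{\mathbb Z})$, and $S_L$, $\lambda_{\max}$ also agree for $\cG$ and $\cG_{\mathbb Z}$, since the jumping numbers move by at most $1$ at level $m$. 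We may therefore replace $\cG$ by $\cG_{\mathbb Z}$ throughout. The first two convergences involve no discrepancies and are exactly the statements of \cite[Definition--Lemma 3.56 and the surrounding results]{Xu2025} for approximating sequences: the $S$-invariant and $\lambda_{\max}$ are continuous along minimal approximants because $\cG^{(q)}\subset\cG_{\mathbb Z}$ and the volumes $\mathrm{vol}(\cG^{(q),\lambda})$ converge to $\mathrm{vol}(\cG^{\lambda})$ for $\lambda<\lambda_{\max}$. We simply quote these results.

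For the slope we use the flag ideal description $\mu^{[t]}=c^{[t]}_\infty(\cdot,e_+)+e_+-(1-t)$ from Lemma~\ref{lem:slope-flag-ideal-comparison}, with a common $e_\pm$ valid for all $\cG^{(q)}$; this is possible by the construction of the approximants. Since $\cG^{(q)}\subset\cG_{\mathbb Z}$, we have $\mathfrak I_m(\cG^{(q)})\subset\mathfrak I_m(\cG)$, so $c^{[t]}_\infty(\cG^{(q)})\ge c^{[t]}_\infty(\cG)$. This gives $\mu^{[t]}(\cG^{(q)})\ge\mu^{[t]}(\cG)$; alternatively, the same follows directly from $I^{(a)}_{\bullet,\cG^{(q)}}\subset I^{(a)}_{\bullet,\cG}$ and monotonicity of $\lct^{[t]}$.

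For the reverse inequality we use the identity $\mathfrak I_{q\ell}(\cG^{(q)})=\mathfrak I_q(\cG)^\ell$, up to integral closure, from \cite[Lemma 3.59]{Xu2025}. This gives
\[
c^{[t]}_\infty(\cG^{(q)},e_+)=c^{[t]}_q(\cG,e_+)=\lct^{[t]}\bigl(X\times\A^1,\F\times\A^1;\mathfrak I_q(\cG)^{1/q};X_0\bigr),
\]
because the lct of a power scales exactly. Consequently $\mu^{[t]}(\cG^{(q)})=c^{[t]}_q(\cG,e_+)+e_+-(1-t)$. The right-hand side converges to $\mathbf L^{[t]}(\cG)=\mu^{[t]}(\cG)$ as $q\to\infty$ along sufficiently divisible $q$, by the existence of the limit in Definition~\ref{def:mixed-cm-cinfty}. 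This completes the argument.

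The main obstacle is this identification $c^{[t]}_\infty(\cG^{(q)})=c^{[t]}_q(\cG)$. Two points need care. First, the lower levels $d<q$ in $\cG^{(q)}$ are trivial filtrations, and one must check that they do not change the asymptotic flag ideal. Second, the flag ideal of $\cG_{\mathbb Z}$ at level $q$ coincides with that of $\cG$, as noted at the end of Lemma~\ref{lem:slope-flag-ideal-comparison}. If the integral closure issue causes trouble, we would instead argue valuatively. By Lemma~\ref{lem:equivariant-mixed-lct}, $c^{[t]}_q$ is computed by a $\G_m$-invariant divisor. Each such divisor gives an upper bound on $c^{[t]}_\infty(\cG^{(q)})$ via $\widetilde v(\mathfrak I_{q\ell}(\cG^{(q)}))\ge \ell\,\widetilde v(\mathfrak I_q(\cG))$, which is immediate from the definition of $\cG^{(q)}$ by multiplicativity of valuations.
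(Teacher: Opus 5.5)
Your proposal is correct and follows essentially the same route as the paper. Like the paper, you quote Xu for the convergence of $S_L$ and $\lambda_{\max}$, then combine $\mathfrak I_{q\ell}(\cG^{(q)})=\mathfrak I_q(\cG^{(q)})^\ell$ with $\mathfrak I_q(\cG^{(q)})=\mathfrak I_q(\cG)$ to obtain $c_\infty^{[t]}(\cG^{(q)},e_+)=c_q^{[t]}(\cG,e_+)$, and conclude via Lemma \ref{lem:slope-flag-ideal-comparison}; your preliminary bound $\mu^{[t]}(\cG^{(q)})\ge\mu^{[t]}(\cG)$ is valid but redundant, since that exact identity already yields the full convergence.
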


\begin{proof}
By \cite[Theorem 3.58]{Xu2025}, $S_L(\cG^{(q)})\longrightarrow S_L(\cG)$. Also, by \cite[Theorem 3.60]{Xu2025} we have $\lambda_{\max}(\cG^{(q)}) \longrightarrow \lambda_{\max}(\cG)$ and therefore $J(\cG^{(q)})\to J(\cG)$.

For the mixed slope, since $\cG^{(q)}$ is integer-valued, \cite[Lemma 3.59]{Xu2025} gives $\mathfrak I_{q\ell}(\cG^{(q)}) =\mathfrak I_q(\cG^{(q)})^\ell$ for all $\ell\ge1$. Furthermore, $\cG^{(q)}$ agrees with $\cG_{\mathbb Z}$ in degree $q$, so $\mathfrak I_q(\cG^{(q)}) =\mathfrak I_q(\cG_{\mathbb Z}) =\mathfrak I_q(\cG)$.  It follows directly from the definitions that $c_\infty^{[t]}(\cG^{(q)},e_+) =c_q^{[t]}(\cG,e_+)$. Hence, the right-hand side converges to $c_\infty^{[t]}(\cG,e_+)$. Applying Lemma \ref{lem:slope-flag-ideal-comparison} to $\cG^{(q)}$ and $\cG$ yields
\[
        \mu^{[t]}(\cG^{(q)}) \longrightarrow \mu^{[t]}(\cG).
\]
Finally, $\Ding^{[t]}=\mu^{[t]}-S_L$, so the asserted Ding convergence follows.
\end{proof}

\begin{proposition}\label{lem:fg-filtration-tc-comparison}
Let $\cG$ be an integer-valued linearly bounded multiplicative filtration. Assume that, for some sufficiently divisible $q\in r_0\N$, $\mathfrak I_{q\ell}(\cG)=\mathfrak I_q(\cG)^\ell$ for all $\ell\ge1$. Let $(\X_{\cG},\F_{\cG},\L_{\cG})\to\A^1$ be the normal semiample test configuration obtained from the normalised blow-up of $\mathfrak I_q(\cG)$ and its semiample model, equipped with the saturated birational transform of $\F\times\A^1$. Then, for every $\eta\in[0,1]$,
\[
\Ding^{[t]}(\cG)-\eta J(\cG) \ge \Ding^{[t]}(\X_{\cG},\F_{\cG},\L_{\cG}) -\eta J^{\NA}(\X_{\cG},\L_{\cG}).
\]
\end{proposition}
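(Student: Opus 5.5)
We compare the two sides term by term: the $J$-terms, the energy terms $S_L$ versus the normalised $\overline{\L}^{n+1}$, and the slope $\mu^{[t]}(\cG)$ versus the test-configuration threshold. Since $\eta\ge 0$, it suffices to prove three relations:
\[
S_L(\cG)=\frac{\overline{\L}_{\cG}^{\,n+1}}{(n+1)V},\qquad \lambda_{\max}(\cG)=\lambda_{\max}(\X_{\cG},\L_{\cG}),\qquad \mu^{[t]}(\cG)\ge \lct^{[t]}\bigl(\overline{\X}_{\cG},\overline{\F}_{\cG};D^{[t]};\overline{\X}_{\cG,0}\bigr)-(1-t).
\]
Together with $J^{\NA}(\X_{\cG},\L_{\cG})=\lambda_{\max}-\overline{\L}_{\cG}^{\,n+1}/((n+1)V)$, these give $J(\cG)=J^{\NA}(\X_\cG,\L_\cG)$ and the claimed inequality. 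The $J$-expression is the standard one for semiample test configurations; its identification with $\lambda_{\max}$ uses the assumption $\mathfrak I_{q\ell}=\mathfrak I_q^\ell$.

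\textbf{Step 1: the filtration and the test configuration.} The hypothesis $\mathfrak I_{q\ell}(\cG)=\mathfrak I_q(\cG)^\ell$ says that the Rees algebra of $\cG$ restricted to degrees $q\N$ is the section ring of the semiample model, up to integral closure. By \cite[Lemma 3.61, Definition 3.62]{Xu2025} and the correspondence between finitely generated filtrations and test configurations, the filtration induced by $(\X_\cG,\L_\cG)$ equals $\cG$ in degrees $q\N$, possibly up to saturation, which only enlarges $\cG$. The Duistermaat--Heckman measures therefore agree. This gives the identity for $S_L$, via the standard formula expressing $\overline{\L}^{n+1}/((n+1)V)$ as the barycentre of the DH measure, and likewise the identity for $\lambda_{\max}$. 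Foliations play no role in this step, so it is a direct citation of \cite[\S3.4--3.6]{Xu2025}.

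\textbf{Step 2: the slope comparison.} This is the main step. By Lemma \ref{lem:slope-flag-ideal-comparison}, $\mu^{[t]}(\cG)=c_\infty^{[t]}(\cG,e_+)+e_+-(1-t)$. Finite generation gives $c_\infty^{[t]}=c_q^{[t]}$, exactly as in the proof of Lemma \ref{lem:mixed-filtration-approximation}. So it suffices to show
\[
\lct^{[t]}\bigl(X\times\A^1,\F\times\A^1;\mathfrak I_q^{1/q};X_0\bigr)+e_+\ \ge\ \lct^{[t]}\bigl(\overline{\X}_\cG,\overline{\F}_\cG;D^{[t]};\overline{\X}_{\cG,0}\bigr).
\]
Let $\Y_q\to X\times\A^1$ be the normalised blow-up, with exceptional Cartier divisor $E$ satisfying $\mathfrak I_q\cO_{\Y_q}=\cO(-E)$, and let $\phi:\Y_q\to\X_\cG$ be the semiample contraction. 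Then:

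\begin{enumerate}
\item $\phi^*\L_\cG=p_1^*L-\frac1q E+e_+\Y_{q,0}$, where the $e_+$-shift comes from the $\tau^{m(e_+-e_-)}$ normalisation.
\item $K^{[t]}$ of the product pulls back compatibly, since the saturated birational transform of $\F\times\A^1$ on $\Y_q$ pushes forward to $\F_\cG$ (Lemma \ref{lem:compatible-flag-blowup}).
\item Consequently $\phi^*D^{[t]}$ and the pair $(\F\times\A^1,\frac1q E)$ have the same mixed log discrepancy function on $\G_m$-invariant divisorial valuations, up to the vertical shift $e_+X_0$.
\end{enumerate}

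Using Lemma \ref{lem:equivariant-mixed-lct}, both thresholds are computed by $\G_m$-invariant divisors. Comparing the two formulas valuation by valuation, using the extension formula \eqref{eq: mixed ld extension}, gives equality. In general one gets only the inequality, since $\phi^*D^{[t]}$ need not be $\Q$-Cartier on $\Y_q$ and the Odaka--Wang convention only gives an inequality of discrepancies. The shift by $-(1-t)$ appears on both sides by construction.

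I expect the main obstacle to be the bookkeeping for the foliated canonical class: checking that $\phi_*K_{\F_{\Y_q}}=K_{\F_\cG}$ and that the vertical divisor $\X_0$ contributes only through the $(1-t)K_{\X}$ part. Lemma \ref{lem:compatible-flag-blowup} and \cite[Theorem 7.13]{Pap26} should handle the invariance of $\X_0$.
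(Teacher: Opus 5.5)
Your proposal follows essentially the paper's route: the crepant identity $p^*(K^{[t]}_{\overline\X_{\cG}/\PP^1,\overline\F_{\cG}}+D^{[t]})=\rho^*K^{[t]}_{X\times\PP^1/\PP^1,\F\times\PP^1}+\frac1qE$ on a common model identifies the test-configuration threshold with $c_q^{[t]}(\cG,e_+)=c_\infty^{[t]}(\cG,e_+)$, and the section-ring comparison of \cite[(3.42), (3.43)]{Xu2025} handles $S_L$ and $\lambda_{\max}$. Your $e_+\Y_{q,0}$ twist of $\L_{\cG}$ only absorbs the paper's explicit $e_+$ shift, which is harmless because both sides are invariant under such twists.

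There are two things to correct.

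First, the filtration induced by $\X_{\cG}$ is only the saturation of $\cG$. So, in your normalisation, you should claim $S_L(\cG)\le S_L(\cG_{\X})$ and $\lambda_{\max}(\cG)\le\lambda_{\max}(\cG_{\X})$, not equality of Duistermaat--Heckman measures. These inequalities are exactly what the paper uses, and they suffice because both $\eta$ and $1-\eta$ are nonnegative.

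Second, the threshold comparison is in fact an equality, not merely an inequality. What gets pulled back is the $\Q$-Cartier class $K^{[t]}+D^{[t]}\sim_{\Q}-\L_{\cG}$, not $D^{[t]}$ alone, so your worry about $\phi^*D^{[t]}$ failing to be $\Q$-Cartier does not arise.
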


\begin{proof}
We follow the converse direction of the proof of \cite[Theorem 3.63]{Xu2025}. Let $p:\Y\to\overline\X_{\cG}$ and $\rho:\Y\to X\times\mathbb P^1$ be the common normalised graph, and write $\mathfrak I_q(\cG)\cO_\Y=\cO_\Y(-E)$. We have
\[
        p^*\overline\L_{\cG} =\rho^*p_1^*L-\frac1qE.
\]
By the definition of the correction divisor,
\[
\begin{aligned}
p^*\left(K^{[t]}_{\overline\X_{\cG}/\mathbb P^1,
\overline\F_{\cG}}+D^{[t]}_{\X_{\cG},\F_{\cG},\L_{\cG}} \right)
&=-p^*\overline\L_{\cG}\\
&=\rho^*K^{[t]}_{X\times\mathbb P^1/\mathbb P^1, \F\times\mathbb P^1}+\frac1qE.
\end{aligned}
\]
Thus, taking mixed discrepancies of both sides shows that the two threshold inequalities agree for every divisorial valuation over the common graph; hence
\[
\lct^{[t]} \left( \overline\X_{\cG},\overline\F_{\cG}; D^{[t]}_{\X_{\cG},\F_{\cG},\L_{\cG}}; (\overline\X_{\cG})_0 \right) =c_q^{[t]}(\cG,e_+),
\]
and $c_q^{[t]}(\cG,e_+)=c_\infty^{[t]}(\cG,e_+)$. Combining the two, we obtain
\[
\mathbf L^{[t]}(\cG) = \lct^{[t]} \left( \overline\X_{\cG},\overline\F_{\cG}; D^{[t]}_{\X_{\cG},\F_{\cG},\L_{\cG}}; (\overline\X_{\cG})_0 \right) +e_+-(1-t).
\]

The section-ring argument in \cite[Equations (3.42) and (3.43)]{Xu2025} is independent of discrepancies and gives $\lambda_{\max}(\cG)-e_+ \le \lambda_{\max}(\cG_{\X})$ and $S_L(\cG)-e_+ \le S_L(\cG_{\X})$, where here $\cG_{\X}$ is the filtration induced by the constructed test configuration. Hence, we obtain
\[
\begin{aligned}
\Ding^{[t]}(\cG)-\eta J(\cG) &=\mathbf L^{[t]}(\cG) -(1-\eta)S_L(\cG)-\eta\lambda_{\max}(\cG)\\
&\ge \Ding^{[t]}(\X_{\cG},\F_{\cG},\L_{\cG}) -\eta J^{\NA}(\X_{\cG},\L_{\cG}),
\end{aligned}
\]
which proves the assertion.
\end{proof}

\begin{theorem}
\label{thm:mixed-ding-compatible-filtrations}
Let $(X,\F,t)$ be a $t$-K-semistable adjoint Fano foliated structure. Then for every linearly bounded multiplicative filtration $\cG^\bullet R(X,L)$, we have $\Ding^{[t]}(\cG)\ge 0$.

If $(X,\F,t)$ is uniformly $t$-K-stable with constant $\delta>0$, then, for every $0<\eta<\min\{\delta,1\}$,
\[
        \Ding^{[t]}(\cG)\ge \eta J(\cG)
\]
for every linearly bounded multiplicative filtration $\cG^\bullet R(X,L)$.
\end{theorem}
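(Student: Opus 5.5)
The plan is to reduce to finitely generated filtrations and then compare with test configurations. Fix a linearly bounded multiplicative filtration $\cG^\bullet R(X,L)$. By Lemma \ref{lem:slope-flag-ideal-comparison}, $\Ding^{[t]}(\cG)=\Ding^{[t]}(\cG_{\mathbb Z})$. The same holds for $J$, since $\lambda_{\max}$ and $S_L$ change by at most $O(1/m)$ per degree under rounding, and this washes out asymptotically. We may therefore assume $\cG$ is integer-valued. Since $(X,\F,t)$ is $t$-K-semistable, Theorem \ref{thm: k-stab implies klt} shows it is klt, so the results of this section apply.

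Next we take the minimal approximants $\cG^{(q)}$ for sufficiently divisible $q\in r_0\N$. By \cite[Lemma 3.59]{Xu2025} they satisfy $\mathfrak I_{q\ell}(\cG^{(q)})=\mathfrak I_q(\cG^{(q)})^\ell$, so Proposition \ref{lem:fg-filtration-tc-comparison} applies to each of them. Let $(\X_q,\F_q,\L_q)$ be the normal semiample $\F$-compatible test configuration of Proposition \ref{prop:approximants-TC}. For every $\eta\in[0,1]$ we then get
\[
\Ding^{[t]}(\cG^{(q)})-\eta J(\cG^{(q)})\ \ge\ \Ding^{[t]}(\X_q,\F_q,\L_q)-\eta J^{\NA}(\X_q,\L_q).
\]
By Remark \ref{rem: ding in terms of semiample} the right-hand side is unchanged if we pass to the ample model, so the stability hypotheses (stated for ample test configurations) can be applied to it.

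\textbf{Semistable case.} By \cite[Corollary 7.10]{Pap26}, $t$-K-semistability is equivalent to $t$-Ding-semistability, so $\Ding^{[t]}(\X_q,\F_q,\L_q)\ge 0$. Taking $\eta=0$ gives $\Ding^{[t]}(\cG^{(q)})\ge 0$ for all $q$. Letting $q\to\infty$ and using Lemma \ref{lem:mixed-filtration-approximation} yields $\Ding^{[t]}(\cG)\ge 0$.

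\textbf{Uniformly stable case.} Here we need the Ding analogue of uniform stability: $\Ding^{[t]}(\X,\F_\X,\L)\ge \eta J^{\NA}(\X,\L)$ for $0<\eta<\min\{\delta,1\}$. I would obtain it from $\DF^{[t]}\ge\delta J^{\NA}$ in the same way \cite[Corollary 7.10]{Pap26} obtains Ding-semistability. One runs the adjoint foliated special degeneration process (as in \cite{Pap26}, following Li--Xu / Fujita) to reduce to special $\F$-compatible test configurations, on which $\Ding^{[t]}=\DF^{[t]}$. Each step of that process decreases $\DF^{[t]}-\eta J^{\NA}$ for $\eta\le 1$, because $J^{\NA}$ changes by a controlled amount, exactly as in the Fano case. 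This gives the inequality for all normal $\F$-compatible test configurations, perhaps with the loss from $\delta$ to any $\eta<\delta$ absorbed by the strict inequality. Combining it with the displayed comparison gives $\Ding^{[t]}(\cG^{(q)})\ge\eta J(\cG^{(q)})$, and Lemma \ref{lem:mixed-filtration-approximation} (convergence of both $\Ding^{[t]}$ and $J$) lets us pass to the limit.

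\textbf{Main obstacle.} The delicate step is the uniform transfer from $\DF^{[t]}\ge\delta J^{\NA}$ to the Ding inequality with $\eta$. If \cite{Pap26} only records the semistable equivalence, I would first try to run its degeneration argument while tracking $J^{\NA}$. Failing that, I would use the general inequality $\DF^{[t]}\ge\Ding^{[t]}$ together with the fact that the special degeneration process preserves $\Ding^{[t]}-\eta J^{\NA}$ monotonically; this is where the restriction $\eta<1$ enters.
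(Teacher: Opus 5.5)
Your proposal is correct and follows the paper's proof essentially step for step. The paper also reduces to $\cG_{\mathbb Z}$, uses the minimal approximants, compares each one with its test configuration via Proposition~\ref{lem:fg-filtration-tc-comparison}, and passes to the limit with Lemma~\ref{lem:mixed-filtration-approximation}. The one step you flag as the main obstacle, namely $\Ding^{[t]}(\X_q,\F_q,\L_q)\ge \eta J^{\NA}(\X_q,\L_q)$ in the uniformly $t$-K-stable case, is not re-derived in the paper; it is simply quoted from \cite[Corollary 7.10]{Pap26}, so no new degeneration argument is needed.
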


\begin{proof}
Replacing $\cG$ by $\cG_{\mathbb Z}$ does not change $\Ding^{[t]}$ or $J$, by Lemma \ref{lem:slope-flag-ideal-comparison} and \cite[Lemma 3.28]{Xu2025}. We may therefore assume that $\cG$ is integer-valued. Let $\cG^{(q)}$ be its minimal approximants. By Proposition \ref{prop:approximants-TC}, each $\cG^{(q)}$ defines a normal semiample $\F$-compatible test configuration $(\X_q,\F_q,\L_q)\to \A^1$.

Since $(X,\F,t)$ is $t$-K-semistable, it is $t$-Ding-semistable by \cite[Corollary 7.10]{Pap26}. By Remark \ref{rem: ding in terms of semiample}, we may test on $(\X_q,\F_q,\L_q)$. Hence, $\Ding^{[t]}(\X_q,\F_q,\L_q)\ge 0$, while Proposition \ref{lem:fg-filtration-tc-comparison} gives
\[
\Ding^{[t]}(\cG^{(q)}) \ge \Ding^{[t]}(\X_q,\F_q,\L_q) \ge0
\]
for every $q$.

We now pass to the limit. By Lemma \ref{lem:mixed-filtration-approximation}, $\Ding^{[t]}(\cG^{(q)})\to \Ding^{[t]}(\cG)$. Therefore
\[
        \Ding^{[t]}(\cG)\ge 0.
\]

Assume now that $(X,\F,t)$ is uniformly $t$-K-stable with constant $\delta>0$, and fix $0<\eta<\min\{\delta,1\}$. By \cite[Corollary 7.10]{Pap26}, $\Ding^{[t]}(\X_q,\F_q,\L_q) \ge \eta J^{\NA}(\X_q,\L_q)$. Using Proposition \ref{lem:fg-filtration-tc-comparison} with this value of $\eta$ gives
\[
\Ding^{[t]}(\cG^{(q)})-\eta J(\cG^{(q)}) \ge \Ding^{[t]}(\X_q,\F_q,\L_q) -\eta J^{\NA}(\X_q,\L_q) \ge0.
\]
Lemma \ref{lem:mixed-filtration-approximation} gives convergence of both terms. Letting $q\to\infty$, we obtain
\[
        \Ding^{[t]}(\cG)\ge \eta J(\cG).
\]
This proves both assertions.
\end{proof}

\section{Inversion of adjunction with ideals} \label{sec:F-compatible-IOA}

In this section we prove the inversion-of-adjunction statements needed in the valuative arguments. Throughout, we work over $\C$. The same arguments apply over any algebraically closed field of characteristic zero for which the cited MMP, qdlt, adjunction, and Bertini results are available. Let $f:(X,\F)\to C$ be a flat projective morphism to a smooth curve, and let $S:=X_0$ be a reduced normal Cartier fibre. We assume that $X$ is normal, that $T_{\F}\subset T_{X/C}$, that $\F$ is algebraically integrable, and that $0<t<1$. In particular, $S$ is $\F$-invariant. We assume that the divisor $K^{[t]}_{X/C,\F}+(1-t)S$ is $\Q$-Cartier. Since $S$ is normal, the saturation of $T_{\F}|_S$ in $T_S$ defines the induced foliation on $S$, which we denote by $\F_S$.

We use the  convention for ideals from the previous sections; if $\mathfrak a\subset \cO_X$ is a nonzero ideal and $c\geq 0$, then $(X,\F,B+c\mathfrak a,t)$ is lc if
\[
        A^{[t]}_{X,\F}(E)-\operatorname{ord}_E(B)
        -c\,\operatorname{ord}_E(\mathfrak a)\geq 0
\]
for every prime divisor $E$ over $X$. Here $B$ denotes an effective $\mathbb Q$-divisor or, more generally, a fixed boundary contribution for which the displayed discrepancies are defined. Since all applications concern the fibre $S$, we use the threshold near $S$:
\[
\lct^{[t]}_{X,\F;B;S}(\mathfrak a) := \sup\left\{
c\geq 0 \;\middle|\; (X,\F,B+c\mathfrak a,t)\text{ is lc in a neighbourhood of }S \right\}
\]
and omit the final $S$ from the notation when no confusion is possible. Equivalently, the infimum in the divisorial formula of Equation \eqref{eq: mixed lct} is taken over prime divisors whose centres meet $S$. If $X\to\Spec R$ is proper over a DVR of finite type over $\C$, every open neighbourhood of the entire special fibre is all of $X$, so this local notation agrees with the total-space threshold used later.

We also fix the following terminology. We say that $\mathcal A=(X,\F,(1-t)S+B,t)$ is \emph{plt near $S$} if $A_{\mathcal A}(S)=0$ and every prime divisor $E\ne S$ whose centre meets $S$ satisfies $A_{\mathcal A}(E)>0$. Thus failure of plt near $S$, once $S$ is an lc place, means that there is a prime divisor $E\ne S$, with $c_X(E)\cap S\ne\varnothing$, such that $A_{\mathcal A}(E)\leq0$.

\subsection{Invariant vertical divisors}

We first recall the elementary invariance property of vertical divisors.

\begin{lemma}[{\cite[Lemma 4.6]{Pap26}}] \label{lem:vertical-divisors-invariant}
Let $f:(X,\F)\to C$ be as above, with $T_{\F}\subset T_{X/C}$. Let $E$ be a prime divisor over $X$ which is vertical over $C$. Then $E$ is invariant for the induced foliation on any birational model on which $E$ appears.
\end{lemma}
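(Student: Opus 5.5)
The statement is local near the generic point of the centre of $E$ on a model where $E$ appears. I would therefore work on a birational model $\mu\colon Y\to X$, with induced foliation $\F_Y$, on which $E$ is a prime divisor. Since $E$ is vertical over $C$, its image in $C$ is the closed point $0$, so $E\subset (f\circ\mu)^{-1}(0)$. The hypothesis $T_\F\subset T_{X/C}$ says that $\F$ is tangent to the fibres of $f$. Because $\mu$ is an isomorphism over a dense open set and $T_{\F_Y}$ is the saturation of the transform of $T_\F$, the inclusion $T_{\F_Y}\subset T_{Y/C}$ holds generically. Both sheaves are saturated in $T_Y$, so the inclusion holds everywhere on $Y$.

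Next I would localise at the generic point of $E$. There $Y$ is regular of codimension one, so I can choose local coordinates $(z,y_2,\dots,y_n)$ with $E=\{z=0\}$. Write $g:=f\circ\mu$ and let $s$ be a uniformiser at $0\in C$. Then $g^*s=u z^k$ with $u$ a unit and $k\ge 1$.

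The key computation is that any local section $\partial$ of $T_{\F_Y}$ lies in $T_{Y/C}$, so $\partial(g^*s)=0$. Expanding gives
\[
0=\partial(u z^k)=z^{k-1}\bigl(z\,\partial u+k u\,\partial z\bigr).
\]
Since we are in characteristic zero, this yields $\partial z = -z\,\partial u/(ku)\in (z)$. So every local vector field of $\F_Y$ preserves the ideal of $E$, which is exactly the statement that $E$ is $\F_Y$-invariant, i.e. $\varepsilon_\F(E)=0$.

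The main obstacle is the saturation step. One must make sure that "$T_{\F_Y}$ is contained in the relative tangent sheaf" makes sense even where $g$ is not smooth. For this I would define $T_{Y/C}$ as the kernel of $dg\colon T_Y\to g^*T_C$. Its sections are the derivations annihilating $g^*\mathcal O_C$, and this kernel is saturated in $T_Y$ because $g^*T_C$ is torsion-free. Generic containment plus saturation then gives global containment. Since invariance can be checked at the generic point of $E$, non-regular points of $Y$ play no role.
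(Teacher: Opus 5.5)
Your argument is correct. The paper gives no proof of this lemma; it simply imports it from \cite[Lemma 4.6]{Pap26}, so there is no internal argument to compare against.

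What you wrote is the standard self-contained proof, and it holds up step by step:
\begin{enumerate}
\item Since $C$ is a smooth curve, $g^*T_C$ is a line bundle, and the quotient $T_Y/T_{Y/C}$ embeds in it. So $T_{Y/C}=\ker(T_Y\to g^*T_C)$ is saturated, which upgrades the generic inclusion $T_{\F_Y}\subset T_{Y/C}$ to a global one. Only the saturation of $T_{Y/C}$ is needed here, not that of $T_{\F_Y}$.
\item In the DVR $\cO_{Y,E}$, the relation $\partial(uz^k)=0$ with $k\ge 1$ and $k$ invertible forces $\partial z\in(z)$.
\item This gives invariance, because a function vanishing at the generic point of the prime divisor $E$ vanishes along all of $E$.
\end{enumerate}

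Two cosmetic corrections:
\begin{enumerate}
\item At the generic point of $E$ the local ring is one-dimensional, so only the uniformiser $z$ exists there. The coordinates $y_2,\dots,y_n$ make sense only near a general closed point of $E$, and they are not needed.
\item The image of $E$ is some closed point of $C$, not necessarily $0$.
\end{enumerate}

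Your computation also works verbatim when $C$ is replaced by $\Spec R$ for a DVR $R$, with the uniformiser $\pi$ in place of $s$. This matters because the paper actually applies the lemma in that form, for instance to $S=X_\kappa$ and to vertical exceptional divisors in Theorem \ref{thm:dvr-fibre-ioa}, even though the cited statement is phrased for a smooth curve.
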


\subsection{Divisorial representatives and single-place extraction}

\begin{lemma} \label{lem:general-divisorial-representative}
Let $\cA=(X,\F,B+\lambda\mathfrak a,t)$ be lc in a neighbourhood of $S$, where $\lambda\in\Q_{\geq0}$, and let $E$ be an lc place of $\cA$. After shrinking around $S$, there is an effective $\Q$-Cartier $\Q$-divisor $D$ such that
\begin{enumerate}
\item $(X,\F,B+D,t)$ is lc;
\item $E$ is an lc place of $(X,\F,B+D,t)$; and
\item $\ord_E(D)=\lambda\ord_E(\mathfrak a)$.
\end{enumerate}
\end{lemma}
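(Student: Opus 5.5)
The idea is to replace the ideal $\mathfrak a$ by a general divisorial representative, in the spirit of the classical construction for ideals in the lc setting. Choose a positive integer $N$ with $N\lambda\in\Z$ and, after shrinking around $S$ (so that $X$ may be taken quasi-projective, or affine locally), fix an ample Cartier divisor $H$ with $\mathfrak a\otimes\cO_X(kH)$ globally generated for some $k$. Take $k' \gg 0$ general members $g_1,\dots,g_{k'}$ of the linear system $|\mathfrak a\otimes\cO_X(kH)|$, where $k'$ is divisible by $N\lambda$ and at least the dimension, and set
\[
D:=\frac{\lambda}{k'}\sum_{i=1}^{k'}\operatorname{div}(g_i)-(\text{the }kH\text{ part}),
\]
or, working locally, $D=\frac{\lambda}{k'}\sum_i (f_i)$ with $f_i$ general $\C$-linear combinations of a fixed set of generators of $\mathfrak a$. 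Then $D$ is an effective $\Q$-Cartier $\Q$-divisor, since it is a combination of Cartier divisors.

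To check (3) and (1), pass to a $\F$-compatible foliated log resolution $\pi:W\to X$ that principalises $\mathfrak a$ as $\mathfrak a\cO_W=\cO_W(-F)$ and on which $E$ appears. This resolution exists for algebraically integrable foliations by the toroidal/foliated log resolutions of \cite{CHLMSSX24,CHLMSSX25}, as in the proof of Lemma \ref{lem:equivariant-mixed-lct}. For general $f_i$ we get $\pi^*(f_i)=F+M_i$, where the $M_i$ are general members of the base-point-free system $|\pi^*(\ldots)-F|$. In particular $\ord_E(f_i)=\ord_E(\mathfrak a)$, which gives (3). For every divisor $G$ on $W$ we then have $\ord_G(D)=\lambda\ord_G(\mathfrak a)$.

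The key point is the discrepancy computation along the moving parts. Write
\[
\pi^*(K^{[t]}+B+D)=K^{[t]}_W+\sum(\ldots)+\lambda F+\tfrac{\lambda}{k'}\sum_i M_i .
\]
The coefficients on $\pi$-exceptional divisors and on the components of $F$ agree with those of $\cA$, so they are lc there. The new components $M_i$ enter with small coefficient $\lambda/k'<1$.

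The main obstacle is to show that $(W,\F_W,\sum(\ldots)+\frac{\lambda}{k'}\sum M_i,t)$ remains sub-lc and that no new bad divisors appear over intersections of the $M_i$ with the foliated boundary. For this I would use a foliated Bertini theorem: general members of a base-point-free system are transverse to $\F_W$ and meet the toroidal boundary in a simple normal crossing/toroidal way. Transverse divisors have $\varepsilon_\F=1$, so adding $\frac{\lambda}{k'}M_i$ costs at most $\frac{\lambda}{k'}$ in mixed discrepancy, and $t\varepsilon+(1-t)=1$ absorbs it. Taking $k'$ large and the $M_i$ general (so that at most $\dim X$ of them meet at any point) keeps the sum of coefficients at any stratum below the threshold that log smoothness requires. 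Hence $(X,\F,B+D,t)$ is lc, which is (1).

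Finally, since $E$ lies on $W$ and $\ord_E(D)=\lambda\ord_E(\mathfrak a)$, we have $A_{(X,\F,B+D,t)}(E)=A_{\cA}(E)=0$, which gives (2). If $E$ does not appear on a log resolution, I would first extract it by a further blow-up and include it in the boundary before choosing the $f_i$.
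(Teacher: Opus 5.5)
Your construction is the one the paper uses. You principalise $\mathfrak a$ on a foliated log resolution $W$ on which $E$ appears, twist so that the moving part $\pi^*(kH)-F$ is base-point-free near the preimage of $S$, add a general member of that class with small coefficient, and descend. Items (2) and (3) then follow exactly as you say, because the moving part does not contain $E$. (Drop ``minus the $kH$ part'': the zero divisor of $g_i$ is already the effective divisor you want.)

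The gap is in the one non-formal step, log canonicity. It is not true that general members of a base-point-free system are transverse to $\F_W$ and meet the toroidal boundary $\Sigma_W$ toroidally. In general a general member is tangent to the leaves somewhere; for instance, a general conic in $\PP^2$ is tangent to two lines of the pencil through a fixed point. At such points $(W,\F_W,\Sigma_W+M_i)$ is no longer foliated log smooth. So sub-lc cannot be read off from coefficients on $W$. Your count of ``at most $\dim X$ of the $M_i$ through a point'' controls only those coefficients, not the orders of the $M_i$ along divisors lying over the tangency locus.

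Tangency genuinely affects log canonicity. Take the foliation $\F_0=\{dy=0\}$ on $\mathbb A^2$ and the non-invariant curve $\Gamma=\{y=x^2\}$. Two point blow-ups produce an invariant divisor $E_2$ with $A_{\mathbb A^2,\F_0}(E_2)=1$, $A_{\mathbb A^2}(E_2)=3$ and $\ord_{E_2}(\Gamma)=2$. Hence $A^{[t]}_{\mathbb A^2,\F_0}(E_2)=3-2t<2$ for $t>1/2$, so $(\mathbb A^2,\F_0,\Gamma,t)$ is not lc, even though $\Gamma$ has coefficient $1$ and is transverse to $\F_0$ away from the origin.

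What you actually need is a Bertini theorem for algebraically integrable adjoint foliated structures, in which the smallness of the coefficient absorbs the tangency of general members. This is \cite[Theorem 3.28(1)]{CHLMSSX25}. The paper applies it to the crepant transform of $\cA$ on $W$ and the semiample $\Q$-divisor $\lambda(\pi^*(kH)-F)$, obtaining a general effective $L\sim_{\Q}\lambda(\pi^*(kH)-F)$ that does not contain $E$ and keeps the crepant structure lc. With that citation in place of your transversality argument, the rest of your proof goes through unchanged.
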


\begin{proof}
There is nothing to prove if $\lambda=0$, so we assume that $\lambda>0$. Choose a foliated log resolution $\nu:W\to X$ on which $E$ appears and which principalises $\mathfrak a$, i.e. $\mathfrak a\cO_W=\cO_W(-F)$. After twisting by a sufficiently ample Cartier divisor $A$ on $X$, we may assume that $M:=\nu^*A-F$ is base-point-free in a neighbourhood of $\nu^{-1}(S)$. We now apply the Bertini-type theorem \cite[Theorem 3.28(1)]{CHLMSSX25} to the semiample $\Q$-divisor $\lambda M$ and to the crepant transform of $\cA$. It gives a sufficiently general effective $\Q$-divisor $L\sim_{\Q}\lambda M$ such that adding $L$ preserves log canonicity. We choose $L$ so that it does not contain $E$.

Since $\lambda F+L\sim_{\Q}\nu^*(\lambda A)$, this divisor descends to an effective $\Q$-Cartier divisor $D\sim_{\Q}\lambda A$ on $X$, and $\nu^*D=\lambda F+L$. As $L$ does not contain $E$, $\ord_E(D)=\lambda\ord_E(\mathfrak a)$. The crepant transform is lc by the preceding Bertini argument, and its coefficient along $E$ is unchanged from the ideal transform. Thus $E$ remains an lc place. Pushing down proves all three assertions.
\end{proof}

\begin{lemma}\label{lem:single-place-extraction}
Let $\cA=(X,\F,B+\lambda\mathfrak a,t)$ be an lc adjoint foliated structure in a neighbourhood of $S$, where $\lambda\in\Q_{\geq0}$, and suppose that $X$ is potentially klt there. Let $E$ be an exceptional lc place of $\cA$. Then, after shrinking around $S$, there exists a projective birational morphism $\rho:Y\to X$ such that $Y$ is $\Q$-factorial and $E$ is the only $\rho$-exceptional prime divisor.
\end{lemma}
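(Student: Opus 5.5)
First use Lemma \ref{lem:general-divisorial-representative} to replace the ideal by a divisor. After shrinking around $S$, it gives an effective $\Q$-Cartier $\Q$-divisor $D$ such that $(X,\F,B+D,t)$ is lc and $E$ is still an lc place. The statement is then about an lc adjoint foliated structure $\cA_D:=(X,\F,B+D,t)$ with an exceptional lc place $E$. The goal is to extract exactly this place while keeping $\Q$-factoriality, which is the adjoint foliated analogue of \cite[Corollary 1.38]{Xu2025}. Since $\F$ is algebraically integrable, the MMP for algebraically integrable adjoint foliated structures of \cite{CHLMSSX24,CHLMSSX25} is available.

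\textbf{Step 1: a dlt-type model containing $E$.} Take a $\Q$-factorial qdlt (or ACSS) modification $h:W\to X$ of $\cA_D$, as in \cite{CHLMSSX25}, on which $E$ appears; this is possible because $E$ is an lc place. Write
\[
K^{[t]}_{W,\F_W}+B_W=h^*\bigl(K^{[t]}_{X,\F}+B+D\bigr),
\]
where $E$ has the maximal allowed coefficient $t\varepsilon_{\F}(E)+(1-t)$ in $B_W$.

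\textbf{Step 2: run a relative MMP to contract the other exceptional divisors.} Perturb the boundary: lower the coefficient of every $h$-exceptional prime divisor $F\neq E$ slightly, and keep $E$ at full coefficient. Potential kltness of $X$ lets us add a small multiple of a klt boundary $\Delta$ on $X$ with $(X,\Delta)$ klt, so the perturbed structure $(W,\F_W,B'_W,t)$ is still lc and, away from $E$, klt-like. Then
\[
K^{[t]}_{W,\F_W}+B'_W\equiv_X -\sum_{F\neq E} a_F F, \qquad a_F>0.
\]
Run a $(K^{[t]}_{W,\F_W}+B'_W)$-MMP over $X$, using the relative MMP and termination with scaling for algebraically integrable adjoint foliated structures from \cite{CHLMSSX24,CHLMSSX25}. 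By the negativity lemma, this MMP contracts exactly the divisors $F\neq E$ and does not contract $E$, since $E$ has coefficient $0$ in the displayed difference. The output $Y\to X$ is then $\Q$-factorial, and its only exceptional divisor is $E$.

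\textbf{Step 3: check that $E$ survives.} If $E$ were contracted, the output would be a small modification of $X$. This contradicts the fact that $E$ is exceptional and that the MMP only contracts components of the negative part.

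\textbf{Main obstacle.} The expected difficulty is Step 2: running the MMP and getting termination in the adjoint foliated setting, together with controlling the singularities of the perturbation. Full lc singularities do not suffice for the available MMP. I would first try to use potential kltness of $X$ to reduce to the case \cite{CHLMSSX25} covers, namely $\Q$-factorial qdlt structures with a klt ambient variety, for which the MMP with scaling of an ample divisor terminates relatively over the base.

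If termination fails, the fallback is finite generation. Since $-E$ is big over $X$, apply relative finite generation of the adjoint canonical ring over $X$ \cite{CHLMSSX25} to $\bigoplus_m h_*\cO_W(-mE)$, and take $Y$ to be the relative Proj, passing to a small $\Q$-factorialization if needed.
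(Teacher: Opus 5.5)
Your reduction to a divisor via Lemma \ref{lem:general-divisorial-representative} is exactly the paper's first step. The paper then simply applies \cite[Theorem 2.2.3]{CHLMSSX25} to the one-element set $\{E\}$, using that an lc place has discrepancy $-(t\varepsilon_{\F}(E)+(1-t))<0$. The gap is in your hand-made replacement for that theorem, in Step 2, where the sign is wrong.

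Lowering the coefficients of the exceptional $F\neq E$ gives $K^{[t]}_{W,\F_W}+B'_W\equiv_X -N$ with $N=\sum_{F\neq E}a_FF\geq 0$. The negativity-lemma argument you invoke needs the opposite sign, $K+B'\equiv_X N$ with $N\geq 0$. Only with that sign is every contracted divisor a component of $N$, and only then must the final $N_Y$, being exceptional and nef over $X$, vanish. For an MMP on $-N$ neither conclusion holds, and $E$ itself can be contracted.

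Concretely, let $W\to X$ be locally the blow-up of a smooth surface point followed by the blow-up of a point on the exceptional curve. This occurs, for instance, as a dlt modification of the toric pair $(\A^2,\{xy=0\})$, where both curves are lc places. Let $F$ be the strict transform of the first curve and $E$ the second, so $F^2=-2$, $E^2=-1$ and $E\cdot F=1$. Then $(-a_FF)\cdot E<0<(-a_FF)\cdot F$, so the MMP first contracts $E$. After that, $-a_FF$ is nef over $X$, and the output has $F$, not $E$, as its only exceptional divisor. Your Step 3 therefore has no justification either.

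You cannot repair this by raising the coefficients of the $F\neq E$ instead. In a crepant qdlt modification of an lc structure they are lc places, already at the maximal coefficient $t\varepsilon_{\F}(F)+(1-t)$. Adding a small multiple of $\Delta$ to the boundary does not help either, since it only lowers discrepancies. What is needed is the following.
\begin{enumerate}
\item First obtain a \emph{klt} structure on $X$ in which $E$ still has negative discrepancy. One option is a convex combination of $(X,\F,B+D,t)$ with $(X,\Delta)$ regarded as a structure with parameter $0$, which changes the adjoint parameter.
\item Then choose the coefficients of the $F\neq E$ strictly between their crepant values and the maximum. This gives $K+\Gamma\equiv_X N$ with $N\geq 0$ supported exactly on the $F\neq E$.
\item Finally run an MMP with scaling over $X$, whose termination must be justified in the algebraically integrable adjoint foliated setting.
\end{enumerate}
This package is precisely what \cite[Theorem 2.2.3]{CHLMSSX25} provides.

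Your fallback has the same defect. The structure with $K+B_W-\eta E\equiv_X-\eta E$ is still only lc, since the other exceptional divisors remain lc places, so the klt finite generation results do not apply as stated. The small $\Q$-factorialisation of the resulting $\Proj$ is also not justified. Compare Proposition \ref{prop:non-terminal-extraction}, where this canonical-model argument is run only on a klt structure, and only after a $\Q$-factorial one-place extraction has been obtained from \cite[Theorem 2.2.3]{CHLMSSX25}.
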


\begin{proof}
By Lemma \ref{lem:general-divisorial-representative}, there is an effective boundary $D$ for which $(X,\F,B+D,t)$ is lc and $E$ is an lc place. An lc place has discrepancy $-\bigl(t\varepsilon_{\F}(E)+(1-t)\bigr)<0$, so applying \cite[Theorem 2.2.3]{CHLMSSX25} to the one-element set $\{E\}$ gives a projective $\Q$-factorial model over $X$ whose only exceptional prime divisor is $E$, as claimed.
\end{proof}

\begin{lemma}
\label{lem:extracted-place-meets-special-fibre}
Let $S\subset X$ be a normal Cartier divisor and let $\rho:Y\to X$ be a projective birational morphism from a normal variety whose only exceptional prime divisor is $E$. If $c_X(E)\cap S\ne\varnothing$, and $S_Y$ is the strict transform of $S$, then $E\cap S_Y\ne\varnothing$.
\end{lemma}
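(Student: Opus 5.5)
We argue by contradiction, using that $S$ is Cartier and that $\rho$ has a single exceptional prime divisor. Suppose $E\cap S_Y=\varnothing$. Since $S$ is Cartier, $\rho^*S$ is a Cartier divisor on $Y$, and we can write
\[
\rho^*S=S_Y+bE,\qquad b=\ord_E(S)\ge 0.
\]
Because $c_X(E)\cap S\neq\varnothing$ and $c_X(E)=\rho(E)$, some point of $E$ maps into $S$, so $E\cap\rho^{-1}(S)\neq\varnothing$. Moreover $\rho^{-1}(S)$ is the support of $\rho^*S$, hence equals $S_Y\cup E$ if $b>0$ and $S_Y$ if $b=0$. Under our assumption $E\cap S_Y=\varnothing$, we must therefore have $b>0$; otherwise $E$ would not meet $\rho^{-1}(S)=S_Y$.

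Next we use connectedness of fibres. By Zariski's main theorem ($X$ normal, $\rho$ projective birational), every fibre of $\rho$ is connected. Take a point $x\in c_X(E)\cap S$ and set $F:=\rho^{-1}(x)$, which is connected. We then check that $F$ meets both $S_Y$ and $E$. It meets $E$ because $x\in\rho(E)$ and $\rho(E)=c_X(E)$ is closed. It meets $S_Y$ because $S_Y\to S$ is birational and proper, hence surjective, since $S$ is irreducible ($S$ normal and connected near $x$; if needed, replace $S$ by its component through $x$).

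We have $F\subset \rho^{-1}(S)=S_Y\sqcup E$, a disjoint union of closed sets by assumption. Then $F$ splits as the disjoint union of the nonempty closed sets $F\cap S_Y$ and $F\cap E$, contradicting the connectedness of $F$. Hence $E\cap S_Y\neq\varnothing$. The only delicate point is surjectivity of $S_Y\to S$ at $x$, which reduces to irreducibility of $S$ near $x$. Normality gives local irreducibility at $x$, so the component of $S$ through $x$ is unique, and its strict transform surjects onto it.
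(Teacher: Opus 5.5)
Your proof is correct and follows essentially the same route as the paper: your dichotomy $b=0$ versus $b>0$ is exactly the paper's split into $c_X(E)\not\subset S$ versus $c_X(E)\subset S$. The first case is handled directly and the second by connectedness of the fibres of $\rho$. Your check that the fibre over $x\in c_X(E)\cap S$ meets $S_Y$, via surjectivity of $S_Y\to S$, usefully spells out what the paper leaves implicit in ``hence the two components meet.''
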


\begin{proof}
If $c_X(E)\subset S$, then, since $S$ is Cartier, $\rho^{-1}(S)_{\red}=S_Y\cup E$. The fibres of $\rho$ are connected because $X$ is normal and $\rho_*\cO_Y=\cO_X$. Hence the two components meet.

If $c_X(E)\not\subset S$, then $E$ is not a component of $\rho^{-1}(S)$, so $\rho^{-1}(S)_{\red}=S_Y$. We then choose a point of $c_X(E)\cap S$. Its inverse image meets $E$ and is contained in $\rho^{-1}(S)_{\red}=S_Y$. Thus again $E\cap S_Y\ne\varnothing$.
\end{proof}

\begin{lemma}\label{lem:restriction-over-lc-invariant-component}
Let $f\colon (Y,\mathcal F_Y)\to C$ be a flat morphism to a smooth curve, with $Y$ $\mathbb Q$-factorial, and let $S_Y\subset Y$ be a reduced $\mathcal F_Y$-invariant divisor mapping to a closed point of $C$. Let $E\subset Y$ be a distinct prime divisor such that $E\cap S_Y\neq\varnothing$. Suppose that the adjoint boundary on $Y$ has the form $(1-t)S_Y+\delta E+B'$, where $E\not\subset\operatorname{Supp}B'$ and $S_Y\not\subset\operatorname{Supp}(\delta E+B')$. Let $\nu\colon S_Y^\nu\to S_Y$ be the normalisation.

Then the induced adjoint foliated structure on $S_Y^\nu$ is:
\begin{enumerate}
\item not klt if $\delta\geq t\epsilon_{\mathcal F_Y}(E)+(1-t)$;
\item not lc if $\delta> t\epsilon_{\mathcal F_Y}(E)+(1-t)$.
\end{enumerate}
In both cases, the failure occurs along a prime divisor whose
centre lies over $E\cap S_Y$.
\end{lemma}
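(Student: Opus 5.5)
The plan is to use the adjunction formula for adjoint foliated structures along an invariant divisor from \cite{CHLMSSX24,CHLMSSX25}. Since $S_Y$ is $\F_Y$-invariant, it enters the mixed boundary with coefficient $(1-t)$, i.e. it is an lc place of the ambient structure. In that situation adjunction gives
\[
\Bigl(K^{[t]}_{Y,\F_Y}+(1-t)S_Y+\delta E+B'\Bigr)\Big|_{S_Y^\nu} \sim_{\Q} K^{[t]}_{S_Y^\nu,\F_{S_Y^\nu}}+B_{S},
\]
where $B_S$ is the mixed different. Its coefficient along each prime divisor $P\subset S_Y^\nu$ is computed from the local structure of $Y$ and of the boundary along $P$.

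The argument would proceed in three steps.

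\textbf{Step 1: reduce to a general point of $E\cap S_Y$.} Choose an irreducible component $Z$ of $E\cap S_Y$. Since $Y$ is $\Q$-factorial, $E$ is $\Q$-Cartier, so $Z$ has codimension one in $S_Y$ and pulls back to a prime divisor $P$ on $S_Y^\nu$ lying over $E\cap S_Y$. At the generic point of $Z$, the pair $(Y,S_Y+E)$ is a surface germ after cutting by general hyperplanes. The singularity of $Y$ there contributes a nonnegative amount to $\coeff_P B_S$. The components of $B'$ contribute nonnegatively as well, since $B'\ge0$ and $E\not\subset\operatorname{Supp}B'$.

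\textbf{Step 2: compute the contribution of $\delta E$.} Show that $\coeff_P B_S\ge \delta\cdot m$ with $m=\ord_P(E|_{S_Y^\nu})\ge1$, so in particular $\coeff_P B_S\ge\delta$. It remains to compare this with the threshold. On $S_Y^\nu$ with induced foliation $\F_{S}$, the divisor $P$ is non-klt, respectively non-lc, when $\coeff_P B_S\ge$, respectively $>$, $t\epsilon_{\F_S}(P)+(1-t)$. So one must check that $\epsilon_{\F_S}(P)\le\epsilon_{\F_Y}(E)$:
\begin{itemize}
\item If $E$ is $\F_Y$-invariant, then its intersection with the invariant $S_Y$ is invariant for $\F_S$, so $\epsilon_{\F_S}(P)=0$.
\item If $E$ is transverse, then $\epsilon_{\F_S}(P)\le1=\epsilon_{\F_Y}(E)$.
\end{itemize}
Hence $A_{S_Y^\nu,\F_S,B_S}(P)\le t\epsilon_{\F_Y}(E)+(1-t)-\delta$. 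This gives (1) and (2), with the failure located along $P$, whose centre lies over $E\cap S_Y$.

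\textbf{Main obstacle.} I expect the hardest point to be Step 2's foliated part: justifying that the mixed different of \cite{CHLMSSX25} is bounded below by $\delta\cdot\ord_P(E|_{S})$ with the correct foliated weight, in the possibly non-lc ambient situation where adjunction is only stated under lc or qdlt hypotheses. The plan is to take a foliated log resolution of $(Y,S_Y+E+B')$ near the generic point of $Z$ and apply adjunction crepantly upstairs, where the different is explicit. Then push down to $S_Y^\nu$, using that the coefficient of $E$ in the crepant pullback stays $\delta$ and that exceptional contributions are nonnegative by negativity. If the transverse case gives $\epsilon_{\F_S}(P)=1$ with coefficient exactly $\delta$, equality with the threshold $t+(1-t)=1$ gives non-klt, which is exactly what is claimed.
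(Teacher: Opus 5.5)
\textbf{There is a genuine gap in Step 2, in the comparison $\varepsilon_{\F_S}(P)\le\varepsilon_{\F_Y}(E)$.} You claim that invariance of $E$ forces $P$ to be $\F_S$-invariant. This is false, and it fails in the configuration where the paper uses the lemma: a vertical invariant divisor meeting the strict transform of the fibre, as in Theorem~\ref{thm:dvr-fibre-ioa}.

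Take $f=xy\colon Y=\A^2\to C=\A^1$ with $\F_Y=T_{Y/C}$, generated by $x\partial_x-y\partial_y$, and set $S_Y=\{x=0\}$, $E=\{y=0\}$.
\begin{enumerate}
\item Both divisors are invariant, so $\varepsilon_{\F_Y}(E)=0$.
\item The image of $T_{\F_Y}|_{S_Y}$ in $T_{S_Y}$ is generated by $y\partial_y$. Its saturation is all of $T_{S_Y}$, so $P=\{0\}$ is $\F_S$-transverse and $\varepsilon_{\F_S}(P)=1$.
\item The lemma still holds here, but only because $K_{\F_Y}|_{S_Y}=K_{\F_S}+P$. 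The foliated part of the different contributes $t$ at $P$, so the total coefficient is $t+\delta$ and $A(P)=1-t-\delta$.
\end{enumerate}
Your Step 2 only bounds $\coeff_P B_S\ge\delta$. With the correct value $\varepsilon_{\F_S}(P)=1$ this gives $A(P)\le 1-\delta$, which proves nothing for $1-t\le\delta<1$.

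What is missing is a lower bound on the foliated different itself. You need $\varepsilon_{\F_S}(P)-\coeff_P\mathrm{Diff}^{\F}\le\varepsilon_{\F_Y}(E)$, not a comparison of the $\varepsilon$'s alone. At least where $Y$ is smooth and $E$, $S_Y$ cross normally, this holds for invariant $E$: $T_{\F_Y}$ is tangent to both $E$ and $S_Y$, so its image in $T_{S_Y}$ is tangent to $P$. Saturating to a $P$-transverse foliation then forces the determinant to vanish along $P$.

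\textbf{A second problem: $m=\ord_P(E|_{S_Y^\nu})\ge 1$ is false.} $E$ is only $\Q$-Cartier. If $Y$ has a transversal $A_1$-singularity along $E\cap S_Y$, with $E$ and $S_Y$ the two rulings, then $m=\tfrac12$. In that case ``$\delta m$ plus nonnegative singular contributions'' does not reach the threshold.

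You need a sharper form of the different in which the singular contribution compensates for the fractional $m$. For the ordinary part at an index-$n$ point this is $\coeff_P\mathrm{Diff}=\tfrac{n-1}{n}+\tfrac{k\delta}{n}$ with $k\in\Z_{>0}$; you also need an analogous statement for the foliated part. Your resolution plan does not supply this. On a resolution the strict transforms of $E$ and $S_Y$ typically become disjoint, and the $\delta$-dependence enters only through the coefficients of exceptional divisors. Those coefficients must be computed, not merely shown to be nonnegative.

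\textbf{How the paper avoids both issues.} It uses linearity of adjunction in the boundary,
\[
B_{S_Y^\nu}(\delta)=B_{S_Y^\nu}(\delta_0)+(\delta-\delta_0)E|_{S_Y^\nu}, \qquad \delta_0=t\varepsilon_{\F_Y}(E)+(1-t).
\]
It takes the non-klt statement at the critical value $\delta_0$ from the local coefficient computation in \cite[Theorem 1.8]{CHLMSSX24}, which accounts for both parts of the different. It then needs only $r_P=\ord_P(E|_{S_Y^\nu})>0$, not $r_P\ge1$, to get the strict inequality for $\delta>\delta_0$.
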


\begin{proof}
We let $\delta_0:=t\epsilon_{\mathcal F_Y}(E)+(1-t)$. Since $Y$ is $\mathbb Q$-factorial, the restriction $E|_{S_Y^\nu}$ is a well-defined nonzero effective $\mathbb Q$-divisor supported over $E\cap S_Y$. We now choose a prime divisor $P\subset\operatorname{Supp}(E|_{S_Y^\nu})$ and set $r_P:=\operatorname{ord}_P(E|_{S_Y^\nu})>0$.

Let $B_{S_Y^\nu}(\delta)$ denote the boundary induced by adjunction. By the construction of adjunction in
\cite[Theorem 1.8]{CHLMSSX24}, the induced boundary depends
linearly on the ambient boundary, and hence
\[
B_{S_Y^\nu}(\delta) = B_{S_Y^\nu}(\delta_0) + (\delta-\delta_0)E|_{S_Y^\nu}.
\]
The local coefficient calculation in the proof of \cite[Theorem 1.8]{CHLMSSX24} shows that, at the critical coefficient $\delta_0$, the induced structure is not klt along a divisor lying over $E\cap S_Y$. Thus, after choosing such a component $P$, we have
\[
A^{[t]}_{S_Y^\nu,\mathcal F_{S_Y^\nu}}(P) - \operatorname{ord}_P B_{S_Y^\nu}(\delta_0) \leq0.
\]
It follows that
\[
\begin{aligned}
&A^{[t]}_{S_Y^\nu,\mathcal F_{S_Y^\nu}}(P)- \operatorname{ord}_P B_{S_Y^\nu}(\delta)
\\
&\qquad\leq -(\delta-\delta_0)r_P.
\end{aligned}
\]
If $\delta=\delta_0$, the right-hand side is zero, so the induced
structure is not klt. If $\delta>\delta_0$, then the right-hand side is strictly negative because $r_P>0$, so the induced structure is not lc.
\end{proof}

\begin{lemma}
\label{lem:principalised-ideal-adjunction}
Let $\cA_0=(X,\F,(1-t)S+B,t)$ be an adjoint foliated structure for which the mixed canonical divisor is $\Q$-Cartier, and let $\mathfrak a\cO_S\ne0$. Choose a foliated log resolution $\mu:W\to X$ which principalises $\mathfrak a$, i.e. $\mathfrak a\cO_W=\cO_W(-F)$ and let $S_W$ be the strict transform of $S$. If
\[
 K^{[t]}_{W,\F_W}+(1-t)S_W+B_W = \mu^*\bigl(K^{[t]}_{X,\F}+(1-t)S+B\bigr),
\]
then, on the normalisation $S_W^\nu$,
\[
 \overline{\mathfrak a_S\cO_{S_W^\nu}} = \cO_{S_W^\nu}\bigl(-F|_{S_W^\nu}\bigr).
\]
Furthermore, for every $c\geq0$, adjunction of the crepant transform of $(\mathcal X,\mathcal F,(1-t)S+B+c\mathfrak a,t)$
 to $S_W^\nu$ agrees with the crepant transform of $(S,\mathcal F_S,B_S+c\mathfrak a_S,t)$.
\end{lemma}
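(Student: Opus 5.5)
The lemma has two assertions: an identification of ideals on $S_W^\nu$, and a compatibility of adjunction with adding $c\mathfrak a$. I will treat them in that order and reduce the second to the first together with the linearity of adjunction in the boundary.

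\emph{Ideal identity.} The starting point is that pullback of ideals commutes with restriction. Let $\iota:S_W^\nu\to W$ be the normalisation of $S_W$ composed with the inclusion, and $\mu_S:S_W^\nu\to S$ the induced morphism. Then $\mathfrak a_S\cO_{S_W^\nu}=(\mathfrak a\cO_W)\cO_{S_W^\nu}=\iota^*\cO_W(-F)$. Since $F$ is Cartier on $W$, this is the invertible ideal $\cO_{S_W^\nu}(-F|_{S_W^\nu})$. The restriction is well defined because $S_W$ is not a component of $F$, which follows from $\mathfrak a\cO_S\neq0$. An invertible ideal on a normal variety is integrally closed, so taking integral closure changes nothing and the first claim follows. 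I will also note a consequence used below: for every prime divisor $P$ over $S$ realised on a higher model of $S_W^\nu$, we have $\ord_P(\mathfrak a_S)=\ord_P(F|_{S_W^\nu})$.

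\emph{Adjunction compatibility.} Write the crepant transform of $(X,\F,(1-t)S+B+c\mathfrak a,t)$ on $W$ as the structure with boundary $(1-t)S_W+B_W+cF$, which is crepant over $X$ by definition of the ideal convention. Applying adjunction \cite[Theorem 1.8]{CHLMSSX24} to $S_W$ gives
\[
K^{[t]}_{S_W^\nu,\F_{S_W^\nu}}+B_{S_W^\nu}+cF|_{S_W^\nu}=\bigl(K^{[t]}_{W,\F_W}+(1-t)S_W+B_W+cF\bigr)|_{S_W^\nu}.
\]
Here I use that the induced boundary depends linearly on the ambient boundary (as in the proof of Lemma \ref{lem:restriction-over-lc-invariant-component}), so the $cF$ term restricts to $cF|_{S_W^\nu}$, and $S_W\not\subset\operatorname{Supp}F$. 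The right-hand side equals $\mu_S^*\bigl(K^{[t]}_{S,\F_S}+B_S\bigr)$. To see this, restrict the crepant identity $\mu^*(\cdots)$ to $S_W^\nu$ and use the adjunction on $X$ along $S$, which defines $B_S$ through $(K^{[t]}_{X,\F}+(1-t)S+B)|_S=K^{[t]}_{S,\F_S}+B_S$. This step is compatible with pullback because the mixed canonical divisor is $\Q$-Cartier. Hence $B_{S_W^\nu}$ is the crepant transform of $B_S$, and by the first part $cF|_{S_W^\nu}$ is the divisorial transform of $c\mathfrak a_S$. So the two structures have identical mixed discrepancies for every divisor over $S$.

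\emph{Main obstacle.} The delicate point is that adjunction on $W$ and on $X$ is defined through the foliated adjunction formula of \cite{CHLMSSX24}, including the different. I would check that the different is functorial under the crepant pullback $S_W^\nu\to S$, i.e.\ that restriction commutes with $\mu^*$ for $\Q$-Cartier classes. The approach is to compare both sides on a common model and use that the foliation induced on $S_W^\nu$ is the birational transform of $\F_S$, since $S$ is $\F$-invariant and $T_\F\subset T_{X/C}$. Because both differences are $\Q$-Cartier and agree over the open set where $\mu$ is an isomorphism, their difference is $\mu_S$-exceptional. The negativity lemma, or direct equality of pullbacks, then forces it to vanish.
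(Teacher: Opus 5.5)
Your overall route is the paper's. The first claim follows because $\mathfrak a\cO_W=\cO_W(-F)$ is invertible and $S_W\not\subset\operatorname{Supp}F$. The second follows from linearity of the adjunction of \cite[Theorem 1.8]{CHLMSSX24} when the Cartier divisor $cF$ is added, together with compatibility of the different with the crepant pullback $\mu$. Your treatment of the ideal identity is correct, and slightly sharper than the paper's: the restricted ideal is already invertible, so the integral closure is redundant.

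The gap is in how you settle your ``main obstacle''. Restricting the crepant identity only gives $K^{[t]}_{S_W^\nu,\F_{S_W^\nu}}+B_{S_W^\nu}\sim_{\Q}\mu_S^*(K^{[t]}_{S,\F_S}+B_S)$. To upgrade this to an equality you need the difference to be $\mu_S$-exceptional. Agreement over the locus where $\mu$ is an isomorphism does not give this, because the non-isomorphism locus of $\mu$ can contain prime divisors of $S$.

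For example, take locally $S=(x=0)$ and $\mathfrak a=(x,y)$. Then $\mathfrak a\cO_S=(y)\neq 0$, and no principalisation is an isomorphism over the generic point of $(x=y=0)\subset S$. Yet for the blow-up of $(x,y)$ the map $S_W\to S$ is an isomorphism, so the strict transform of $(y=0)\subset S$ is not $\mu_S$-exceptional. Comparing its coefficient in $B_{S_W^\nu}$ with the coefficient of $(y=0)$ in $B_S$ is exactly what must be done, and your argument does not address it.

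The missing input is that the different does not depend on the foliated log resolution used to compute it. Computing it on $W$ then gives $\mu_{S*}B_{S_W^\nu}=B_S$ directly. This is built into the construction of \cite[Theorem 1.8]{CHLMSSX24}, and it is what the paper invokes when it says the different is constructed on a common foliated log resolution. With that identity in hand, your $\Q$-linear equivalence chain and the negativity lemma do yield the crepant identity on $S_W^\nu$.
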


\begin{proof}
Because $\mathfrak a\cO_S\ne0$, the divisor $S_W$ is not a component of $F$. We write locally $\mathfrak a=(f_1,\ldots,f_r)$. On $W$, the functions $\mu^*f_i$ have the common divisorial factor defining $F$, and their quotients generate the unit ideal. Restricting to $S_W$ and then normalising shows that the divisorial integral closure of the restricted ideal is exactly $\cO_{S_W^\nu}(-F|_{S_W^\nu})$.

The construction of the different in \cite[Theorem 1.8]{CHLMSSX24} is performed on a common foliated log resolution by combining ordinary and foliated adjunction. Applying that construction to the displayed crepant equality, and then adding $cF$, adds precisely $cF|_{S_W^\nu}$ to the restricted boundary. The same proof identifies the latter divisor with the pullback of $c\mathfrak a_S$. This proves the discrepancy identity. 
\end{proof}

\begin{lemma} \label{lem:mixed-ideal-threshold-attained}
Assume that $\cA_0=(X,\F,(1-t)S+B,t)$ is lc on a neighbourhood of $S$, and that $\mathfrak a\cO_S\ne0$. If
\[
 c=\lct^{[t]}_{X,\F;(1-t)S+B;S}(\mathfrak a)<+\infty,
\]
then $c\in\Q$ and there is a prime divisor $E$ over $X$, with $c_X(E)\cap S\ne\varnothing$, $\ord_E(\mathfrak a)>0$, and
\[
 A_{\cA_0}(E)-c\ord_E(\mathfrak a)=0.
\]
\end{lemma}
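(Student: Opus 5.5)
The plan is to reduce to a finite computation on a single foliated log resolution, in the same way as the proof of Lemma \ref{lem:equivariant-mixed-lct}. First, after shrinking to a neighbourhood of $S$, I choose a foliated log resolution $\mu\colon W\to X$ that principalises $\mathfrak a$, so that $\mathfrak a\cO_W=\cO_W(-F)$. I require that $\operatorname{Exc}(\mu)\cup\operatorname{Supp}F\cup\mu^{-1}_*\operatorname{Supp}((1-t)S+B)$ be contained in the toroidal boundary of a foliated log smooth structure. Such resolutions exist for algebraically integrable foliations by the results of \cite{CHLMSSX24,CHLMSSX25} already used above. Let $E_1,\dots,E_N$ be the components of this boundary whose centres meet $S$. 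Set
\[
a_i:=A_{\cA_0}(E_i)\in\Q,\qquad b_i:=\ord_{E_i}(\mathfrak a)\in\Z_{\ge0}.
\]
The $a_i$ are rational because $t\in\Q$, $B$ is a $\Q$-divisor, and the mixed canonical divisor is $\Q$-Cartier.

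The key step is the foliated log smooth criterion. On such a model, the structure $(X,\F,(1-t)S+B+c'\mathfrak a,t)$ is lc near $S$ if and only if $a_i-c'b_i\ge0$ for every $i$. For divisors $G$ over $W$ not among the $E_i$, the standard toroidal computation writes $A(G)-c'\ord_G(\mathfrak a)$ as a nonnegative combination of the quantities $a_i-c'b_i$ attached to the boundary components containing $c_W(G)$. It also contributes the terms $t\varepsilon$ and $(1-t)$, which are nonnegative. This is the same fact used in the proof of Lemma \ref{lem:equivariant-mixed-lct}; there it was invoked to write the threshold as a minimum over the $E_i$.

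From this criterion the conclusion follows directly. Since $\cA_0$ is lc near $S$, every $a_i\ge0$. Moreover, $c=+\infty$ unless some $b_i>0$, because otherwise every $c'$ works. Hence
\[
c=\min_{i:\,b_i>0}\frac{a_i}{b_i}.
\]
This is a minimum of finitely many rationals, so $c\in\Q$. Any $E=E_i$ attaining the minimum has $c_X(E)\cap S\ne\varnothing$, satisfies $\ord_E(\mathfrak a)=b_i>0$, and gives $A_{\cA_0}(E)-c\ord_E(\mathfrak a)=0$.

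The hypothesis $\mathfrak a\cO_S\neq0$ enters as follows. It guarantees that the strict transform $S_W$ is not a component of $F$, so $S$ itself has $b=0$ and is harmless in the minimum. This is where the hypothesis is needed, even though $A_{\cA_0}(S)=0$.

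The main obstacle is justifying the log smooth criterion in the mixed setting. The mixed discrepancy combines ordinary log discrepancies with foliated ones, and $\varepsilon_\F$ can jump between invariant and transverse divisors over the resolution. I would handle this by treating the two parts separately. For the ordinary part, I would use the standard fact that $(W,\sum E_i)$ is log smooth. For the foliated part, I would use the foliated log smooth (toroidal) description of \cite{CHLMSSX24}, under which any divisor $G$ over $W$ satisfies
\[
A_{W,\F_W}(G)\ge\sum_i \ord_G(E_i)\bigl(\text{coefficient contribution}\bigr),
\]
with equality up to the nonnegative term $\varepsilon_\F(G)$. Both inequalities are linear, so their $(t,1-t)$-combination gives the required criterion.
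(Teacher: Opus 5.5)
Your proposal is correct and follows essentially the same route as the paper. Both arguments work on a single foliated log resolution principalising $\mathfrak a$, identify the threshold near $S$ with the finite minimum of $A_{\cA_0}(E_i)/\ord_{E_i}(\mathfrak a)$ over components with positive denominator and centre meeting $S$, and read off rationality and attainment. The only difference is that the paper cites \cite[Lemma 2.14]{CHLMSSX24} for the log smooth coefficient criterion (non-invariant coefficients at most $1$, invariant ones at most $1-t$), whereas you derive it by combining the ordinary and foliated log-smooth inequalities with weights $1-t$ and $t$. One small correction: the hypothesis $\mathfrak a\cO_S\neq0$ is not actually needed for this lemma. If $\mathfrak a$ vanished along $S$, your minimum would simply be $0$, attained by $S$ itself, since $A_{\cA_0}(S)=0$.
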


\begin{proof}
We take the simultaneous foliated log resolution of $(X,\F,S+B,\mathfrak a)$ used in Lemma \ref{lem:principalised-ideal-adjunction}. On this model, the coefficient of every component is affine-linear in $c$. Let $c_0$ be the minimum of the finitely many ratios
\[
 \frac{A_{\cA_0}(D)}{\ord_D(\mathfrak a)}
\]
over components $D$ with positive denominator and with centre meeting $S$. At every $0\leq c\leq c_0$, the coefficient of each non-invariant component is at most $1$, and that of each invariant component is at most $1-t$. The adjoint foliated structure at $c_0$ is lc by \cite[Lemma 2.14]{CHLMSSX24}; hence the adjoint foliated structure with $c<c_0$ is also lc. If $c>c_0$, a component attaining the minimum has negative discrepancy. Thus $c=c_0$, proving both rationality and attainment.
\end{proof}

\subsection{Singularity criteria on admissible DVR families}
\label{subsec:singularity-criteria-admissible-DVR}

We next establish the singularity properties of admissible families over a DVR that will be needed below. We first expand Lemma \ref{lem:single-place-extraction}.

\begin{lemma}\label{lem:qdlt-prescribed-nklt}
Let $\cA=(X,\F,B,t)/U$ be an adjoint foliated structure with $t<1$, and let $E_1,\dots,E_r$ be finitely many nklt places of $\cA$. Then there exists a projective birational morphism
\[
        h:(Y,\F_Y,B_Y,t)\longrightarrow(X,\F,B,t)
\]
such that:
\begin{enumerate}
\item $Y$ is $\Q$-factorial and klt;
\item $(Y,\F_Y,B_Y,t)$ is qdlt;
\item every $E_i$ appears as a prime divisor on $Y$;
\item every $h$-exceptional prime divisor is an nklt place of $\cA$.
\end{enumerate}
If $\cA$ is lc, then $h$ may be chosen crepant.
\end{lemma}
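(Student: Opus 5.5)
We construct $h$ by running a relative MMP with scaling on a foliated log resolution. This is the usual route to qdlt modifications in \cite{CHLMSSX24, CHLMSSX25}, modified so that only the prescribed places $E_1,\dots,E_r$ and the nklt places survive.

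\emph{Step 1 (resolution and boundary).} Take a foliated log resolution $\mu:W\to X$ on which every $E_i$ appears as a prime divisor and the exceptional locus together with $\mu^{-1}_*B$ is foliated log smooth. On $W$ we define a boundary $B_W$ by three rules:
\begin{enumerate}
\item take the strict transform of $B$, with coefficients truncated at the lc bound;
\item give every $\mu$-exceptional divisor $G$ that is an nklt place of $\cA$, in particular every $E_i$, the maximal coefficient $t\varepsilon_{\F}(G)+(1-t)$;
\item give every other exceptional divisor the same coefficient minus a small $\epsilon>0$.
\end{enumerate}
Then $(W,\F_W,B_W,t)$ is qdlt, since it is foliated log smooth with reduced boundary components of the allowed coefficients.

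We then write
\[
K^{[t]}_{W,\F_W}+B_W\sim_{\Q,X}\sum_G\bigl(A_{\cA}(G)-c_G\bigr)G,
\]
where $c_G$ is the deficit from the coefficient choices. This is a sum over exceptional $G$ whose coefficient is $\ge 0$, and it is strictly positive exactly on the exceptional divisors that are not nklt places. When $\cA$ is only qdlt-approximable and not lc, negative coefficients appear on the non-lc places; these are themselves nklt places, so they are kept.

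\emph{Step 2 (MMP).} Run a $(K^{[t]}_{W,\F_W}+B_W)$-MMP over $X$ with scaling of an ample divisor. Its existence and termination for algebraically integrable adjoint foliated structures, preserving qdlt-ness and $\Q$-factoriality, is provided by \cite{CHLMSSX24, CHLMSSX25} in the relative setting over $X$. By the negativity lemma applied to the effective exceptional part, the MMP contracts exactly the divisors whose coefficient in the displayed sum is positive. Those are precisely the exceptional divisors that are not nklt places. The $E_i$ have coefficient $\le 0$ and therefore survive. The output $Y$ satisfies items (2), (3) and (4).

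\emph{Step 3 (klt of $Y$).} Item (1) requires more care. Choosing $\epsilon$ appropriately, or perturbing the boundary using potential klt of $X$, we want the underlying variety $Y$ to be klt. The route is to use \cite[Theorem 2.2.3]{CHLMSSX25}, as in Lemma \ref{lem:single-place-extraction}, which produces $\Q$-factorial klt models extracting prescribed places with negative discrepancy. Alternatively, one can use the standard fact that qdlt adjoint foliated structures with $t<1$ have klt ambient variety, since the ordinary part $(1-t)(K_Y+\cdot)$ dominates.

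\emph{Crepant case.} If $\cA$ is lc, every nklt place has $A_{\cA}=0$. Then $K^{[t]}_{Y,\F_Y}+B_Y\equiv_X 0$ on the output, so $h$ is crepant.

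\emph{Main obstacle.} The hard step is securing that the ambient variety $Y$ is klt while keeping exactly the prescribed places. The first thing to try is deriving it from qdlt-ness via the structure theorem for qdlt adjoint foliated structures in \cite{CHLMSSX24}.
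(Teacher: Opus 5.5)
Your construction is the paper's. Its proof reuses the argument in the proof of \cite[Theorem 3.6]{CHLMSSX24}, choosing the initial foliated log resolution so that every $E_i$ appears. It then asserts that the relative MMP contracts precisely the exceptional divisors that are not nklt places, and reads off klt-ness of $Y$ and crepancy in the lc case from that theorem. So your Step 3 is settled in the paper by citation: klt-ness of the $\Q$-factorial qdlt output is part of the cited result, which is your second suggestion. Your other options do not apply. \cite[Theorem 2.2.3]{CHLMSSX25} builds a different one-place extraction and says nothing about the qdlt output. Potential klt-ness of $X$ is not a hypothesis of the lemma; in Theorem \ref{thm:dvr-fibre-ioa} it is a consequence.

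The genuine gap is the survival claim in Step 2, which is valid only when $\cA$ is lc. Negativity gives one direction: on the output, $K_Y+B_Y\equiv_X P_Y-N_Y$ is nef over $X$ with $h_*(N_Y-P_Y)\geq 0$. Hence $P_Y=0$, and every non-nklt exceptional divisor is contracted. That nothing else is contracted needs the relative class to be effective, because a step of a $P$-MMP with $P\geq 0$ has its exceptional locus inside $\operatorname{Supp}P$; this is exactly the lc case, where $N=0$. For non-lc $\cA$ the class is $P-N$ with $N\neq 0$, as you observe. A $(P-N)$-negative ray can then sweep out a divisor of coefficient $\leq 0$, so ``these are themselves nklt places, so they are kept'' does not follow.

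Here is a concrete failure. Take $\F=T_X$, so that $A^{[t]}_{X,\F}=A_X$ and every $\varepsilon_{\F}=1$, with $X=\A^2$ and $B=L_1+L_2+L_3$ three lines through the origin. Blowing up the origin gives $E_1$ with $A_\cA(E_1)=-1$. Blowing up a point of $E_1$ off the $\widetilde L_i$ gives $E_2$ with $A_\cA(E_2)=0$. On this $W$ one has $K_W+B_W=\mu^*(K_X+B)-E_1'$, with $E_1'^2=-2$ and $E_2^2=-1$. So the only $(K_W+B_W)$-negative extremal ray over $X$ is $[E_2]$, and the MMP contracts the prescribed place $E_2$, stopping on the blow-up of the origin.

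A larger resolution or a different scaling does not help. On any output, $(Y,B_Y)$ is lc while $(X,B)$ is not, so $N_Y\neq 0$. If the coefficient-zero curve $E_2$ survived, connectedness of $h^{-1}(0)$ would give a coefficient-zero component $C$ meeting $\operatorname{Supp}N_Y$, with $(K_Y+B_Y)\cdot C=-N_Y\cdot C<0$, contradicting nefness. The lemma itself is not contradicted, since $W$ already satisfies (1)--(4), but your mechanism cannot produce such a model.

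The paper's proof makes the same ``contracts precisely'' assertion by citation, so your argument is as complete as the paper's for lc $\cA$. For non-lc $\cA$, conclusion (3) needs a different argument, for example an MMP whose relative class is effective or a separate extraction of the missing $E_i$. Alternatively, the lemma should be restricted to the lc case.
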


\begin{proof}
We first choose the initial foliated log resolution in the proof of \cite[Theorem 3.6]{CHLMSSX24} so that every $E_i$ appears as a divisor. In that proof, the relative MMP contracts precisely the exceptional divisors $D$ satisfying $a(D,\cA)> -t\varepsilon_{\F}(D)-(1-t)$, namely the exceptional divisors which are not nklt places of $\cA$. Hence, none of the prescribed divisors $E_i$ is contracted. The output is therefore a $\Q$-factorial qdlt modification on which all the $E_i$ appear. The remaining assertions, including the klt property of the underlying variety and crepantness in the lc case, all follow from \cite[Theorem 3.6]{CHLMSSX24}. For the case of an lc place, this prescribed extraction is also recorded explicitly in \cite[Proposition 3.8]{CHLMSSX24}.
\end{proof}

\begin{theorem}[Inversion of adjunction over a DVR] \label{thm:dvr-fibre-ioa}
Let $R$ be a DVR essentially of finite type over $\C$ and let $f:(X,\F,t)\longrightarrow\Spec R$ be a flat projective family. Let $S:=X_\kappa$ be its special fibre. Assume that:
\begin{enumerate}
\item $X$ is normal and $S$ is a reduced normal Cartier divisor;
\item $T_\F\subset T_{X/R}$ and $\F$ is algebraically integrable;
\item the divisor $K^{[t]}_{X/R,\F}+(1-t)S$ is $\Q$-Cartier;
\item $(S,\F_S,t)$ is klt.
\end{enumerate}
Then $(X,\F,(1-t)S,t)$ is plt in a neighbourhood of $S$.
\end{theorem}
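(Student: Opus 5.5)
We argue by contradiction, in the style of the classical proof of inversion of adjunction via a dlt modification and connectedness, with the adjoint-foliated MMP results of \cite{CHLMSSX24,CHLMSSX25} replacing the usual ones. Set $\cA=(X,\F,(1-t)S,t)$.

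\textbf{Step 1: $S$ is an lc place.} By Lemma \ref{lem:vertical-divisors-invariant}, $S$ is $\F$-invariant, so $\varepsilon_\F(S)=0$. Its mixed log discrepancy with respect to the boundary $(1-t)S$ is therefore $(1-t)\cdot 1-(1-t)=0$. Thus $A_{\cA}(S)=0$, and plt near $S$ amounts to $A_{\cA}(E)>0$ for every prime divisor $E\neq S$ whose centre meets $S$.

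\textbf{Step 2: extract a bad divisor.} Suppose such an $E$ exists with $A_{\cA}(E)\le 0$. Then $E$ is an nklt place of $\cA$. If $\cA$ is not lc near $S$, we first lower the coefficient of $S$ slightly, or use a non-lc place directly. Apply Lemma \ref{lem:qdlt-prescribed-nklt} to $\cA$ and the places $S,E$ to get $h:Y\to X$ with $Y$ $\Q$-factorial and klt, $E$ appearing on $Y$, and $(Y,\F_Y,B_Y,t)$ qdlt. Write
\[
h^*\bigl(K^{[t]}_{X/R,\F}+(1-t)S\bigr)=K^{[t]}_{Y/R,\F_Y}+(1-t)S_Y+\delta E+B',
\]
where $\delta=-a(E,\cA)\ge t\varepsilon_{\F}(E)+(1-t)$. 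This inequality is exactly the condition $A_{\cA}(E)\le 0$.

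\textbf{Step 3: force $E$ to meet $S_Y$.} If $c_X(E)\subset S$, this follows from Lemma \ref{lem:extracted-place-meets-special-fibre} (or its connectedness argument), since $\rho^{-1}(S)$ is connected and contains both $S_Y$ and $E$. In general, if $c_X(E)$ merely meets $S$, the same lemma again gives $E\cap S_Y\neq\varnothing$. If several exceptional divisors appear, we instead use the qdlt model to pass to an exceptional nklt place adjacent to $S_Y$. The key input is a connectedness statement for the non-klt locus over a neighbourhood of $S$; this is \cite{CHLMSSX25}-type connectedness, since $-(K^{[t]}_Y+\ldots)$ is $h$-trivial and hence $h$-nef. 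Alternatively, we can avoid connectedness by extracting $E$ alone with Lemma \ref{lem:single-place-extraction}.

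\textbf{Step 4: adjunction and contradiction.} Once $E\cap S_Y\neq\varnothing$, apply Lemma \ref{lem:restriction-over-lc-invariant-component} with this $\delta$. It shows that the adjoint foliated structure induced on $S_Y^\nu$ is not klt along a divisor $P$ lying over $E\cap S_Y$. Adjunction in \cite[Theorem 1.8]{CHLMSSX24} is crepant, so $(S_Y^\nu,\F_{S_Y},B_{S_Y})$ is crepant to $(S,\F_S,t)$ via $S_Y^\nu\to S$. Here $S$ is normal and the different on $S$ is zero, because $K^{[t]}_{X/R,\F}+(1-t)S$ restricts to $K^{[t]}_{S,\F_S}$ for a Cartier fibre with $T_\F\subset T_{X/R}$. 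Hence $P$ is a divisor over $S$ with $A^{[t]}_{S,\F_S}(P)\le 0$. This contradicts hypothesis (4) that $(S,\F_S,t)$ is klt.

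\textbf{Main obstacle.} I expect the hard part to be Step 3 together with the verification, in Step 4, that the different on $S$ vanishes. Step 3 requires either a connectedness lemma for adjoint foliated structures or a careful single-place extraction when $\cA$ is only known to be non-lc near $S$. I would try first the single-place extraction of Lemma \ref{lem:single-place-extraction}, applied after perturbing to an lc threshold. Concretely, replace $(1-t)S$ by $(1-t)S+c\,\mathfrak m$ for the lc threshold $c\le 0$ argument; more simply, choose $E$ computing the minimum of $A_{\cA}$ on a log resolution, as in Lemma \ref{lem:mixed-ideal-threshold-attained}. Then Lemma \ref{lem:extracted-place-meets-special-fibre} applies directly.
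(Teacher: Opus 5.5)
Your overall route is the paper's: the model $h\colon Y\to X$ of Lemma~\ref{lem:qdlt-prescribed-nklt}, an nklt place meeting $S_Y$, Lemma~\ref{lem:restriction-over-lc-invariant-component}, and crepant comparison with $(S,\F_S,t)$. However, Step~3 does not go through as written, and the fallback you say you would try first is not available.

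\emph{Lemma~\ref{lem:extracted-place-meets-special-fibre} does not apply on $Y$.} It assumes $E$ is the \emph{only} exceptional divisor, which is false on $Y$. There $E$ can genuinely be separated from $S_Y$ by vertical exceptional divisors.

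\emph{Single-place extraction is not available.} Extracting $E$ alone via Lemma~\ref{lem:single-place-extraction} (i.e.\ \cite[Theorem 2.2.3]{CHLMSSX25}) needs $X$ potentially klt and $E$ an lc place of an lc structure. Neither is a hypothesis here. Moreover, potential klt-ness of $X$ is exactly what the paper \emph{deduces} from this theorem (Corollary~\ref{cor:dvr-total-potentially-klt}), so that route is circular. This is why the paper uses the multi-divisor qdlt model, which needs no such hypothesis.

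\emph{The missing argument is elementary.} No Koll\'ar--Shokurov-type connectedness is needed. The set $h^{-1}(S)=\operatorname{Supp}(h^*S)$ consists of $S_Y$ and the vertical $h$-exceptional divisors, and it is connected because $h_*\OO_Y=\OO_X$.
If some exceptional divisor is vertical, one of them, say $D$, meets $S_Y$. By Lemma~\ref{lem:qdlt-prescribed-nklt}(4) and Lemma~\ref{lem:vertical-divisors-invariant}, $D$ is an invariant nklt place, so its crepant coefficient is $\ge 1-t$. You then apply Lemma~\ref{lem:restriction-over-lc-invariant-component} to $D$ rather than $E$.
If no exceptional divisor is vertical, then $E$ is horizontal and $h^{-1}(S)=S_Y$. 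The second half of the proof of Lemma~\ref{lem:extracted-place-meets-special-fibre} then gives $E\cap S_Y\neq\varnothing$, even though there are other (horizontal) exceptional divisors.

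Two further remarks. First, using the crepant boundary (exceptional coefficients $\ge t\varepsilon_{\F}+(1-t)$) instead of the qdlt boundary plus the effective error $F$ is equivalent bookkeeping. Lemma~\ref{lem:qdlt-prescribed-nklt} does not require $\cA$ lc, and Lemma~\ref{lem:restriction-over-lc-invariant-component} allows $\delta>t\varepsilon_{\F}(E)+(1-t)$, so no perturbation is needed when $\cA$ is not lc. Lowering the coefficient of $S$ would in fact break Step~4, which needs coefficient exactly $1-t$ on $S$.

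Second, your reason for the vanishing of the different is not valid in this generality. Since $S$ is a normal Cartier divisor, $X$ is regular at codimension-one points of $S$, which kills the ordinary part of the different. But $T_\F|_S\to T_S$ can fail to be saturated along a divisor. Take $T_\F=\OO\cdot(\pi\partial_x+y^2\partial_y)$ on $\A^2_R$, which is saturated and relative, with rational first integral $x/\pi+1/y$. It restricts to $y^2\partial_y$, so the foliated part of the different is $2\{y=0\}$. The blow-up of $\{\pi=y=0\}$ has $A_{\cA}=1-2t$, although $(S,\F_S,t)$ is klt.

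The paper's proof uses the same identity $g^*K^{[t]}_{S,\F_S}=K_{\cA_{S_Y^\nu}}+F|_{S_Y^\nu}$ without comment. It does hold for the admissible families to which the theorem is applied: condition (c) of Definition~\ref{def:fibrewise-family-adjoint-foliated} makes the effective different $\Q$-linearly trivial on the projective fibre, hence zero.
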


\begin{proof}
We fix the adjoint foliated structure $\cA:=(X,\F,(1-t)S,t)$. The divisor $S$ is invariant by Lemma \ref{lem:vertical-divisors-invariant}, and
\[
        A_\cA(S)=A^{[t]}_{X,\F}(S)-(1-t)\ord_S(S)=(1-t)-(1-t)=0.
\]
Thus $S$ is an lc place of $\cA$.

Suppose that $\cA$ is not plt near $S$. Thus, there exists a prime divisor $E\ne S$ over $X$, with $c_X(E)\cap S\ne\varnothing$, such that
\[
        A_\cA(E) = A^{[t]}_{X,\F}(E) -(1-t)\ord_E(S) \leq0.
\]
Thus $E$ is an nklt place of $\cA$.

We now apply Lemma \ref{lem:qdlt-prescribed-nklt} to $\cA$ and the prescribed nklt place $E$. We obtain a projective birational morphism
\[
        h:(Y,\F_Y,B_Y,t)\longrightarrow (X,\F,(1-t)S,t),
\]
where $Y$ is $\Q$-factorial and klt, the induced adjoint foliated structure is qdlt, and $E$ appears as an $h$-exceptional divisor.

Let $S_Y$ be the strict transform of $S$. Every $h$-exceptional prime divisor $D$ is an nklt place of $\cA$, and its coefficient in the qdlt boundary $B_Y$ is equal to $t\varepsilon_{\F_Y}(D)+(1-t)$. In particular, a vertical exceptional divisor is invariant and has coefficient $1-t$.

We write
\begin{equation}
\label{eq:qdlt-discrepancy-error}
        h^*\left(K^{[t]}_{X/R,\F}+(1-t)S\right)= K^{[t]}_{Y/R,\F_Y}+B_Y+F,
\end{equation}
where $F$ is $h$-exceptional. Notice that, for every $h$-exceptional prime divisor $D$, $\coeff_D(F)=-A_\cA(D)$. All such divisors are nklt places, so $F$ is effective.

If some $h$-exceptional divisor is vertical, then
\[
 h^{-1}(S)_{\red} = S_Y\cup\{\text{vertical $h$-exceptional divisors}\}.
\]
This set is connected because $h_*\cO_Y=\cO_X$; hence one of the vertical components meets $S_Y$. If there is no vertical exceptional divisor, the prescribed divisor $E$ is horizontal and Lemma \ref{lem:extracted-place-meets-special-fibre} shows that $E\cap S_Y\ne\varnothing$. Thus, in both cases there exists a component $D\ne S_Y$ of $B_Y$ which meets $S_Y$, and its coefficient in $B_Y$ is $t\varepsilon_{\F_Y}(D)+(1-t)$. Lemma \ref{lem:restriction-over-lc-invariant-component} therefore shows that the adjoint foliated structure induced by $(Y,\F_Y,B_Y,t)$ on $S_Y^\nu$ is not klt.

Let $g:S_Y^\nu\longrightarrow S$ be the induced birational morphism. Restricting \eqref{eq:qdlt-discrepancy-error} to $S_Y^\nu$ and applying \cite[Theorem 1.8]{CHLMSSX24}, we obtain
\[
        g^*K^{[t]}_{S,\F_S} = K_{\cA_{S_Y^\nu}}+F|_{S_Y^\nu},
\]
where $\cA_{S_Y^\nu}$ denotes the adjoint foliated structure induced by $(Y,\F_Y,B_Y,t)$ on $S_Y^\nu$. Since $Y$ is $\Q$-factorial, $F|_{S_Y^\nu}$ is a well-defined effective $\Q$-divisor. Hence for every prime divisor $P$ over $S_Y^\nu$,
\[
        A^{[t]}_{S,\F_S}(P) \leq A_{\cA_{S_Y^\nu}}(P).
\]
Since $\cA_{S_Y^\nu}$ is not klt, this implies that $(S,\F_S,t)$ is not klt, contradicting assumption (4). Therefore $(X,\F,(1-t)S,t)$ is plt near $S$.
\end{proof}

\begin{corollary} \label{cor:dvr-total-potentially-klt}
Under the assumptions of Theorem \ref{thm:dvr-fibre-ioa}, the adjoint foliated structure $(X,\F,t)$ is klt in a neighbourhood of $S$. In particular, the underlying variety $X$ is potentially klt near $S$.
\end{corollary}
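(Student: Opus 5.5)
The corollary follows by comparing the discrepancies of $(X,\F,t)$ with those of the plt structure $\cA=(X,\F,(1-t)S,t)$ from Theorem \ref{thm:dvr-fibre-ioa}, and then invoking the adjoint MMP results to see that the underlying variety is potentially klt.

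\textbf{Step 1: the mixed structure is klt near $S$.} Shrink $X$ to a neighbourhood of $S$ on which $\cA$ is plt, and on which the only component of the fibre over the closed point is $S$ itself. For a prime divisor $E$ over $X$ whose centre meets this neighbourhood, there are two cases.

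\begin{itemize}
\item If $E=S$, then $A^{[t]}_{X,\F}(S)=1-t>0$.
\item If $E\neq S$, then
\[
A^{[t]}_{X,\F}(E)=A_{\cA}(E)+(1-t)\ord_E(S).
\]
Since $(1-t)\ord_E(S)\ge 0$, this gives $A^{[t]}_{X,\F}(E)\ge A_{\cA}(E)$. If $c_X(E)$ meets $S$, plt near $S$ gives $A_{\cA}(E)>0$, so $A^{[t]}_{X,\F}(E)>0$.
\end{itemize}

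Divisors whose centres lie entirely in the generic fibre are not constrained by ``near $S$''. We avoid this by interpreting klt near $S$ as klt on an open neighbourhood $U\supset S$. The non-klt locus is closed, and the first bullet's argument shows it is disjoint from $S$. We can therefore take $U$ to be its complement.

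\textbf{Step 2: $X$ is potentially klt near $S$.} On $U$, $(X,\F,t)$ is a klt algebraically integrable adjoint foliated structure with $0<t<1$. We then invoke the result of \cite{CHLMSSX24}, used already in Lemma \ref{lem:qdlt-prescribed-nklt}, that the underlying variety of a klt algebraically integrable adjoint foliated structure with $t<1$ is potentially klt. More precisely, the qdlt/klt modification of \cite[Theorem 3.6]{CHLMSSX24} applied to the klt structure is a small $\Q$-factorialisation with klt total space.

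\textbf{Main obstacle.} The only nontrivial step is Step 2. If the needed statement is not available in exactly this form, the fallback is as follows. Apply Lemma \ref{lem:qdlt-prescribed-nklt} with no prescribed places. Since $(X,\F,t)$ is klt, it has no nklt places, so every exceptional divisor must be an nklt place and hence there are none. The resulting $h\colon Y\to X$ is therefore small, with $Y$ klt and $\Q$-factorial. Potential kltness then descends along the small morphism by the standard argument: take $\Delta_Y$ to be the pushforward-compatible boundary making $K_Y+\Delta_Y$ the pullback of a $\Q$-Cartier divisor, using $h$-ampleness of a suitable divisor. This last descent argument is where I would be most careful.
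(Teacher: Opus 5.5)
Your argument is the paper's: Step 1 is the same discrepancy comparison $A^{[t]}_{X,\F}(E)=A_{\cA}(E)+(1-t)\ord_E(S)$, using the plt conclusion of Theorem \ref{thm:dvr-fibre-ioa}, and Step 2 is exactly the citation \cite[Theorem 1.10(1)]{CHLMSSX24}; the detour through the non-klt locus is unnecessary, because $X\to\Spec R$ is projective and so every centre meets $S$. Drop the ``more precisely'' gloss and the fallback, however: a small $\Q$-factorial klt modification $h\colon Y\to X$ does not by itself make $X$ potentially klt (for instance, the affine cone over $\PP^1\times E$ polarised by $\cO(1)\boxtimes M$, with $E$ an elliptic curve and $M$ ample, has a smooth small resolution but is not even a rational singularity), so the $h$-trivial klt boundary has to come from the adjoint foliated structure, which is precisely what the cited theorem supplies.
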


\begin{proof}
By Theorem \ref{thm:dvr-fibre-ioa}, $ (X,\F,(1-t)S,t)$ is plt near $S$. Every divisor other than $S$ whose centre meets $S$ already has positive discrepancy, and removing the boundary $(1-t)S$ strictly increases the discrepancy of every divisor with positive order along $S$, including $S$ itself. Hence $(X,\F,t)$ is klt near $S$.

Since $t<1$, the underlying variety of a klt adjoint foliated structure is potentially klt by \cite[Theorem 1.10(1)]{CHLMSSX24}. Therefore $X$ is potentially klt in a neighbourhood of $S$.
\end{proof}

\begin{proposition}
\label{prop: families satisfy IOA}
Let $f:(\X,\F,t)\longrightarrow\Spec R$ be an admissible $t$-K-semistable family over a DVR. Then:
\begin{enumerate}
\item $(\X,\F,(1-t)\X_\kappa,t)$ is plt near $\X_\kappa$;
\item $(\X,\F,t)$ is klt near $\X_\kappa$;
\item the underlying variety $\X$ is potentially klt near
$\X_\kappa$.
\end{enumerate}
\end{proposition}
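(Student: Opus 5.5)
The plan is to verify the four hypotheses of Theorem \ref{thm:dvr-fibre-ioa} for the admissible family $f:(\X,\F,t)\to\Spec R$ with $S:=\X_\kappa$. Items (2) and (3) then follow from Corollary \ref{cor:dvr-total-potentially-klt}, and item (1) is the conclusion of Theorem \ref{thm:dvr-fibre-ioa}. Throughout I take $R$ essentially of finite type over $\C$, or more generally over an algebraically closed field of characteristic zero, as in the standing conventions of Section \ref{sec:F-compatible-IOA}. I would also pass to the geometric special fibre where necessary; this is harmless because klt-ness and normality are insensitive to extension of an algebraically closed residue field.

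For hypotheses (1) and (2): $\X$ is normal by Definition \ref{def:admissible-family-adjoint-foliated}(1), or alternatively by Lemma \ref{lem:normal-total-space}. The special fibre $S$ is normal by Definition \ref{def:fibrewise-family-adjoint-foliated}(a). It is Cartier, being cut out by the uniformiser, and it is reduced because it is normal. The inclusion $T_\F\subset T_{\X/R}$ and algebraic integrability come from items (2) and (3) of Definition \ref{def:admissible-family-adjoint-foliated}.

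For hypothesis (3): item (4) of the admissible definition, together with Definition \ref{def:fibrewise-family-adjoint-foliated}(d), shows that $rK^{[t]}_{\X/R,\F}$ corresponds to the dual of the line bundle $\L_r$. Hence $K^{[t]}_{\X/R,\F}$ is $\Q$-Cartier. Since $S=\operatorname{div}(\pi)$ is Cartier, the sum $K^{[t]}_{\X/R,\F}+(1-t)S$ is also $\Q$-Cartier.

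For hypothesis (4): I need $(S,\F_S,t)$ to be klt, where $\F_S$ is the saturated restriction. Condition (c) of Definition \ref{def:fibrewise-family-adjoint-foliated} identifies $\mathscr K_{r,t}|_S$ with $\cO_S(rK^{[t]}_{S,\F_S})$, so $K^{[t]}_{S,\F_S}$ is $\Q$-Cartier and $-K^{[t]}_{S,\F_S}\sim_\Q\L|_S$ is ample. Thus the special fibre is a $\Q$-Gorenstein adjoint Fano foliated structure, and it is $t$-K-semistable by the hypothesis on the family. Theorem \ref{thm: k-stab implies klt} then gives that $(S,\F_S,t)$ is klt.

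I expect the main subtle point to be checking that the induced foliation on $S$ used in Section \ref{sec:F-compatible-IOA} agrees with the fibre foliation $\F_{\bar s}$ of Definition \ref{def:fibrewise-family-adjoint-foliated}(b). Both are defined as the saturation in $T_S$ of the image of $T_\F|_S$, so they coincide. With all four hypotheses in place, Theorem \ref{thm:dvr-fibre-ioa} and Corollary \ref{cor:dvr-total-potentially-klt} yield (1), (2) and (3).
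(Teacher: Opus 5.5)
Your proposal is correct and follows the paper's proof: the paper also reads the hypotheses of Theorem~\ref{thm:dvr-fibre-ioa} off the admissibility conditions, gets the klt property of the special fibre from Theorem~\ref{thm: k-stab implies klt}, and then concludes (1) from Theorem~\ref{thm:dvr-fibre-ioa} and (2)--(3) from Corollary~\ref{cor:dvr-total-potentially-klt}. Your extra remarks fill in details the paper leaves implicit: that $\X_\kappa=\operatorname{div}(\pi)$ is Cartier, that one may pass to the geometric special fibre, and that the two descriptions of $\F_\kappa$ agree.
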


\begin{proof}
By admissibility, $\X$ is normal near the special fibre, $\X_\kappa$ is a reduced normal Cartier divisor, $T_\F\subset T_{\X/R}$, the foliation is algebraically integrable, and $K^{[t]}_{\X/R,\F}+(1-t)\X_\kappa$ is $\Q$-Cartier. By Theorem \ref{thm: k-stab implies klt}, the $t$-K-semistable special fibre $(\X_\kappa,\F_\kappa,t)$ is klt. Theorem \ref{thm:dvr-fibre-ioa} gives (1), and Corollary \ref{cor:dvr-total-potentially-klt} gives (2) and (3).
\end{proof}

\begin{remark}
\label{rem:comparison-Cascini-Liu-IOA}
P. Cascini and J. Liu have informed the author of a more general inversion-of-adjunction statement for adjoint foliated structures: if $S$ is a normal $\Q$-Cartier component of the round-down of the boundary, the ambient variety is potentially klt, and the adjunction to $S$ is klt, then the ambient adjoint foliated structure is plt near $S$. Theorem \ref{thm:dvr-fibre-ioa} is tailored to a Cartier fibre over a DVR. In this setting, potential klt singularities of the ambient variety are a consequence, rather than an assumption, of inversion of adjunction and \cite[Theorem 1.10(1)]{CHLMSSX24}.
\end{remark}

\subsection{Ideal adjunction}

We now prove the finite-level inversion-of-adjunction statement.

\begin{theorem}[Inversion of adjunction with ideals]
\label{thm:F-compatible-IOA-finite}
Let $f:(X,\F)\to C$ be as above, and let $S=X_0$ be a reduced normal Cartier fibre. Assume:
\begin{enumerate}
\item $T_{\F}\subset T_{X/C}$;
\item $X$ is potentially klt near $S$;
\item the divisor $K^{[t]}_{X/C,\F}+(1-t)S$ is $\Q$-Cartier;
\item $(X,\F,(1-t)S,t)$ is lc near $S$.
\end{enumerate}
Let $\mathfrak a\subset \cO_X$ be any nonzero coherent ideal not vanishing identically along $S$. Then
\[
        \lct^{[t]}_{X,\F;(1-t)S}(\mathfrak a)  = \lct^{[t]}_{S,\F_S}(\mathfrak a_S).
\]
\end{theorem}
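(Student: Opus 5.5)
We prove the two inequalities separately. Write $c_X:=\lct^{[t]}_{X,\F;(1-t)S}(\mathfrak a)$ and $c_S:=\lct^{[t]}_{S,\F_S}(\mathfrak a_S)$.

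\emph{The inequality $c_X\le c_S$.} This is the easy direction, and it follows from adjunction on a principalising resolution. Choose a foliated log resolution $\mu:W\to X$ as in Lemma \ref{lem:principalised-ideal-adjunction}. It should principalise $\mathfrak a$, with $\mathfrak a\cO_W=\cO_W(-F)$, and its restriction to $S_W^\nu$ should also compute $c_S$. Fix $c<c_X$. The crepant transform of $(X,\F,(1-t)S+c\mathfrak a,t)$ on $W$ is sub-lc near $S_W$, and its adjunction to $S_W^\nu$ is sub-lc by \cite[Theorem 1.8]{CHLMSSX24}, since adjunction does not decrease discrepancies. By Lemma \ref{lem:principalised-ideal-adjunction}, this restricted structure is exactly the crepant transform of $(S,\F_S,c\mathfrak a_S,t)$, using that $\overline{\mathfrak a_S\cO_{S_W^\nu}}=\cO(-F|_{S_W^\nu})$ and that integral closure does not change thresholds. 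Hence $c\le c_S$, and letting $c\to c_X$ gives $c_X\le c_S$.

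\emph{The inequality $c_S\le c_X$.} We argue by contradiction and assume $c_X<c_S$. In particular $c_X<\infty$.
\begin{enumerate}
\item By Lemma \ref{lem:mixed-ideal-threshold-attained}, $c_X\in\Q$ and there is a prime divisor $E$ over $X$ with $c_X(E)\cap S\neq\varnothing$, $\ord_E(\mathfrak a)>0$, and $E$ an lc place of $\cA:=(X,\F,(1-t)S+c_X\mathfrak a,t)$. We have $E\ne S$ because $\mathfrak a$ does not vanish along $S$.
\item Apply Lemma \ref{lem:general-divisorial-representative} to replace $c_X\mathfrak a$ by an effective $\Q$-Cartier divisor $D$. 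Then $(X,\F,(1-t)S+D,t)$ is lc near $S$, $E$ is still an lc place, and $\ord_E(D)=c_X\ord_E(\mathfrak a)$.
\item Since $X$ is potentially klt, Lemma \ref{lem:qdlt-prescribed-nklt} gives a crepant $\Q$-factorial qdlt model $h:Y\to X$ on which $E$ appears. If $E$ is not exceptional, it is already a divisor on $X$; in that case we use a small $\Q$-factorialisation via Lemma \ref{lem:single-place-extraction}.
\item On $Y$ the boundary has the form $(1-t)S_Y+\delta E+B'$ with $\delta=t\varepsilon_{\F_Y}(E)+(1-t)$. Lemma \ref{lem:extracted-place-meets-special-fibre}, together with the connectedness argument in the proof of Theorem \ref{thm:dvr-fibre-ioa}, produces an nklt component $E'$ of coefficient $t\varepsilon+(1-t)$ meeting $S_Y$.
\end{enumerate}

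Once such an $E'$ is found, Lemma \ref{lem:restriction-over-lc-invariant-component} shows that the structure induced on $S_Y^\nu$ is not klt along some divisor $P$ lying over $E'\cap S_Y$.

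The next step is to transport this to $S$. By Lemma \ref{lem:principalised-ideal-adjunction} and crepant adjunction, the structure on $S_Y^\nu$ is the crepant transform of $(S,\F_S,B_S,t)$. Here $B_S$ is the restriction of $D$, and its coefficients are bounded by those of $c_X\mathfrak a_S$ in the sense of orders along divisors over $S$. This yields a divisor $P$ over $S$ with
\[
A^{[t]}_{S,\F_S}(P)\le c_X\,\ord_P(\mathfrak a_S).
\]
Since $\ord_P(\mathfrak a_S)>0$, this gives $c_S\le c_X$, a contradiction.

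\emph{Expected main obstacle.} Two points need care, and they are the crux of the argument.

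First, in step 4 the component $E'$ meeting $S_Y$ must carry positive $\mathfrak a$-order, so that the non-klt failure on $S$ is charged to the ideal rather than to the ambient singularities. To arrange this, I would run the qdlt extraction for the perturbed structure $(X,\F,(1-t)S+(c_X+\epsilon')\mathfrak a,t)$ with $c_X<c_X+\epsilon'<c_S$. That structure is not lc near $S$, so Lemma \ref{lem:restriction-over-lc-invariant-component}(2) gives a strictly non-lc failure on $S$. Since the ideal part was inserted via $D$ with strict inequality, this failure forces $c_X+\epsilon'\ge c_S$.

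Second, we must ensure that the Bertini-general part of $D$ does not contribute to $\ord_P$. For this, the general member should be chosen generically with respect to the finitely many strata of $S_Y^\nu$.
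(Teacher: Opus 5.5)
The first inequality, $c_X\le c_S$, is fine and is the paper's argument. The reverse inequality has a genuine gap, at exactly the point you flag as the crux.

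On the qdlt model of Lemma \ref{lem:qdlt-prescribed-nklt} there can be many exceptional divisors. The connectedness argument only produces \emph{some} nklt component $E'$ meeting $S_Y$. Nothing forces $\ord_{E'}(\mathfrak a)>0$, nor $\ord_P(\mathfrak a_S)>0$ for the divisor $P$ where the non-klt failure occurs. Since hypothesis (4) is only lc, $E'$ may be an lc place of $(X,\F,(1-t)S,t)$ itself. In that case the failure on $S_Y^\nu$ only reflects the permitted non-klt singularities of $(S,\F_S,t)$ and gives no bound on $c_S$.

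Your repair does not work as stated:
\begin{itemize}
\item At $c_X+\epsilon'$ the structure is not lc. So Lemma \ref{lem:general-divisorial-representative} does not apply, and Lemma \ref{lem:qdlt-prescribed-nklt} yields no crepant model.
\item The qdlt boundary has all coefficients at most $t\varepsilon+(1-t)$, so part (2) of Lemma \ref{lem:restriction-over-lc-invariant-component} can never be triggered there.
\item If you instead add $\epsilon'$ times the ideal to the crepant pullback on the $c_X$-model, the divisors pushed strictly above $t\varepsilon+(1-t)$ are those with positive $\mathfrak a$-order. Again there is no reason for one of them to meet $S_Y$.
\item The Bertini worry is circular, because $Y$ is constructed from $D$, so you cannot choose $D$ general with respect to the strata of $S_Y^\nu$.
\end{itemize}

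The missing idea is to extract only $E$. Lemma \ref{lem:single-place-extraction} gives a $\Q$-factorial $\rho:Y\to X$ whose unique exceptional divisor is $E$. Lemma \ref{lem:extracted-place-meets-special-fibre} then gives $E\cap S_Y\neq\varnothing$ directly. With $q=\ord_E(\mathfrak a)$, $\Q$-factoriality gives $\mathfrak a\cO_Y\subseteq\cO_Y(-qE)$, so no divisor $D$ is needed on the comparison model.

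Now fix $c_X<c<c_S$ and let $\Gamma_Y$ be the crepant correction of $(X,\F,(1-t)S,t)$. The structure $(Y,\F_Y,(1-t)S_Y+\Gamma_Y+cqE,t)$ has coefficient of $E$ strictly above $t\varepsilon_{\F_Y}(E)+(1-t)$. Lemma \ref{lem:restriction-over-lc-invariant-component}(2) then makes its adjunction to $S_Y^\nu$ non-lc along a divisor $P$ in the support of $E|_{S_Y^\nu}$. The inequality $\ord_P(\mathfrak a_S)\ge q\,\ord_P(E|_{S_Y^\nu})$ shows that $(S,\F_S,c\mathfrak a_S,t)$ is not lc, contradicting $c<c_S$.

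For non-exceptional $E$ no model is needed, and Lemma \ref{lem:single-place-extraction} does not produce small $\Q$-factorialisations anyway. For a component $P$ of $E\cap S$ one has $\ord_P(\mathfrak a_S)\ge q$ and $A^{[t]}_{S,\F_S}(P)\le t\varepsilon_{\F}(E)+(1-t)=c_Xq$, so $c_S\le c_X$.
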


\begin{proof}
We set $c_X := \lct^{[t]}_{X,\F;(1-t)S}(\mathfrak a)$ and $c_S := \lct^{[t]}_{S,\F_S}(\mathfrak a_S)$. We first prove $c_X\leq c_S$. Suppose that $(X,\F,(1-t)S+c\mathfrak a,t)$ is lc near $S$. We take the simultaneous foliated log resolution of Lemma \ref{lem:principalised-ideal-adjunction}, which we denote by $\mu:W\to X$, and write, as before, $\mathfrak a\cO_W=\cO_W(-F)$. By crepancy, adjunction along the invariant component $S_W$, followed by Lemma \ref{lem:principalised-ideal-adjunction}, we see that $(S,\F_S,c\mathfrak a_S,t)$ is lc. This proves $c_X\leq c_S$.

For the reverse inequality, we fix $c<c_S$, and suppose for contradiction that $\cA_c=(X,\F,(1-t)S+c\mathfrak a,t)$ is not lc near $S$. Then $\lambda:=c_X<c$. By Lemma \ref{lem:mixed-ideal-threshold-attained}, $\lambda\in\Q$ and there is a prime divisor $E\ne S$, with $c_X(E)\cap S\ne\varnothing$, such that, after setting $q:=\ord_E(\mathfrak a)>0$, we have
\begin{equation}\label{eq:ideal-threshold-place}
    A^{[t]}_{X,\F}(E) -(1-t)\ord_E(S)-\lambda q=0.
\end{equation}

We first assume that $E$ is exceptional over $X$. By Lemma \ref{lem:single-place-extraction}, there is a projective birational morphism $\rho:Y\to X$ from a $\Q$-factorial variety whose only exceptional prime divisor is $E$. Let $S_Y$ be the strict transform of $S$. By Lemma \ref{lem:extracted-place-meets-special-fibre}, $E\cap S_Y\ne\varnothing$.

We write
\[
 K^{[t]}_{Y,\F_Y}+(1-t)S_Y+\Gamma_Y = \rho^*\bigl(K^{[t]}_{X,\F}+(1-t)S\bigr).
\]
Since $Y$ is $\Q$-factorial and $q=\ord_E(\mathfrak a)$, there is an inclusion of divisorial ideals
\begin{equation}\label{eq:ideal-contained-qE}
    \mathfrak a\cO_Y\subseteq\cO_Y(-qE).
\end{equation}
For every prime divisor $P$ over $Y$, the above gives $\ord_P(\mathfrak a)\geq q\ord_P(E)$. Hence log canonicity of $\cA_\lambda$ implies log canonicity of the adjoint foliated structure
\begin{equation}\label{eq:divisorial-comparison-lambda}
    (Y,\F_Y,(1-t)S_Y+\Gamma_Y+\lambda qE,t).
\end{equation}
Equation \eqref{eq:ideal-threshold-place} shows that the coefficient of $E$ in \eqref{eq:divisorial-comparison-lambda} is exactly $t\varepsilon_{\F_Y}(E)+(1-t)$. After replacing $\lambda qE$ by $cqE$, that coefficient becomes strictly larger. By Lemma \ref{lem:restriction-over-lc-invariant-component}, and adjunction to $S_Y^\nu$ is therefore not lc.

On the other hand, restricting \eqref{eq:ideal-contained-qE} to
$S_Y^\nu$ gives $\ord_P(\mathfrak a_S)\geq q\ord_P(E|_{S_Y^\nu})$ for every prime divisor $P$ over $S_Y^\nu$. Consequently the crepant transform of $(S,\F_S,c\mathfrak a_S,t)$ is at least as singular as the restriction of the structure with $cqE$. It is therefore not lc, contradicting $c<c_S$.

If $E$ is not exceptional, then $\ord_E(S)=0$ and
\[
        A^{[t]}_{X,\F}(E) =t\varepsilon_{\F}(E)+(1-t).
\]
Thus Equation \eqref{eq:ideal-threshold-place} gives $\lambda q=t\varepsilon_{\F}(E)+(1-t)$. Let $P$ be a prime divisor on $S$ contained in $E\cap S$. Restriction of $\mathfrak a\subseteq\cO_X(-qE)$ gives $\ord_P(\mathfrak a_S)\geq q$. Furthermore,
\[
 t\varepsilon_{\F_S}(P)+(1-t)  \leq t\varepsilon_{\F}(E)+(1-t);
\]
if $E$ is invariant, then $P$ is invariant, while for a
non-invariant $E$ the right-hand side is $1$. Hence, in both cases,
\[
\begin{aligned}
 A^{[t]}_{S,\F_S}(P)-c\ord_P(\mathfrak a_S) &\leq t\varepsilon_{\F}(E)+(1-t)-cq\\
 &=(\lambda-c)q<0.
\end{aligned}
\]
Thus $(S,\F_S,c\mathfrak a_S,t)$ is not lc in this case as well.

The contradiction proves that $\cA_c$ is lc near $S$ for every $c<c_S$, and hence $c_X\geq c_S$. Combining the two inequalities gives $c_X=c_S$.
\end{proof}

\subsection{Graded sequences}

We now pass from a single ideal to a graded sequence.

\begin{corollary}
\label{cor:F-compatible-IOA-graded}
We keep the assumptions of Theorem \ref{thm:F-compatible-IOA-finite}, and let $\mathfrak a_\bullet=\{\mathfrak a_m\}_{m\geq 1}$ be any graded sequence of ideals such that no $\mathfrak a_m$ vanishes identically along $S$. Then
\[
        \lct^{[t]}_{X,\F;(1-t)S} (\mathfrak a_\bullet)  = \lct^{[t]}_{S,\F_S} (\mathfrak a_{\bullet,S}).
\]
\end{corollary}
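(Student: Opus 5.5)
The plan is to reduce the graded statement to the finite-level Theorem \ref{thm:F-compatible-IOA-finite} via the approximation Lemma \ref{lem:finite-level-mixed-lct-approx}, applied once on $X$ (with boundary $(1-t)S$, computed near $S$) and once on $S$. Concretely, I would first note that $\mathfrak a_{\bullet,S}=\{\mathfrak a_m\cO_S\}$ is again a graded sequence of ideals on $S$. This holds because restriction is multiplicative, $(\mathfrak a_m\mathfrak a_\ell)\cO_S=(\mathfrak a_m\cO_S)(\mathfrak a_\ell\cO_S)\subseteq \mathfrak a_{m+\ell}\cO_S$. Moreover, each $\mathfrak a_{m,S}$ is nonzero by hypothesis.

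The second step is to check that Lemma \ref{lem:finite-level-mixed-lct-approx} holds in the two settings needed. On $X$ we need it for the threshold with fixed boundary $(1-t)S$, localised near $S$. The proof of that lemma only used the valuative formula $\inf_v A(v)/v(\cdot)$ together with subadditivity $v(\mathfrak b_p)+v(\mathfrak b_{p'})\ge v(\mathfrak b_{p+p'})$ and Fekete's lemma. It therefore goes through verbatim with $A^{[t]}_{X,\F}$ replaced by the log discrepancy $A_{\cA_0}(v)=A^{[t]}_{X,\F}(v)-(1-t)v(S)$ and the infimum restricted to valuations with centre meeting $S$. Here one uses that $\cA_0$ is lc near $S$, so these discrepancies are nonnegative and the ratios are well defined in $[0,+\infty]$. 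On $S$ the lemma applies directly to the adjoint foliated structure $(S,\F_S,t)$. This gives
\[
\lct^{[t]}_{X,\F;(1-t)S}(\mathfrak a_\bullet)=\lim_{m\to\infty} m\,\lct^{[t]}_{X,\F;(1-t)S}(\mathfrak a_m),\qquad \lct^{[t]}_{S,\F_S}(\mathfrak a_{\bullet,S})=\lim_{m\to\infty} m\,\lct^{[t]}_{S,\F_S}(\mathfrak a_{m,S}).
\]

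Third, for each $m$, Theorem \ref{thm:F-compatible-IOA-finite} applied to the ideal $\mathfrak a_m$ gives $\lct^{[t]}_{X,\F;(1-t)S}(\mathfrak a_m)=\lct^{[t]}_{S,\F_S}(\mathfrak a_{m,S})$. Its hypotheses (1)--(4) are exactly our standing assumptions, and $\mathfrak a_m$ does not vanish along $S$. Multiplying by $m$ and taking limits then gives the claimed equality, including the case where both sides are $+\infty$.

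The main obstacle is the second step: justifying that the asymptotic threshold on $X$ "near $S$" with the boundary $(1-t)S$ is genuinely computed by the limit of the finite-level thresholds. One issue is that for $v$ centred on $S$ the numerator $A_{\cA_0}(v)$ can be $0$ (for instance $v=\ord_S$). This is harmless, since $\ord_S(\mathfrak a_m)=0$ by the non-vanishing hypothesis, so such a valuation contributes $+\infty$ in the convention of the paper. A second issue is locality: restricting to valuations with centre meeting $S$ is compatible with the inclusions $\mathfrak b_{p+1}\subseteq\mathfrak a_m\subseteq\mathfrak b_p$ used in that proof. If a direct limit interchange proves delicate, I would fall back on the two inequalities separately. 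The bound $\ge$ follows from the sup formula $\sup_m m\,\lct(\mathfrak a_m)$, which holds on both sides. For $\le$, I would approximate $\lct(\mathfrak a_\bullet)$ by $m\,\lct(\mathfrak a_m)$ for large divisible $m$ on each side.
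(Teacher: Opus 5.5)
Your proposal is correct and follows essentially the same route as the paper. Like the paper, you apply Theorem \ref{thm:F-compatible-IOA-finite} to each $\mathfrak a_m$ and then pass to the graded thresholds on both sides via the $\sup_m m\,\lct^{[t]}$ formula of Lemma \ref{lem:finite-level-mixed-lct-approx}; your checks that $\mathfrak a_{\bullet,S}$ is graded and that this lemma carries over to the boundary $(1-t)S$ localised near $S$ make explicit what the paper uses tacitly.
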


\begin{proof}
For every $m\geq1$, Theorem \ref{thm:F-compatible-IOA-finite} gives
\[
        \lct^{[t]}_{X,\F;(1-t)S}(\mathfrak a_m) =  \lct^{[t]}_{S,\F_S}((\mathfrak a_m)_S).
\]
Using Lemma \ref{lem:finite-level-mixed-lct-approx} on both sides, we obtain
\[
\begin{aligned}
        \lct^{[t]}_{X,\F;(1-t)S} (\mathfrak a_\bullet) &= \sup_m m\, \lct^{[t]}_{X,\F;(1-t)S}(\mathfrak a_m) \\
        &= \sup_m m\, \lct^{[t]}_{S,\F_S}((\mathfrak a_m)_S) \\
        &= \lct^{[t]}_{S,\F_S} (\mathfrak a_{\bullet,S}).
\end{aligned}
\]
\end{proof}

For completeness, we explain how the base ideals used in Sections \ref{sec: theta red} and \ref{sec: s-completeness} fit in the above setting. We let $f:(\X,\F,t)\to\Spec R$ be an admissible DVR family, choose a Cartier multiple $L_r=-rK^{[t]}_{\X/R,\F}$, and put $\mathcal R_m=f_*\cO_{\X}(mL_r)$. We recall that a multiplicative filtration of the relative section ring is a decreasing left-continuous filtration of every $\mathcal R_m$ by $R$-submodules, compatible with multiplication, with base ideal (for $a\in\R$)
\begin{equation}\label{eq:relative-raw-base-ideal}
    I^{(a)}_{m,\cG} := \operatorname{Im}\!\left( \cG^{\lceil ma\rceil}\mathcal R_m\otimes \cO_{\X}(-mL_r)\longrightarrow\cO_{\X} \right).
\end{equation}

Multiplicativity shows that $\{I^{(a)}_{m,\cG}\}_{m\geq1}$ is a graded sequence. We define its fibre ideal by actual restriction,
\[
 I^{(a)}_{m,\cG,\kappa} := I^{(a)}_{m,\cG}\cO_{\X_\kappa}.
\]

\begin{theorem}
\label{thm:lct-special-fibre-filtrations}
Let $f:(\X,\F,t)\to \Spec R$ be an admissible $t$-K-semistable DVR family, and let $\X_\kappa$ be the special fibre. Let $\cG^\bullet$ be any linearly bounded multiplicative filtration on the relative anti-adjoint section ring, and assume that the base ideals $I^{(a)}_{\bullet,\cG}=\{I^{(a)}_{m,\cG}\}_{m\geq1}$ do not vanish identically along $\X_\kappa$. Then
\[
        \lct^{[t]}_{\X,\F; (1-t)\X_\kappa}  \left(I^{(a)}_{\bullet,\cG}\right) =        \lct^{[t]}_{\X_\kappa,\F_\kappa} \left(I^{(a)}_{\bullet,\cG,\kappa}\right).
\]
\end{theorem}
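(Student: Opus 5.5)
The plan is to reduce the statement to Corollary \ref{cor:F-compatible-IOA-graded}. For that we have to check that the DVR family satisfies the hypotheses of Theorem \ref{thm:F-compatible-IOA-finite}, with $\Spec R$ in place of the curve $C$.

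\emph{Step 1: the hypotheses on the family.} By Proposition \ref{prop: families satisfy IOA}:
\begin{enumerate}
\item $(\X,\F,(1-t)\X_\kappa,t)$ is plt near $\X_\kappa$, and in particular lc near $\X_\kappa$;
\item $\X$ is potentially klt near $\X_\kappa$.
\end{enumerate}
Admissibility (Definition \ref{def:admissible-family-adjoint-foliated}) supplies the remaining hypotheses. The inclusion $T_\F\subset T_{\X/R}$ holds, $\F$ is algebraically integrable, and $\X_\kappa$ is a reduced normal Cartier fibre. Moreover $K^{[t]}_{\X/R,\F}$ is $\Q$-Cartier, and $\X_\kappa=\operatorname{div}(\pi)$ is Cartier, so $K^{[t]}_{\X/R,\F}+(1-t)\X_\kappa$ is $\Q$-Cartier.

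\emph{Step 2: the curve $C$ versus $\Spec R$.} Theorem \ref{thm:F-compatible-IOA-finite} is stated over a smooth pointed curve. Since $R$ is essentially of finite type over $\C$, the family spreads out to a flat projective family over a smooth affine curve $C$ with a closed point $0$ whose local ring is $R$. The ideals $I^{(a)}_{m,\cG}$ are finitely many coherent ideals for each $m$, so they spread out as well. Alternatively, one can observe that the proof of Theorem \ref{thm:F-compatible-IOA-finite} only uses the local structure near $S$, and this is exactly what is available over $\Spec R$. As remarked before the statement, any neighbourhood of the whole special fibre is all of $\X$, so the lct "near $S$" coincides with the global one. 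Mixed discrepancies, thresholds, and divisors whose centres meet $S$ are all unchanged by the localisation.

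\emph{Step 3: the graded sequence.} Multiplicativity of $\cG$ makes $\{I^{(a)}_{m,\cG}\}_m$ a graded sequence. By hypothesis no member vanishes identically on $\X_\kappa$. Restriction commutes with products: $(I_mI_\ell)\cO_{\X_\kappa}=I_{m}\cO_{\X_\kappa}\cdot I_\ell\cO_{\X_\kappa}$. Hence $\{I^{(a)}_{m,\cG,\kappa}\}_m$ is again a graded sequence. Corollary \ref{cor:F-compatible-IOA-graded} then gives the equality directly, via the finite-level equality and Lemma \ref{lem:finite-level-mixed-lct-approx}.

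One caveat concerns indexing. The base ideals are indexed by $m$ sufficiently divisible, while the Corollary uses $m\ge1$. This is handled by reindexing along $r\N$; Lemma \ref{lem:finite-level-mixed-lct-approx} is already stated for $m\in r\N$. Some members $I^{(a)}_{m,\cG}$ may be zero for small $m$. In that case they contribute $+\infty$ on both sides, or we simply discard finitely many terms, which does not affect the limit.

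The main obstacle is Step 2, the justification that the curve-based Theorem \ref{thm:F-compatible-IOA-finite} applies over the DVR. I would treat it by the spreading-out argument: choose a finite type model over a curve on which all the admissibility data remain valid in a neighbourhood of $0$. Openness of normality, of the Cartier and $\Q$-Cartier conditions, and of flatness all hold near $0$. By Step 1, the plt/lc property holds near the fibre. Then apply the Theorem there.
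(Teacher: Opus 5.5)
Your proposal is correct and follows the paper's proof, which consists only of your Step~1: Proposition~\ref{prop: families satisfy IOA} gives potential kltness of $\X$ and the plt (hence lc) property near $\X_\kappa$, and then Corollary~\ref{cor:F-compatible-IOA-graded} is applied with $S=\X_\kappa$, with no discussion of the curve-versus-DVR issue or of indexing. Two minor slips in the extra care you add: spreading out to a curve over $\C$ with $\cO_{C,0}\simeq R$ is only possible when $\kappa=\C$ (e.g.\ $\C[x,y]_{(x)}$ has residue field $\C(y)$), so in general you should rely on your alternative observation that the proof of Theorem~\ref{thm:F-compatible-IOA-finite} is local near $S$; and a zero ideal has mixed threshold $0$, not $+\infty$, although such ideals are excluded by the non-vanishing hypothesis anyway.
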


\begin{proof}
Proposition \ref{prop: families satisfy IOA} gives that $\X$ is potentially klt near $\X_\kappa$ and that $(\X,\F,(1-t)\X_\kappa,t)$ is plt, hence lc, there. Therefore all hypotheses of Corollary \ref{cor:F-compatible-IOA-graded} are satisfied with $S=\X_\kappa$, and we obtain the desired equality.
\end{proof}

\section{Relative extraction and finite generation}\label{sec:relative-extraction-fg}

In this section, we prove the extraction and finite generation statement used in the valuative arguments. The result is the adjoint foliated analogue of \cite[Corollaries 1.68 and 1.70]{Xu2025}.

Throughout this section, $R$ denotes a DVR essentially of finite type over an algebraically closed field $k$ of characteristic zero, with residue field $\kappa$, and $f\colon (\X,\F,t)\longrightarrow \Spec R$ is a projective admissible DVR family. We retain this convention in Sections \ref{sec: theta red} and \ref{sec: s-completeness}.

We further assume that $\X$ is potentially klt. This is automatic for an admissible $t$-K-semistable family by Proposition \ref{prop: families satisfy IOA}. We set $L:=-K^{[t]}_{\X/R,\F}$ and choose a sufficiently divisible integer $r>0$ such that $L_r:=rL$ is Cartier. For $q\geq 1$ and an effective $\Q$-Cartier divisor $D\sim_{\Q,R}qL_r$ we consider the adjoint foliated structures $\cA_0:= \bigl(\X,\F,(1-t)\X_\kappa,t\bigr)$ and $\cA_D := \left(\X,\F, (1-t)\X_\kappa+\frac{1}{rq}D, t\right)$. Since $\X_\kappa$ is principal over $\Spec R$, we have
\[
K_{\cA_D}\sim_{\Q,R} -L+\frac{1}{rq}D \sim_{\Q,R}0.
\]

We first record a consequence of the definition of a
$t$-K-semistable family.

\begin{lemma}
\label{lem:A0-unique-lc-place}
If the family is $t$-K-semistable, then $\cA_0$ is lc and
$\X_\kappa$ is its unique lc place.
\end{lemma}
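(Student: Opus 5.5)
The plan is to deduce the statement directly from Proposition \ref{prop: families satisfy IOA}, which says that $\cA_0=(\X,\F,(1-t)\X_\kappa,t)$ is plt near $\X_\kappa$. Since $R$ is a DVR essentially of finite type and $\X\to\Spec R$ is proper, every open neighbourhood of $\X_\kappa$ is all of $\X$. This is the remark made after the definition of the threshold near $S$ in Section \ref{sec:F-compatible-IOA}: the closure of any closed subset of $\X$ disjoint from $\X_\kappa$ would have to meet the special fibre. Hence "plt near $\X_\kappa$" upgrades to a global statement: $A_{\cA_0}(\X_\kappa)=0$, and every prime divisor $E\neq\X_\kappa$ over $\X$ whose centre meets $\X_\kappa$ satisfies $A_{\cA_0}(E)>0$.

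The next step is to handle divisors whose centre does not meet $\X_\kappa$. Such a centre would be a closed subset of $\X_K$ whose closure in $\X$ misses $\X_\kappa$, and properness rules this out. So every prime divisor $E$ over $\X$ has centre meeting $\X_\kappa$. A more careful version of this argument goes as follows. Let $Z$ be the closure in $\X$ of the centre of $E$ (possibly centred on $\X_K$). By properness over $R$, the image of $Z$ in $\Spec R$ is closed and contains the generic point, so it is all of $\Spec R$. Therefore $Z\cap\X_\kappa\neq\varnothing$, and since plt near $\X_\kappa$ tests divisors whose centres meet $\X_\kappa$, the divisor $E$ is covered.

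Combining these, $A_{\cA_0}(E)\ge0$ for every $E$, with equality exactly for $E=\X_\kappa$. This gives that $\cA_0$ is lc and that $\X_\kappa$ is its unique lc place. Here $\X_\kappa$ is an lc place because $A_{\cA_0}(\X_\kappa)=A^{[t]}_{\X,\F}(\X_\kappa)-(1-t)=0$, using that it is invariant (Lemma \ref{lem:vertical-divisors-invariant}) and the computation in the proof of Theorem \ref{thm:dvr-fibre-ioa}.

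The main point to watch is the neighbourhood-to-global step: the plt property of Theorem \ref{thm:dvr-fibre-ioa} is stated only near $S$, and the argument relies entirely on properness over the DVR. One should also check that the generic-fibre hypotheses, klt of $(\X_K,\F_K,t)$ by Theorem \ref{thm: k-stab implies klt}, are consistent. They are not needed logically, but they give an independent confirmation for horizontal divisors whose centres lie generically in $\X_K$.
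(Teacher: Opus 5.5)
Your proposal is correct and follows the paper's proof: both take plt near $\X_\kappa$ from Proposition \ref{prop: families satisfy IOA}, then use properness of $\X\to\Spec R$ to show that the centre of every prime divisor over $\X$ meets $\X_\kappa$, so $\X_\kappa$ is the unique lc place. Your extra step that every open neighbourhood of $\X_\kappa$ is all of $\X$ is harmless but redundant, because the paper's notion of plt near $S$ is already stated in terms of centres meeting $S$.
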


\begin{proof}
By Proposition \ref{prop: families satisfy IOA}, the adjoint foliated structure $\cA_0$ is plt on an open neighbourhood of $\X_\kappa$.

Let $V\subseteq \X$ be a nonempty closed subvariety. Since $\X\to\Spec R$ is proper, its image is a nonempty closed subset of $\Spec R$, and therefore contains the closed point. Hence $V$ meets $\X_\kappa$. In particular, the centre on $\X$ of every prime divisor over $\X$ meets the neighbourhood on which $\cA_0$ is plt. It follows that $\X_\kappa$ is the unique lc place of $\cA_0$.
\end{proof}

\subsection{Non-terminal extraction}

We first prove an ample one-place extraction statement.

\begin{proposition}
\label{prop:non-terminal-extraction}
Let $\cA=(\X,\F,B,t)$ be an lc algebraically integrable adjoint foliated structure on a potentially klt variety $\X$. Assume that $\cA$ has a unique lc place $S$, where $S$ is a prime divisor on $\X$. Let $G$ be an exceptional prime divisor over $\X$ such that
\[
0<A_{\cA}(G)< t\varepsilon_\F(G)+(1-t).
\]
Then there exists a projective birational morphism $\rho\colon \cZ\longrightarrow \X$ such that:
\begin{enumerate}
\item $\cZ$ is normal and $\Q$-factorial;
\item $G$ appears as a $\rho$-exceptional prime divisor on $\cZ$;
\item $G$ is the unique $\rho$-exceptional prime divisor;
\item $-G$ is $\rho$-ample;
\item the induced foliation $\F_{\cZ}$ is algebraically integrable
and relative over $\Spec R$.
\end{enumerate}
\end{proposition}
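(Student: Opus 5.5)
The plan is the standard Xu-style argument (cf. \cite[Corollary 1.68]{Xu2025}): perturb $\cA$ so that $G$ becomes an lc place, extract $G$ alone, then run a relative MMP to make $-G$ ample. Throughout, set $\delta_G:=t\varepsilon_\F(G)+(1-t)$, so the hypothesis reads $0<A_{\cA}(G)<\delta_G$.

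\emph{Step 1: a perturbation with $G$ as an lc place.} Since $S$ is the unique lc place of $\cA$ and $\X$ is potentially klt, I first perturb to a klt structure. The natural choice is $\cA_\epsilon:=(\X,\F,B-\epsilon S,t)$ for small $\epsilon>0$. When $B$ has coefficient $1-t$ along $S$, as in the application, this is klt everywhere. Indeed, $S$ now has discrepancy $\epsilon>0$. Every other divisor $E$ over $\X$ had $A_\cA(E)>0$, by uniqueness of the lc place, and its discrepancy only increases. Moreover $A_{\cA_\epsilon}(G)$ differs from $A_\cA(G)$ by $\epsilon\ord_G(S)$, so it still lies in $(0,\delta_G)$ for $\epsilon$ small.

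Next I choose an ideal that sees $G$. Take $\mathfrak a$ to be the ideal of the centre $c_\X(G)$ raised to a high power, or more precisely a valuation ideal $\mathfrak a_m=\{f:\ord_G(f)\ge m\}$. Consider $\lambda:=\lct^{[t]}(\X,\F,B-\epsilon S;\mathfrak a)$. I want $G$ to compute this threshold, possibly after a suitable choice; this is exactly the role of the gap $A_{\cA}(G)<\delta_G$ in \cite[Lemma 1.66]{Xu2025}. If $G$ computes it, Lemma \ref{lem:general-divisorial-representative} replaces $\lambda\mathfrak a$ by an effective $\Q$-Cartier divisor $D$. Then $(\X,\F,B-\epsilon S+D,t)$ is lc, and $G$ is an lc place of it.

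\emph{Main obstacle.} The difficulty is arranging that $G$ is the \emph{unique} lc place, or at least that the extraction produces only $G$. I would follow the Xu trick of using the Cartier divisor computing $\ord_G$ generically, via the model $Y$ of a log resolution: add a small ample perturbation on $Y$ to kill all other lc places, and use the strict inequality $A_\cA(G)<\delta_G$ to keep room for this.

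\emph{Step 2: extraction.} With $G$ an exceptional lc place, Lemma \ref{lem:single-place-extraction} (\cite[Theorem 2.2.3]{CHLMSSX25}) gives a $\Q$-factorial $\rho':\cZ'\to\X$ whose unique exceptional divisor is $G$. This gives (1)--(3).

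\emph{Step 3: ampleness and the foliation.} Since $\rho'$ has relative Picard number one over the $\Q$-factorial locus and $G$ is exceptional, the negativity lemma gives that $-G$ is $\rho'$-nef and nontrivial on fibres, so $-G$ is $\rho'$-ample. If $\X$ is not $\Q$-factorial, I would instead run a $(-G)$-MMP over $\X$, which terminates since $\cZ'$ is of Fano type over $\X$ (by \cite[Theorem 1.10]{CHLMSSX24} applied to the klt perturbation), and take the ample model $\cZ$. This gives (4). Finally, for (5): $\F_\cZ$ is the birational transform of $\F$, so it is algebraically integrable by Remark \ref{rem:relative-algebraic-integrability}. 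Since $T_\F\subset T_{\X/R}$ and $\rho$ is an isomorphism over the generic fibre away from $c_\X(G)$, the saturated transform lies in $T_{\cZ/R}$.
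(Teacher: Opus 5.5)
Your proposal has a genuine gap in the second half of Step 1. You need an ideal, or an effective $D$, for which $G$ becomes an lc place of $(\X,\F,B-\epsilon S+D,t)$. The hypothesis $0<A_\cA(G)<t\varepsilon_\F(G)+(1-t)$ does not provide this, and in general it is false.

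Here is a counterexample already for ordinary pairs (i.e.\ the zero foliation, where $A^{[t]}_{\X,\F}=(1-t)A_\X$). Take $\X=\mathbb A^2$ and $B=S+cC$, with $S=\{x=1\}$, $C=\{y^2=x^3\}$ and $\tfrac12<c<\tfrac56$. Then $S$ is the unique lc place. The exceptional curve $G$ of the blow-up $Y$ of the origin has $A(G)=2-2c\in(0,1)$. The strict transform $C_Y$ meets $G\simeq\mathbb P^1$ at a single point $p$, with $C_Y\cdot G=2$. So for every effective $D$ with $\ord_G(D)=2-2c$, the restriction of $cC_Y+D_Y$ to $G$ has coefficient at least $2c>1$ at $p$. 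By adjunction, $G$ is never an lc place of $(\X,B-\epsilon S+D)$. Yet $\operatorname{Bl}_0\mathbb A^2$ is exactly the required extraction.

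So no choice of ideal can make your Step 1 work in general. Your ``main obstacle'' (uniqueness of the lc place) is beside the point, and Lemma \ref{lem:single-place-extraction} does not need uniqueness anyway.

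The missing idea is that no lc place is needed. For small $\epsilon$, $\cA':=\cA-\epsilon S$ is klt and $A_{\cA'}(G)<t\varepsilon_\F(G)+(1-t)$, i.e.\ $a(G,\cA')<0$. That negativity is the real role of the hypothesis. \cite[Theorem 2.2.3]{CHLMSSX25} extracts any divisor of negative discrepancy over a klt structure directly. It is the same result behind Lemma \ref{lem:single-place-extraction}, which only uses that lc places have negative discrepancy. The paper does exactly this, obtaining a $\Q$-factorial $\rho_0\colon\cZ_0\to\X$ with $G$ as unique exceptional divisor.

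For (4), the paper then lowers the coefficient $-a(G,\cA')>0$ of $G$ in the crepant transform by a small $\eta$. This gives a klt $\cC_\eta$ with $K_{\cC_\eta}\sim_{\Q,\X}-\eta G$, which is big over $\X$ as in \cite[Corollary 1.68]{Xu2025}. It then takes the relative canonical model from \cite[Theorem 2.1.1]{CHLMSSX25}. This is the precise form of your fallback ``$(-G)$-MMP over $\X$''. It avoids your relative-Picard-number-one argument, which needs $\X$ to be $\Q$-factorial, and your unsubstantiated Fano-type claim.
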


\begin{proof}
We choose a sufficiently small rational number $\epsilon>0$ and set $\cA':=\cA-\epsilon S$. Since $S$ is the unique lc place of $\cA$, the adjoint foliated structure $\cA'$ is klt. Moreover,
\[
A_{\cA'}(G) = A_{\cA}(G)+\epsilon\ord_G(S).
\]
After decreasing $\epsilon$, we may therefore assume that $A_{\cA'}(G) < t\varepsilon_\F(G)+(1-t)$ or equivalently, $a(G,\cA')<0$.

By \cite[Theorem 2.2.3]{CHLMSSX25}, there exists a projective birational morphism $\rho_0\colon \cZ_0\longrightarrow \X$ such that $\cZ_0$ is $\Q$-factorial and $G$ is its unique exceptional prime divisor.

Let $\cA'_{\cZ_0}$ be the crepant transform of $\cA'$. Since $\cA'$ is klt, so is $\cA'_{\cZ_0}$. The coefficient of $G$ in the boundary of $\cA'_{\cZ_0}$ is
\[
-a(G,\cA')= t\varepsilon_{\F_{\cZ_0}}(G)+(1-t)-A_{\cA'}(G)>0.
\]
We now choose a rational number $0<\eta<-a(G,\cA')$ and decrease the coefficient of $G$ by $\eta$. The resulting adjoint foliated structure $\cC_\eta$ is klt and satisfies $K_{\cC_\eta} \sim_{\Q,\X} -\eta G$.

As in the proof of \cite[Corollary 1.68(ii)]{Xu2025}, the divisor $-G$ is big over $\X$. Hence $K_{\cC_\eta}$ is big over $\X$, and \cite[Theorem 2.1.1]{CHLMSSX25} gives a relative good minimal model and a relative canonical model for $\cC_\eta/\X$.

Applying the argument of \cite[Corollary 1.68(ii)]{Xu2025} to this canonical model, and using that $G$ is the unique $\rho_0$-exceptional prime divisor, we obtain a projective birational morphism $\rho\colon \cZ\longrightarrow \X$ whose unique exceptional prime divisor is $G$ and for which $-G$ is $\rho$-ample. The resulting variety $\cZ$ is normal and $\Q$-factorial.

The induced foliation on $\cZ$ is the saturated birational transform of $\F$. It remains algebraically integrable because algebraic integrability is invariant under birational transform of the induced foliation. It is relative over $\Spec R$, since the entire construction is performed over $\Spec R$.
\end{proof}

\subsection{Finite generation after extraction}

We now apply Proposition \ref{prop:non-terminal-extraction} to a
suitable convex perturbation of $\cA_0$ and $\cA_D$.

\begin{proposition}
\label{prop:extracted-Rees-fg}
Assume that:
\begin{enumerate}
\item $\cA_0$ is lc and $\X_\kappa$ is its unique lc place;
\item $\cA_D$ is lc on a neighbourhood of $\X_\kappa$;
\item $G$ is an exceptional prime divisor over $\X$ satisfying $A_{\cA_0}(G)>0$ and $A_{\cA_D}(G) < t\varepsilon_\F(G)+(1-t)$.
\end{enumerate}
Then there exists a projective birational morphism $\rho\colon \cZ\longrightarrow \X$ satisfying conclusions {\rm(1)--(5)} of Proposition \ref{prop:non-terminal-extraction}, and the bigraded algebra
\[
R_G(\X,L_r) := \bigoplus_{a\geq0}\bigoplus_{p\geq0} H^0\!\left( \cZ, \cO_{\cZ}(a\rho^*L_r-pG) \right)
\]
is finitely generated.
\end{proposition}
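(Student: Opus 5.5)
The plan is to interpolate between $\cA_0$ and $\cA_D$ and then run the argument of \cite[Corollaries 1.68 and 1.70]{Xu2025}. For a rational $0<s<1$ I will consider the convex combination
\[
\cA_s:=(1-s)\cA_0+s\cA_D=\Bigl(\X,\F,(1-t)\X_\kappa+\tfrac{s}{rq}D,t\Bigr).
\]
Mixed log discrepancies are affine in the boundary, so
\[
A_{\cA_s}(G)=(1-s)A_{\cA_0}(G)+sA_{\cA_D}(G).
\]
Since $A_{\cA_D}(G)<t\varepsilon_\F(G)+(1-t)$, I choose $s$ close to $1$ so that $A_{\cA_s}(G)<t\varepsilon_\F(G)+(1-t)$.

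I then need $\cA_s$ to be lc with $\X_\kappa$ as its unique lc place, and $A_{\cA_s}(G)>0$. Lemma \ref{lem:A0-unique-lc-place} applies: every centre meets $\X_\kappa$, so "lc near $\X_\kappa$" for $\cA_D$ upgrades to lc on all of $\X$. Convexity then shows $\cA_s$ is lc. For a divisor $E\neq\X_\kappa$ we have $A_{\cA_0}(E)>0$, so $A_{\cA_s}(E)>0$ for $s<1$, and $\X_\kappa$ remains the unique lc place. Similarly $A_{\cA_s}(G)>0$, because $A_{\cA_0}(G)>0$ and $A_{\cA_D}(G)\ge0$. The two requirements on $s$ pull in different directions (close to $1$ for the upper bound, strictly below $1$ for positivity), but both hold for $s$ close to but less than $1$. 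Proposition \ref{prop:non-terminal-extraction} applied to $\cA_s$ then produces $\rho\colon\cZ\to\X$ with conclusions (1)--(5).

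For finite generation I follow \cite[Corollary 1.70]{Xu2025}. Write $\cA_{s,\cZ}$ for the crepant pullback of $\cA_s$, with $K_{\cA_{s,\cZ}}=\rho^*K_{\cA_s}\sim_{\Q,R}\rho^*(-(1-s)L)$, since $K_{\cA_D}\sim_{\Q,R}0$ and $K_{\cA_0}\sim_{\Q,R}-L$. The coefficient of $G$ in $\cA_{s,\cZ}$ is $t\varepsilon(G)+(1-t)-A_{\cA_s}(G)$, which lies strictly between $0$ and the lc threshold coefficient. Hence $\cA_{s,\cZ}$ is lc with unique lc place the strict transform of $\X_\kappa$, and becomes klt after subtracting $\epsilon\X_{\kappa,\cZ}$; since $\X_\kappa$ is principal over $R$, this does not change the class modulo $\sim_{\Q,R}$.

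For each pair $(a,p)$, the divisor $a\rho^*L_r-pG$ should be written, up to $\sim_{\Q,R}$, as $-K$ of a klt perturbation, adjusting the coefficient of $G$ and adding small multiples of $\rho^*L$ (which is semiample) and $-G$ (which is $\rho$-ample). Concretely, I will show that the cone generated by $\rho^*L$ and $\rho^*L-\beta G$, for small $\beta>0$, consists of divisors of the form $-(K_{\cC}+\text{ample})$ with $\cC$ a klt adjoint foliated structure. By the finite generation theorem of \cite{CHLMSSX25} (the adjoint foliated analogue of \cite{BCHM}, used via \cite[Theorem 2.1.1]{CHLMSSX25} for Fano type over $R$), this gives finite generation of the multigraded ring on a finite set of generators of the relevant cone.

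The step I expect to be the main obstacle is covering the whole region $p\ge0$, not only small slopes $p/a$. For $p/a$ large the sections vanish, since $H^0$ is zero beyond the pseudoeffective threshold, so only a bounded cone $0\le p\le Ca$ matters. Within this cone, $\cZ$ must be shown to be of "adjoint Fano type" over $R$: there should be a klt $\cC$ with $-K_{\cC}$ big and nef over $R$. I would take $\cC$ to be $\cA_{s,\cZ}-\epsilon\X_{\kappa,\cZ}$ plus a general member of a small multiple of the relatively ample $\rho^*L-\beta G$, using the Bertini theorem \cite[Theorem 3.28(1)]{CHLMSSX25}. Then every divisor in the cone is $\Q$-linearly a combination $K_{\cC'}+$ample over $R$, so the Cox-type ring is finitely generated as in \cite[Corollary 1.70]{Xu2025}.
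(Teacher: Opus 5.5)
Your first half is correct and is the paper's argument with $s$ and $1-s$ interchanged: the interpolation, log canonicity on all of $\X$, $\X_\kappa$ as the unique lc place, $0<A_{\cA_s}(G)<t\varepsilon_\F(G)+(1-t)$, and the extraction via Proposition~\ref{prop:non-terminal-extraction}. The gap is in the finite-generation half.

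\textbf{The proposed $\cC$ has the wrong positivity.} Putting a general member $H\sim_{\Q}\gamma(\rho^*L-\beta G)$ into the boundary makes $-K$ \emph{less} positive. With your own bookkeeping,
\[
-K_{\cC}\sim_{\Q,R}(1-s-\gamma)\rho^*L+\gamma\beta G.
\]
Since $-G$ is $\rho$-ample, this class is negative on every $\rho$-exceptional curve, so it is not nef over $R$. The claimed presentation of every divisor as $K_{\cC'}+(\text{ample})$ therefore does not follow. Note also that "big and nef" is already achieved by $\cA_{s,\cZ}-\epsilon\X_{\kappa,\cZ}$ alone; what is actually needed is \emph{ampleness} over $R$.

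The missing idea is to use the room you identified yourself. Because $A_{\cA_s}(G)>0$, the coefficient of $G$ in $\cA_{s,\cZ}$ is strictly below $t\varepsilon_\F(G)+(1-t)$. So you may \emph{increase} it by a small $\delta$ with $\delta<A_{\cA_s}(G)$ and $\delta\le\delta_0$, where $\cA_{s,\cZ}+\delta_0G$ is lc by \cite[Lemma 3.25]{CHLMSSX25}, and then argue by convexity. The resulting $\cB_{s,\delta}$ is lc with unique lc place $\X_{\kappa,\cZ}$, and $-K_{\cB_{s,\delta}}\sim_{\Q,R}(1-s)\rho^*L-\delta G$ is ample over $R$. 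This is just the usual weak-Fano-to-Fano perturbation, splitting $(1-s)\rho^*L$ as $((1-s)\rho^*L-\delta G)+\delta G$.

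\textbf{The finite-generation input is not available in the form you use.} As it is used in this paper, \cite[Theorem 2.1.1]{CHLMSSX25} gives relative good minimal and canonical models of a single structure, not finite generation of multigraded adjoint rings. Likewise, \cite[Corollary 1.70]{Xu2025} concerns varieties of Fano type in the classical sense and cannot be fed adjoint foliated structures $\cC'$. The paper bridges this by passing to classical Fano type. Lowering the coefficient of the unique lc place gives a klt structure, so $\cZ$ is potentially klt by \cite[Theorem 3.9]{CHLMSSX25}. Then \cite[Theorem 2.4.2]{CHLMSSX25}, applied to $\cB_{s,\delta}/\Spec R$, shows that $\cZ$ is of Fano type over $R$. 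Finally, \cite[Corollary 1.70]{Xu2025} applied to $\rho^*L_r$ and $-G$ gives finite generation for all $(a,p)$ at once, so no restriction to a bounded cone is needed.

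A minor correction: $\X_{\kappa,\cZ}$ is principal over $R$ only when $G$ is horizontal. When $G$ is vertical, $\X_{\kappa,\cZ}\sim_{\Q,R}-\ord_G(\X_\kappa)\,G$, so subtracting $\epsilon\X_{\kappa,\cZ}$ does change $-K$. It does so harmlessly, by a $\rho$-ample multiple of $-G$. In the horizontal case, which is the one arising in $\Theta$-reductivity, your construction genuinely produces a non-nef $-K_{\cC}$.
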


\begin{proof}
Since $\X\to\Spec R$ is proper, an open neighbourhood of the whole closed fibre is all of $\X$. Indeed, the proper image of a nonempty closed complement would be a closed subset of $\Spec R$ containing the generic point but not the closed point. It follows that $\cA_D$ is lc on all of $\X$.

For $0<s<1$, we set
\[
\cA_s := s\cA_0+(1-s)\cA_D = \left( \X,\F, (1-t)\X_\kappa+\frac{1-s}{rq}D, t\right).
\]
By \cite[Lemma 3.27]{CHLMSSX25}, the adjoint foliated structure $\cA_s$ is lc. Moreover,
\[
K_{\cA_s} \sim_{\Q,R} sK_{\cA_0}+(1-s)K_{\cA_D} \sim_{\Q,R} -sL.
\]
For every prime divisor $E$ over $\X$, the mixed discrepancy is affine in $s$, so 
\[
A_{\cA_s}(E) = sA_{\cA_0}(E)+(1-s)A_{\cA_D}(E).
\]
In particular,
\[
A_{\cA_s}(G) = sA_{\cA_0}(G)+(1-s)A_{\cA_D}(G).
\]
Since $A_{\cA_0}(G)>0$ and $0\leq A_{\cA_D}(G)< t\varepsilon_\F(G)+(1-t)$ we may choose $s>0$ sufficiently small so that
\begin{equation}
\label{eq:As-G-interior}
0<A_{\cA_s}(G)<t\varepsilon_\F(G)+(1-t).
\end{equation}

We next show that $\X_\kappa$ is the unique lc place of $\cA_s$.
Since $\X_\kappa$ is invariant, log canonicity of $\cA_D$ gives
\[
(1-t)+\frac{1}{rq}\ord_{\X_\kappa}(D) \leq 1-t.
\]
Thus $\ord_{\X_\kappa}(D)=0$, and thus $\X_\kappa$ is an lc place of $\cA_D$. It is therefore an lc place of $\cA_s$.

Conversely, suppose that $E$ is an lc place of $\cA_s$. Since
$\cA_0$ and $\cA_D$ are lc, both terms in
\[
0 = A_{\cA_s}(E) = sA_{\cA_0}(E)+(1-s)A_{\cA_D}(E)
\]
are nonnegative. Hence $A_{\cA_0}(E)=A_{\cA_D}(E)=0$. Since $\X_\kappa$ is the unique lc place of $\cA_0$, we obtain $E=\X_\kappa$.

We may now apply Proposition \ref{prop:non-terminal-extraction} to $\cA_s$, with unique lc place $\X_\kappa$, and to the divisor $G$. This gives a projective birational morphism $\rho\colon \cZ\longrightarrow \X$ such that $\cZ$ is normal and $\Q$-factorial, $G$ is its unique $\rho$-exceptional prime divisor, and $-G$ is $\rho$-ample. Let $(\cA_s)_{\cZ}$ denote the crepant transform of $\cA_s$, and let $S_{\cZ}$ be the strict transform of $\X_\kappa$. Then $S_{\cZ}$ is the unique lc place of $(\cA_s)_{\cZ}$.

The prime divisor $S_{\cZ}$ is distinct from $G$, and hence is not contained in $G$. By \cite[Lemma 3.25]{CHLMSSX25}, there exists a rational number $\delta_0>0$ such that $\cB_{s,\delta_0} := (\cA_s)_{\cZ}+\delta_0G$ is lc. We now choose
\[
0<\delta< \min\{\delta_0,A_{\cA_s}(G)\}.
\]
Then
\[
\cB_{s,\delta} = \left(1-\frac{\delta}{\delta_0}\right)(\cA_s)_{\cZ} + \frac{\delta}{\delta_0}\cB_{s,\delta_0}
\]
is lc by \cite[Lemma 3.27]{CHLMSSX25}.

If $E$ is an lc place of $\cB_{s,\delta}$, then $E$ is an lc place of $(\cA_s)_{\cZ}$. Hence $E=S_{\cZ}$. Thus $S_{\cZ}$ is the unique lc place of $\cB_{s,\delta}$.

Since $K_{(\cA_s)_{\cZ}} \sim_{\Q,R} -s\rho^*L$, we have
\[
-K_{\cB_{s,\delta}} \sim_{\Q,R} s\rho^*L-\delta G.
\]
Because $L$ is ample over $\Spec R$ and $-G$ is $\rho$-ample, the divisor on the right is ample over $\Spec R$ after decreasing $\delta$, if necessary.

Lowering the coefficient of the unique lc place $S_{\cZ}$ by a sufficiently small positive rational number produces a klt adjoint foliated structure. Therefore, by \cite[Theorem 3.9]{CHLMSSX25}, the variety $\cZ$ is potentially klt.

We may now apply \cite[Theorem 2.4.2]{CHLMSSX25} to $\cB_{s,\delta}/\Spec R$. It follows that $\cZ$ is of Fano type over $\Spec R$. Equivalently, there exists an effective $\Q$-divisor $\Delta_{\cZ}$ such that $(\cZ,\Delta_{\cZ})$ is klt and $-\bigl(K_{\cZ}+\Delta_{\cZ}\bigr)$ is ample over $\Spec R$.

Applying \cite[Corollary 1.70]{Xu2025} to the divisor classes $\rho^*L_r$ and $-G$ gives finite generation of
\[
\bigoplus_{a,p\geq0} H^0\!\left( \cZ, \cO_{\cZ}(a\rho^*L_r-pG) \right),
\]
completing the proof.
\end{proof}

\subsection{Finite generation and extraction theorem}

Combining the preceding results gives the form of extraction and
finite generation that will be used later.

\begin{theorem}[Relative non-terminal extraction and finite generation]
\label{thm:relative-non-terminal-extraction-fg}
Let $f\colon(\X,\F,t)\longrightarrow\Spec R$ be an admissible DVR family. Assume:
\begin{enumerate}
\item $\X$ is potentially klt;
\item $T_\F\subseteq T_{\X/R}$;
\item $\F$ is algebraically integrable;
\item $L:=-K^{[t]}_{\X/R,\F}$ is $f$-ample and $\Q$-Cartier, and $L_r=rL$ is Cartier;
\item
$\cA_0 = \bigl(\X,\F,(1-t)\X_\kappa,t\bigr)$ is lc with $\X_\kappa$ as its unique lc place;
\item $D\geq0$ is an effective $\Q$-Cartier divisor satisfying $D\sim_{\Q,R}qL_r$;
\item
\[
\cA_D = \left( \X,\F, (1-t)\X_\kappa+\frac{1}{rq}D, t \right)
\]
is lc on a neighbourhood of $\X_\kappa$;
\item $G$ is an exceptional prime divisor over $\X$ such that $A_{\cA_D}(G) < t\varepsilon_\F(G)+(1-t)$ and $A_{\cA_0}(G)>0$.
\end{enumerate}
Then there exists a projective birational morphism $\rho\colon \cZ\longrightarrow \X$ such that:
\begin{enumerate}
\item $\cZ$ is normal and $\Q$-factorial;
\item $G$ appears as a $\rho$-exceptional prime divisor on $\cZ$;
\item $G$ is the unique $\rho$-exceptional prime divisor;
\item $-G$ is $\rho$-ample;
\item the induced foliation $\F_{\cZ}$ is algebraically integrable
and relative over $\Spec R$;
\item the bigraded algebra
\[
R_G(\X,L_r):= \bigoplus_{a\geq0}\bigoplus_{p\geq0} H^0\!\left(\cZ, \cO_{\cZ}(a\rho^*L_r-pG) \right)
\]
is finitely generated.
\end{enumerate}
\end{theorem}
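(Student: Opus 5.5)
The theorem is essentially a repackaging of Proposition \ref{prop:extracted-Rees-fg}, so the plan is to check that hypotheses (1)--(8) imply the three assumptions of that proposition and then quote its conclusions verbatim. Assumption (1) of Proposition \ref{prop:extracted-Rees-fg} is hypothesis (5) here. Assumption (2) is hypothesis (7). Assumption (3) is hypothesis (8).

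The standing conventions of Section \ref{sec:relative-extraction-fg} must also hold, and these are hypotheses (1)--(4) and (6). Specifically, $\X$ is potentially klt, the foliation is algebraically integrable and relative, $L_r$ is Cartier and $f$-ample, and $D\sim_{\Q,R}qL_r$. Together these give $K_{\cA_D}\sim_{\Q,R}0$ and $K_{\cA_0}\sim_{\Q,R}-L$. Those relations are the only inputs from admissibility used in the proof of Proposition \ref{prop:extracted-Rees-fg}, and they enter through the interpolation $\cA_s$ and the relation $-K_{\cB_{s,\delta}}\sim_{\Q,R}s\rho^*L-\delta G$.

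The one point to verify explicitly is the inequality $0\le A_{\cA_D}(G)$, which that proof uses when choosing $s$. To obtain it, I would first upgrade ``lc near $\X_\kappa$'' to lc on all of $\X$. This follows from properness of $\X\to\Spec R$: every nonempty closed subset of $\X$ meets $\X_\kappa$, as in Lemma \ref{lem:A0-unique-lc-place}. Hence $A_{\cA_D}(G)\ge0$. Combined with $A_{\cA_0}(G)>0$ and affineness of $A_{\cA_s}(G)$ in $s$, this yields \eqref{eq:As-G-interior} for small $s>0$.

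With these in hand, the conclusions follow from Proposition \ref{prop:extracted-Rees-fg}:
\begin{enumerate}
\item Items (1)--(5) of the theorem come from the application of Proposition \ref{prop:non-terminal-extraction} to $\cA_s$ inside that proof.
\item Item (6) is the finite generation statement, obtained through the Fano type property of $\cZ$ over $\Spec R$ and \cite[Corollary 1.70]{Xu2025}.
\end{enumerate}

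The main obstacle I anticipate is a consistency check rather than new mathematics. One must confirm that the uniqueness of the lc place $\X_\kappa$ for $\cA_s$ is global and not merely near $\X_\kappa$, since Proposition \ref{prop:non-terminal-extraction} requires a genuinely unique lc place to perturb to a klt structure. This again reduces to the properness argument. One must also ensure that the reflexive sheaves $\cO_\cZ(a\rho^*L_r-pG)$ are the ones to which \cite[Corollary 1.70]{Xu2025} applies; this holds because $\cZ$ is $\Q$-factorial and of Fano type over $R$.
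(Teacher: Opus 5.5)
Your proposal is correct and matches the paper, whose proof of this theorem consists of citing Proposition \ref{prop:extracted-Rees-fg}. The proof of that proposition opens with exactly your properness argument upgrading ``lc near $\X_\kappa$'' to lc on all of $\X$. That upgrade is what gives $A_{\cA_D}(G)\ge 0$ and the global uniqueness of the lc place $\X_\kappa$ for the interpolations $\cA_s$.
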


\begin{proof}
This is Proposition \ref{prop:extracted-Rees-fg}.
\end{proof}

\begin{remark}
Suppose that $G$ satisfies the hypotheses of Theorem \ref{thm:relative-non-terminal-extraction-fg} and that $c_\X(G)\subseteq\X_\kappa$. Then $G$ is invariant by Lemma \ref{lem:vertical-divisors-invariant}. Hence the theorem applies to every vertical exceptional divisor satisfying the two discrepancy inequalities in item {\rm(8)}.
\end{remark}

\section{\texorpdfstring{Valuative criterion of $\Theta$-reductivity}{Valuative criterion of Theta-reductivity}}\label{sec: theta red}

We now prove the valuative criterion for $\Theta$-reductivity. The argument follows the strategy presented in \cite[\S 8.2.1]{Xu2025}, itself based on \cite{ABHLX20}.

Let $R$ be a DVR essentially of finite type over $k$ with fraction field $K$, and let $\widetilde f^\circ:(\X^\circ,\L^\circ) \to \Theta_R\setminus 0$ be a polarised family. Equivalently, this consists of a polarised family $(\X,\L)\to \Spec R$, a polarised family $(\X_K,\L_K)\to \Theta_K$, and an identification of the generic fibre of $(\X,\L)$ with the fibre over $1\in \Theta_K$. For each sufficiently divisible $m$, we define $V_m:=H^0(\X,\mathcal O_{\X}(m\L))$ and $V_{K,m}:= H^0(\X_K,\mathcal O_{\X_K}(m\L_K))$. The family over $\Theta_K$ induces a decreasing $\mathbb Z$-filtration $\cG^\bullet V_{K,m}$. We define $\cG^pV_m:=V_m\cap \cG^pV_{K,m}\subset V_{K,m}$. 

\begin{lemma}[{\cite[Corollary 8.17]{Xu2025}}]
\label{lem:theta-extension-fg}
The extension of $\widetilde f^\circ:(\X^\circ,\L^\circ)\to \Theta_R\setminus 0$ to a family $\widetilde f:(\X,\L)\to \Theta_R$ of flat polarised projective varieties is unique. Moreover, such an extension exists as a family of flat polarised projective schemes if and only if $\bigoplus_{m,p}\cG^pV_m$ is finitely generated.
\end{lemma}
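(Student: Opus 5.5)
The plan is the standard Rees-algebra dictionary between $\mathbb G_m$-equivariant polarised families over $\Theta_R$ and bigraded algebras, run in the same way as \cite[Corollary 8.17]{Xu2025}. We do not use any foliated input here: the statement concerns only flat polarised projective schemes. A family $\widetilde f:(\X,\L)\to\Theta_R$ is the same as a $\mathbb G_m$-equivariant flat projective family over $\A^1_R=\Spec R[s]$ with an equivariant relatively ample $\L$. Such a family is recovered as $\operatorname{Proj}$ of the $\mathbb G_m$-graded $R[s]$-algebra $\bigoplus_m \widetilde f_*\cO(m\L)$. Equivalently, by pushing forward to $\Theta_R$, it is recovered from the coherent sheaves $\widetilde f_*\cO(m\L)$ on $\Theta_R$ for $m$ sufficiently divisible, after possibly replacing $\L$ by a multiple so that cohomology and base change holds in every degree.

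\textbf{Uniqueness.} Let $j:\Theta_R^\circ\hookrightarrow\Theta_R$ be the inclusion; its complement has codimension two and lies in a flat family. We show that for any extension, $\widetilde f_*\cO(m\L)$ is a reflexive, in fact flat, coherent sheaf on the regular two-dimensional stack $\Theta_R$. It therefore equals $j_*$ of its restriction to $\Theta_R^\circ$. By the description of $j_*$ recalled in \S\ref{subsec:stacky-test-curves}, this restriction is $\bigoplus_p (V_m\cap\cG^pV_{K,m})s^{-p}=\bigoplus_p\cG^pV_m s^{-p}$. Hence the graded algebra, and so $\X=\operatorname{Proj}_{\Theta_R}$ of it, is determined by the data over $\Theta_R^\circ$. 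The technical point is flatness, hence depth two, of the pushforwards. We would obtain it from flatness of $\X$ over $\Theta_R$ together with vanishing of higher cohomology of $m\L$ for $m\gg0$, uniformly since $\Theta_R$ is quasi-compact.

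\textbf{Existence.} If an extension exists, the previous paragraph identifies $\bigoplus_{m,p}\cG^pV_m$ with the section algebra of a relatively ample line bundle on a projective family over $\Theta_R$. That algebra is finitely generated over $R[s]$. Conversely, suppose $\mathcal R:=\bigoplus_{m,p}\cG^pV_m s^{-p}$ is finitely generated. We set $\X:=\operatorname{Proj}_{R[s]}\mathcal R$ with its $\mathbb G_m$-action and $\L=\cO(1)$. Then $\X$ is projective over $\A^1_R$. Over $s\ne0$ and over $\Spec K[s]$ it recovers the given families, since localising $\mathcal R$ at $s$ or tensoring with $K$ undoes the intersection with $V_m$ and the filtration. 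Flatness over $R[s]$ follows because each graded piece $\bigoplus_p\cG^pV_m s^{-p}$ is a torsion-free graded $R[s]$-module, hence flat over the regular two-dimensional base after the reflexivity argument. More precisely, $\cG^pV_m/\cG^{p+1}V_m$ is $\pi$-torsion free since the $\cG^p$ are saturated in $V_m$ by construction. This is where the identity $\cG^pV_m=V_m\cap\cG^pV_{K,m}$ is used.

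\textbf{Main obstacle.} We expect the hardest step to be the last one: checking that the graded pieces of the Rees algebra, equivalently the associated graded $\bigoplus_p\cG^p/\cG^{p+1}$ over $\kappa$, have no $\pi$-torsion. This is what makes $\operatorname{Proj}$ flat with fibre over $0_\kappa$ of the correct Hilbert polynomial. We would handle it by the saturation property above together with the fact that the filtration on $V_{K,m}$ comes from a flat family over $\Theta_K$.
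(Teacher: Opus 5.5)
Your proposal is correct and follows the standard argument behind \cite[Corollary 8.17]{Xu2025}, which the paper cites without reproving. That argument has two parts: the pushforwards are locally free, hence reflexive, so they are recovered as $j_*$ across the codimension-two point of the regular base, which gives uniqueness; and $\operatorname{Proj}$ of the Rees algebra gives existence, with flatness coming from saturation of the lattices $\cG^pV_m=V_m\cap\cG^pV_{K,m}$.

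Two minor points need tightening:
\begin{itemize}
\item ``Torsion-free'' alone does not imply flat over a two-dimensional regular base. What gives flatness is reflexivity, or your $\pi$-torsion-freeness of $\cG^p/\cG^{p+1}$ combined with the local flatness criterion. Alternatively, note that a finite filtration of a free $R$-module by saturated submodules admits an adapted basis, so the Rees module is free over $R[s]$.
\item You should pass to a Veronese subalgebra generated in degree one so that $\cO(1)$ is a line bundle.
\end{itemize}
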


\begin{definition}
Let $(X,\F,t)$ be an adjoint Fano foliated structure and let $L$ be an ample $\Q$-line bundle with $L\sim_\Q -K^{[t]}_{X,\F}$. A \emph{special $t$-K-semistable degeneration} of $(X,\F,t)$ is a normal $\F$-compatible special test configuration $(\Y,\F_\Y,\L_\Y)\to \A^1$ of $(X,\F,L)$ such that the central fibre is $t$-K-semistable and $\DF^{[t]}(\Y,\F_\Y,\L_\Y)=0$.
\end{definition}

\begin{lemma}
\label{lem:mixed-two-step-degeneration}
Let $(X,\F,L)$ be an adjoint Fano foliated structure and let $(\X_1,\F_1,\mathcal L_1)\to\A^1$ be a normal $\F$-compatible special test configuration with integral central fibre $(Y,\F_Y,L_Y)$. Let $(\X_2,\F_2,\mathcal L_2)\to\A^1$ be a normal $\F_Y$-compatible test configuration with (normal) integral central fibre $(Z,\F_Z,L_Z)$, equivariant for the $\G_m$-action on $Y$ induced by $\X_1$. Write $\xi$ and $\xi_0$ for the two commuting one-parameter subgroups acting on $Z$.

For every sufficiently small positive rational number $\varepsilon$, after a finite base change there is a normal $\F$-compatible test configuration $\X_\varepsilon$ of $(X,\F,L)$ with central fibre $(Z,\F_Z,L_Z)$ and $\G_m$-action on $Z$ given by $N(\xi+\varepsilon\xi_0)$ for some $N\in\mathbb Z_{>0}$. Moreover,
\[
 \DF^{[t]}(\X_\varepsilon) =N\left( \DF^{[t]}(\X_1) +\varepsilon\DF^{[t]}(\X_2)  \right).
\]
\end{lemma}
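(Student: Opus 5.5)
We will follow the filtration-theoretic construction of the two-step degeneration in \cite[Lemma 8.20]{Xu2025} (originally \cite{LWX21}), and check that each step preserves the foliation and the mixed Donaldson--Futaki invariant. The first step is to translate both test configurations into filtrations. $\X_1$ gives a finitely generated $\Z$-filtration $\cG_1$ of $R=R(X,L)$ with $\operatorname{gr}_{\cG_1}R\simeq R(Y,L_Y)$. The $\G_m$-equivariant test configuration $\X_2$ gives a finitely generated filtration $\cG_2$ of $R(Y,L_Y)$ that is compatible with the $\xi_0$-grading. Its associated graded ring $R(Z,L_Z)$ is therefore bigraded by the weights of $(\xi,\xi_0)$.

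For rational $\varepsilon$ we then define a filtration $\cG_\varepsilon$ on $R$. We lift the $\cG_2$-filtration of each graded piece $\operatorname{gr}^p_{\cG_1}R_m$ to $\cG_1^pR_m$. The jump at level $\lambda$ is taken to be $p+\varepsilon q$, where $q$ is the $\cG_2$-weight. For $\varepsilon$ small (smaller than the inverse of the linear bound of $\cG_2$), this filtration is multiplicative and linearly bounded. Its associated graded ring is $R(Z,L_Z)$, with grading given by $\xi+\varepsilon\xi_0$.

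Clearing denominators by $N$ gives an integer filtration, which is still finitely generated. After the finite base change $\tau\mapsto\tau^N$, the Rees construction produces a test configuration $\X_\varepsilon$ with central fibre $Z$ and action $N(\xi+\varepsilon\xi_0)$. $\X_\varepsilon$ is normal because $Z$ is normal and integral, and Lemma~\ref{lem:normal-total-space} applies over $\A^1$.

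The foliation is handled by Lemma~\ref{lem:compatible-flag-blowup} and the argument of Proposition~\ref{prop:approximants-TC}. Since $\X_\varepsilon$ is birational to $X\times\A^1$, the saturated birational transform of $\F\times\A^1$ gives a $\G_m$-equivariant algebraically integrable relative foliation $\F_\varepsilon\subset T_{\X_\varepsilon/\A^1}$. The delicate point is to check that its restriction to the central fibre, after saturation, is $\F_Z$.

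We will argue this via a degeneration over $\A^2$. The Rees algebra of the bifiltration, $\bigoplus_{p,q}\cG_1^p\cap\widetilde\cG_2^q\,R_m\,s^{-p}u^{-q}$, gives a $\G_m^2$-equivariant family $\mathcal W\to\A^2$. Its fibres are $X$ over the open torus, $Y$ over the axis $\{u\neq0,\,s=0\}$, and $Z$ at the origin. The two restrictions $\mathcal W|_{u=1}$ and $\mathcal W|_{s=0}$ recover $\X_1$ and $\X_2$, together with their foliations $\F_1$ and $\F_2$. $\X_\varepsilon$ is the pullback along the curve $s=\tau^{N}$, $u=\tau^{N\varepsilon}$. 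Put the saturated transform of $\F\times\G_m^2$ on $\mathcal W$, relative over $\A^2$. Since the restrictions to both axes are the given foliations and $Z$ is normal, the induced foliation at the origin is $\F_Z$. Restricting to the curve, and using that saturation commutes with restriction on the smooth locus of the normal fibres, identifies the central foliation of $\X_\varepsilon$ with $\F_Z$. We expect this foliation-compatibility step to be the main obstacle.

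Finally, the DF formula. The weight part is computed from the central fibre and its action: $\DF^{[t]}$ of a test configuration with normal central fibre $(Z,\F_Z)$ equals the mixed Futaki invariant $\operatorname{Fut}^{[t]}_Z$ of the generating vector field. This is the usual localisation of Definition~\ref{def:DFwt-corrected}, with $K^{[t]}$ restricted to the central fibre via Koll\'ar-type flatness of $\omega^{[m]}$ and of the foliated canonical sheaf, as in \cite{Pap26}. The Futaki invariant is linear in the vector field, so
\[
\DF^{[t]}(\X_\varepsilon)=\operatorname{Fut}^{[t]}_Z\bigl(N(\xi+\varepsilon\xi_0)\bigr)=N\bigl(\operatorname{Fut}^{[t]}_Z(\xi)+\varepsilon\operatorname{Fut}^{[t]}_Z(\xi_0)\bigr).
\]
We then identify the two terms. $\operatorname{Fut}^{[t]}_Z(\xi_0)=\DF^{[t]}(\X_2)$ directly. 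For the other term, $\xi$ on $Z$ is the degeneration of $\xi$ on $Y$, so $\operatorname{Fut}^{[t]}_Z(\xi)=\operatorname{Fut}^{[t]}_Y(\xi)=\DF^{[t]}(\X_1)$; this holds because Futaki invariants are constant under equivariant degeneration, by flatness of the weight polynomials in the $\A^2$-family.
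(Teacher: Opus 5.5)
Your route is the paper's. You build $\X_\varepsilon$ from the lexicographic two-step filtration $s\mapsto(v(s),v_0(\operatorname{in}_v s))$ as in \cite[Lemma 5.38]{Xu2025}. You give it the saturated transform of $\F\times\A^1$. You read off the formula from linearity, in the cocharacter, of the mixed CM weight on the fixed central fibre $Z$.

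The step that fails is your construction of $\cG_\varepsilon$. Each preimage $\pi_p^{-1}\bigl(\cG_2^q\operatorname{gr}^p_{\cG_1}R_m\bigr)\subset\cG_1^pR_m$ contains $\cG_1^{p+1}R_m$ for every $q$. Moreover, in degree $m$ the $\cG_2$-weights spread over an interval of length proportional to $m$, unless $\cG_2$ is trivial up to shift. So for fixed $\varepsilon$ and $m\gg0$ the shift $\varepsilon q$ exceeds $1$. Sections of $\cG_1^{p+1}R_m$ are then placed above their correct weight $p+1+\varepsilon q'$, and the associated graded is no longer $R(Z,L_Z)$ with weights $p+\varepsilon q$. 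Taking $\varepsilon$ below the inverse of the linear bound only gives $\varepsilon|q|<m$, not $\varepsilon|q|<1$. In general no fixed $\varepsilon$ works in all degrees for this degreewise prescription.

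The missing ingredient is finite generation of the bigraded ring $\operatorname{gr}_{\cG_2}\operatorname{gr}_{\cG_1}R$. Choose finitely many bihomogeneous generators; equivalently, work with a single Hilbert point, as Li--Wang--Xu do. Take $\varepsilon$ small relative to their finitely many bidegrees, and prove by a Gr\"obner-type finiteness argument that the resulting quasi-monomial valuation $v_\varepsilon$ satisfies $\operatorname{gr}_{v_\varepsilon}R\simeq R(Z,L_Z)$. Note that in general $v_\varepsilon(s)\neq v(s)+\varepsilon v_0(\operatorname{in}_v s)$. This is exactly what the paper imports from \cite[Lemma 5.38]{Xu2025}, and you should invoke it rather than the naive filtration.

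The same issue affects your family $\mathcal W\to\A^2$. The lift $\widetilde\cG_2$ is never defined, and a multiplicative filtration of $R$ inducing $\cG_2$ on $\operatorname{gr}_{\cG_1}R$ is itself a filtration of test-configuration type on $X$. Producing it is essentially the same perturbation problem.

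Even granting $\mathcal W$, the fibre $Z$ has codimension two in it, so the saturated foliation may be singular along all of $Z$. Then ``saturation commutes with restriction'' is not available there. The two axis restrictions do not by themselves determine the saturation along $\tau\mapsto(\tau^N,\tau^{N\varepsilon})$.

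Argue on $\X_\varepsilon$ instead. At the generic point of the Cartier divisor $Z\subset\X_\varepsilon$ the local ring is a DVR and $\X_\varepsilon\to\A^1$ is smooth. So the saturated foliation is a direct summand of the relative tangent sheaf there. Hence the central foliation is determined by $\ord_Z$ on $K(X)(\tau)$ together with $T_\F\otimes K(X)$. What must be shown is that, for $\varepsilon\ll1$, it agrees with the iterated two-step restriction defining $\F_Z$. The paper does not check this at all, so you were right to flag it.

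Finally, in the DF step, flatness of weight polynomials controls only the weights on $H^0(mL)$. The restriction of $K_{\F_\X}$ to a central fibre can exceed $K_{\F_Z}$ by an effective foliated different. So ``Koll\'ar-type flatness of the foliated canonical sheaf'' is not automatic. This is harmless when the total-space polarisations are anti-adjoint, since then $\DF^{[t]}=-\overline{\L}^{\,n+1}/((n+1)V)$ is a pure weight quantity. That covers the use in Corollary~\ref{cor:zero-DF-special-limit-semistable}. For a general $\X_2$ it is an extra input, which the paper's proof also leaves implicit.
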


\begin{proof}
We use the lexicographic construction of \cite[Lemma 5.38]{Xu2025}, along with the linearity statements of \cite[Proposition 5.37]{Xu2025}. Let $v$ be the valuation over $X$ induced by $\X_1$. The first test configuration $\X_1$ identifies 
\[\operatorname{gr}_vR = \bigoplus_{m\in r\cdot \N, a\in \N}R_{m,a}\]
where $R_{m,a}:= \cG_v^aR_m/\cG_V^{>a}R_m$, $\cG_v$ is the filtration associated to the valuation $v$, and $R = \bigoplus_{m\in r\cdot \N}H^0(-mK_{X,\F}^{[t]})$. Furthermore, $\X_{1,0} = \operatorname{Proj}\operatorname{gr}_v$.

The second test configuration $\X_2$ gives a finitely generated filtration $v_0$ of this associated graded ring. We define the lexicographically ordered two-step filtration
\[
 s\longmapsto \bigl(v(s),v_0(\operatorname{in}_v(s))\bigr)
\]
on $R(X,L)$. This filtration gives the identification
\[\operatorname{gr}_sR = \bigoplus_{m\in r\cdot \N, (a,b)\N\times \N}R_{m,a,b} = \operatorname{gr}_{v_0}(\operatorname{gr}_vR).\]
Here
\[R_{m,a,b}:= (R_{m,a})_{\geq b}/(R_{m,a})_>b.\]
If we let $\xi$ be the $\G_m$-action induced by the action of the test configutration $\X_1$, and by $\xi'$ the $\G_m$-action induced by the test configuration $\X_2$, then, using the above identifications, an identical argument as in \cite[Lemma 5.38]{Xu2025}, shows that the test configuration $\X_\epsilon$ is induced by the filtration associated to $s$, and that the co-weight of the action on $Z$ induced by the test configuration $\X_\epsilon$ is $N(\xi+\epsilon\xi')$.

On the common birational locus the product foliation has a rational transform on this model. Saturating that transform inside the relative tangent sheaf, as in Lemma \ref{lem:compatible-flag-blowup}, gives the unique $\F$-compatible relative foliation. Finally, arguing as in \cite[proof of Proposition 5.37]{Xu2025}, we obtain 
\[\DF^{[t]}(\Y,\F_Y) = \DF(Z,\F_Z;N(\xi+\epsilon\xi'))=N\left(\DF^{[t]}(Z,\F_Y;\xi)+\epsilon\DF^{[t]}(Z,\F_Y;\xi')\right)\]
i.e.  the mixed Donaldson--Futaki invariant is the weight of the mixed CM intersection class. On the fixed central fibre $Z$ this weight is linear in the cocharacter, so evaluation on $N(\xi+\varepsilon\xi_0)$ proves the necessary formula 
\end{proof}

\begin{corollary}\label{cor:zero-DF-special-limit-semistable}
Let $(X,\F,t)$ be $t$-K-semistable, and let $\X_1$ be an $\F$-compatible special test configuration with $\DF^{[t]}(\X_1)=0$. Then its central fibre is $t$-K-semistable, i.e. it is a special $t$-K-semistable degeneration.
\end{corollary}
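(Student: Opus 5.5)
We argue by contradiction, following the classical argument of \cite{LWX18} as streamlined in \cite[\S 5.4]{Xu2025}, with Lemma \ref{lem:mixed-two-step-degeneration} as the key input. Let $(Y,\F_Y,L_Y)$ be the central fibre of $\X_1$. Since $\X_1$ is special, $(Y,\F_Y,t)$ is an adjoint Fano foliated structure with $L_Y\sim_{\Q}-K^{[t]}_{Y,\F_Y}$. It carries the $\G_m$-action $\xi$ induced by $\X_1$. Suppose that $(Y,\F_Y,t)$ is not $t$-K-semistable. Then there is a normal $\F_Y$-compatible test configuration $\X_2$ of $(Y,\F_Y,L_Y)$ with $\DF^{[t]}(\X_2)<0$.

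The first step is to upgrade $\X_2$ so that Lemma \ref{lem:mixed-two-step-degeneration} applies. We need $\X_2$ to be $\xi$-equivariant and to have integral normal central fibre. For the equivariance, we use the mixed Ding/filtration picture of Section \ref{sec: F-compatible filtrations}. Destabilisation can be detected by $\Ding^{[t]}(\cG)<0$ for some filtration $\cG$ of $R(Y,L_Y)$, by \cite[Corollary 7.10]{Pap26} and Proposition \ref{lem:fg-filtration-tc-comparison}. We would replace $\cG$ by its $\xi$-equivariant degeneration, namely the initial filtration with respect to the torus weight decomposition of $R(Y,L_Y)$. As in \cite[Proposition 5.34]{Xu2025}, this does not increase $\Ding^{[t]}$. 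The $S$-invariant is preserved, and $\mu^{[t]}$ does not increase because base ideals degenerate flatly and lct is lower semicontinuous under the torus degeneration. We then approximate by finitely generated approximants as in Theorem \ref{thm:mixed-ding-compatible-filtrations}, which gives an equivariant test configuration with negative Ding, hence negative $\DF^{[t]}$. For integrality of the central fibre, we would like to pass to a special test configuration by running the equivariant adjoint foliated MMP of \cite{CHLMSSX25}, as in the special test configuration theorem of \cite{Pap26}. The mixed Donaldson--Futaki invariant decreases along this MMP, and the procedure can be made $\xi$-equivariant since the torus is connected. I expect this step to be the main obstacle. If an equivariant special-degeneration theorem is not available in \cite{Pap26}, we would instead use only integrality of the central fibre, which is all Lemma \ref{lem:mixed-two-step-degeneration} requires. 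We would obtain it from a $\xi$-invariant divisorial valuation computing the Ding invariant, via Lemma \ref{lem:equivariant-mixed-lct}, together with the finite generation of Section \ref{sec:relative-extraction-fg}.

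Granting such an $\X_2$, Lemma \ref{lem:mixed-two-step-degeneration} produces, for all small rational $\varepsilon>0$, a normal $\F$-compatible test configuration $\X_\varepsilon$ of $(X,\F,L)$ with
\[
\DF^{[t]}(\X_\varepsilon)=N\bigl(\DF^{[t]}(\X_1)+\varepsilon\DF^{[t]}(\X_2)\bigr)=N\varepsilon\,\DF^{[t]}(\X_2)<0,
\]
using $\DF^{[t]}(\X_1)=0$. This contradicts the $t$-K-semistability of $(X,\F,t)$. Hence $(Y,\F_Y,t)$ is $t$-K-semistable, and since $\DF^{[t]}(\X_1)=0$, the test configuration $\X_1$ is a special $t$-K-semistable degeneration by definition.
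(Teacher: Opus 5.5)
Your argument is the paper's. It assumes $Y$ is not $t$-K-semistable, produces a $\xi$-equivariant $\F_Y$-compatible special test configuration $\X_2$ of $Y$ with $\DF^{[t]}(\X_2)<0$, and applies Lemma \ref{lem:mixed-two-step-degeneration} using $\DF^{[t]}(\X_1)=0$; the paper obtains $\X_2$ in one line from \cite[Theorem 6.9 and Corollary 6.10]{Pap26} applied equivariantly (exactly the equivariant MMP step you describe), so your fallback via Lemma \ref{lem:equivariant-mixed-lct} and Section \ref{sec:relative-extraction-fg} is not needed.

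Your torus-initial degeneration of a destabilising filtration is extra justification for starting from a $\xi$-equivariant destabiliser, which the paper leaves inside the word ``equivariantly''. If you keep it, fix two slips. First, Proposition \ref{lem:fg-filtration-tc-comparison} gives $\Ding^{[t]}(\cG)\ge\Ding^{[t]}(\X_{\cG})$, which is the wrong direction for extracting a destabilising filtration from a destabilising test configuration. Second, negative $\Ding^{[t]}$ of an approximating test configuration yields negative $\DF^{[t]}$ only after the equivariant passage to a special test configuration, where the two invariants agree.
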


\begin{proof}
Suppose that the central fibre $Y$ is not $t$-K-semistable. By \cite[Theorem 6.9 and Corollary 6.10]{Pap26}, applied equivariantly for the commuting $\G_m$-action induced by $\X_1$, there is an $\F_Y$-compatible special test configuration $\X_2$ of $Y$ with negative mixed Donaldson--Futaki invariant. Then, Lemma \ref{lem:mixed-two-step-degeneration} produces an $\F$-compatible test configuration of $X$ with negative mixed Donaldson--Futaki invariant, contradicting $t$-K-semistability.
\end{proof}

\begin{theorem}
\label{thm:adjoint-foliated-theta-extension}
Let $R$ be a DVR with fraction field $K$ and residue field $\kappa$. Let $(\X,\F,t)\to \Spec R$ be an admissible family of $t$-K-semistable adjoint Fano foliated structures, and set $\L:=-K^{[t]}_{\X/R,\F}$. Let $(\Y_K,\F_{\Y_K},\M_K) \to \A^1_K$ be a special $t$-K-semistable degeneration of $(X_K,\F_K,L_K)$. Then, after replacing $R$ by a finite extension, there exists a unique
admissible family $(\Y,\F_{\Y},\M) \to \A^1_R$ such that
\[
        (\Y,\F_{\Y},\M)\times_R K \simeq (\Y_K,\F_{\Y_K},\M_K),
\]
and
\[
        (\Y,\F_{\Y},\M)|_{\{1\}\times \Spec R} \simeq (\X,\F,\L).
\]
Moreover, every geometric fibre of $(\Y,\F_{\Y},\M) \to \A^1_R$ is $t$-K-semistable, and $\M \sim_{\Q,\A^1_R} -K^{[t]}_{\Y/\A^1_R,\F_{\Y}}$.
\end{theorem}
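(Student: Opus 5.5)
We follow the strategy of \cite[\S 8.2.1]{Xu2025} (itself from \cite{ABHLX20}). The special degeneration $\Y_K$ of the generic fibre is induced by a divisorial valuation $v_K=c\cdot\ord_{G_K}$ over $X_K$, and hence by a $\mathbb Z$-filtration $\cG^\bullet V_{K,m}$ of $V_{K,m}=H^0(X_K,mL_{r,K})$. Intersecting with the lattice gives $\cG^pV_m=V_m\cap\cG^pV_{K,m}$. By Lemma~\ref{lem:theta-extension-fg}, uniqueness of the extension (as a flat polarised family, hence also with foliation, by saturation) is automatic. Existence reduces to finite generation of $\bigoplus_{m,p}\cG^pV_m$ and to checking that the resulting family is admissible with $t$-K-semistable fibres.

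\textbf{Finite generation.} Let $G$ be the divisor over $\X$ whose restriction to the generic fibre is $G_K$; after a finite base change of $R$ it is a prime divisor over $\X$. A computation with Gauss extensions shows that $\cG^pV_m$ is exactly the space of sections $s$ with $\ord_G(s)\ge p/c$, i.e.\ $H^0(\cZ,m\rho^*L_r-(p/c)G)$ on any model $\rho\colon\cZ\to\X$ extracting $G$. The whole question therefore becomes the finite generation statement of Theorem~\ref{thm:relative-non-terminal-extraction-fg}, applied with this $G$, and we verify its hypotheses:
\begin{itemize}
\item (1)--(5) follow from Proposition~\ref{prop: families satisfy IOA} and Lemma~\ref{lem:A0-unique-lc-place}.
\item $A_{\cA_0}(G)>0$ holds because $G\neq\X_\kappa$, which is the unique lc place.
\item For (6)--(8) we need an effective $D\sim_{\Q,R}qL_r$ with $\cA_D$ lc and $A_{\cA_D}(G)<t\varepsilon_\F(G)+(1-t)$.
\end{itemize}

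\textbf{Constructing $D$.} Since $\Y_K$ has $\DF^{[t]}=0$ and the central fibre is $t$-K-semistable, the filtration $\cG_K$ satisfies $\Ding^{[t]}(\cG_K)=0$. Indeed, $\Ding^{[t]}(\cG_K)\ge 0$ by Theorem~\ref{thm:mixed-ding-compatible-filtrations}, and $\le 0$ by Proposition~\ref{lem:fg-filtration-tc-comparison} combined with the equality of Ding and DF for special test configurations from \cite{Pap26}. Unwinding $\mu^{[t]}(\cG_K)=S_L(\cG_K)$ shows that $G_K$ computes the relevant mixed lct. We extend this over $R$ by applying Theorem~\ref{thm:lct-special-fibre-filtrations} to the base ideals $I^{(a)}_{\bullet,\cG}$ of the relative filtration, for $a$ slightly below $S_L$. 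The special fibre is $t$-K-semistable, so Theorem~\ref{thm:mixed-ding-compatible-filtrations} applied to the restricted filtration on $\X_\kappa$ gives $\lct^{[t]}_{\X_\kappa}(I^{(a)}_{\bullet,\kappa})\ge1$ for $a<\mu^{[t]}$. Inversion of adjunction then transfers this bound to $\lct^{[t]}_{\X,\F;(1-t)\X_\kappa}(I^{(a)}_\bullet)\ge1$. Here we use that the relative filtration restricted to the special fibre has $S$ at least $S_L(\cG_K)$, by flatness and the semicontinuity of jumping numbers. Taking $D$ a general member of $\frac1m I^{(a)}_{m}$ scaled by $rq$, using the Bertini argument of Lemma~\ref{lem:general-divisorial-representative}, we get $\cA_D$ lc. Moreover $\ord_G(D)\approx a/c$, which makes $A_{\cA_D}(G)$ small, in particular below $t\varepsilon_\F(G)+(1-t)$. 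This is the step I expect to be the main obstacle: matching the $S$-invariant of the special-fibre filtration and ensuring that the base ideals are not identically zero along $\X_\kappa$ requires care, and may need the approximants $\cG^{(q)}$ and Lemma~\ref{lem:mixed-Xu-346}.

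\textbf{Conclusion.} With finite generation in hand, $\Y:=\operatorname{Proj}_{R[s]}\bigoplus\cG^pV_m s^{-p}$ is a flat $\G_m$-equivariant family over $\A^1_R$ restricting to $\Y_K$ and $\X$. Its foliation is the saturated transform of $\F\times\A^1$, as in Lemma~\ref{lem:compatible-flag-blowup}. The central fibre over $0_\kappa$ is $\operatorname{Proj}\operatorname{gr}$, which equals $\operatorname{Proj}$ of the section ring of $G|_{\cZ}$ restricted to the special fibre. Normality follows since $G$ has Fano type by Proposition~\ref{prop:extracted-Rees-fg}, and the reflexive sheaves obey the Koll\'ar condition, which gives admissibility with $\M\sim_{\Q}-K^{[t]}$. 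Over $\kappa$, $\Y_\kappa$ is a special test configuration of $\X_\kappa$ with $\DF^{[t]}=0$, by constancy of the mixed CM weight in the flat family together with $\DF^{[t]}(\Y_K)=0$. Corollary~\ref{cor:zero-DF-special-limit-semistable} then gives $t$-K-semistability of all fibres.
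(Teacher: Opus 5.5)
You follow the paper's architecture: the valuation filtration of $G_K$, intersection with the lattice, Lemma~\ref{lem:theta-extension-fg}, and Theorem~\ref{thm:relative-non-terminal-extraction-fg} fed by a divisor $D$ obtained from inversion of adjunction and Bertini. For constructing $D$, the lower bound $\mu^{[t]}(\cG_\kappa)\ge S(\cG_\kappa)=S(\cG_K)$ is indeed enough. You must, however, choose $a$ so that the asymptotic threshold is strictly $>1$, using strict decrease from Lemma~\ref{lem:mixed-Xu-344}, because Lemma~\ref{lem:finite-level-mixed-lct-approx} only gives a supremum.

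\textbf{The gap is the fibre over $0_\kappa$.} You assert that $\Y|_{\A^1_\kappa}$ is a special test configuration of $\X_\kappa$, with normality ``since $G$ has Fano type''. Proposition~\ref{prop:extracted-Rees-fg} only says that the extraction $\cZ$ is of Fano type over $R$ and that the bigraded algebra is finitely generated. It says nothing about $\Proj\operatorname{gr}_{\cG_\kappa}R_\kappa$ being integral, normal or klt. The associated graded of $\cG$ over $R$ is a domain, but its reduction mod $\pi$ need not be. ``Constancy of the mixed CM weight'' does not repair this. It presupposes that $K^{[t]}_{\Y/\A^1_R,\F_\Y}$ is $\Q$-Cartier near $0_\kappa$ and restricts to the mixed canonical class of $\Y|_{\A^1_\kappa}$. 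That is the Koll\'ar condition at $0_\kappa$, i.e.\ the admissibility you have not yet proved, and it needs the fibre there to be normal. Corollary~\ref{cor:zero-DF-special-limit-semistable} also takes specialness as an input. So the argument is circular exactly where something must be proved.

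\textbf{The missing idea is the reverse inequality $\mu^{[t]}(\cG_\kappa)\le\mu^{[t]}(\cG_K)$.} Theorem~\ref{thm:lct-special-fibre-filtrations}, followed by restricting an lc structure on $\X$ to the generic fibre, gives
\[
\lct^{[t]}_{\X_\kappa,\F_\kappa}\bigl(I^{(a)}_{\bullet,\cG,\kappa}\bigr)=\lct^{[t]}_{\X,\F;(1-t)\X_\kappa}\bigl(I^{(a)}_{\bullet,\cG}\bigr)\le\lct^{[t]}_{\X_K,\F_K}\bigl(I^{(a)}_{\bullet,\cG_K}\bigr).
\]
Combine this with three facts. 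First, $S(\cG_\kappa)=S(\cG_K)$. This is an equality because $\cG^pV_m=V_m\cap\cG^pV_{K,m}$ makes $V_m/\cG^pV_m$ torsion-free. Bare semicontinuity would only give $\dim_\kappa\cG^p_\kappa\le\operatorname{rk}_R\cG^pV_m=\dim_K\cG^p_K$, which is the wrong direction for you. Second, $\mu^{[t]}(\cG_K)=S(\cG_K)$. Third, $\X_\kappa$ is Ding-semistable. Together these force $\Ding^{[t]}(\cG_\kappa)=0$. It is this vanishing which, via \cite[Theorem 7.7]{Pap26} and \cite[Corollary 7.10]{Pap26}, makes the induced degeneration of $\X_\kappa$ special with $\DF^{[t]}=0$. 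Only then does Corollary~\ref{cor:zero-DF-special-limit-semistable} apply, and normality of all fibres then feeds admissibility, not the other way round.

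\textbf{Smaller points.}
\begin{enumerate}
\item Proposition~\ref{lem:fg-filtration-tc-comparison} gives $\Ding^{[t]}(\cG)\ge\Ding^{[t]}(\X_\cG,\F_\cG,\L_\cG)$. It therefore cannot supply $\Ding^{[t]}(\cG_K)\le 0$, which the paper takes from \cite[Corollary 7.10]{Pap26}.
\item Your ``unwinding'' step, $A^{[t]}_{X_K,\F_K}(G_K)=\mu^{[t]}(\cG_K)$, does not follow from $\mu^{[t]}(\cG_K)=S(\cG_K)$. It is an input from the special test configuration/valuation correspondence. It is also exactly what you need to make $A_{\cA_D}(G)$ small.
\item Theorem~\ref{thm:relative-non-terminal-extraction-fg} requires $G$ to be exceptional over $\X$. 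When $G_K$ is a prime divisor on $X_K$, for instance for product-type degenerations, you need the separate argument via a small $\Q$-factorialisation and the Fano-type property. The trivial-degeneration case should also be set aside.
\end{enumerate}
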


\begin{proof}
By admissibility, $T_\F\subset T_{\X/R}$, the induced fibre foliations are algebraically integrable, and $\L=-K^{[t]}_{\X/R,\F}$ is $f$-ample and $\Q$-Cartier. Since the fibres are $t$-K-semistable, Theorem \ref{thm: k-stab implies klt} implies that $(\X_\kappa,\F_\kappa,t)$ is klt. Hence the underlying fibre $\X_\kappa$ is potentially klt. By Corollary \ref{cor:dvr-total-potentially-klt}, the total space $\X$ is potentially klt near $\X_\kappa$. Finally, by Theorem \ref{thm:dvr-fibre-ioa}, the adjoint structure $\cA_0=(\X,\F,(1-t)\X_\kappa,t)$ is plt near $\X_\kappa$.

We choose a sufficiently divisible integer $r>0$ such that $L_r:=-rK^{[t]}_{\X/R,\F}$ is Cartier and the section algebra $\bigoplus_{m\ge 0} f_*\mathcal O_{\X}(mL_r)$ is compatible with base change. To avoid extra factors, we continue to write $L=-K^{[t]}_{\X/R,\F}$, and all degrees $m$ below are taken sufficiently divisible so that $mL$ is Cartier. Let $E_m:=f_*\mathcal O_{\X}(mL)$. This is a finite free $R$-module with $E_m\otimes_R K\simeq H^0(\X_K,mL_K)$, $E_m\otimes_R\kappa \simeq H^0(\X_\kappa,mL_\kappa)$.

Since the degeneration of the generic fibre is nontrivial and special, the special-test-configuration/divisorial-valuation correspondence \cite[Proposition 4.3, Proposition 4.7 and Corollary 4.8]{Pap26} gives a special divisor $G_K$ over $\X_K$. Its induced filtration is the valuation filtration is
\[
        \cG_K^pH^0(\X_K,mL_K) = \{s\in H^0(\X_K,mL_K)\mid \ord_{G_K}(s)\ge p\}.
\]
As in \cite[Theorem 8.18]{Xu2025}, we define an $R$-filtration by
\[
        \cG^pV_m := \{s\in V_m\mid \ord_{G_K}(s|_{\X_K})\ge p\}.
\]
We let $\cG_\kappa^\bullet$ be the induced filtration on the special fibre
\[
        \cG_\kappa^p(V_m\otimes_R\kappa) := \operatorname{Im}  \left( \cG^pV_m\to V_m\to V_m\otimes_R\kappa  \right).
\]
If the generic special degeneration is trivial, then we take the product extension over $\A^1_R$. Thus we may assume from now on that the generic degeneration is nontrivial.

Since the filtered pieces are $R$-lattices, the filtrations $\mathcal G_K^\bullet$ on $\bigoplus_m H^0(\X_K,mL_K)$ and $\mathcal G_\kappa^\bullet$ on $\bigoplus_m H^0(\X_\kappa,mL_\kappa)$ have the same weight measures in each degree, hence the same Duistermaat--Heckman measure (compare with \cite[Equation (8.8) and Theorem 8.18]{Xu2025}). Hence
\[
        S(\cG_K)=S(\cG_\kappa).
\]

The degeneration of the generic fibre is special and satisfies $\DF^{[t]} (\Y_K,\F_{\Y_K},\M_K)=0$. By \cite[Corollary 7.10]{Pap26}, we also have $\Ding^{[t]}(\cG_K)=0$, hence $\mu^{[t]}(\cG_K)=S(\cG_K)$.

For every real $a$, Theorem \ref{thm:lct-special-fibre-filtrations} applies to the base ideals $I^{(a)}_{\bullet,\cG}$ and their actual restrictions to the special fibre, and gives
\[
\lct^{[t]}_{\X,\F;(1-t)\X_\kappa} (I^{(a)}_{\bullet,\cG}) = \lct^{[t]}_{\X_\kappa,\F_\kappa} (I^{(a)}_{\bullet,\cG,\kappa}).
\]
Thus, $\lct^{[t]}_{\X,\F;(1-t)\X_\kappa} (I^{(a)}_{\bullet,\cG}) $ is at least $1$, and restriction to the generic fibre gives $\lct^{[t]}_{\X_K,\F_K}(I^{(a)}_{\bullet,\mathcal G_K})\ge 1$. Taking suprema over such $a$ gives
\[
        \mu^{[t]}(\mathcal G_\kappa)\leq \mu^{[t]}(\mathcal G_K),
\]
and thus
\[
        \mu^{[t]}(\cG_\kappa) \le \mu^{[t]}(\cG_K) = S(\cG_K) = S(\cG_\kappa).
\]

On the other hand, the special fibre $(\X_\kappa,\F_\kappa,t)$ is $t$-K-semistable and hence $t$-Ding semistable. Theorem \ref{thm:mixed-ding-compatible-filtrations} gives $\Ding^{[t]}(\cG_\kappa)\ge 0$, which implies that $\mu^{[t]}(\cG_\kappa)\ge S(\cG_\kappa)$. Therefore
\[
        \mu^{[t]}(\cG_\kappa) = S(\cG_\kappa) = S(\cG_K) = \mu^{[t]}(\cG_K).
\]
We denote this common value by $\mu$.

Since $G_K$ is special, $\lambda_{\max}(\cG_K)>\mu^{[t]}(\cG_K)$ (compare with \cite[Equation (3.30)]{Xu2025}). By equality of Duistermaat--Heckman measures, $\lambda_{\max}(\cG_\kappa) =\lambda_{\max}(\cG_K)$, and hence $\lambda_{\max}(\cG_\kappa)>\mu^{[t]}(\cG_\kappa)$. By Lemma \ref{lem:mixed-Xu-346}, the function
\[
        a\longmapsto \lct^{[t]} \left(\X_\kappa,\F_\kappa; I^{(a)}_\bullet(\cG_\kappa) \right)
\]
is continuous in a neighbourhood of $a=\mu^{[t]}(\cG_\kappa)$. Therefore, after choosing $0<\varepsilon<1-t$ sufficiently small, we have
\[
        \lct^{[t]} \left(\X_\kappa,\F_\kappa; I^{(\mu-\varepsilon)}_\bullet(\cG_\kappa) \right) >1.
\]
By Theorem \ref{thm:lct-special-fibre-filtrations}, and Lemma \ref{lem:finite-level-mixed-lct-approx}, after replacing $m$ by a sufficiently large, divisible integer, we have
\[
m\cdot \lct^{[t]}_{\X,\F;(1-t)\X_\kappa} \bigl(I_{m,(\mu-\varepsilon)m}(G)\bigr)>1.
\]
Equivalently,
\[
(\X,\F,(1-t)\X_\kappa+ \frac{1}{m}I_{m,(\mu-\varepsilon)m}(G),t)
\]
is lc near $\X_\kappa$. We now choose a general section $s\in \cG^{(\mu-\varepsilon)m}V_m$ and let $D=(s=0)$. By taking a common log resolution of the base ideal and applying Bertini (cf.  \cite[Theorem 3.28(1)]{CHLMSSX25})  to the moving part, as in the proof of Lemma \ref{lem:general-divisorial-representative}, the mixed log canonicity of the ideal pair implies that the adjoint foliated structure $\left(\X,\F,(1-t)\X_\kappa+\frac1mD,t\right)$ is lc near $\X_\kappa$.

Let $G$ be the divisor over $\X$ obtained by extending $G_K$. After replacing the model if necessary, let $G\subset \Y$ be the closure of the divisor $G_K\subset \Y_K$. Then $G$ is a horizontal prime divisor over $\operatorname{Spec}R$. Since $D\in \cG^{(\mu-\varepsilon)m}V_m$ we have $\ord_G(D)\ge (\mu-\varepsilon)m$.

\begin{claim} \label{claim:generic-discrepancy-extension}
\[
A^{[t]}_{\X,\F;(1-t)\X_\kappa}(G) = A^{[t]}_{\X_K,\F_K}(G_K)=\mu^{[t]}(\cG_K).
\]
\end{claim}

\begin{proof}[Proof of Claim]
We choose a normal model $\rho:\Y\to \X$ on which $G$ appears as a prime divisor, and such that the restriction of $G$ to the generic fibre is the divisor $G_K$ over $\X_K$. Since $G$ dominates $\operatorname{Spec}R$, it is horizontal over $R$. Hence $\operatorname{ord}_G(\X_\kappa)=0$. Therefore
\[
A^{[t]}_{\X,\F;(1-t)\X_\kappa}(G) = A^{[t]}_{\X,\F}(G).
\]

We make the comparison directly for the mixed canonical divisor, since admissibility does not require $K_{\X/R}$ and $K_{\F/R}$ to be separately $\Q$-Cartier. Choose $r>0$ as in Definition \ref{def:admissible-family-adjoint-foliated}. Pulling back the rank-one reflexive sheaf
\[
 \mathscr K_{r,t} =\left(\omega_{\X/R}^{[r(1-t)]} \widehat\otimes \omega_{\F/R}^{[rt]} \right)^{**}
\]
to $\Y$ and taking its reflexive hull gives the divisorial pullback of $rK^{[t]}_{\X/R,\F}$. The coefficient of the horizontal divisor $G$ in the resulting mixed discrepancy divisor is unchanged by restriction to the generic fibre, as restriction is defined at the generic point of $G$, where $R\setminus\{0\}$ is inverted, and $G|_{\Y_K}=G_K$. Moreover, $G$ is invariant for $\F_\Y$ if and only if $G_K$ is invariant for the restricted foliation. Thus both the mixed canonical coefficient and the $\varepsilon_\F$-term agree after restriction, and consequently
\[
A^{[t]}_{\X,\F}(G) = A^{[t]}_{\X_K,\F_K}(G_K).
\]
Since $\operatorname{ord}_G(\X_\kappa)=0$, we obtain the desired equality
\[
A^{[t]}_{\X,\F;(1-t)\X_\kappa}(G) =A^{[t]}_{\X_K,\F_K}(G_K).
\]

Finally, by the special test configuration/divisorial valuation correspondence, the special divisor $G_K$ is normalised so that
\[
A^{[t]}_{\X_K,\F_K}(G_K)=\mu^{[t]}(\cG_K).
\]
Therefore
\[
A_{\cA_0}(G)=A^{[t]}_{\X,\F;(1-t)\X_\kappa}(G)=\mu^{[t]}(\cG_K).
\]
\end{proof}

Therefore we obtain
\[
\begin{aligned}
        A^{[t]}_{\X,\F;(1-t)\X_\kappa+\frac{1}{m}D}(G) &=A^{[t]}_{\X,\F;(1-t)\X_\kappa}(G) - \frac{1}{m}\ord_G(D)  \\
        &\le\mu-(\mu-\varepsilon)\\
        &=\varepsilon<1-t \leq t\varepsilon_{\F}(G)+(1-t).
\end{aligned}
\]
Moreover, $A_{\cA_0}(G)=\mu>0$.

Since $c_{\X}(G)$ is horizontal and its closure meets $\X_\kappa$, any open neighbourhood of $\X_\kappa$ contains the generic point of $c_{\X}(G)$. Thus lc near $\X_\kappa$ gives lc in a neighbourhood of $c_{\X}(G)$. We write $m=rq$ for the fixed Cartier index $r$, and use Lemma \ref{lem:A0-unique-lc-place} for $\cA_0$. Suppose first that $G$ is exceptional over $\X$. Then all the assumptions of Theorem \ref{thm:relative-non-terminal-extraction-fg} are satisfied, and there exists a projective birational morphism $\rho:\cZ\to\X$ extracting $G$, with $-G$ $\rho$-ample, such that
\[
        R_G(\X,L) := \bigoplus_{m,p\ge 0} H^0\!\left(\cZ,m\rho^*\L-pG\right)
\]
is finitely generated. In this case, for $p\geq0$,
\[
\cG^pE_m=H^0\!\left(\cZ,\cO_{\cZ}(m\rho^*L-pG)\right).
\]
If $G$ is not exceptional, its centre is a prime divisor on $\X$. The Fano-type conclusion of \cite[Theorem 2.4.2]{CHLMSSX25}, applied after a small $\Q$-factorialisation, gives finite generation of the multisection algebra generated by $L$ and that prime divisor. 

Thus in either case the section algebra
\[
\mathcal R(\cG)= \bigoplus_{m\ge0}\bigoplus_{p\in\mathbb Z} \cG^pV_m\,u^{-p} \subset \bigoplus_{m\ge0}V_m\otimes_R K[u,u^{-1}]
\]
is finitely generated. 

By Lemma \ref{lem:theta-extension-fg}, this finite generation gives the unique flat polarised extension. It is realised by the normalised relative Proj
\[
\Y:=\operatorname{Proj}_{\A^1_R}\overline{\mathcal R(\mathcal G)},
\]
with relatively semiample line bundle $\M=\cO_\Y(1)$. The grading by $p$ gives the $\G_m$-action, so this is a test-configuration family over $R$. Since $G$ restricts to $G_K$ over the generic point, we have
\[
        (\Y,\M)\times_R K \simeq (\Y_K,\M_K).
\]
Over $1\in \A^1_R$, the Rees construction recovers $(\X,\L)$.

The normalised Rees construction is birational to $\X\times_R\A^1_R$ and is an isomorphism over the locus where the filtration is trivial. On their common function field, pull back the rational distribution $\F\times_R\A^1_R$ and let
\[
T_{\F_{\Y}} := \left(T_{\Y/\A^1_R}\cap  T_{\F\times_R\A^1_R}\otimes K(\Y)\right)^{\mathrm{sat}}.
\]
This is the unique saturated birational transform of the foliation. Its Lie bracket vanishes in the torsion-free quotient because it does so on the common birational open set; algebraic integrability is birationally invariant. The construction restricts to the prescribed foliations over $K$ and over $\{1\}\times\operatorname{Spec}R$. Thus $(\Y,\F_{\Y},\M) \to \A^1_R$ is $\F$-compatible. 

We now study the polarisation. By Claim \ref{claim:generic-discrepancy-extension}, the discrepancy shift in the divisorial filtration is $A^{[t]}_{\X,\F;(1-t)\X_\kappa}(G)=\mu$. For the sufficiently divisible index $r$, the reflexive anti-adjoint section algebra
\[
 \bigoplus_{q\geq0} h_*\mathcal O_{\mathcal Y} \left(-qrK^{[t]}_{\mathcal Y/\mathbb A^1_R,\F_{\mathcal Y}}\right)
\]
is therefore identified with the normalised divisorial algebra above, after the linear regrading $p\mapsto p+qr\mu$. This regrading is exactly a character twist and does not change the relative Proj or its underlying polarisation. The identification follows from  \cite[Proposition 4.3]{Pap26}. Thus,
\[
\mathcal M\sim_{\mathbb Q,\mathbb A^1_R} -K^{[t]}_{\mathcal Y/\mathbb A^1_R,\F_{\mathcal Y}}.
\]

It remains to check $t$-K-semistability of the fibres. Away from the closed origin $0_\kappa\in \A^1_R$, the fibres are either fibres of the original admissible $t$-K-semistable family or fibres of the given special $t$-K-semistable degeneration.

At $0_\kappa$, the equality $ \mu^{[t]} \cG_\kappa)=S(\cG_\kappa)$ shows that the induced $\F_\kappa$-compatible test configuration has $\Ding^{[t]}=0$ and is thus special by \cite[Theorem 7.7]{Pap26}. Furthermore, since the test configuration is $\F_\kappa$-compatible special, we have $\Ding^{[t]}=\DF^{[t]}=0$ by \cite[Corollary 7.10]{Pap26}. Its central fibre is $t$-K-semistable by Corollary \ref{cor:zero-DF-special-limit-semistable}. 

Thus every geometric fibre of $(\Y,\F_{\Y},\M)\to \A^1_R$ is $t$-K-semistable and hence normal. Uniqueness of the extension follows from Lemma \ref{lem:theta-extension-fg}, while uniqueness of the foliation follows since the relative foliation is its unique saturated extension. The mixed reflexive sheaf satisfies Koll\'ar's condition, hence the family is admissible in the sense of Section \ref{sec: admissible families}, proving the theorem.
\end{proof}

\section{\texorpdfstring{Valuative criterion of $S$-completeness/separatedness}{Valuative Criterion of S-completeness/separatedness}}\label{sec: s-completeness}

We will now verify $S$-completeness. Let $R$ be a DVR essentially of finite type over $k$ with uniformiser $\pi$, fraction field $K$, and residue field $\kappa$. We let
\[
        \ST_R:=\left[\Spec R[s,u]/(su-\pi)\, /\, \G_m\right],
\]
where $\G_m$ acts by $\lambda\cdot(s,u)=(\lambda s,\lambda^{-1}u)$. Let $0\in \ST_R$ be the closed stacky point and let
\[
        j:\ST_R^\circ:=\ST_R\setminus\{0\}\hookrightarrow \ST_R.
\]
The open stack $\ST_R^\circ$ is covered by the two open substacks $U_s:=\{s\neq 0\}/\G_m$ and $U_u:=\{u\neq 0\}/\G_m$, both canonically isomorphic to $\Spec R$. On $U_s$ we normalise $s=1$,
while on $U_u$ we normalise $u=1$. Their intersection is
\[
        U_s\cap U_u = \{s\neq 0,\ u\neq 0\}/\G_m \simeq \Spec K.
\]
Thus $\ST_R^\circ$ is obtained by gluing two copies of $\Spec R$ along their common generic point $\Spec K$.

Let
\[
        (\X,\F,\L)\to \Spec R,  \qquad (\X',\F',\L')\to \Spec R
\]
be two admissible DVR families, and suppose that their generic fibres are identified by an isomorphism
\[
        \varphi_K: (\X_K,\F_K,\L_K) \xrightarrow{\sim} (\X'_K,\F'_K,\L'_K).
\]
We place $(\X,\F,\L)$ over $U_s$ and $(\X',\F',\L')$ over $U_u$. The isomorphism $\varphi_K$ identifies the two families over $U_s\cap U_u\simeq \Spec K$, hence descent gives a family
\[
        (\cZ^\circ,\F^\circ,\M^\circ) \to \ST_R^\circ.
\]
Its restriction to $U_s$ is $(\X,\F,\L)$ and its restriction to $U_u$ is $(\X',\F',\L')$. The polarisation $\M^\circ$ is obtained by gluing $\L$ and $\L'$ via $\varphi_K^*\L'_K\simeq \L_K$, and the relative foliations glue because $\varphi_K$ identifies $T_{\F_K}\subset T_{\X_K}$ with $T_{\F'_K}\subset T_{\X'_K}$. Thus $T_{\F^\circ}\subset T_{\cZ^\circ/\ST_R^\circ}$ is a saturated relative foliation whose restrictions to the two charts are $T_\F$ and $T_{\F'}$, respectively.

After replacing $\L$ and $\L'$ by sufficiently divisible Cartier multiples, we assume that the anti-adjoint section rings are compatible with base change. For such $m$, we set $E_m:=H^0(\X,m\L)$ and $E'_m:=H^0(\X',m\L')$. Using the generic-fibre identification, we view both as $R$-lattices in the common $K$-vector space $H^0(\X_K,m\L_K)$. We define comparison filtrations
\[
        \cG^pE_m:=E_m\cap \pi^pE'_m, \qquad \cG'^{\,p}E'_m:=E'_m\cap \pi^pE_m.
\]
We let $R_\kappa:=\bigoplus_m H^0(\X_\kappa,m\L_\kappa)$, $R'_\kappa:= \bigoplus_m H^0(\X'_\kappa,m\L'_\kappa)$  and define the induced filtrations on the special fibres by
\[
        \cH^pH^0(\X_\kappa,m\L_\kappa)  := \operatorname{Im}
        \left(\cG^pE_m\to E_m\to E_m\otimes_R\kappa\right),
\]
and
\[
        \cH'^{\,p}H^0(\X'_\kappa,m\L'_\kappa) := \operatorname{Im}
        \left(\cG'^{\,p}E'_m\to E'_m\to E'_m\otimes_R\kappa\right).
\]
Equivalently, $\cH^p(E_m/\pi E_m)=\frac{\cG^pE_m+\pi E_m}{\pi E_m}$, and similarly for $\cH'^{\,p}$. When real shifts occur below, filtration indices are understood with the usual ceiling convention; we suppress this from the notation.

We first recall a key result regarding the existence and uniqueness of the extension.

\begin{lemma}[{\cite[Corollary 8.23]{Xu2025}}]\label{lem:underlying-STR-extension}
The glued polarised family $g^\circ:(\cZ^\circ,\M^\circ)\to \ST_R^\circ$ admits a unique extension as a flat polarised projective family $g:(\cZ,\M)\to \ST_R$ if and only if the graded $\kappa$-algebra
\[
\operatorname{Gr}_{\cH}R_\kappa:=\bigoplus_{m,p} \frac{\cG^pE_m+\pi E_m}     {\cG^{p+1}E_m+\pi E_m}
\]
is finitely generated. In this case
\[
        \cZ =
        \Proj_{\ST_R} \bigoplus_{m\ge 0} j_*g^\circ_*\OO_{\cZ^\circ}(m\M^\circ),
\]
and $\M=\OO_{\cZ}(1)$. Moreover, the fibre over the closed stacky point is $\cZ_0 = \Proj_\kappa \operatorname{Gr}_{\cH}R_\kappa$.
\end{lemma}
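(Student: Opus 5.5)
This lemma is the underlying-scheme statement of \cite[Corollary 8.23]{Xu2025}. It involves no foliation or discrepancy data, so the plan is to recall that argument in the notation fixed above. The approach is to identify flat coherent extensions over $\ST_R$ with graded modules over $R[s,u]/(su-\pi)$, using the description of $j_*$ recalled in Subsection \ref{subsec:stacky-test-curves}. First, I will show that any flat polarised extension $g:(\cZ,\M)\to\ST_R$ satisfies $g_*\OO_\cZ(m\M)=j_*g^\circ_*\OO_{\cZ^\circ}(m\M^\circ)$ for $m$ sufficiently divisible. This holds because $\ST_R$ is a regular two-dimensional stack, the closed point has codimension two, and a flat family over it has $S_2$ pushforwards of relatively ample powers, after replacing $\M$ by a multiple so that higher cohomology vanishes and the pushforwards are vector bundles. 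Since $\cZ$ is the relative $\Proj$ of $\bigoplus_m g_*\OO_\cZ(m\M)$, the extension is determined by $\cZ^\circ$, and this gives uniqueness together with the displayed formula for $\cZ$.

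For existence, I will write $j_*$ explicitly as $\bigoplus_{p\in\Z}(E_m\cap\pi^pE'_m)u^{-p}=\bigoplus_p\cG^pE_m u^{-p}$. This is a graded $R[s,u]/(su-\pi)$-module in which $s$ acts by the inclusion $\cG^{p+1}\subset\cG^p$ and $u$ acts by multiplication by $\pi$ shifted in degree. Summing over $m$ gives a bigraded algebra $\mathcal A$, and $\mathcal A\otimes \kappa[s,u]/(s,u)$ equals $\operatorname{Gr}_\cH R_\kappa$. This holds because quotienting $\cG^pE_m$ by $s\cG^{p+1}E_m+u\cG^{p-1}E_m=\cG^{p+1}E_m+\pi E_m\cap\cG^{p}E_m$ produces the displayed graded pieces; here I use $\pi\cG^{p-1}E_m=\pi E_m\cap\pi^pE'_m$, and I have to check the intersection identity with the lattices carefully. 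Hence, if $\mathcal A$ is finitely generated, so is its fibre $\operatorname{Gr}_\cH R_\kappa$. Conversely, I will apply graded Nakayama. Since $\mathcal A$ is bounded in the relevant direction in each degree $m$ (the filtration is linearly bounded because $E_m$ and $E'_m$ are lattices and $\cG^pE_m=E_m$ for $p\ll0$), lifting generators of the closed fibre generates $\mathcal A$. So finite generation of $\mathcal A$ is equivalent to finite generation of $\operatorname{Gr}_\cH R_\kappa$.

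When finite generation holds, I define $\cZ:=\Proj_{\ST_R}\mathcal A$ and $\M=\OO_\cZ(1)$. It restricts to $\cZ^\circ$ over $\ST_R^\circ$, since on $U_s$ (inverting $s$) one recovers $\bigoplus E_m$, and on $U_u$ one recovers $\bigoplus E'_m$. The central fibre is $\Proj_\kappa\operatorname{Gr}_\cH R_\kappa$ by the base change computation above. Flatness is the step I expect to be the main obstacle. My plan is to prove that each graded piece $\bigoplus_p\cG^pE_m u^{-p}$ is flat over $R[s,u]/(su-\pi)$, equivalently that $s$ and $u$ form a regular sequence on it modulo each other. This reduces to torsion-freeness of the lattices and to the Hilbert function of the central fibre $\sum_p\dim\cH^p/\cH^{p+1}=\operatorname{rk}E_m$, which follows since the associated graded of a filtration of the $\kappa$-vector space $E_m/\pi E_m$ has the same dimension. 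Flatness in every degree then gives flatness of $\cZ$ with constant Hilbert polynomial, which completes the plan, exactly as in \cite[\S 8.2.2]{Xu2025}.
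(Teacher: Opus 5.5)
Your outline is the standard argument behind \cite[Corollary 8.23]{Xu2025}, which the paper cites rather than reproves. The main steps are all correct: identify quasi-coherent sheaves on $\ST_R$ with graded modules over $A:=R[s,u]/(su-\pi)$; note that any extension satisfies $g_*\OO_\cZ(m\M)=j_*g^\circ_*\OO_{\cZ^\circ}(m\M^\circ)$ for $m\gg0$, by Hartogs on the regular two-dimensional atlas; and compute the closed fibre of $\mathcal A=\bigoplus_{m,p}\cG^pE_m u^{-p}$.

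The intersection identity you flag is just the modular law,
$\cG^pE_m\cap(\cG^{p+1}E_m+\pi E_m)=\cG^{p+1}E_m+(\cG^pE_m\cap\pi E_m)$, combined with $\pi\cG^{p-1}E_m=\pi E_m\cap\pi^pE'_m$. Flatness is not a real obstacle either. On a regular two-dimensional scheme, $j_*$ of a vector bundle from the complement of the closed point is reflexive, hence locally free. Your fibre-dimension count is an equally valid way to see this, since a finite torsion-free module over a local domain whose closed fibre has dimension equal to its generic rank is free. (With the weight convention $u^{-p}$, it is $u$ that acts by the inclusion $\cG^p\subset\cG^{p-1}$ and $s$ that acts by $\pi$. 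Your labels are swapped, which is harmless.)

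The one step whose stated justification fails is the converse direction of finite generation via graded Nakayama. For fixed $m$, the module $\mathcal A_m=\bigoplus_p\cG^pE_mu^{-p}$ is nonzero in every weight: it is $E_m$ for $p\ll0$ and $\pi^pE'_m$ for $p\gg0$. Moreover $s$ and $u$ have weights of opposite sign. So the usual graded Nakayama lemma, which needs an ideal generated in positive degree and a module bounded below, does not apply. The boundedness you invoke is that of $\cH^\bullet$ on $E_m/\pi E_m$, not that of $\mathcal A_m$.

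Here is a replacement. Let $\mathcal A'\subset\mathcal A$ be the $A$-subalgebra generated by homogeneous lifts of generators of $\operatorname{Gr}_{\cH}R_\kappa$. Then $N:=\mathcal A_m/\mathcal A'_m$ is a finitely generated graded $A$-module, since $\mathcal A_m$ is generated by finitely many weight pieces, and $N=(s,u)N$. By the local Nakayama lemma, $N$ vanishes at the origin. Its support is therefore a closed $\G_m$-invariant subset of $\Spec A$ avoiding the origin. But every nonempty closed $\G_m$-invariant subset contains the origin, because the origin is the unique closed point of $\ST_R$. Hence $N=0$ and $\mathcal A=\mathcal A'$. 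With this repair, and after passing to a Veronese subalgebra so that $\OO_\cZ(1)$ is a line bundle, your argument proves the lemma.
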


The same argument as in \cite[Proof of Theorem 8.31]{Xu2025}, shows that the extension $\cZ$ is normal. As such, we will assume $\Z$ is normal throughout, and justify this later fully in the proof of Lemma \ref{lem:admissibility-of-STR-extension}.

\begin{lemma}\label{lem:foliation-extends-over-STR}
The extension $(\cZ,\M)\to \ST_R$ carries a unique saturated relative foliation $T_{\F_\cZ}\subset T_{\cZ/\ST_R}$ extending $T_{\F^\circ}$.
\end{lemma}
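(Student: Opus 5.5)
The plan is to build the foliation on the smooth atlas $\Spec R[s,u]/(su-\pi)$ with its $\G_m$-action, and then descend it. Let $\widetilde\cZ\to\Spec A$, where $A:=R[s,u]/(su-\pi)$, denote the pullback of $\cZ$ to the atlas, and let $\widetilde\cZ^\circ$ be the preimage of $\ST_R^\circ$. The complement of $\widetilde\cZ^\circ$ in $\widetilde\cZ$ is the fibre over the origin $s=u=0$. Since $A$ is a normal surface and $\widetilde\cZ\to\Spec A$ is flat with integral fibres, this fibre has codimension $2$ in $\widetilde\cZ$. By the normality assumption recorded after Lemma \ref{lem:underlying-STR-extension}, $\widetilde\cZ$ is normal.

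The relative tangent sheaf $T_{\widetilde\cZ/A}$ is reflexive; it is the dual of $\Omega_{\widetilde\cZ/A}$. Let $i:\widetilde\cZ^\circ\hookrightarrow\widetilde\cZ$ be the open immersion, and define
\[
T_{\F_{\cZ}} := i_*T_{\F^\circ}\cap T_{\widetilde\cZ/A}.
\]
Equivalently, this is the saturation in $T_{\widetilde\cZ/A}$ of the subsheaf generated by sections of $T_{\F^\circ}$. The main steps are as follows.

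\textbf{Step 1: reflexivity and saturation.} Because the complement has codimension $\ge 2$ and $T_{\widetilde\cZ/A}$ is reflexive, we have $i_*T_{\widetilde\cZ^\circ/A}=T_{\widetilde\cZ/A}$. Hence $T_{\F_\cZ}=i_*T_{\F^\circ}$ is a reflexive subsheaf. Its quotient injects into $i_*(T_{\widetilde\cZ^\circ/A}/T_{\F^\circ})$, which is torsion free because $T_{\F^\circ}$ is saturated. So $T_{\F_\cZ}$ is saturated.

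\textbf{Step 2: involutivity.} The Lie bracket is a bilinear differential operator on $T_{\widetilde\cZ/A}$. For local sections $v,w$ of $T_{\F_\cZ}$, the bracket $[v,w]$ lies in $T_{\F^\circ}$ over $\widetilde\cZ^\circ$. It therefore lies in $i_*T_{\F^\circ}=T_{\F_\cZ}$, by Step 1.

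\textbf{Step 3: $\G_m$-equivariance and descent.} The sheaf $T_{\F^\circ}$ is $\G_m$-equivariant, since it is pulled back from the stack. Pushforward along the equivariant open immersion $i$ preserves equivariance. So $T_{\F_\cZ}$ descends to a saturated relative foliation on $\cZ\to\ST_R$ whose restriction to $\ST_R^\circ$ is $T_{\F^\circ}$.

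\textbf{Step 4: uniqueness.} Suppose $T'$ is another saturated relative foliation extending $T_{\F^\circ}$. Then $T'$ is reflexive, being saturated in a reflexive sheaf on a normal scheme. It agrees with $T_{\F_\cZ}$ off a codimension-$2$ subset, so the two coincide. Alternatively, both equal the saturation of the same generic-fibre rational distribution.

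The main obstacle is the codimension claim. It requires the fibre over the origin to be a divisor in the total space of the atlas pullback, not a component of larger dimension. This follows from flatness of $\cZ\to\ST_R$ (Lemma \ref{lem:underlying-STR-extension}): every fibre has dimension $n$ while $\dim\widetilde\cZ=n+2$. Normality of $\widetilde\cZ$ is also needed, since the reflexive-hull arguments fail without it. I would first invoke the normality argument of \cite[Proof of Theorem 8.31]{Xu2025}. If that is unavailable at this stage, I would instead work on the normalisation and note that the construction is unchanged.
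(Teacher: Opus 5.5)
Your proposal is correct and follows essentially the same route as the paper. On the atlas you define the extension as $i_*T_{\F^\circ}\cap T_{\widetilde\cZ/A}$, use flatness to see that the fibre over the origin has codimension two, and deduce involutivity and uniqueness from reflexivity on the normal total space. Your observation that $T_{\widetilde\cZ/A}/i_*T_{\F^\circ}$ embeds in the torsion-free sheaf $i_*(T_{\widetilde\cZ^\circ/A}/T_{\F^\circ})$ shows that the extra saturation in the paper's definition is redundant. Your explicit $\G_m$-descent step fills in what the paper leaves implicit.

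One wording slip: in your final paragraph the fibre over the origin must have codimension two rather than be ``a divisor'', exactly as your own count $n$ versus $n+2$ shows.
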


\begin{proof}
Work on the standard normal two-dimensional atlas $B$ of $\ST_R$, and let $j_{\mathcal Z}:\mathcal Z^\circ\hookrightarrow\mathcal Z$ denote the induced open immersion. Inside $j_{\mathcal Z*}(T_{\mathcal Z^\circ/B^\circ})$, define
\[
T_{\F_{\mathcal Z}} := \left( T_{\mathcal Z/B}\cap j_{\mathcal Z*}T_{\F^\circ}\right)^{\mathrm{sat}}.
\]
Equivalently, this is the reflexive extension of $T_{\F^\circ}$ intersected with the relative tangent sheaf. It is a saturated relative subsheaf and its restriction to $\mathcal Z^\circ$ is $T_{\F^\circ}$.

On the standard atlas, the complement of the punctured base is the closed origin, of codimension two. Since $\mathcal Z\to B$ is flat, its inverse image has codimension two in $\mathcal Z$. Hence bracket closure may be checked on $\mathcal Z^\circ$ and extended reflexively. Thus $T_{\F_\cZ}$ is a relative foliation.  Since $\mathcal Z\setminus\mathcal Z^\circ$ has codimension two, a very general point of $\mathcal Z$ lies in $\mathcal Z^\circ$. The leaf through such a point agrees with a leaf of the original glued foliation, hence is Zariski open in an algebraic subvariety. Thus $\F_{\mathcal Z}$ is algebraically integrable. Uniqueness follows from saturation, since any saturated relative foliation extending $T_{\F^\circ}$ must agree with the above image on the codimension-one locus. Both saturated subsheaves are reflexive and agree on the complement of a codimension-two subset; hence they agree globally.
\end{proof}

\begin{lemma}
\label{lem:opposite-associated-graded}
The filtrations $\cH^\bullet$ and $\cH'^{\,\bullet}$ are linearly bounded and multiplicative. Moreover, there is an isomorphism of graded $\kappa$-algebras
\[
        \operatorname{Gr}_{\cH} R_\kappa \simeq \operatorname{Gr}_{\cH'} R'_\kappa
\]
which sends weight $p$ on the left to weight $-p$ on the right. In particular,
\[
        S(\cH')=-S(\cH).
\]
\end{lemma}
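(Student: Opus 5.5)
The lemma is the foliated-free, purely algebraic analogue of the opposite-filtration statement in \cite[Lemma 8.24 / proof of Theorem 8.31]{Xu2025}. The foliation plays no role, so I will argue only with the two $R$-lattices $E_m,E'_m\subset V_{K,m}:=H^0(\X_K,m\L_K)$.

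\emph{Multiplicativity.} Since $E_\bullet$ and $E'_\bullet$ are subrings of $\bigoplus_m V_{K,m}$, we have $(E_m\cap\pi^pE'_m)(E_\ell\cap\pi^{p'}E'_\ell)\subset E_{m+\ell}\cap\pi^{p+p'}E'_{m+\ell}$. This gives multiplicativity of $\cG$, and passing to images in $E\otimes_R\kappa$ gives it for $\cH$. The symmetric argument handles $\cG'$ and $\cH'$.

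\emph{Linear boundedness.} The bound comes from comparing the two lattices in degree $r$ and propagating. Because $R_\kappa$ is generated in a bounded degree, I fix generators $s_1,\dots,s_N$ of $E_\bullet$ as an $R$-algebra in degrees $\le d$, after possibly enlarging $r$; this uses that the relative section ring is finitely generated over $R$. Choose $c\in\N$ with $\pi^{c}s_i\in E'$ for all $i$. Then $\pi^{cm}E_m\subset E'_m$, so $\cG^{-cm}E_m=E_m$. Symmetrically, $\pi^{c'm}E'_m\subset E_m$, and hence $E_m\cap \pi^pE'_m\subset \pi^{p-c'm}E_m$. For $p>c'm$ this lies in $\pi E_m$, so $\cH^{p}=0$. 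The same argument with $E$ and $E'$ swapped bounds $\cH'$.

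\emph{Opposite gradings.} This is the main step. I will use the standard lattice identity: for two lattices, the piece
\[
\frac{E_m\cap\pi^pE'_m}{(E_m\cap\pi^{p+1}E'_m)+\pi(E_m\cap\pi^{p-1}E'_m)}
\]
is symmetric under exchanging roles. Concretely, I pick an adapted basis $e_1,\dots,e_N$ of $E_m$ such that $\pi^{-a_j}e_j$ is a basis of $E'_m$; this is the elementary divisor theorem over the DVR $R$. In this basis, $\cG^pE_m=\bigoplus_j\pi^{\max(0,p+a_j)}Re_j$, where the sign convention is fixed by $\pi^pE'=\bigoplus \pi^{p-a_j}\cdots$. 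Consequently $\operatorname{Gr}^p_{\cH}(E_m\otimes\kappa)$ is spanned by the $\bar e_j$ with jumping weight $p$. On the other side, $\operatorname{Gr}^{-p}_{\cH'}$ is spanned by the classes of $\pi^{-a_j}e_j$ with the same $j$. The map $\bar e_j\mapsto \overline{\pi^{p}e_j}$ (suitably normalised) gives a canonical isomorphism. Canonically, it is induced by $x\mapsto \pi^{-p}x$ from $E_m\cap\pi^pE'_m$ to $E'_m\cap\pi^{-p}E_m$, which is a bijection of $R$-modules. One checks that it descends to the graded pieces, that is, the denominators correspond. It is multiplicative because multiplication by $\pi^{-p}\pi^{-p'}=\pi^{-(p+p')}$ is compatible with the product. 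This gives the graded $\kappa$-algebra isomorphism sending weight $p$ to $-p$.

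The only delicate point is verifying that the subspaces $\cG^{p+1}E_m+\pi E_m$ map onto the corresponding ones on the primed side. I will do this via the adapted basis, where it is a coordinatewise check.

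\emph{The identity $S(\cH')=-S(\cH)$.} The jumping numbers of a filtration in degree $m$ are determined by $\dim\operatorname{Gr}^p$. The isomorphism above shows that the weight multiset in degree $m$ for $\cH'$ is the negative of that for $\cH$. Hence $\nu_m(\cH')$ is the pushforward of $\nu_m(\cH)$ under $\lambda\mapsto-\lambda$. Passing to the limit gives the same relation for the Duistermaat--Heckman measures, and integrating $\lambda$ yields $S(\cH')=-S(\cH)$.
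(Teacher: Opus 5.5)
Your proposal is correct and follows the paper's argument. The paper also uses the lattice computation of \cite[Lemma 8.27]{Xu2025}: the multiplicative bijection $x\mapsto \pi^{-p}x$ from $\cG^pE_m=E_m\cap\pi^pE'_m$ onto $\cG'^{-p}E'_m=E'_m\cap\pi^{-p}E_m$, with $S(\cH')=-S(\cH)$ read off from the weight reversal; your linear-boundedness, multiplicativity and adapted-basis checks fill in details the paper leaves implicit.

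Fix the sign slips in the adapted basis: if $\pi^{-a_j}e_j$ is a basis of $E'_m$, then $\cG^pE_m=\bigoplus_j\pi^{\max(0,p-a_j)}Re_j$, and the map is $\bar e_j\mapsto\overline{\pi^{-p}e_j}$. Your ``delicate point'' is exactly the identity $\pi^{-p}\bigl(\cG^{p+1}E_m+\pi\cG^{p-1}E_m\bigr)=\cG'^{-p+1}E'_m+\pi\cG'^{-p-1}E'_m$. Here $\cG^{p+1}E_m+\pi\cG^{p-1}E_m=\cG^pE_m\cap\bigl(\cG^{p+1}E_m+\pi E_m\bigr)$, because $\cG^pE_m\cap\pi E_m=\pi\cG^{p-1}E_m$.
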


\begin{proof}
The lattice computation of \cite[Lemma 8.27]{Xu2025} gives canonical isomorphisms
\[
        \cG^pE_m = E_m\cap \pi^pE'_m \simeq \pi^{-p}E_m\cap E'_m = \cG'^{-p}E'_m.
\]
These isomorphisms are compatible with multiplication, and hence give an isomorphism of bigraded algebras with the weight grading reversed. The equality $S(\cH')=-S(\cH)$ follows from the weight reversal.
\end{proof}

\begin{lemma}
\label{lem:comparison-filtration-valuation-description}
Let $v:=\ord_{\X'_\kappa}$, viewed as a divisorial valuation over
$\X$, and set
\[
 a := A^{[t]}_{\X,\F;(1-t)\X_\kappa}(v).
\]
Then, for every sufficiently divisible $m$ and every $p\in\mathbb Z$, $s\in \cG^pE_m$ if and only if $v(s)-ma\ge p$. Equivalently,
\[
        \cG^pE_m  = \{s\in E_m \mid v(s)\ge ma+p\}.
\]
\end{lemma}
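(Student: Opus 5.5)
The plan is to follow the lattice argument of \cite[Lemma 8.28]{Xu2025}, with the ordinary log discrepancy replaced by the mixed one. First we relate the two total spaces birationally. Since $\X$ and $\X'$ are normal, flat and projective over $R$ with isomorphic generic fibres, $\varphi_K$ induces a birational map $\X\dashrightarrow\X'$ over $R$. Since $\X'_\kappa$ is irreducible and reduced, it defines a divisorial valuation $v=\ord_{\X'_\kappa}$ on $K(\X)=K(\X')$ with $v(\pi)=1$. For $s\in H^0(\X_K,m\L_K)$ we have $s\in E'_m$ if and only if $s$ is regular along $\X'_\kappa$ as a section of $m\L'$; by normality of $\X'$ and Hartogs, it suffices to check this at the generic point of $\X'_\kappa$. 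Thus $s\in\pi^pE'_m$ if and only if $\ord_{\X'_\kappa}(s/\pi^p)\ge 0$, where the order is measured in a local trivialisation of $m\L'$ at the generic point of $\X'_\kappa$.

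The key step is to compare the trivialisations of $m\L$ and $m\L'$ along $v$. Take a common normal model $W$ with morphisms $p:W\to\X$ and $p':W\to\X'$ on which $\X'_\kappa$ appears as a divisor. We write $p^*\L-p'^*\L'$ as a vertical $\Q$-divisor and need its coefficient along the strict transform of $\X'_\kappa$. Since $\L\sim_{\Q,R}-K^{[t]}_{\X/R,\F}$ and $\L'\sim_{\Q,R}-K^{[t]}_{\X'/R,\F'}$, the two mixed relative canonical divisors agree on the common function field; this is where item (4) of Definition \ref{def:admissible-family-adjoint-foliated} enters, because the comparison must be made at the level of the reflexive sheaf $\mathscr K_{r,t}$. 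The coefficient is therefore the mixed discrepancy of $v$ relative to $\X$, corrected by the relative canonical term of $\X'$ along its own fibre. On $\X'$, the divisor $\X'_\kappa$ is invariant by Lemma \ref{lem:vertical-divisors-invariant} and has $A^{[t]}_{\X',\F';(1-t)\X'_\kappa}(\X'_\kappa)=0$. Consequently $\ord_v(p^*\L-p'^*\L')$ is determined up to the $\Q$-linear equivalence ambiguity over $R$, which changes the coefficient by a multiple of $\pi$. We fix this ambiguity by the normalisation that $\L$ and $\L'$ agree on the generic fibre, which gives $a=A^{[t]}_{\X,\F;(1-t)\X_\kappa}(v)$. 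The relative divisors $K_{\X/R}$ and $K_{\F/R}$ introduce no additional vertical shift, since $K_R$ is trivial.

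Combining these two steps, for $s\in E_m$ regarded via $m\L$ we obtain
\[
 \ord_{\X'_\kappa}\bigl(s \text{ as a section of } m\L'\bigr)=v(s)-ma,
\]
where $v(s)$ is computed in the trivialisation of $m\L$. Hence $s\in\pi^pE'_m$ if and only if $v(s)-ma\ge p$, which gives $\cG^pE_m=\{s\in E_m\mid v(s)\ge ma+p\}$.

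The main obstacle is the careful bookkeeping of the mixed discrepancy. One has to show that the pullback comparison of the two reflexive mixed canonical sheaves along $\X'_\kappa$ yields exactly $A^{[t]}_{\X,\F;(1-t)\X_\kappa}(v)$, including the $\varepsilon_\F$ term, and this holds because $\X'_\kappa$ is invariant. Neither $K_{\X}$ nor $K_{\F}$ need be separately $\Q$-Cartier, so the computation must be done with $\mathscr K_{r,t}$ and its reflexive pullback, as in the proof of Claim \ref{claim:generic-discrepancy-extension}.
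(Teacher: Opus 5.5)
Your argument is essentially the paper's. On a common model $W$ you show that the strict transform $E'$ of $\X'_\kappa$ has coefficient $-a$ in $\rho'^*L'-\rho^*L$. The $\X'$ side contributes zero because $E'$ is a divisor there; the $\X$ side contributes $a$, using that $E'$ is vertical, hence invariant, together with $v(\pi)=1$. Hartogs on $\X'$ then gives $s\in\pi^pE'_m$ if and only if $v(s)-ma\ge p$.

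One small correction concerns the normalisation. The vertical ambiguity is not removed by requiring that $\L$ and $\L'$ agree on the generic fibre, since twisting by a multiple of $\X'_\kappa$ preserves that agreement. It is removed by taking $\L_r=\mathscr K_{r,t}^{\vee}$ and $\L'_r=\mathscr K_{r,t}'^{\vee}$, identified over $K$ through the canonical isomorphism induced by $\varphi_K$. Your comparison at the level of $\mathscr K_{r,t}$ already supplies exactly this, so the extra ambiguity paragraph is unnecessary.
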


\begin{proof}
This is the same calculation as \cite[Lemma 8.25]{Xu2025}, with $K_X+\Delta$ replaced by the mixed adjoint canonical divisor $K^{[t]}_{\X/R,\F}$. Let us explain. 

Let $E'\subset W$ be the strict transform of $\X'_\kappa$. Since $E'$ is a divisor on $\X'$, its coefficient in
\[
K^{[t]}_{W/R,\F_W} - \rho'^*K^{[t]}_{\X'/R,\F'}
\]
is zero. On the other hand, its coefficient in
\[
K^{[t]}_{W/R,\F_W} - \rho^*K^{[t]}_{\X/R,\F}
\]
is $A^{[t]}_{\X,\F;(1-t)\X_\kappa}(v)=a$. Consequently,
\[
\coeff_{E'}(\rho'^*L'-\rho^*L)=-a.
\]
Thus a section $s\in E_m$ lies in $\pi^pE'_m$ if and only if $v(s)-ma\ge p$, proving the claim.
\end{proof}

Now, let $v':=\ord_{\X_\kappa}$, viewed as a divisorial valuation over $\X'$, and set $a' := A^{[t]}_{\X',\F;(1-t)\X'_\kappa}(v')$.

\begin{lemma}
\label{lem:support-comparison-filtrations}
The supports of the Duistermaat-Heckman measures $\nu_{\operatorname{DH}, \cH, R}$ and $\nu_{\operatorname{DH}, \cH', R}$ are $[-a, a']$ and $[-a',a]$, respectively.
\end{lemma}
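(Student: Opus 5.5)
The plan follows \cite[Lemma 8.26]{Xu2025}, using the valuative description of Lemma \ref{lem:comparison-filtration-valuation-description} together with the weight reversal of Lemma \ref{lem:opposite-associated-graded}. Since the filtered pieces $\cG^pE_m$ are saturated $R$-submodules of the free lattice $E_m$, the jumping numbers of $\cH^\bullet$ on $E_m\otimes_R\kappa$ agree with those of the induced $K$-filtration on $E_m\otimes_RK$. The DH measure of $\cH$ can therefore be computed from $\cG^\bullet$ on the generic fibre, exactly as in the proof of Theorem \ref{thm:adjoint-foliated-theta-extension}. By Lemma \ref{lem:comparison-filtration-valuation-description}, this filtration is the valuative filtration of $v=\ord_{\X'_\kappa}$ restricted to $\X_K$, shifted by $-ma$. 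Hence the support of $\nu_{\operatorname{DH},\cH}$ is the support of the DH measure of $\cG_v$ translated by $-a$.

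\textbf{Lower endpoint.} Every $s\in E_m$ is regular along $\X_\kappa$. The valuation $v$ is centred on $\X$ inside $\X_\kappa$, with $v(\pi)=1$ because $\X'_\kappa$ is reduced. So $v(s)\ge0$ for all $s$, and a general section has $v(s)=0$, since its divisor does not contain the centre of $v$. Thus $\cG^{-ma}E_m=E_m$, and the jump at $-ma$ occurs for every $m$. This gives $\lambda_{\min}(\cH)=-a$, which is the left endpoint of the support.

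\textbf{Upper endpoint.} Applying the same argument to $\cH'$, with $v'=\ord_{\X_\kappa}$ over $\X'$, gives $\lambda_{\min}(\cH')=-a'$. Lemma \ref{lem:opposite-associated-graded} identifies $\operatorname{Gr}_\cH R_\kappa$ with $\operatorname{Gr}_{\cH'}R'_\kappa$ with weights reversed. Because the DH measure is determined by the dimensions of the graded pieces, $\nu_{\operatorname{DH},\cH'}$ is the pushforward of $\nu_{\operatorname{DH},\cH}$ under $\lambda\mapsto-\lambda$. Hence $\lambda_{\max}(\cH)=-\lambda_{\min}(\cH')=a'$. The lower endpoint bounds the support from below and $\lambda_{\max}$ bounds it from above. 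For the support to be exactly $[-a,a']$, rather than merely contained in it, I will use that the DH measure of a linearly bounded multiplicative filtration on an integral section ring is absolutely continuous with connected support on $[\lambda_{\min},\lambda_{\max}]$, as in \cite[\S3.4]{Xu2025}. The statement for $\cH'$ follows by the reversal.

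\textbf{Main obstacle.} The difficult step is justifying that the limits $\lambda_{\min}$ and $\lambda_{\max}$ are genuinely attained asymptotically, so that the endpoints lie in the support and are not just bounds. For $\lambda_{\max}(\cH)=a'$, I would first try to show directly that $\cG^{pm}E_m\neq0$ for $p$ close to $a'$. I would use sections $s\in E'_m$ with $v'(s)$ minimal, that is, sections not vanishing along $\X_\kappa$. After multiplying by $\pi^{-k}$, these lie in $E_m\cap\pi^{p}E'_m$ with $p\approx ma'$. Checking this relies on the coefficient computation $\coeff_{E'}(\rho'^*L'-\rho^*L)=-a$ from Lemma \ref{lem:comparison-filtration-valuation-description} and on its symmetric counterpart for $E$. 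The symmetric computation needs the mixed canonical divisors of $\X$ and $\X'$ to agree on the common resolution away from the two special fibres, which holds by the generic identification $\varphi_K$.
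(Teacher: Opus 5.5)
Your endpoint computations follow the paper's argument: a general section with $v(s)=0$ has the extreme weight $-ma$, and the weight reversal of Lemma~\ref{lem:opposite-associated-graded} turns minimal weights of one filtration into maximal weights of the other. Two steps around them are wrong as written, and the second is a real gap.

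\textbf{The generic-fibre reduction.} Your opening reduction fails. $\cG^pE_m=E_m\cap\pi^pE'_m$ is a \emph{full-rank} sublattice of $E_m$, since it contains $\pi^NE_m$ for $N\gg0$. So it is not saturated, and $\cG^pE_m\otimes_RK=E_m\otimes_RK$ for every $p$. The induced $K$-filtration is therefore trivial. Moreover $v$ is not trivial on $K$, since $v(\pi)=1$. Saturation belongs to the $\Theta$-reductivity setting, where $\cG^pV_m=V_m\cap\cG^pV_{K,m}$ with $\cG^pV_{K,m}$ a $K$-subspace. Here all the information lies in the relative position of the two lattices. You should read $\cH^p$ directly off Lemma~\ref{lem:comparison-filtration-valuation-description}, as the image in $E_m\otimes_R\kappa$ of $\{s\in E_m\mid v(s)\ge ma+p\}$.

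The same confusion appears in your last paragraph. $\cG^{pm}E_m\neq0$ holds for every $p$; what you need is $\cG^{pm}E_m\not\subset\pi E_m$. Your construction does produce such an element: rescale a section of $E'_m$ with $v'=0$ by $\pi^{k}$, $k\approx ma'$. This is just the reversal isomorphism $\pi^p\colon\cG'^{-p}E'_m\to\cG^pE_m$ made explicit, which is the paper's argument.

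\textbf{The support endpoints.} The general fact you invoke to pass from jumping numbers to the support is false on both counts.
\begin{itemize}
\item DH measures need not be absolutely continuous; they can have an atom at $\lambda_{\max}$. In the present setting, if $v$ is not exceptional then $a=a'=0$ and $\nu_{\operatorname{DH},\cH}$ is a Dirac mass.
\item The left end of the support can be strictly larger than $\lim_m\lambda_{\min,m}/m$. On $\PP^1$ with $R_m=k[x,y]_m$, set $\cG^\lambda R_m=R_m$ for $\lambda\le -m$, $\cG^\lambda R_m=xR_{m-1}$ for $-m<\lambda\le 0$, and $\cG^\lambda R_m=0$ for $\lambda>0$. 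This filtration is multiplicative and linearly bounded with $\lambda_{\min,m}=-m$, but its DH measure is $\delta_0$.
\end{itemize}
So a general section of weight $-ma$ does not by itself put $-a$ in the support.

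What is true is that the support is an interval whose \emph{right} endpoint is $\lambda_{\max}$ (Boucksom--Chen). The left endpoint must therefore come from reflection. By Lemma~\ref{lem:opposite-associated-graded},
\[
\inf\operatorname{supp}\nu_{\operatorname{DH},\cH}=-\sup\operatorname{supp}\nu_{\operatorname{DH},\cH'}=-\lambda_{\max}(\cH').
\]
The reversal gives $\lambda_{\max,m}(\cH')=-\lambda_{\min,m}(\cH)$ in each degree. Hence your computation $\lambda_{\min}(\cH)=-a$ yields $\lambda_{\max}(\cH')=a$, and the left endpoint is $-a$. This is how the paper closes: it computes $\lambda_{\max}$ for both filtrations and obtains the lower endpoint by interchanging the two families.
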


\begin{proof}
This is the same argument as in \cite[Lemma 8.28]{Xu2025}. By Lemma \ref{lem:comparison-filtration-valuation-description}, for every $s\in E_m$, $s\in \cG^pE_m$ if and only if $v(s)-ma\geq p$. In particular, $\cG{-ma}E_m=E_m$  and therefore $\lambda_{\min}(\cH)\geq -a$. Similarly, $\lambda_{\min}(\cH')\geq -a'$. By Lemma \ref{lem:opposite-associated-graded}, there is a graded isomorphism $\operatorname{Gr}_{H}R_{\kappa} \simeq \operatorname{Gr}_{H'}R'_{\kappa}$ which sends the weight-$p$ part on the left to the weight-$(-p)$ part on the right. It follows that $\lambda_{\max}(\cH)\leq a'$.

We now prove the reverse inequality. Let $m\gg 0$ be sufficiently divisible, and choose a general section $s'\in H^0\!\left( \X'_{\kappa}, mL'_{\kappa} \right)$ which does not vanish along the centre of $v'$ on $\X'_{\kappa}$. By the symmetric form of Lemma \ref{lem:comparison-filtration-valuation-description}, the $\cH'$-weight of $s'$ is exactly $-ma'$. Hence, by Lemma \ref{lem:opposite-associated-graded} we obtain a nonzero element of $\cH$-weight $ma'$. Consequently, $\lambda_{\max}(\cH)\geq a'$  and hence $\lambda_{\max}(\cH)=a'$. Interchanging the two families gives $\lambda_{\max}(\cH')=a$. Hence, $\lambda_{\min}(\cH)=-a$, proving the required result.
\end{proof}

\begin{lemma}
\label{lem:mixed-slope-inequalities-ST}
We have $\mu^{[t]}(\cH)\le 0$ and $\mu^{[t]}(\cH')\le 0$.
\end{lemma}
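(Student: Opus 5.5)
By symmetry (interchanging the roles of the two families and using Lemma \ref{lem:opposite-associated-graded}) it suffices to prove $\mu^{[t]}(\cH)\le 0$. This follows \cite[Lemma 8.29]{Xu2025}. The idea is to show that for every $a>0$ the special-fibre base ideals $I^{(a)}_{\bullet,\cH}$ have mixed log canonical threshold $<1$ on $(\X_\kappa,\F_\kappa,t)$. Since $\mu^{[t]}(\cH)$ is the supremum of those $a$ with threshold $\ge 1$, this gives the bound.

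We pass to the total space via inversion of adjunction. By Theorem \ref{thm:lct-special-fibre-filtrations}, applied to the $R$-filtration $\cG^\bullet$ whose restriction to $\X_\kappa$ is $\cH^\bullet$, we have
\[
\lct^{[t]}_{\X_\kappa,\F_\kappa}\bigl(I^{(a)}_{\bullet,\cH}\bigr)=\lct^{[t]}_{\X,\F;(1-t)\X_\kappa}\bigl(I^{(a)}_{\bullet,\cG}\bigr),
\]
provided the base ideals do not vanish identically along $\X_\kappa$. If they do vanish, the threshold on $\X_\kappa$ is $0$ by convention, which is even better. The total-space threshold is then bounded above by testing the single divisorial valuation $v=\ord_{\X'_\kappa}$ over $\X$. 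Its centre meets $\X_\kappa$, since $v$ is vertical.

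By Lemma \ref{lem:comparison-filtration-valuation-description}, every $s\in\cG^{\lceil ma\rceil}E_m$ satisfies $v(s)\ge m\,a_0+ma$, where $a_0:=A^{[t]}_{\X,\F;(1-t)\X_\kappa}(v)$. Hence
\[
v\bigl(I^{(a)}_{\bullet,\cG}\bigr)\ge a_0+a.
\]
Two cases arise.

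\begin{itemize}
\item If $v=\ord_{\X_\kappa}$, so that $\X'_\kappa$ and $\X_\kappa$ define the same valuation, then $a_0=0$. In this case $\pi$ divides sections in the expected way and $\cH$ is the trivial filtration, so $\mu^{[t]}(\cH)\le\lambda_{\max}(\cH)=0$.
\item Otherwise $a_0\ge 0$, because $\cA_0$ is lc by Proposition \ref{prop: families satisfy IOA}. The valuative formula then gives
\[
\lct^{[t]}_{\X,\F;(1-t)\X_\kappa}\bigl(I^{(a)}_{\bullet,\cG}\bigr)\le \frac{a_0}{a_0+a}<1
\]
for every $a>0$, using $a_0+a>0$. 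If $a_0=0$, the threshold is $0$.
\end{itemize}

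Combining the two steps, $\lct^{[t]}(\X_\kappa,\F_\kappa;I^{(a)}_{\bullet,\cH})<1$ for all $a>0$, so $\mu^{[t]}(\cH)\le0$.

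The main obstacle is the identification of the special-fibre base ideals $I^{(a)}_{m,\cH}$ with the actual restrictions $I^{(a)}_{m,\cG}\cO_{\X_\kappa}$. This is needed so that Theorem \ref{thm:lct-special-fibre-filtrations} applies verbatim. It holds because $\cH^p$ is by definition the image of $\cG^pE_m$ in $E_m\otimes\kappa$, and base change compatibility of $E_m$ (admissibility) then gives the equality of the generated ideals. A secondary point is the ceiling convention when $ma\notin\Z$; this only contributes $O(1/m)$ and disappears in the limit.
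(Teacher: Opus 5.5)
Your proposal is correct and follows essentially the paper's route. Both arguments test the valuation $v=\ord_{\X'_\kappa}$ on the total-space base ideals via Lemma \ref{lem:comparison-filtration-valuation-description}, then transfer the bound to $\X_\kappa$ by inversion of adjunction (Corollary \ref{cor:F-compatible-IOA-graded}, equivalently Theorem \ref{thm:lct-special-fibre-filtrations}). The only difference is that the paper tests at level $0$, gets threshold at most $1$, and then needs the strict monotonicity of Lemma \ref{lem:mixed-Xu-344}, whereas you test at every level $a>0$ and get the strict bound $a_0/(a_0+a)<1$, so you do not need that step.
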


\begin{proof}
We prove the inequality for $\cH$. The proof for $\cH'$ is identical. Let $v=\ord_{\X'_\kappa}$ and $a=A^{[t]}_{\X,\F;(1-t)\X_\kappa}(v)$. If $v$ is not exceptional over $\X$, then its centre is the Cartier special fibre and Lemma \ref{lem:comparison-filtration-valuation-description} identifies $\cH$ with a shift of the trivial filtration. The asserted inequality is then immediate. We may therefore assume that $v$ is exceptional. The plt statement in Theorem \ref{thm:dvr-fibre-ioa} gives $a>0$. By Lemma \ref{lem:comparison-filtration-valuation-description}, the base ideals of $\cH^\bullet$ are, up to integral closure, the restrictions to $\X_\kappa$ of the valuation ideals $\mathfrak a_m(v):=\{f\in \OO_\X \mid v(f)\ge ma\}$. Let $\mathfrak a_\bullet(v)=\{\mathfrak a_m(v)\}_{m\ge 1}$. Since $v(\mathfrak a_\bullet(v))=a$, the valuation $v$ gives
\[
        \lct^{[t]}_{\X,\F;(1-t)\X_\kappa} \bigl(\mathfrak a_\bullet(v)\bigr)  \le \frac{A^{[t]}_{\X,\F;(1-t)\X_\kappa}(v)}{v(\mathfrak a_\bullet(v))}= 1.
\]
The sequence $\mathfrak a_\bullet(v)$ is a graded sequence of ideals on the total space, so Corollary \ref{cor:F-compatible-IOA-graded} applies directly and gives
\[
        \lct^{[t]}_{\X,\F;(1-t)\X_\kappa} \bigl(\mathfrak a_\bullet(v)\bigr) = \lct^{[t]}_{X_\kappa,\F_\kappa} \bigl(\mathfrak a_{\bullet,\kappa}(v)\bigr).
\]
Thus the base ideals of $\cH^\bullet$ have mixed threshold at most $1$.  By the strict monotonicity statement in Lemma \ref{lem:mixed-Xu-344}, we obtain $\mu^{[t]}(\cH)\le0$.
\end{proof}

\begin{lemma} \label{lem:ding-zero-comparison-filtrations}
Assume that the two special fibres $(\X_\kappa,\F_\kappa,t)$ and $(\X'_\kappa,\F'_\kappa,t)$ are $t$-K-semistable. Then
\[
        \Ding^{[t]}(\cH)=0, \qquad \Ding^{[t]}(\cH')=0,
\]
and
\[
        \mu^{[t]}(\cH)=0=\mu^{[t]}(\cH').
\]
\end{lemma}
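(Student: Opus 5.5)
The plan is a sandwich argument. Ding semistability of the special fibres gives a lower bound on \(\Ding^{[t]}(\cH)\), Lemma \ref{lem:mixed-slope-inequalities-ST} gives an upper bound on the slopes, and the weight reversal of Lemma \ref{lem:opposite-associated-graded} links the two filtrations.

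First I would check that Theorem \ref{thm:mixed-ding-compatible-filtrations} applies to \(\cH\) on \(R_\kappa=R(\X_\kappa,L_\kappa)\) and to \(\cH'\) on \(R'_\kappa\). By Lemma \ref{lem:opposite-associated-graded} both are linearly bounded and multiplicative filtrations. The special fibres are \(t\)-K-semistable adjoint Fano foliated structures, polarised by \(L_\kappa\sim_\Q-K^{[t]}_{\X_\kappa,\F_\kappa}\) because the family is admissible. So the theorem gives
\[
\mu^{[t]}(\cH)\ge S(\cH),\qquad \mu^{[t]}(\cH')\ge S(\cH').
\]
One point to verify is that the filtration indices may be real shifts, since \(a\) need not be an integer. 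This is harmless: Lemma \ref{lem:slope-flag-ideal-comparison} shows that \(\Ding^{[t]}\) is unchanged under \(\cG\mapsto\cG_{\mathbb Z}\).

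Next I would combine these with Lemma \ref{lem:mixed-slope-inequalities-ST}, which gives \(\mu^{[t]}(\cH)\le0\) and \(\mu^{[t]}(\cH')\le0\). This yields
\[
S(\cH)\le\mu^{[t]}(\cH)\le 0,\qquad S(\cH')\le\mu^{[t]}(\cH')\le0.
\]
Lemma \ref{lem:opposite-associated-graded} gives \(S(\cH')=-S(\cH)\). Hence \(S(\cH)\le0\) and \(-S(\cH)\le0\), so \(S(\cH)=S(\cH')=0\). Substituting back forces \(\mu^{[t]}(\cH)=\mu^{[t]}(\cH')=0\). Then \(\Ding^{[t]}=\mu^{[t]}-S\) vanishes for both filtrations, which is the full statement.

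The main obstacle is the identity \(S(\cH')=-S(\cH)\) at the level of Duistermaat--Heckman measures. The isomorphism of associated graded algebras reverses weights, so in each degree \(m\) the jumping numbers of \(\cH'\) are the negatives of those of \(\cH\). This holds because \(\dim\operatorname{Gr}^p_{\cH}\) in degree \(m\) computes the multiplicity of jumping number \(p\). So the measures \(\nu_m(\cH')\) are the pushforwards of \(\nu_m(\cH)\) under \(\lambda\mapsto-\lambda\), and passing to the limit gives \(S(\cH')=-S(\cH)\).

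One further point to confirm is that the \(\cH\)-filtration on \(E_m\otimes\kappa\) has graded dimensions matching the lattice filtration \(\cG^\bullet E_m\). This follows because each \(\cG^pE_m\) is saturated in \(E_m\), so \(\cG^pE_m/\cG^{p+1}E_m\) is torsion-free, and the lattice argument cited in Lemma \ref{lem:opposite-associated-graded} then applies.
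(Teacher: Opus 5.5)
Your sandwich argument is correct and is essentially the paper's proof. The paper adds the two inequalities $\Ding^{[t]}(\cH)\ge0$ and $\Ding^{[t]}(\cH')\ge0$, then uses $S(\cH')=-S(\cH)$ and Lemma \ref{lem:mixed-slope-inequalities-ST} to get $\Ding^{[t]}(\cH)+\Ding^{[t]}(\cH')\le 0$; this is the same set of inequalities, only rearranged.

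Your last paragraph is not needed, and its justification is wrong. Since $\pi\,\cG^{p-1}E_m\subset\cG^{p}E_m=E_m\cap\pi^pE'_m$, these lattices are in general not saturated; for example $\cG^pE_m=\pi^pE_m$ when $E_m=E'_m$. Moreover $\cG^pE_m/\cG^{p+1}E_m$ is killed by $\pi$, so it is torsion rather than torsion-free. Nothing in your argument depends on this, because Lemma \ref{lem:opposite-associated-graded} already supplies the weight-reversing isomorphism of associated graded algebras, and hence $S(\cH')=-S(\cH)$.
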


\begin{proof}
The filtrations $\cH^\bullet$ and $\cH'^{\,\bullet}$ are linearly bounded and multiplicative by Lemma \ref{lem:opposite-associated-graded}. Therefore Theorem \ref{thm:mixed-ding-compatible-filtrations}, which applies to arbitrary such filtrations, gives
\[
        \Ding^{[t]}(\cH)\ge 0, \qquad \Ding^{[t]}(\cH')\ge 0.
\]
On the other hand, using Lemma \ref{lem:opposite-associated-graded} and Lemma \ref{lem:mixed-slope-inequalities-ST}, we get
\[
\begin{aligned}
        \Ding^{[t]}(\cH)+\Ding^{[t]}(\cH') &= \bigl(\mu^{[t]}(\cH)-S(\cH)\bigr) +  \bigl(\mu^{[t]}(\cH')-S(\cH')\bigr)\\
        &= \mu^{[t]}(\cH)+\mu^{[t]}(\cH')\\
        &\le 0,
\end{aligned}
\]
because $S(\cH')=-S(\cH)$. Hence both non-negative terms must vanish:
\[
        \Ding^{[t]}(\cH)=0, \qquad \Ding^{[t]}(\cH')=0.
\]
Since
\[
        \Ding^{[t]}(\cH)=\mu^{[t]}(\cH)-S(\cH), \qquad \Ding^{[t]}(\cH')=\mu^{[t]}(\cH')-S(\cH'),
\]
and $S(\cH')=-S(\cH)$, we get
\[
        \mu^{[t]}(\cH)+\mu^{[t]}(\cH')=0.
\]
Together with
\[
        \mu^{[t]}(\cH)\le 0, \qquad \mu^{[t]}(\cH')\le 0,
\]
this implies
\[
        \mu^{[t]}(\cH)=0, \qquad  \mu^{[t]}(\cH')=0.
\]
\end{proof}

\begin{lemma}
\label{lem:finite-generation-comparison-graded}
Assume that the two special fibres are $t$-K-semistable. Then $\cH$  is finitely generated. Equivalently, $\cH'$ is finitely generated.
\end{lemma}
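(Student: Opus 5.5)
The strategy follows the proof of \cite[Theorem 8.31]{Xu2025}: reduce finite generation of $\operatorname{Gr}_{\cH}R_\kappa$ to finite generation of a divisorial Rees algebra on the total space $\X$, and obtain the latter from Theorem \ref{thm:relative-non-terminal-extraction-fg} applied to $G:=\X'_\kappa$, viewed as a divisor over $\X$ via $\varphi_K$.

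\emph{Trivial case.} If $v=\ord_{\X'_\kappa}$ is not exceptional over $\X$, then its centre is $\X_\kappa$. Lemma \ref{lem:comparison-filtration-valuation-description} then shows that $\cG^pE_m=\pi^{\lceil ma+p\rceil}E_m$, so $\cH$ is a shift of the trivial filtration. It is therefore finitely generated.

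\emph{Exceptional case.} Assume now that $v$ is exceptional, so $G$ is a vertical exceptional divisor over $\X$ and is invariant by Lemma \ref{lem:vertical-divisors-invariant}. We have $A_{\cA_0}(G)=a>0$ by the plt statement of Proposition \ref{prop: families satisfy IOA}. Moreover, $\cA_0$ has $\X_\kappa$ as its unique lc place by Lemma \ref{lem:A0-unique-lc-place}, and $\X$ is potentially klt.

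\emph{Choice of $D$.} It remains to produce $D\sim_{\Q,R}qL_r$ with $\cA_D$ lc and $A_{\cA_D}(G)<1-t$. Here we use Lemma \ref{lem:ding-zero-comparison-filtrations}, which gives $\mu^{[t]}(\cH)=0$. By Lemma \ref{lem:support-comparison-filtrations}, $\lambda_{\max}(\cH)=a'$. We distinguish two subcases.
\begin{itemize}
\item If $a'=0$, then $\X_\kappa$ is not exceptional over $\X'$. By symmetry we are essentially back in the trivial case, since the identification $\cH\simeq\cH'$ with reversed weights lets us reduce to the trivial filtration.
\item Otherwise $\lambda_{\max}(\cH)>0=\mu^{[t]}(\cH)$. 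By Lemmas \ref{lem:mixed-Xu-344} and \ref{lem:mixed-Xu-346}, for small $\varepsilon\in(0,1-t)$ we have $\lct^{[t]}_{\X_\kappa,\F_\kappa}(I^{(-\varepsilon)}_{\bullet,\cH})>1$.
\end{itemize}
The base ideals of $\cH$ are the restrictions of the total-space base ideals of $\cG$. Theorem \ref{thm:lct-special-fibre-filtrations} and Lemma \ref{lem:finite-level-mixed-lct-approx} then give some large divisible $m$ for which $(\X,\F,(1-t)\X_\kappa+\frac1m I_{m,-\varepsilon m}(\cG),t)$ is lc. As in the proof of Theorem \ref{thm:adjoint-foliated-theta-extension}, we take a general $s\in\cG^{-\varepsilon m}E_m$, set $D=(s=0)$, and apply Bertini \cite[Theorem 3.28(1)]{CHLMSSX25} to make $\cA_D$ lc. By Lemma \ref{lem:comparison-filtration-valuation-description}, $v(s)\ge m(a-\varepsilon)$. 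Hence
\[
A_{\cA_D}(G)\le a-(a-\varepsilon)=\varepsilon<1-t=t\varepsilon_\F(G)+(1-t).
\]

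\emph{Finite generation.} Theorem \ref{thm:relative-non-terminal-extraction-fg} now gives an extraction $\rho:\cZ\to\X$ of $G$, and the algebra $\bigoplus_{a,p}H^0(\cZ,a\rho^*L_r-pG)$ is finitely generated. By Lemma \ref{lem:comparison-filtration-valuation-description}, $\bigoplus_{m,p}\cG^pE_m$ is this algebra after the linear regrading $p\mapsto p+ma$, extended by $\pi^{-1}$ in negative weights. It is therefore finitely generated over $R$. Since
\[
\operatorname{Gr}_\cH R_\kappa=\Big(\textstyle\bigoplus_{m,p}\cG^pE_m\Big)\Big/\Big(\pi\text{-part}+\textstyle\bigoplus\cG^{p+1}E_m\Big),
\]
it is a quotient of a finitely generated algebra, namely of $\bigoplus\cG^pE_m\otimes_R\kappa$ modulo the ideal generated by $\cG^{p+1}$. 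This is the step written out in \cite[Lemma 8.29]{Xu2025}, and it yields finite generation of $\cH$. The equivalence with $\cH'$ is immediate from Lemma \ref{lem:opposite-associated-graded}.

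The main obstacle is the extension step: the passage from $\mu^{[t]}(\cH)=0$ on the special fibre to an lc divisor $D$ on the total space with small log discrepancy at $G$. This needs the ideal inversion of adjunction together with the integral-closure identification of base ideals with restrictions, handled carefully at the ceiling and shift conventions.
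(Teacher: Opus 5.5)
Your proposal is correct and follows essentially the same route as the paper. It treats the non-exceptional case as a shift of the trivial filtration, then uses $\mu^{[t]}(\cH)=0$ from Lemma \ref{lem:ding-zero-comparison-filtrations}, ideal inversion of adjunction and Bertini to produce $D$ with $A_{\cA_D}(\X'_\kappa)\le\varepsilon<1-t$, applies Theorem \ref{thm:relative-non-terminal-extraction-fg} to $G=\X'_\kappa$, and passes to the quotient $\operatorname{Gr}_{\cH}R_\kappa$. Your indexing via $\cG^{-\varepsilon m}E_m$ is cleaner than the paper's $a$-shift bookkeeping and matches Lemma \ref{lem:comparison-filtration-valuation-description} directly; the subcase $a'=0$ cannot occur once $v$ is exceptional, but including it does no harm.
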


\begin{proof}
By Lemma \ref{lem:ding-zero-comparison-filtrations}, we have $\Ding^{[t]}(\cH)=0$. As before, we let $v:=\ord_{\X'_\kappa}$ and $a:=A^{[t]}_{\X,\F;(1-t)\X_\kappa}(v)$. If $v$ is not exceptional over $\X$, then the comparison filtration is a shift of the trivial filtration, so its section algebra and associated graded algebra are finitely generated. We henceforth assume that $v$ is exceptional; then $a>0$ by the plt form of Theorem \ref{thm:dvr-fibre-ioa}. Since $\mu^{[t]}(\cH)=0$, the $a$-shift $\cH^a$ satisfies $\mu^{[t]}(\cH^a)=a$. By Lemma \ref{lem:mixed-Xu-346}, applied to $\cH^a$, we get
\[
        \lct^{[t]}_{X_\kappa,\F_\kappa}
        \left(I^{(a)}_{\bullet}(\cH^a)\right)=1.
\]
Unwinding the $a$-shift exactly as in \cite[Corollary 8.30]{Xu2025}, this implies that for $m\gg 0$ sufficiently divisible and for $0<\varepsilon<1-t$ sufficiently small,
\[
        \lct^{[t]}_{\X,\F;(1-t)\X_\kappa} \left(\frac{1}{m}I_{m,(a-\varepsilon)m}(\cG)\right)\ge 1.
\]
Equivalently, for a general divisor $D\in \cG^{(a-\varepsilon)m}E_m$, the adjoint foliated structure
\[
 \left(\X,\F,(1-t)\X_\kappa+\frac{1}{m}D,t\right)
\]
is log canonical near $X_\kappa$.

Let $v=\ord_{X'_\kappa}$. By Lemma \ref{lem:comparison-filtration-valuation-description} $s\in \cG^pE_m$ if and only if $v(s)-ma\ge p$. The condition $D\in \cG^{(a-\varepsilon)m}E_m$ implies in particular $v(D)\ge m(a-\varepsilon)$. Hence
\[
\begin{aligned}
        A^{[t]}_{\X,\F;(1-t)\X_\kappa+\frac{1}{m}D}(v) &= A^{[t]}_{\X,\F;(1-t)\X_\kappa}(v) - \frac{1}{m}v(D)\\
        &\le a-(a-\varepsilon)  = \varepsilon < t\varepsilon_{\F}(v)+(1-t).
\end{aligned}
\]
If $r$ is a Cartier index of $L$, we write the sufficiently divisible degree as $m=rq$. Using Lemma \ref{lem:A0-unique-lc-place}, all the hypotheses of Theorem \ref{thm:relative-non-terminal-extraction-fg} are satisfied for the divisor $X'_\kappa$.

Theorem \ref{thm:relative-non-terminal-extraction-fg} gives finite
generation of
\[
        \mathcal R(\cG) = \bigoplus_{m,p}  \cG^pE_m\, u^{-p}.
\]
Its tensor over $\kappa$ yields $\oplus_p \cH^pR$, which is finitely generated.
\end{proof}

\begin{lemma} \label{lem:comparison-branches-special}
Assume that the two special fibres are $t$-K-semistable. The restrictions of the comparison extension to the coordinate divisors $(s=0)$ and $(u=0)$ are $\F$-compatible and $\F'$-compatible special test configurations. Both have mixed Donaldson--Futaki invariant zero and have the common central fibre
\[
 \cZ_0=\Proj\operatorname{Gr}_{\cH}R_\kappa \simeq \Proj\operatorname{Gr}_{\cH'}R'_\kappa.
\]
\end{lemma}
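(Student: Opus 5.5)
By Lemma \ref{lem:finite-generation-comparison-graded} and Lemma \ref{lem:underlying-STR-extension}, the extension $(\cZ,\M)\to\ST_R$ exists and its fibre over the closed stacky point is $\cZ_0=\Proj\operatorname{Gr}_{\cH}R_\kappa$. By Lemma \ref{lem:opposite-associated-graded}, this fibre is isomorphic to $\Proj\operatorname{Gr}_{\cH'}R'_\kappa$, with the $\G_m$-action inverted.

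The plan is to identify the restriction of $\cZ$ to the coordinate divisor $(u=0)$ with the test configuration of $(\X_\kappa,\F_\kappa,\L_\kappa)$ induced by the finitely generated filtration $\cH$. Indeed, $(u=0)\subset\ST_R$ is $\Theta_\kappa=[\A^1_\kappa/\G_m]$ with coordinate $s$, and restricting $\bigoplus_{m,p}\cG^pE_m\,u^{-p}$ modulo $u$ yields the Rees algebra $\bigoplus_{m,p}\cH^p R_{\kappa,m}\,s^{-p}$ of $\cH$. Its Proj is a normal semiample test configuration of $\X_\kappa$ whose central fibre is $\cZ_0$. Symmetrically, $(s=0)$ gives the test configuration of $\X'_\kappa$ induced by $\cH'$.

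The foliation on each branch is the restriction of $\F_\cZ$ from Lemma \ref{lem:foliation-extends-over-STR}, saturated in the relative tangent sheaf. Over $\G_m$ it agrees with the product foliation $\F_\kappa\times\G_m$, and by the same saturation argument as in Lemma \ref{lem:compatible-flag-blowup} and Proposition \ref{prop:approximants-TC} it is the unique $\F_\kappa$-compatible extension. Hence both branches are $\F$-compatible (respectively $\F'$-compatible).

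Next, Lemma \ref{lem:ding-zero-comparison-filtrations} gives $\Ding^{[t]}(\cH)=0$. Since $\cH$ is finitely generated, Proposition \ref{lem:fg-filtration-tc-comparison} with $\eta=0$ yields
\[
0=\Ding^{[t]}(\cH)\ge \Ding^{[t]}(\X_{\cH},\F_{\cH},\L_{\cH}).
\]
The $t$-Ding semistability of $\X_\kappa$, which follows from $t$-K-semistability by \cite[Corollary 7.10]{Pap26}, gives the reverse inequality, so the test configuration has $\Ding^{[t]}=0$. Then \cite[Theorem 7.7]{Pap26} shows that a Ding-minimising $\F$-compatible test configuration of a $t$-Ding-semistable structure is special, exactly as in the last step of the proof of Theorem \ref{thm:adjoint-foliated-theta-extension}. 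For special test configurations, \cite[Corollary 7.10]{Pap26} gives $\DF^{[t]}=\Ding^{[t]}=0$. The same argument applied to $\cH'$ handles the other branch.

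The main obstacle is specialness, in particular knowing that the semiample model equals the Proj of the Rees algebra. One route is to show $\cZ_0$ is integral and normal, with the normality of $\cZ$ inherited via \cite[Proof of Theorem 8.31]{Xu2025}, so that no ample-model contraction occurs. I would also check that the Ding invariant of the test configuration equals the filtration invariant, not merely an inequality. Since $\cH$ is finitely generated and integer valued, after Veronese its flag ideals satisfy $\mathfrak I_{q\ell}=\mathfrak I_q^\ell$, so the comparison in Proposition \ref{lem:fg-filtration-tc-comparison} holds with equality up to the shift $e_+$. If this proves delicate, I would instead use Corollary \ref{cor:zero-DF-special-limit-semistable}-style reasoning with the lc threshold equal to $1$ to conclude that the central fibre is klt, as in \cite[Theorem 8.31]{Xu2025}.
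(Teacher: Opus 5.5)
Your route is different from the paper's, and it has a genuine gap at exactly the point you flag as the main obstacle.

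\textbf{The gap.} In your second paragraph you assert that $\Proj$ of $\bigoplus_{m,p}\cH^pR_{\kappa,m}s^{-p}$ is a \emph{normal} test configuration. Nothing justifies this, and the Ding argument cannot supply it.

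Proposition~\ref{lem:fg-filtration-tc-comparison}, Ding semistability and \cite[Theorem 7.7]{Pap26} only concern $\X_{\cH}$. This is the semiample model of the normalised blow-up of the flag ideal, and its ample model is $\Proj$ of the \emph{integral closure} of that Rees algebra. So what you actually prove is that the normalisation of each branch is special with $\DF^{[t]}=0$.

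Ding-type invariants cannot see the difference between a branch and its normalisation. The slope $\mu^{[t]}$ depends only on the flag ideals up to integral closure. The invariant $S$ depends only on the leading term of the weights, while the weight discrepancy between a finitely generated filtration and its integral closure is of lower order. Consequently, a test configuration that is non-normal along its central fibre has the same filtration Ding invariant as its normalisation.

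Hence $\Ding^{[t]}(\cH)=0$ is compatible with $\cZ_0$ being non-normal. In that case the branch would not be special, and its $\DF^{[t]}$ need not vanish.

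Your proposed remedies do not close this:
\begin{itemize}
\item Showing that $\cZ_0$ is integral and normal is precisely the statement at stake. The paper itself deduces normality of $\cZ_0$ from this lemma, in Lemma~\ref{lem:admissibility-of-STR-extension}.
\item Normality of the total space $\cZ$ does not pass to the Cartier divisor $(u=0)$.
\item The alternative via an lct-equals-one argument is not spelled out enough to assess.
\end{itemize}

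\textbf{The missing ingredient.} What you need is the symmetry argument used in the paper. The two branches are Cohen--Macaulay, by the argument of \cite[Proof of Theorem 8.31]{Xu2025}. By Lemma~\ref{lem:opposite-associated-graded}, they have the same central fibre $\cZ_0$ with inverse $\G_m$-actions. On a fixed central fibre the mixed Donaldson--Futaki invariant is linear in the cocharacter, so
\[
\DF^{[t]}(\cZ_s)+\DF^{[t]}(\cZ_u)=0 .
\]
The $t$-K-semistability of both special fibres then forces both invariants to vanish, and \cite[Theorem 1.2]{Pap26} gives specialness.

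The Cohen--Macaulay property matters because of how DF differs from Ding. Unlike the Ding invariant, $\DF$ sees the subleading weight term. For an $S_2$ test configuration, any non-normality occurs in codimension one along the central fibre. There, as in the classical normalisation inequality, it makes $\DF$ strictly larger than that of the normalisation. So the symmetry identity, combined with your Ding conclusion for the normalisations, rules out non-normality.

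You can keep your Ding computation as a way of handling the normalisations, but you need this symmetric DF step to reach the branches themselves.

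\textbf{A minor slip.} Reducing $\bigoplus_p\cG^pE_m u^{-p}$ modulo $u$ gives $\bigoplus_p\cG^pE_m/\cG^{p+1}E_m$, which is the Rees algebra of $\cH'$. The Rees algebra of $\cH$ is the reduction modulo $s$. This only swaps the labels of the two branches.
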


\begin{proof}
On $\cZ$, $(s=0)$ and $(u=0)$ correspond to two divisors $\cZ_s$ and $\cZ_u$. These are Cohen-Macaulay by an identical argument as in \cite[Proof of Theorem 8.31]{Xu2025}. Here, $(\cZ_s,\F_{\cZ_s})$ is an $\F_{\kappa}$-compatible test configuration for $(\X_\kappa,\F_\kappa)$ and $(\cZ_u,\F_{\cZ_u})$ is an $\F_{\kappa'}$-compatible test configuration for $(\X_\kappa,\F_{\kappa'})$. These test configurations have identical central fibres and opposite $\G_m$-actions, by Lemma \ref{lem:opposite-associated-graded}. Thus, by the definition of the mixed Donaldson--Futaki invariant, we obtain 
\[\DF^{[t]}(\cZ_s,\F_{\cZ_s})+\DF^{[t]}(\cZ_u,\F_{\cZ_u}) = 0\]
which in turn implies $\DF^{[t]}(\cZ_s,\F_{\cZ_s})=\DF^{[t]}(\cZ_u,\F_{\cZ_u}) = 0$. Thus, by \cite[Theorem 1.2]{Pap26} both test configurations are special. 
\end{proof}

\begin{lemma}\label{lem:admissibility-of-STR-extension}
Assume that the two special fibres are $t$-K-semistable. Let $(\cZ,\F_\cZ,\M)\to \ST_R$ be the extension constructed above. Then this is an admissible family of adjoint Fano foliated structures. Moreover, the fibre over the closed stacky point is $\cZ_0 = \Proj\operatorname{Gr}_{\cH}R_\kappa \simeq \Proj\operatorname{Gr}_{\cH'}R'_\kappa$.
\end{lemma}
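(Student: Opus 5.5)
We check the conditions of Definition \ref{def:admissible-family-adjoint-foliated} on the standard atlas $B=\Spec R[s,u]/(su-\pi)$, arguing separately away from and over the closed stacky point $0$. By Lemma \ref{lem:finite-generation-comparison-graded} and Lemma \ref{lem:underlying-STR-extension}, the extension $g\colon(\cZ,\M)\to\ST_R$ exists as a flat polarised projective family, and $\cZ_0=\Proj\operatorname{Gr}_{\cH}R_\kappa$. The isomorphism $\Proj\operatorname{Gr}_{\cH}R_\kappa\simeq\Proj\operatorname{Gr}_{\cH'}R'_\kappa$ follows from Lemma \ref{lem:opposite-associated-graded}, since reversing the weight grading does not change $\Proj$ in the $m$-grading. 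Over $\ST_R^\circ$ the family is the glued family, which is admissible chart by chart because $(\X,\F,\L)$ and $(\X',\F',\L')$ are.

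\textbf{Fibres over the closed point.} By Lemma \ref{lem:comparison-branches-special}, the restrictions $\cZ_s$ and $\cZ_u$ are $\F$- and $\F'$-compatible special test configurations with $\DF^{[t]}=0$. Their central fibre $\cZ_0$ is therefore a normal adjoint Fano foliated structure. Since $\X_\kappa$ is $t$-K-semistable, Corollary \ref{cor:zero-DF-special-limit-semistable} shows that $\cZ_0$ is $t$-K-semistable. Special test configurations have normal central fibres, so $\cZ_s$ and $\cZ_u$ are normal Cartier divisors in $\cZ$, cut out by $s$ and $u$.

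\textbf{Normality of the total space.} To prove that $\cZ$ is normal, I would use that $\cZ\to B$ is flat and that $\cZ_s$ is a normal Cartier divisor. Serre's criterion near $\cZ_s$ then follows from the standard fact that if a Cartier divisor is normal and the ambient space is $S_2$ near it, the ambient space is normal, as in \cite[Proof of Theorem 8.31]{Xu2025} and Lemma \ref{lem:normal-total-space}. Away from $\cZ_s$, normality follows from the already-normal $\X$ and $\X'$.

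\textbf{The foliation.} Lemma \ref{lem:foliation-extends-over-STR} gives the saturated, bracket-closed, algebraically integrable relative foliation $\F_\cZ$. On each fibre, the saturation of the image of $T_{\F_\cZ}$ agrees with the foliation of the special test configuration, so it is algebraically integrable.

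\textbf{The canonical identity (main obstacle).} The hardest step is condition (4) together with Koll\'ar's condition. We need
\[
\M^{\otimes r}\simeq\bigl(\omega_{\cZ/B}^{[r(1-t)]}\widehat\otimes\omega_{\F_\cZ/B}^{[rt]}\bigr)^\vee,
\]
and we need all reflexive powers of this sheaf to commute with base change. Both sides are reflexive rank-one sheaves on the normal variety $\cZ$, and they agree on $\cZ^\circ$, whose complement $\cZ_0$ has codimension two. Hence the mixed sheaf $\mathcal K_{r,t}$ is the dual of the line bundle $\M^{\otimes r}$; in particular it is invertible, which gives Koll\'ar's condition for all of its powers. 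We also need the isomorphism to restrict on each fibre to $\cO(rK^{[t]})$. Over $\cZ_s$ I would deduce this from the analogous identity for special test configurations, proved in the $\Theta$ case via \cite[Proposition 4.3]{Pap26}. It may be necessary to first replace $\M$ by the correct twist, using the shift from Lemma \ref{lem:comparison-filtration-valuation-description}. My first attempt would be to check directly that the divisorial algebra $\bigoplus_m g_*\cO(-mrK^{[t]}_{\cZ/B,\F_\cZ})$ coincides with $\bigoplus_m j_*g^\circ_*\cO(m\M^\circ)$. Finally, $f$-ampleness of $\M$ is built into the $\Proj$ construction, and the DVR base-change condition of Definition \ref{def:admissible-DVR-family-kollar} follows from flatness of the Rees algebra.
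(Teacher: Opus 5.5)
\paragraph{Normality step.} Most of your outline matches the paper's proof:
\begin{itemize}
\item the flat polarised extension and $\cZ_0$ come from Lemma \ref{lem:underlying-STR-extension} and Lemma \ref{lem:opposite-associated-graded};
\item the foliation comes from Lemma \ref{lem:foliation-extends-over-STR};
\item $\OO_\cZ(r\M)\simeq\mathscr K_{r,t}^\vee$ follows by reflexive extension across the codimension-two fibre $\cZ_0$.
\end{itemize}
Your normality step, however, is circular. You obtain normality of $\cZ$ from normality of the Cartier divisors $\cZ_s,\cZ_u$, which you take from Lemma \ref{lem:comparison-branches-special}. That lemma treats $\cZ_s,\cZ_u$ as normal test configurations, computes their $\DF^{[t]}$, and applies \cite[Theorem 1.2]{Pap26}. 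It does so under the standing assumption, made right after Lemma \ref{lem:underlying-STR-extension}, that $\cZ$ is normal, and the justification of that assumption is deferred to exactly the lemma you are proving.

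The paper's argument needs no input from the branches:
\begin{itemize}
\item $B$ is regular of dimension two and $\cZ\to B$ is flat, so $\cZ\setminus\cZ^\circ=\cZ_0$ has codimension two.
\item Hence every codimension-one point of $\cZ$ lies in the normal open set $\cZ^\circ$, which gives $R_1$.
\item $\OO_\cZ=j_*\OO_{\cZ^\circ}$ gives $S_2$; equivalently, flatness forces depth at least two along $\cZ_0$.
\end{itemize}
Serre's criterion then gives normality. Lemma \ref{lem:comparison-branches-special} is invoked only afterwards, for the closed fibre. Replace your normality paragraph with this argument and the circularity disappears.

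\paragraph{Koll\'ar's condition.} With that fix, your treatment is a legitimate and cleaner alternative to the paper's. The paper first places $B^\circ$ and both coordinate divisors in the hull-flat stratum of \cite[Proposition 7.8]{Xu2025}, and only then proves $\OO_\cZ(r\M)\simeq\mathscr K_{r,t}^\vee$. You prove the identity first. Then $\mathscr K_{r,t}$ is invertible, and all its reflexive powers commute with base change automatically.

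Invertibility does not, however, give condition (c) of Definition \ref{def:fibrewise-family-adjoint-foliated} at the closed fibre, namely $\mathscr K_{r,t}|_{\cZ_0}\simeq\OO_{\cZ_0}(rK^{[t]}_{\cZ_0,\F_{\cZ_0}})$. The restriction of $T_{\F_\cZ}$ to $\cZ_0$ can fail to be saturated along divisors of $\cZ_0$. You correctly isolate this point, and it is exactly the input the paper takes from Lemma \ref{lem:comparison-branches-special} when it puts the coordinate divisors into the stratum.

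Three of your hedges are unnecessary:
\begin{itemize}
\item No twist of $\M$ is needed. On $\cZ^\circ$, gluing $\L$ and $\L'$ along $\varphi_K$ is the canonical identification of anti-adjoint sheaves, and a character twist would only change $\M$ by a pullback from the base.
\item The direct comparison of divisorial algebras plays no role.
\item Definition \ref{def:admissible-DVR-family-kollar} plays no role either.
\end{itemize}
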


\begin{proof}
Admissibility is smooth-local on the base, so we pull back to the standard atlas
\[
        B:=\Spec R[s,u]/(su-\pi)\longrightarrow \ST_R.
\]
By abuse of notation, we continue to write $ h:(\cZ,\F_\cZ,\M)\to B$ after this base change.

By construction,
\[
        \cZ = \Proj_B \bigoplus_{m\geq 0}  j_*h^\circ_*\OO_{\cZ^\circ}(m\M^\circ),
\]
where $j:B^\circ\hookrightarrow B$ is the complement of the closed point. Flatness of $h$ and relative ampleness of $\M=\OO_\cZ(1)$ are supplied by \cite[Corollary 8.23]{Xu2025}, as recorded in Lemma \ref{lem:underlying-STR-extension}.

The total space $\cZ^\circ$ is normal, and flatness implies that $\cZ\setminus\cZ^\circ$ has codimension two. The relative Proj algebra is reflexively extended from $B^\circ$, so $\OO_\cZ=j_*\OO_{\cZ^\circ}$. Thus $\cZ$ is $S_2$, while all of its codimension-one points lie in the normal open set $\cZ^\circ$; Serre's criterion gives normality. The closed fibre is normal as well, because it is the central fibre of either special branch from Lemma \ref{lem:comparison-branches-special}.

By Lemma \ref{lem:foliation-extends-over-STR}, the reflexive saturated extension
\[
T_{\F_\cZ/B} = \left(T_{\cZ/B}\cap j_*T_{\F^\circ/B^\circ}\right)^{\mathrm{sat}}
\]
is a relative algebraically integrable foliation. 

We verify precisely the mixed Koll\'ar condition required by Definition \ref{def:admissible-family-adjoint-foliated}. Choose $r>0$ sufficiently divisible so that $r(1-t)$ and $rt$ are integers, and set
\[
        \mathscr K_{r,t}  := \left( \omega_{\cZ/B}^{[r(1-t)]}  \widehat\otimes  \omega_{\F_\cZ/B}^{[rt]} \right)^{**},
\]
viewed as the divisorial hull from the relative regular locus. This is a mostly flat rank-one reflexive sheaf. By \cite[Proposition 7.8]{Xu2025} we obtain a locally closed hull-flat stratum of $B$ characterised by flatness and compatibility of $\mathscr K_{r,t}$ and all its reflexive powers with base change. The punctured open $B^\circ$ factors through this stratum because it is glued from the two original admissible families. By Lemma \ref{lem:comparison-branches-special}, both complete coordinate divisors factor through the same stratum, including their common closed point. Since $B=B^\circ\cup(s=0)\cup(u=0)$, the stratum is all of $B$. Hence the mixed sheaf $\mathscr K_{r,t}$ satisfies Koll\'ar's condition over $B$.

Over $B^\circ$ its dual is the anti-adjoint polarisation
\[
 \mathscr K_{r,t}^{\vee}|_{\cZ^\circ} \simeq \OO_{\cZ^\circ}(r\M^\circ).
\]
Both sides extend as reflexive rank-one sheaves on the normal scheme $\cZ$. No codimension-one point of $\cZ$ maps to the codimension-two origin of $B$, by flatness and the dimension formula. They therefore agree at every codimension-one point and hence globally:
\[
        \OO_\cZ(r\M) \simeq \mathscr K_{r,t}^{\vee}   =\OO_\cZ\!\left(-rK^{[t]}_{\cZ/B,\F_\cZ}\right).
\] 
Equivalently, $\M\sim_{\Q,B}-K^{[t]}_{\cZ/B,\F_\cZ}$. Since the construction is $\G_m$-equivariant, this descends from the atlas $B$ to $\ST_R$. Therefore $K^{[t]}_{\cZ/\ST_R,\F_\cZ}$ is $\Q$-Cartier and $\M$ is the relative anti-adjoint polarisation.

Finally, the fibre description follows from Lemma \ref{lem:underlying-STR-extension} and Lemma \ref{lem:opposite-associated-graded}.

We have verified the normality and flatness of $\mathcal Z\to B$, the existence of a saturated algebraically integrable relative foliation, Koll\'ar's condition for the required mixed reflexive powers, and the anti-adjoint identity, and the anti-adjoint identity $\mathcal M\sim_{\mathbb Q,B} -K^{[t]}_{\cZ/B,\F_\cZ}$. Thus the pulled-back family over $B$ is admissible, and hence so is the descended family over $\mathrm{ST}_R$.
\end{proof}

\begin{lemma}
\label{lem:closed-fibre-semistable-ST}
Assume that the two special fibres $(\X_\kappa,\F_\kappa,t)$ and $(\X'_\kappa,\F'_\kappa,t)$ are $t$-K-semistable. Then the closed stacky fibre $(\cZ_0,\F_{\cZ_0},\M|_{\cZ_0})$ is $t$-K-semistable.
\end{lemma}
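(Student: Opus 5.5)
The closed stacky fibre $\cZ_0$ is the central fibre of the $\F_\kappa$-compatible test configuration $\cZ_s$ of $(\X_\kappa,\F_\kappa,t)$ obtained by restricting the extension to the coordinate divisor $(s=0)$. Lemma \ref{lem:comparison-branches-special} shows that $\cZ_s$ is special with $\DF^{[t]}(\cZ_s,\F_{\cZ_s})=0$. The plan is to apply Corollary \ref{cor:zero-DF-special-limit-semistable} directly to this branch: its source $(\X_\kappa,\F_\kappa,t)$ is $t$-K-semistable by hypothesis, so the central fibre of any special $\F_\kappa$-compatible test configuration with vanishing mixed Donaldson--Futaki invariant is $t$-K-semistable.

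The key steps, in order, are as follows.

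\textbf{Step 1.} Check that $\cZ_s$ is a genuine normal special $\F_\kappa$-compatible test configuration, so that the corollary applies. Admissibility of the extension, from Lemma \ref{lem:admissibility-of-STR-extension}, supplies the following ingredients:
\begin{itemize}
\item[(a)] the relative foliation $\F_\cZ$ restricts to a saturated algebraically integrable foliation on $\cZ_s$, after saturating the image in $T_{\cZ_s/\A^1}$ as in Definition \ref{def:fibrewise-family-adjoint-foliated}(b);
\item[(b)] the closed fibre $\cZ_0$ is normal;
\item[(c)] $\M|_{\cZ_s}\sim_{\Q,\A^1}-K^{[t]}_{\cZ_s/\A^1,\F_{\cZ_s}}$, using Koll\'ar's condition for $\mathscr K_{r,t}$ and base change to $(s=0)$.
\end{itemize}
Item (c) is needed so that the central fibre, equipped with $\M|_{\cZ_0}$, is an adjoint Fano foliated structure with its anti-adjoint polarisation. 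The total space $\cZ_s$ is normal by Lemma \ref{lem:normal-total-space} applied over the local ring of $\A^1$ at the origin, since both the generic fibre and the central fibre are normal.

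\textbf{Step 2.} Record that $\DF^{[t]}(\cZ_s)=0$. This follows from Lemma \ref{lem:comparison-branches-special}: $\DF^{[t]}(\cZ_s)+\DF^{[t]}(\cZ_u)=0$ because the two branches have opposite $\G_m$-weights on the common central fibre, and each term is nonnegative by the $t$-K-semistability of the respective special fibres.

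\textbf{Step 3.} Invoke Corollary \ref{cor:zero-DF-special-limit-semistable} to conclude that $(\cZ_0,\F_{\cZ_0},\M|_{\cZ_0})$ is $t$-K-semistable. The $\cZ_u$ branch gives a symmetric confirmation.

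The main obstacle is Step 1, specifically the identification of the foliation on the central fibre. The foliation $\F_{\cZ_0}$ obtained from the $\ST_R$ family must agree with the foliation on the central fibre of the test configuration $\cZ_s$, where the latter is the saturated birational transform of $\F_\kappa\times\A^1$. To handle this, I would note that both are saturated extensions of the same foliation on the dense open set $\cZ_s\setminus\cZ_0$, which is the product $\X_\kappa\times\G_m$. The uniqueness of saturated reflexive extensions, as in the argument of Lemma \ref{lem:foliation-extends-over-STR}, then identifies them on $\cZ_s$. Restricting to the normal Cartier fibre $\cZ_0$ then gives the same saturated image in $T_{\cZ_0}$, because the image is determined at codimension-one points of $\cZ_0$.

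An alternative route, if this identification proves delicate, is to run the Ding argument directly. Lemma \ref{lem:ding-zero-comparison-filtrations} gives $\Ding^{[t]}(\cH)=0$. The filtration $\cH$ is finitely generated by Lemma \ref{lem:finite-generation-comparison-graded}, and its associated test configuration is exactly $\cZ_s$. Proposition \ref{lem:fg-filtration-tc-comparison} together with \cite[Theorem 7.7]{Pap26} then shows that this test configuration is special with Ding invariant zero, and one again concludes via Corollary \ref{cor:zero-DF-special-limit-semistable}.
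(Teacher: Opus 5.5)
Your proposal is correct and is the paper's proof: the paper also notes that, by Lemma \ref{lem:comparison-branches-special}, either coordinate branch is a special test configuration of a $t$-K-semistable special fibre with vanishing mixed Donaldson--Futaki invariant and central fibre $\cZ_0$, and then applies Corollary \ref{cor:zero-DF-special-limit-semistable}. Your Step 1 checks (normality of the branch, the anti-adjoint polarisation, and identifying the foliation on $\cZ_0$ with the saturated transform via generic agreement of saturated subsheaves) make explicit points that the paper leaves implicit.
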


\begin{proof}
By Lemma \ref{lem:comparison-branches-special}, either coordinate branch is a special test configuration of a $t$-K-semistable special fibre, has mixed Donaldson--Futaki invariant zero, and has central fibre $\cZ_0$. Corollary \ref{cor:zero-DF-special-limit-semistable} therefore applies and shows that $\cZ_0$ is $t$-K-semistable.
\end{proof}

\begin{theorem} \label{thm:adjoint-foliated-S-completeness}
Let $(\X,\F,t)\to \Spec R$ and $(\X',\F',t)\to \Spec R$ be two admissible families of $t$-K-semistable adjoint Fano foliated structures with isomorphic generic fibres $(X_K,\F_K,L_K)\simeq (X'_K,\F'_K,L'_K)$. Then the induced family over $\ST_R^\circ$ extends uniquely to an admissible family $(\cZ,\F_\cZ,\M)\to \ST_R$ whose geometric fibres are $t$-K-semistable.
\end{theorem}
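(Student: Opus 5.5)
The proof assembles the lemmas of this section in the order in which they were established. First, glue the two admissible DVR families along $\varphi_K$ to obtain $(\cZ^\circ,\F^\circ,\M^\circ)\to\ST_R^\circ$. Form the comparison filtrations $\cG^pE_m=E_m\cap\pi^pE'_m$ and their special-fibre reductions $\cH^\bullet$, $\cH'^{\,\bullet}$. By Lemma \ref{lem:opposite-associated-graded} these are linearly bounded and multiplicative, and their associated graded algebras are isomorphic with the weights reversed. Since both special fibres are $t$-K-semistable, Lemma \ref{lem:ding-zero-comparison-filtrations} gives $\Ding^{[t]}(\cH)=\Ding^{[t]}(\cH')=0$. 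This step rests on Theorem \ref{thm:mixed-ding-compatible-filtrations} together with the slope bound of Lemma \ref{lem:mixed-slope-inequalities-ST}, which in turn uses inversion of adjunction with ideals (Corollary \ref{cor:F-compatible-IOA-graded}). Lemma \ref{lem:finite-generation-comparison-graded} then gives finite generation of $\operatorname{Gr}_{\cH}R_\kappa$. Its proof checks the hypotheses of Theorem \ref{thm:relative-non-terminal-extraction-fg} for the divisor $\ord_{\X'_\kappa}$, taking $\cA_0$ from Lemma \ref{lem:A0-unique-lc-place} and a general $D\in\cG^{(a-\varepsilon)m}E_m$.

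With finite generation in hand, Lemma \ref{lem:underlying-STR-extension} produces the flat polarised extension $(\cZ,\M)\to\ST_R$ as the relative Proj of $\bigoplus_m j_*g^\circ_*\OO_{\cZ^\circ}(m\M^\circ)$. This extension is unique among flat polarised projective families. Lemma \ref{lem:foliation-extends-over-STR} extends the foliation uniquely as the saturated reflexive extension, which is relative and algebraically integrable. Lemma \ref{lem:admissibility-of-STR-extension} then shows that $(\cZ,\F_\cZ,\M)$ is admissible. The ingredients there are normality via Serre's criterion, the mixed Koll\'ar condition via the hull-flat stratum, and the identity $\M\sim_{\Q}-K^{[t]}_{\cZ/\ST_R,\F_\cZ}$ by comparison at codimension-one points, all of which lie over $B^\circ$.

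For semistability of the fibres, every geometric fibre of $\cZ$ over a point of $\ST_R^\circ$ is a fibre of $\X$ or of $\X'$, hence $t$-K-semistable by assumption. The $\G_m$-equivariant fibres over the coordinate divisors $(s=0)$ and $(u=0)$ minus the origin are isomorphic to $\X_\kappa$ and $\X'_\kappa$ respectively, since they are the general fibres of the two branch test configurations. The only new fibre is $\cZ_0$, and Lemma \ref{lem:closed-fibre-semistable-ST} shows it is $t$-K-semistable. That lemma uses Lemma \ref{lem:comparison-branches-special}: the two branches are special test configurations with opposite actions, so their $\DF^{[t]}$ values sum to zero, both vanish, and Corollary \ref{cor:zero-DF-special-limit-semistable} applies.

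Uniqueness comes from two facts: the underlying polarised family is unique by Lemma \ref{lem:underlying-STR-extension}, and the foliation is unique by saturation, since any two saturated extensions agree off a codimension-two set. The bulk of the work is already contained in the lemmas. The remaining step needing care is ordering the argument so that there is no circularity: normality of $\cZ$ and of $\cZ_0$, which admissibility requires, must be established via the special branches \emph{before} one invokes admissibility-dependent results. I would therefore state the special-branch lemma first, and deduce normality and $t$-K-semistability of $\cZ_0$ from it before concluding admissibility.
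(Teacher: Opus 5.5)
Your proposal is correct and follows the paper's proof essentially step for step: gluing, the comparison filtrations, vanishing of $\Ding^{[t]}$, finite generation via Theorem \ref{thm:relative-non-terminal-extraction-fg}, the extension of Lemma \ref{lem:underlying-STR-extension}, the saturated foliation, admissibility, and $t$-K-semistability of $\cZ_0$ via the zero-$\DF^{[t]}$ special branches, with uniqueness from the underlying extension and saturation. The paper makes one preliminary step explicit that you leave inside the cited lemmas: by Theorem \ref{thm: k-stab implies klt}, Corollary \ref{cor:dvr-total-potentially-klt} and Theorem \ref{thm:dvr-fibre-ioa}, both total spaces are potentially klt and $(\X,\F,(1-t)\X_\kappa,t)$, $(\X',\F',(1-t)\X'_\kappa,t)$ are plt near their special fibres; your remark about obtaining normality of $\cZ_0$ from the special branches matches how the paper defers normality to Lemma \ref{lem:admissibility-of-STR-extension}.
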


\begin{proof}
Since the two families are admissible and $t$-K-semistable, \cite[Theorem 1.2.(3)]{Pap26b} implies that $(\X_\kappa,\F_\kappa,t)$ and $(\X'_\kappa,\F'_\kappa,t)$ are klt and that the underlying special fibres are potentially klt. By Corollary \ref{cor:dvr-total-potentially-klt}, the total spaces $\X$ and $\X'$ are potentially klt near their special fibres. By Theorem \ref{thm:dvr-fibre-ioa}, the adjoint foliated structures $(\X,\F,(1-t)\X_\kappa,t)$ and $(\X',\F',(1-t)\X'_\kappa,t)$ are plt near their special fibres.

The generic-fibre isomorphism glues the two families over $\ST_R^\circ$, giving $(\cZ^\circ,\F^\circ,\M^\circ)\to \ST_R^\circ$. Consider the comparison filtrations $\cH$ and $\cH'$ defined above.

By Lemma \ref{lem:finite-generation-comparison-graded}, $\operatorname{Gr}_{\cH}R_\kappa$ is finitely generated. Hence Lemma \ref{lem:underlying-STR-extension} constructs a unique flat polarised extension $(\cZ,\M)\to \ST_R$, which is normal by Lemma \ref{lem:admissibility-of-STR-extension}. By Lemma \ref{lem:foliation-extends-over-STR}, the glued relative foliation extends uniquely to a saturated relative algebraically integrable foliation $T_{\F_\cZ}\subset T_{\cZ/\ST_R}$ while by Lemma \ref{lem:admissibility-of-STR-extension}, the resulting family $(\cZ,\F_\cZ,\M)\to \ST_R$ is admissible.

Away from the closed stacky point, the geometric fibres are $t$-K-semistable because the family restricts to the original two admissible $t$-K-semistable DVR families. The fibre over the closed stacky point is $t$-K-semistable by Lemma \ref{lem:closed-fibre-semistable-ST}. Thus all geometric fibres of $(\cZ,\F_\cZ,\M)\to \ST_R$ are $t$-K-semistable.

This proves existence. Uniqueness of the extension follows from Lemma \ref{lem:underlying-STR-extension} and from the uniqueness of the saturated foliation in Lemma \ref{lem:foliation-extends-over-STR}.
\end{proof}

We also obtain the following uniqueness theorem for $t$-K-polystable degenerations.

\begin{theorem}
\label{thm:uniqueness-polystable-degeneration}
Let $(X,\F,t)$ be a $t$-K-semistable adjoint Fano foliated structure, with $L\sim_{\Q}-K^{[t]}_{X,\F}$. Let $(\X_1,\F_1,\L_1)\to \A^1$ and $(\X_2,\F_2,\L_2)\to \A^1$ be two $\F$-compatible special test configurations of $(X,\F,L)$ with $\DF^{[t]}(\X_i,\F_i,\L_i)=0$ and with $t$-K-polystable central fibres $(Y_i,\F_{Y_i},M_i)$. Then
\[
        (Y_1,\F_{Y_1},M_1) \simeq (Y_2,\F_{Y_2},M_2)
\]
as polarised foliated varieties.
\end{theorem}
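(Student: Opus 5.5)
We follow the strategy of \cite{BX19} and \cite[\S 8.2.2]{Xu2025}, and reduce the statement to Theorem \ref{thm:adjoint-foliated-S-completeness} applied over a DVR obtained by comparing the two degenerations.

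\textbf{Step 1: put both degenerations over a common DVR.} Let $R$ be the local ring of $\A^1$ at $0$, or its completion's algebraic model; this is essentially of finite type over $k$. Restricting $(\X_i,\F_i,\L_i)$ to $\Spec R$ gives two families over $\Spec R$. Their generic fibres are both $(X,\F,L)_K$, via the trivialisations over $\A^1\setminus\{0\}$, and their special fibres are $Y_1$ and $Y_2$. The special fibres are $t$-K-semistable by Corollary \ref{cor:zero-DF-special-limit-semistable}, and the generic fibre is $t$-K-semistable by assumption.

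\textbf{Step 2: check admissibility.} We need these families to be admissible DVR families in the sense of Definition \ref{def:admissible-DVR-family-kollar}. The total space is normal by Lemma \ref{lem:normal-total-space}. The foliation is relative, saturated and algebraically integrable by the definition of an $\F$-compatible test configuration. For special test configurations, $\L_i\sim_{\Q}-K^{[t]}_{\X_i/\A^1,\F_i}$ holds after a character twist. The Koll\'ar condition for the mixed reflexive sheaf comes from the fact that the central fibre is an adjoint Fano foliated structure with the induced anticanonical polarisation, as in \cite{Pap26}. We expect to assert this citing the special test configuration theory of \cite{Pap26}, rather than reprove it.

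\textbf{Step 3: apply $S$-completeness and extract a degeneration.} Theorem \ref{thm:adjoint-foliated-S-completeness} gives an admissible extension $(\cZ,\F_\cZ,\M)\to\ST_R$ with $t$-K-semistable fibres. By Lemma \ref{lem:comparison-branches-special}, the restrictions to $(s=0)$ and $(u=0)$ are $\F_{Y_1}$- and $\F_{Y_2}$-compatible special test configurations of $Y_1$ and $Y_2$. Both have $\DF^{[t]}=0$ and share the common central fibre $\cZ_0$. Since $Y_1$ is $t$-K-polystable, the test configuration of $Y_1$ is of product type, so $\cZ_0\simeq Y_1$ compatibly with foliations and polarisations. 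Symmetrically, $\cZ_0\simeq Y_2$. Hence $Y_1\simeq Y_2$.

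The main obstacle is Step 3's use of polystability. Definition \ref{def: k-stability definition}(4) gives product type only for \emph{normal} test configurations, and it must yield an isomorphism of polarised foliated varieties. Normality of the branches $\cZ_s$ and $\cZ_u$ follows from speciality, since their central fibre $\cZ_0$ is normal by Lemma \ref{lem:admissibility-of-STR-extension}. For the foliations, we use that the product structure identifies the central fibre foliation with $\F_{Y_1}$: the foliation on the branch is the unique saturated transform of the product foliation. A secondary subtlety is that the gluing in $\ST_R^\circ$ should use the identity on generic fibres; any $\G_m$-twist only changes the isomorphism class by an automorphism, which is harmless.
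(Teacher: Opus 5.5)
Your proposal is correct and follows essentially the same route as the paper. The paper also base changes to $R=k[\pi]_{(\pi)}$ and applies Theorem~\ref{thm:adjoint-foliated-S-completeness} over $\ST_R$. It then uses Lemma~\ref{lem:comparison-branches-special} to obtain zero-$\DF^{[t]}$ special test configurations of $Y_1$ and $Y_2$ with a common central fibre, and concludes by polystability of each $Y_i$; your extra checks on admissibility and normality of the branches cover points the paper simply asserts.
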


\begin{proof}
Let $R=k[\pi]_{(\pi)}$. We base change the two test configurations by $\Spec R\to \A^1$, $\tau\mapsto \pi$. This gives two admissible $\F$-compatible DVR families $(\X_{1,R},\F_{1,R},\L_{1,R})\to \Spec R$ and $(\X_{2,R},\F_{2,R},\L_{2,R})\to \Spec R$. Their generic fibres are both canonically isomorphic to $(X,\F,L)\times_k K$, because a test configuration is trivial over $\A^1\setminus\{0\}$. Their special fibres are respectively $(Y_1,\F_{Y_1},M_1)$ and $(Y_2,\F_{Y_2},M_2)$.

Applying Theorem \ref{thm:adjoint-foliated-S-completeness} to the two DVR families, we obtain an $\F$-compatible admissible family $(\cZ,\F_\cZ,\M)\to \ST_R$ extending the glued family over $\ST_R^\circ$. The restriction of this family to the two coordinate branches of the special fibre of $\ST_R$ gives $\F$-compatible special test configurations $(\cZ_s,\F_{\cZ_s},\M_s)\to \A^1_\kappa$ of $(Y_1,\F_{Y_1},M_1)$, and $(\cZ_u,\F_{\cZ_u},\M_u)\to \A^1_\kappa$ of $(Y_2,\F_{Y_2},M_2)$, with common central fibre over the closed
stacky point $0\in \ST_R$.

By Lemma \ref{lem:comparison-branches-special}, these two special test configurations have zero mixed Donaldson--Futaki invariant:
\[
        \DF^{[t]}(\cZ_s,\F_{\cZ_s},\M_s)=0=\DF^{[t]}(\cZ_u,\F_{\cZ_u},\M_u).
\]
Since $(Y_i,\F_{Y_i},M_i)$ are assumed to be $t$-K-polystable, every $\F$-compatible special test configuration with zero mixed Donaldson--Futaki invariant is an $\F$-compatible product test configuration. Therefore $(\cZ_s,\F_{\cZ_s},\M_s)$ is the product test configuration of $(Y_1,\F_{Y_1},M_1)$, and $(\cZ_u,\F_{\cZ_u},\M_u)$ is the product test configuration of $(Y_2,\F_{Y_2},M_2)$.

The two product test configurations have the same central fibre, namely the fibre of $\cZ\to \ST_R$ over the closed stacky point. Since every fibre of a product test configuration is isomorphic, as a polarised foliated variety, to its general fibre, the common central fibre is isomorphic to both $(Y_1,\F_{Y_1},M_1)$ and $(Y_2,\F_{Y_2},M_2)$. Therefore
\[
(Y_1,\F_{Y_1},M_1) \simeq (Y_2,\F_{Y_2},M_2).
\]
\end{proof}

\section{\texorpdfstring{Reductivity of the automorphism group of $t$-K-polystable adjoint Fano foliated structures}{Reductivity of the automorphism group of t-K-polystable adjoint Fano foliated structures}}\label{sec:reductivity-aut}

Throughout, $k$ is an algebraically closed field of characteristic zero. We let $(X,\F,t)$ be a $t$-K-polystable adjoint Fano foliated structure and choose $r>0$ sufficiently divisible such that $L:=\mathscr L_{r,t}=\left(\omega_X^{[r(1-t)]}\widehat\otimes\omega_{\F}^{[rt]}\right)^{[-1]}$ is very ample. We denote by
\[
\Aut_t(X,\F):= \Aut(X,\F,\mathscr L_{r,t})
\]
the group of automorphisms $g\in\Aut(X)$ such that $g_*T_{\F}=T_{\F}$. Every such automorphism preserves $\omega_X$, $\omega_{\F}$, and hence $\mathscr L_{r,t}$. In particular,
\[
        \Aut_t(X,\F)= \left\{g\in \Aut(X)\ \middle|\ g_*T_{\F}=T_{\F}, \text{ and } g^*\mathscr L_{r,t}\simeq \mathscr L_{r,t} \right\},
\]
hence
\[
        \Aut_t(X,\F)=\Aut(X,\F)\subset \Aut(X,L)\subset \PGL(H^0(X,L)).
\]
The inclusion $T_{\F}\subset T_X$ determines a quotient $T_X\twoheadrightarrow Q:=T_X/T_{\F}$. The group $\Aut(X,L)$ acts on the relevant Quot scheme parametrising quotients of $T_X$ with the Hilbert polynomial of $Q$. The subgroup preserving $T_{\F}$,  equivalently the quotient $T_X\twoheadrightarrow Q$, is precisely the stabiliser of this Quot point $[T_X\twoheadrightarrow Q]$. Hence it is closed in $\Aut(X,L)$. Since $\Aut(X,L)$ is a linear algebraic group, so is $\Aut(X,\F)$.

For a linear algebraic group $G$, we define
\[
        \Lambda_G := \{\lambda(\pi)\in G(k((\pi)))\mid \lambda:\G_m\to G \text{ is a one-parameter subgroup}\}.
\]
We will use the following form of the converse to the Iwahori
decomposition.

\begin{proposition}[{\cite[Proposition 4.2]{ABHLX20}}]\label{prop:ABHLX20-4.2}
    If
\[
        G(k((\pi)))=G(k\llbracket\pi\rrbracket)\Lambda_G G(k\llbracket\pi\rrbracket),
\]
then $G$ is reductive.
\end{proposition}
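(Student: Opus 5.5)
The approach is by contradiction, via the projection to the reductive quotient. Suppose $G$ satisfies the displayed decomposition but is not reductive. Since reductivity only concerns the identity component and the decomposition is stated for all of $G$, I would work directly with the unipotent radical $U:=R_u(G^\circ)$, which is then a nontrivial connected unipotent normal subgroup of $G$. Set $\overline G:=G/U$ and $q\colon G\to\overline G$. Then $\overline G^\circ$ is reductive. Write $\mathcal O:=k\llbracket\pi\rrbracket$ and $\mathcal K:=k((\pi))$.

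The first step is to produce an element of $G(\mathcal K)$ lying in $U(\mathcal K)$ but not in $G(\mathcal O)$. Since $k$ has characteristic zero and $U$ is nontrivial connected unipotent, there is a nonzero nilpotent $X\in\operatorname{Lie}U$ and a closed embedding $\G_a\hookrightarrow U$, $x\mapsto \exp(xX)$. Fix a faithful representation $G\subset \mathrm{GL}_N$, so that $G(\mathcal O)=G(\mathcal K)\cap\mathrm{GL}_N(\mathcal O)$. Consider
\[
u:=\exp(\pi^{-1}X)\in U(\mathcal K).
\]
Its matrix has the entry $\pi^{-1}X$ in first order, so $u\notin \mathrm{GL}_N(\mathcal O)$, and hence $u\notin G(\mathcal O)$.

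Next I apply the hypothesis to write $u=h_1\lambda(\pi)h_2$ with $h_i\in G(\mathcal O)$ and $\lambda\colon\G_m\to G$ a one-parameter subgroup. Applying $q$ gives $1=q(h_1)\,(q\circ\lambda)(\pi)\,q(h_2)$. Therefore $(q\circ\lambda)(\pi)\in \overline G(\mathcal O)$.

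The key observation, which I expect to be the main point to check, is this: for a cocharacter $\mu$ of a linear algebraic group $H$, if $\mu(\pi)\in H(\mathcal O)$ then $\mu$ is trivial. To see this, embed $H\subset\mathrm{GL}_M$ and diagonalise $\mu$ with weights $w_1,\dots,w_M$. Then $\mu(\pi)$ is conjugate over $k$ to $\operatorname{diag}(\pi^{w_i})$. The condition $\mu(\pi)\in\mathrm{GL}_M(\mathcal O)$, applied to both $\mu(\pi)$ and its inverse, forces $w_i\ge0$ and $-w_i\ge 0$ for all $i$. Hence all $w_i=0$ and $\mu$ is trivial.

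Applied to $q\circ\lambda$, this shows that $\lambda$ factors through $\ker q=U$. But a homomorphism from the torus $\G_m$ to a unipotent group is trivial, so $\lambda$ is trivial. Then $u=h_1h_2\in G(\mathcal O)$, which contradicts the first step. Hence $U$ is trivial and $G$ is reductive.

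The only care needed beyond this is routine bookkeeping. First, $G(\mathcal O)$ should be defined via a faithful representation; this is independent of the choice up to the standard comparison, and in any case the argument uses one fixed embedding throughout. Second, the disconnected case is harmless, because all the maps above are defined on the whole of $G$.
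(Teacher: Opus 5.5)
Your proof is correct. Passing to $q\colon G\to G/R_u(G^\circ)$ kills $u=\exp(\pi^{-1}X)$, which forces $(q\circ\lambda)(\pi)\in\overline G(k\llbracket\pi\rrbracket)$ and hence, by your diagonalisation lemma, $q\circ\lambda$ trivial; so $\lambda$ lands in $U$ and is trivial, contradicting that $u$ has a pole (cleanly phrased: the $\pi^{-1}$-coefficient of $\sum_j \pi^{-j}X^j/j!$ is $X\neq 0$). The paper gives no proof of its own and simply imports \cite[Proposition 4.2]{ABHLX20}; as far as I can tell, your reductive-quotient argument is essentially the standard proof of that cited result, though the paper does not reproduce it for a direct comparison.
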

We will also use the following lemma, which reduces the double-coset equality to algebraic points.

\begin{lemma}[{\cite[Lemma 4.3]{ABHLX20}}]\label{lem:ABHLX20-4.3}
    If $G$ is a linear algebraic group, then for any $g \in G(K)$, there is an algebraic point $g_0$ such that $g \cdot g^{-1}_0 \in G(R)$, where algebraic means that $g_0 \in G(k(C))$ for the function field $k(C)$ of a smooth curve over $k$ embedded in $K$ via a dominant morphism $\Spec(R) \to C$.
\end{lemma}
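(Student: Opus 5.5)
The plan is to deduce the statement from Artin approximation, together with the observation that $G(R)$ is open in $G(K)$ for the $\pi$-adic topology. Throughout, $R=k\llbracket\pi\rrbracket$ and $K=k((\pi))$, as in Proposition \ref{prop:ABHLX20-4.2}.

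First, fix a closed embedding $G\subset \mathrm{GL}_n$ defined over $k$. Realise $\mathrm{GL}_n$ as the closed subvariety of $\mathrm{Mat}_n\times\mathrm{Mat}_n$ cut out by $XY=I$, so that $G$ is cut out by $XY=I$ together with finitely many polynomials $f_j(X)=0$ with coefficients in $k$. A point $g\in G(K)$ is then a pair $(g,g^{-1})$ of matrices over $K$. Choose $N\geq0$ such that $\pi^Ng$ and $\pi^Ng^{-1}$ have entries in $R$, and write $g=\pi^{-N}P$ and $g^{-1}=\pi^{-N}Q$ with $P,Q\in\mathrm{Mat}_n(R)$. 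Substituting $X=\pi^{-N}U$ and $Y=\pi^{-N}V$ and clearing denominators gives a finite system of polynomial equations in the entries of $U,V$ with coefficients in $k[\pi]$:
\[
UV=\pi^{2N}I,\qquad \pi^{Nd_j}f_j(\pi^{-N}U)=0,
\]
where $d_j=\deg f_j$. The pair $(P,Q)$ is a solution of this system over $R$.

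Next, I will apply Artin approximation. Since $R$ is the completion of the henselisation $k[\pi]_{(\pi)}^h$, for every $c\geq1$ there is a solution $(P_0,Q_0)$ over $k[\pi]^h_{(\pi)}$ with $P_0\equiv P$ and $Q_0\equiv Q \pmod{\pi^c}$. The henselisation is a filtered colimit of local rings of pointed étale neighbourhoods $(C,c_0)\to(\A^1,0)$, so $(P_0,Q_0)$ is defined over $\cO_{C,c_0}$ for such a smooth curve $C$. The induced map $\Spec R\to C$ is dominant, and it embeds $k(C)$ into $K$. Setting $g_0:=\pi^{-N}P_0$, we have $g_0^{-1}=\pi^{-N}Q_0$ because $P_0Q_0=\pi^{2N}I$. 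The remaining equations then say that $g_0\in G(k(C))$, so $g_0$ is an algebraic point.

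It remains to check that $g g_0^{-1}\in G(R)$ once $c$ is large. Write $Q_0=Q+\pi^cE$ with $E\in\mathrm{Mat}_n(R)$. Then
\[
g g_0^{-1}=\pi^{-2N}P(Q+\pi^cE)=I+\pi^{c-2N}PE .
\]
Similarly,
\[
g_0g^{-1}=I+\pi^{c-2N}E'Q
\]
for some $E'\in\mathrm{Mat}_n(R)$. Taking $c>2N$, both $g g_0^{-1}$ and its inverse have entries in $R$. Since $g g_0^{-1}$ is a $K$-point of the closed subgroup $G$ with integral entries and integral inverse, it lies in $G(R)$.

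The main obstacle is making the approximation step rigorous. Artin's theorem must be applied in the form valid over the henselisation of $k[\pi]$ at $(\pi)$, which is an excellent Henselian DVR, so Artin/Popescu approximation applies. One must also make sure that the approximating solution descends to the local ring of a single smooth curve with a dominant map. Both points are standard, and the rest of the argument is bookkeeping of pole orders.
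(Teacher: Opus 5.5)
Your proof is correct, and it is the standard argument behind the cited \cite[Lemma 4.3]{ABHLX20}, which the paper quotes without proof. That argument combines two facts: $G(R)$ is open in $G(K)$ for the $\pi$-adic topology, and Artin approximation over $k[\pi]^h_{(\pi)}$ shows that algebraic points coming from \'etale neighbourhoods of $0\in\A^1$ are dense in $G(K)$. A small bonus of your construction is that $\mathcal{O}_{C,c_0}\to R$ is the completion map, which is exactly the form in which the lemma is used in the proof of Theorem \ref{thm:aut-reductive-adjoint-foliated}.
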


\subsection{Isotrivial extensions}

Let $R$ be a DVR essentially of finite type over $k$, with uniformiser $\pi$, fraction field $K$, and residue field $\kappa$. We define $S_R:=\Spec R[s,u]/(su-\pi)$ and note that $S_R^\circ:=S_R\setminus\{(s=u=0)\}$.

The scheme $S_R^\circ$ is the union of $\Spec R[s]_s$ and $\Spec R[u]_u$ glued along $\Spec K[s]_s\simeq \G_{m,K}$. The group $\G_m$ acts on $S_R$ by $\lambda\cdot(s,u)=(\lambda s,\lambda^{-1}u)$. Note that  $\ST_R^\circ = \left[S_R^\circ/\G_m\right]$ and $\ST_R = \left[S_R/\G_m\right]$.

Let
\[
\alpha_K\colon (X_K,\F_K,L_K) \xrightarrow{\sim} (X_K,\F_K,L_K)
\]
be an element of $\operatorname{Aut}(X_K,\F_K)$. Using this, we glue the two trivial families $(X,\F,L)\times \Spec R[s]_s$ and $(X,\F,L)\times \Spec R[u]_u$ over $\Spec K[s]_s$ using the $\G_m$-equivariant isomorphism induced by $\alpha_K$. This gives a $\G_m$-equivariant admissible family
\[
(\cZ^\circ,\F_{\cZ^\circ},\M^\circ)\to S_R^\circ.
\]
Equivalently, after quotienting by $\G_m$, it defines a family over $\ST_R^\circ$.

\begin{lemma} \label{lem:isotrivial-STR-extension-foliated}
Let $(X,\F,t)$ be $t$-K-polystable. Then the resulting $\G_m$-equivariant admissible family over $S_R^\circ$  $(\cZ^\circ,\F_{\cZ}^\circ)\rightarrow S_R^\circ$ extends to a $\G_m$-equivariant admissible family $(\cZ,\F_{\cZ},\M)\to S_R$, where the fibre over $s=u=0$ is isomorphic to the central fibre, i.e. 
\[
    (\cZ_{\overline{0}}, \F_{\cZ, \overline{0}}, \M_{\overline{0}})\simeq (X_{\overline{\kappa}},\F_{\overline{\kappa}},L_{\overline{\kappa}}).
\]
\end{lemma}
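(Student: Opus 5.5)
We reduce the statement to the $S$-completeness criterion, Theorem \ref{thm:adjoint-foliated-S-completeness}, and then identify the central fibre using polystability, following \cite[\S 4]{ABHLX20}.

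\textbf{Step 1: reduction to Theorem \ref{thm:adjoint-foliated-S-completeness}.} Quotienting by $\G_m$, the glued family over $S_R^\circ$ is a family over $\ST_R^\circ$. Its restrictions to the two charts $U_s,U_u\simeq\Spec R$ are both the trivial family $(X,\F,L)\times_k\Spec R$, and the gluing over $\Spec K$ is given by $\alpha_K$. The trivial family is an admissible DVR family, since normality, the Koll\'ar condition and base change all hold for products. Its fibres are $t$-K-polystable, hence $t$-K-semistable. Theorem \ref{thm:adjoint-foliated-S-completeness} therefore produces a unique admissible extension $(\cZ,\F_\cZ,\M)\to\ST_R$ with $t$-K-semistable geometric fibres. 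Pulling back along the atlas $S_R\to\ST_R$ gives the $\G_m$-equivariant admissible family over $S_R$.

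\textbf{Step 2: the central fibre.} By Lemma \ref{lem:comparison-branches-special}, the restrictions to $(s=0)$ and $(u=0)$ are $\F_\kappa$-compatible special test configurations of $(X_\kappa,\F_\kappa,L_\kappa)$ with $\DF^{[t]}=0$. Both have common central fibre $\cZ_{\overline0}=\Proj\operatorname{Gr}_{\cH}R_\kappa$. Since $(X,\F,t)$ is $t$-K-polystable, so is its base change $(X_{\overline\kappa},\F_{\overline\kappa},t)$. By Definition \ref{def: k-stability definition}(4), each branch is then a product test configuration. As in the proof of Theorem \ref{thm:uniqueness-polystable-degeneration}, every fibre of a product test configuration, and in particular its central fibre, is isomorphic to the general fibre as a polarised foliated variety. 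This gives $(\cZ_{\overline 0},\F_{\cZ,\overline0},\M_{\overline0})\simeq(X_{\overline\kappa},\F_{\overline\kappa},L_{\overline\kappa})$.

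\textbf{Main obstacle.} Two points need care. The first is that polystability must be applied over $\overline\kappa$ rather than $k$. Since $R$ is essentially of finite type, $\kappa$ may be a transcendental extension of $k$, so one needs invariance of $t$-K-polystability under field extension. I would first try the approach of \cite{ABHLX20}: use Lemma \ref{lem:ABHLX20-4.3} to assume $\alpha_K$ is algebraic, so that $R$ is the local ring of a smooth curve at a $k$-point and $\kappa=k$. Failing that, one can argue that a destabilising special test configuration descends to a finitely generated field and then specialises, using the equivariant special degeneration results of \cite{Pap26}.

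The second point is that "product type" must include the foliation: the isomorphism of central fibres has to carry $\F_{\cZ,\overline0}$ to $\F_{\overline\kappa}$. This follows because the relative foliation on a product test configuration is the pulled-back product foliation. That foliation is the unique saturated extension, by the same argument as in Lemma \ref{lem:foliation-extends-over-STR}.
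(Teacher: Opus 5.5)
Your proposal is correct and follows essentially the same route as the paper. You apply Theorem \ref{thm:adjoint-foliated-S-completeness} to the two trivial families glued by $\alpha_K$ and pull back to the atlas $S_R$. You then use Lemma \ref{lem:comparison-branches-special} and $t$-K-polystability to see that both coordinate branches are product test configurations, so their common central fibre is $(X_\kappa,\F_\kappa,L_\kappa)$.

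The two points you flag are passed over silently in the paper, and both are harmless. Base change of polystability is not an issue in the later application, where $R_0=\cO_{C,x}$ is the local ring of a smooth curve at a closed point, so $\kappa=k$. The compatibility of the foliation is built into the paper's definition of a product $\F$-compatible test configuration.
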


\begin{proof}
By Theorem \ref{thm:adjoint-foliated-S-completeness}, the family over $\operatorname{ST}_R^\circ$ extends uniquely to an admissible family over $\operatorname{ST}_R$. Pulling back to the standard atlas gives a $\mathbb G_m$-equivariant admissible family over $S_R$.

The restrictions to the two coordinate axes are $\mathcal F$-compatible special test configurations of $(X_\kappa,\mathcal F_\kappa,t)$ with zero mixed Donaldson--Futaki invariant. Since $(X,\mathcal F,t)$ is $t$-K-polystable, both are product test configurations. Their common central fibre is therefore isomorphic to $(X_\kappa,\mathcal F_\kappa,L_\kappa)$.
\end{proof}

\subsection{Reductivity of the automorphism group}

\begin{theorem}[Reductivity of the automorphism group]
\label{thm:aut-reductive-adjoint-foliated}
Let $(X,\F,t)$ be a $t$-K-polystable adjoint Fano foliated structure. Then $\Aut(X,\F)$ is reductive.
\end{theorem}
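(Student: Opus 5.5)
We follow the Iwahori-decomposition strategy of \cite[\S 4]{ABHLX20}. Set $G:=\Aut(X,\F)$, which was shown above to be a linear algebraic group, closed in $\Aut(X,L)\subset\PGL(H^0(X,L))$. By Proposition \ref{prop:ABHLX20-4.2} it suffices to prove
\[
G(k((\pi)))=G(k\llbracket\pi\rrbracket)\,\Lambda_G\,G(k\llbracket\pi\rrbracket).
\]
Lemma \ref{lem:ABHLX20-4.3} lets us reduce to algebraic points. Given $g\in G(k((\pi)))$, choose an algebraic $g_0$ with $g g_0^{-1}\in G(k\llbracket\pi\rrbracket)$. Then $g_0$ is defined over $K=\operatorname{Frac}(R)$, where $R$ is a DVR essentially of finite type over $k$, namely the local ring of a smooth curve at a point. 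It therefore suffices to show $g_0\in G(R)\Lambda_G G(R)$ for such $R$, and then complete.

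Take $\alpha_K:=g_0\in\Aut(X_K,\F_K)$ and glue the two trivial families over $S_R^\circ$ as above. This yields a $\G_m$-equivariant admissible family over $S_R^\circ$. By Lemma \ref{lem:isotrivial-STR-extension-foliated}, it extends to a $\G_m$-equivariant admissible family $(\cZ,\F_\cZ,\M)\to S_R$ whose fibre over the origin is isomorphic to $(X_\kappa,\F_\kappa,L_\kappa)$. The restrictions to the axes $(s=0)$ and $(u=0)$ are product test configurations. Hence the family is isotrivial along the whole special fibre, with the closed point fixed by a $\G_m$-action on $X_\kappa$ preserving $\F_\kappa$ and $L$. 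This action is given by a one-parameter subgroup $\lambda:\G_m\to G$.

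Next we trivialise. Consider the frame bundle of $h_*\M^{\otimes r}$ on $S_R$, compatible with the embedding into $\PP(H^0(X,L))$ and with the foliation. Since $\cZ\to S_R$ is flat with all fibres isomorphic to $(X,\F,L)$, this becomes a $G$-torsor. Concretely, the isomorphism scheme $\operatorname{Isom}_{S_R}((\cZ,\F_\cZ,\M),(X,\F,L)\times S_R)$ is a $G$-torsor over $S_R$. It is smooth over $S_R$ because its fibres are all $G$-torsors and $G$ is smooth in characteristic zero. It is $\G_m$-equivariant, so it is a $G$-torsor on $\ST_R$.

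We then follow the proof of \cite[Theorem 4.1]{ABHLX20}. A $G$-bundle on $\ST_R$ that is trivial over both charts $\Spec R$ corresponds to a double coset in $G(R)\backslash G(K)/G(R)$, namely the class of the gluing $g_0$. Since the fibre over the origin carries the $\G_m$-action $\lambda$, the bundle is determined near $0$ by the $\G_m$-equivariant structure. Using the description of $G$-bundles on $\ST_R$ from \cite[Lemma 4.4 / \S 4]{ABHLX20}, we conclude that $g_0\in G(R)\lambda(\pi)G(R)$.

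The main obstacle is this last step: identifying the extended family with a $G$-torsor on $\ST_R$ and showing that the gluing class is $\lambda(\pi)$ up to $G(R)$ on both sides. This requires that the isomorphism scheme preserving the foliation be smooth and that its $\G_m$-equivariant structure at the origin be induced by a cocharacter of $G$. Here we use uniqueness of the saturated foliation (Lemma \ref{lem:foliation-extends-over-STR}), so that isomorphisms of the underlying polarised families preserving $\F$ over a dense open set automatically preserve $\F_\cZ$. This lets us treat $G$ as a closed subgroup of $\Aut(X,L)$ and apply \cite{ABHLX20} verbatim.
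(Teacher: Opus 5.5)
Your outline matches the paper's proof step for step: converse Iwahori decomposition, reduction to algebraic points via Lemma \ref{lem:ABHLX20-4.3}, gluing two trivial families by $g_0$, extension by Lemma \ref{lem:isotrivial-STR-extension-foliated} with closed fibre $(X,\F,L)$, the cocharacter $\lambda$, and comparison of the foliation-preserving Isom torsor with the $\lambda$-twisted product. The gap is the step you call the main obstacle: your claim that ``the bundle is determined near $0$ by the $\G_m$-equivariant structure''. This is where the paper does actual work, and as you set it up it is not justified.

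You propose to prove $g_0\in G(R)\Lambda_GG(R)$ over the non-complete local ring $R=\cO_{C,x}$ and complete afterwards. But rigidity of $G$-torsors on $\ST_R$ comes from formal lifting followed by algebraisation, and algebraisation needs a complete base. The paper instead proceeds as follows.
\begin{enumerate}
\item It first base changes the extension from $S_{R_0}$ to $S=\Spec \widehat{R_0}[s,u]/(su-\pi)$, where $\widehat{R_0}\simeq k\llbracket\pi\rrbracket$.
\item It observes that $I\to S$ is a torsor under the smooth group $G$, by \cite[Lemma 2.3.2]{SdJ10}, since every geometric fibre is isomorphic to $(X,\F,L)$. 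Smoothness comes from this torsor property; smooth fibres alone would not suffice without flatness.
\item It lifts the $\G_m$-equivariant section at the origin to compatible sections over all $S_n=\Spec\cO_S/\mathfrak m_0^{n+1}$. The lifts can be chosen equivariantly because $\G_m$ is linearly reductive.
\item It algebraises this compatible system, using that $I$ is affine over the complete base.
\end{enumerate}
This yields a $\G_m$-equivariant isomorphism $(\cZ,\F_\cZ,\M)\simeq (X,\F,L)\times S$. Restricting it to $S^\circ$ and comparing the gluings $g_0$ and $\lambda(\pi)$ gives $a g_0=\lambda(\pi)b$ with $a,b\in G(k\llbracket\pi\rrbracket)$. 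That is all Proposition \ref{prop:ABHLX20-4.2} requires.

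So you need to complete before trivialising, not after. You should also replace the appeal to a ``description of $G$-bundles on $\ST_R$'' with this lifting-and-algebraisation argument.
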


\begin{proof}
We follow the proof of \cite[Proposition 4.4]{ABHLX20}. We set $G:=\Aut(X,\F)$, $R = k\llbracket\pi\rrbracket$ and $K = k((\pi))$. As observed in the preceding discussion, $G$ is a linear algebraic group. By Proposition \ref{prop:ABHLX20-4.2}, it suffices to prove $G(K)=G(R)\Lambda_G G(R)$, while Lemma \ref{lem:ABHLX20-4.3}, reduces this to algebraic points of $G(K)$. 

Thus, let $C$ be a smooth pointed curve with closed point $x$. Set $R_0:=\mathcal O_{C,x}$ and choose a uniformiser $\pi\in R_0$. Since $C$ is smooth at $x$, $\widehat{R_0}\simeq k\llbracket\pi\rrbracket=R$. Let $K_0=\operatorname{Frac}(R_0)$; then the completion map induces an embedding $K_0\hookrightarrow K=k((\pi))$. Let $g\in G(K_0)$ be an algebraic point. We shall prove that the image of $g$ in $G(K)$ lies in $G(R)\Lambda_G G(R)$.

The element $g$ gives an isomorphism of polarised foliated generic fibres
\[
        g: (X_{K_0},\F_{K_0},L_{K_0}) \xrightarrow{\sim} (X_{K_0},\F_{K_0},L_{K_0}).
\]
Using $g$, we glue the two trivial families over
\[
        S_{R_0}^\circ = \Spec R_0[s,u]/(su-\pi)\setminus\{s=u=0\}.
\]

Applying Lemma \ref{lem:isotrivial-STR-extension-foliated} over $R_0$, we obtain an extension over $S_{R_0}$. Base-changing along $R_0\rightarrow \widehat{R}_0\simeq k[[\pi]]$ yields a $\GG_m$-equivariant admissible family over $S_{\widehat{R}_0}$
\[
        (\cZ,\F_{\cZ},\M) \to S:=\Spec R[s,u]/(su-\pi),
\]
whose fibre over $s=u=0$ is isomorphic to $(X,\F,L)$. Under the identification of the closed fibre with $(X,\F,L)$, the $\G_m$-action on the closed fibre preserves $L$ and the saturated foliation $T_{\F}$. Hence it defines a one-parameter subgroup $\lambda:\G_m\to G$.

We now compare $(\cZ,\F_{\cZ},\M)$ with the product family $(X,\F,L)\times S\to S$, where $\G_m$ acts on $S$ by $z\cdot(s,u)=(z s,z^{-1}u)$ and acts on the factor $(X,\F,L)$ through the one-parameter subgroup $\lambda$.

Let $I_{\mathrm{pol}}$ be the Isom scheme between the polarised family $(\mathcal Z,\mathcal M)\to S$  and the product polarised family $(X,L)\times S\to S$. Inside $I_{\mathrm{pol}}$, let $I$ be the closed subscheme of isomorphisms carrying $T_{\F_{\mathcal Z}}$ to the pullback of $T_{\F}$. This is a closed condition: equivalently, the induced quotient of the relative tangent sheaf must equal the corresponding point of the relative Quot scheme. By Lemma \ref{lem:isotrivial-STR-extension-foliated} the fibre over $s=u=0$ is isomorphic to $(X,\F,L)$. Away from the closed point, the family is obtained by gluing trivial families. Moreover, the restrictions to the coordinate divisors are product test configurations. Hence every geometric fibre of $(\mathcal Z,\F_{\mathcal Z},\mathcal M)\to S$ is isomorphic to $(X,\F,L)$. In particular, the morphism $I\to S$ is a $G$-torsor over $S$ by \cite[Lemma 2.3.2]{SdJ10}. The foliation-preserving condition is closed, so the polarised Isom torsor restricts to the $G=\Aut(X,\F)$-torsor of foliation-preserving isomorphisms.

The $\G_m$-actions on both families induce a $\G_m$-action on $I$, and the projection $I\to S$ is $\G_m$-equivariant. The chosen identification of the fibre over $s=u=0$ with $(X,\F,L)$ gives a $\G_m$-equivariant section $S_0:=\Spec k\to I$. Since $I\to S$ is a $G$-torsor and $G$ is smooth in characteristic zero, $I\to S$ is smooth. Because $\G_m$ is linearly reductive, the usual infinitesimal lifting property may be applied on invariant affine neighbourhoods and the lifts may be chosen equivariantly. Thus the section over $S_0$ extends to compatible $\G_m$-equivariant sections over all infinitesimal neighbourhoods
\[
        S_n:=\Spec \cO_S/\mathfrak m_0^{n+1}.
\]

The equivariant effectivity argument in \cite[proof of Proposition 4.4]{ABHLX20} now algebraises this compatible formal section. More explicitly, $I$ is affine over $S$ because it is a torsor under the affine group $G$, and the completed graded algebra maps defined by the sections on the $S_n$ are effective over the complete base. Their weight decompositions therefore give a $\G_m$-equivariant section $S\to I$. Consequently there is a $\G_m$-equivariant isomorphism
\[
        (\cZ,\F_{\cZ},\M) \simeq (X,\F,L)\times S,
\]
where the product has diagonal $\G_m$-action via $\lambda$ on $(X,\F,L)$.

We now restrict this isomorphism to $S^\circ=S\setminus\{s=u=0\}$ and quotient by the $\G_m$-action. The original family over $S^\circ$ was obtained by gluing two trivial families using $g\in G(K)$, whereas the diagonal product family is obtained by gluing using $\lambda(\pi)\in G(K)$. The equivariant isomorphism over $S^\circ$ therefore gives elements $a,b\in G(R)$ such that $a\cdot g=\lambda(\pi)\cdot b$. Equivalently,
\[
        g=a^{-1}\lambda(\pi)b \in G(R)\Lambda_GG(R).
\]
Thus every algebraic point of $G(K)$ lies in $G(R)\Lambda_GG(R)$. Proposition \ref{prop:ABHLX20-4.2} and Lemma \ref{lem:ABHLX20-4.3} therefore imply that $G$ is reductive.
\end{proof}

\subsection{\texorpdfstring{Finiteness of the automorphism group for $t$-K-stable adjoint Fano foliated structures}{Finiteness of the automorphism group for t-K-stable adjoint Fano foliated structures}}

We now tackle the $t$-K-stable case. The proof is similar to that of Theorem \ref{thm:uniqueness-polystable-degeneration}.

\begin{theorem} \label{thm:stable-separatedness-adjoint-foliated}
Let $C$ be a smooth pointed curve with closed point $0$, and let $(\X,\F,t)\to C$ and $(\X',\F',t)\to C$ be admissible families of $t$-K-semistable adjoint Fano foliated structures. Suppose there is an isomorphism over the punctured curve $C^\circ=C\setminus\{0\}$:
\[
        (\X,\F)|_{C^\circ}\simeq (\X',\F')|_{C^\circ}.
\]
If the central fibre $(X_0,\F_0,t)$ is $t$-K-stable, then the isomorphism over $C^\circ$ extends uniquely
to an isomorphism
\[
        (\X,\F,t)\simeq (\X',\F',t)
\]
over $C$.
\end{theorem}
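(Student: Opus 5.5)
We localise at $0$ and apply the $S$-completeness criterion, following the proof of Theorem \ref{thm:uniqueness-polystable-degeneration}. Let $R:=\mathcal O_{C,0}$, a DVR essentially of finite type over $k$, with fraction field $K=k(C)$. Base changing both families along $\Spec R\to C$ gives admissible DVR families $(\X_R,\F_R,\L_R)$ and $(\X'_R,\F'_R,\L'_R)$ whose generic fibres are identified by the given isomorphism over $C^\circ$. Since the isomorphism preserves the foliations, it identifies the mixed reflexive sheaves $\mathscr K_{r,t}$, hence the anti-adjoint polarisations. Both special fibres are $t$-K-semistable, and $(X_0,\F_0,t)$ is $t$-K-stable.

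By Theorem \ref{thm:adjoint-foliated-S-completeness}, the glued family over $\ST_R^\circ$ extends to an admissible family $(\cZ,\F_\cZ,\M)\to\ST_R$. By Lemma \ref{lem:comparison-branches-special}, its restrictions to $(s=0)$ and $(u=0)$ are special $\F_0$-compatible and $\F'_0$-compatible test configurations of $X_0$ and $X'_0$. Both have vanishing mixed Donaldson--Futaki invariant and share the central fibre $\cZ_0$. Since $(X_0,\F_0,t)$ is $t$-K-stable, the branch over $(s=0)$ is trivial, so $\cZ_0\simeq X_0$ with trivial $\G_m$-action.

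The key step is to show that triviality of this branch forces the comparison filtration to be trivial. Concretely, we show that $v=\ord_{\X'_\kappa}$ is not exceptional over $\X$. If the branch is trivial, the filtration $\cH$ is a shift of the trivial filtration, so its Duistermaat--Heckman measure is a Dirac mass. Lemma \ref{lem:support-comparison-filtrations} gives support $[-a,a']$, so $a'=-a$. But if $v$ were exceptional, the plt statement (Theorem \ref{thm:dvr-fibre-ioa}) gives $a>0$, and symmetrically $a'>0$ if $v'$ is exceptional over $\X'$. A short case analysis then shows $v$ and $v'$ are both non-exceptional, with $a=a'=0$. Hence $\X'_\kappa$ is a divisor on $\X$, namely $\X_\kappa$. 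The birational map $\X\dashrightarrow\X'$ is then an isomorphism in codimension one. It matches the polarisations $\L_R$ and $\L'_R$, since both are anti-adjoint and agree in codimension one. Therefore the section rings $\bigoplus_m E_m$ and $\bigoplus_m E'_m$ coincide as lattices, and taking Proj gives $\X_R\simeq\X'_R$ over $R$. The foliations agree because they are saturated and coincide generically.

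Alternatively, one can argue via the stacky picture. Since both branches of $\cZ\to\ST_R$ are products, the extension is the pullback of a family over $\Spec R$, and restricting to the two charts identifies $\X_R$ with $\X'_R$.

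The isomorphism over $\Spec R$ spreads out to a neighbourhood of $0$ and glues with the given isomorphism over $C^\circ$, since the two agree on the generic fibre. This gives the isomorphism over $C$. Uniqueness holds because both total spaces are separated and integral, so two isomorphisms agreeing on the dense open $C^\circ$ are equal.

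For the consequence, let $(X,\F,t)$ be $t$-K-stable, and note that it is $t$-K-polystable. By Theorem \ref{thm:aut-reductive-adjoint-foliated}, $G=\Aut(X,\F)$ is reductive. If $G$ were infinite, its identity component would be a positive-dimensional reductive group, hence would contain a nontrivial $\G_m$. That $\G_m$ gives a nontrivial product test configuration with $\DF^{[t]}=0$: the DF of a product configuration is linear in the cocharacter, so it is nonnegative for both $\lambda$ and $\lambda^{-1}$ and hence zero. This contradicts $t$-K-stability.

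Another route uses the main statement directly. Given $g\in G(K)$, glue two trivial families over a curve using $g$; the extension result shows $g$ extends over $R$, so $G(K)=G(R)$ for all DVRs, hence $G$ is proper. An affine proper group is finite.

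The main obstacle is the key step: showing that a trivial branch forces $v$ to be non-exceptional with zero shift, and handling the non-exceptional shift case carefully.
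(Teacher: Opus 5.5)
Your proof is correct, but its key step differs from the paper's.

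\textbf{The paper's route.} The paper argues purely with lattices. Triviality of the $\X_0$-branch makes $\cH$ a shift of the trivial filtration. After rescaling the generic identification of polarisations by a character so that the shift is zero, the condition $\cH^0=R_\kappa$ reads $E_m=(E_m\cap E'_m)+\pi E_m$, and Nakayama gives $E_m\subset E'_m$. The weight reversal of Lemma \ref{lem:opposite-associated-graded} then makes $\cH'$ trivial as well and gives $E'_m\subset E_m$. Finally $\Proj$ identifies the families, and the foliations are matched via separatedness of the relative Quot scheme.

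\textbf{Your route.} You instead extract geometric information. Triviality gives $\lambda_{\min}(\cH)=\lambda_{\max}(\cH)$; no Duistermaat--Heckman measures are needed for this. Lemma \ref{lem:support-comparison-filtrations} converts this into $a+a'=0$. The plt statement of Theorem \ref{thm:dvr-fibre-ioa} gives $a,a'\ge 0$, with equality only when the valuation is the order along the special fibre itself. Hence $\ord_{\X'_\kappa}=\ord_{\X_\kappa}$. The birational map is then an isomorphism in codimension one, the intrinsic sheaves $\mathscr K_{r,t}^\vee$ agree there, and normality gives $E_m=E'_m$.

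\textbf{Comparison.} Your route costs two extra inputs, the support lemma and inversion of adjunction, whereas the paper's Nakayama step needs only triviality of $\cH$. In exchange you gain three things.
\begin{itemize}
\item The shift vanishes automatically for the canonical identification of anti-adjoint polarisations, so no character normalisation is needed.
\item You get the sharper conclusion that the two special fibres define the same divisorial valuation.
\item The Quot-scheme argument, and the appeal to Lemma \ref{lem:foliation-extends-over-STR} for uniqueness, are replaced by two elementary facts: a saturated subsheaf is determined at the generic point, and morphisms from a reduced scheme to a separated one that agree on a dense open coincide.
\end{itemize}

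\textbf{Side remarks.} Your ``stacky'' alternative, that $\cZ$ is pulled back from $\Spec R$, is unjustified as stated, but nothing depends on it. Finiteness of $\Aut(X,\F)$ is not part of this statement. Your reductivity-plus-cocharacter argument for it is nonetheless a valid alternative to the paper's properness proof of Corollary \ref{cor:finite-aut-t-K-stable}.
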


\begin{proof}
Since the given isomorphism already exists away from $0$, it suffices to extend it over a neighbourhood of $0$. We may therefore replace $C$ by the spectrum of the DVR $R:=\cO_{C,0}$. 

By Theorem \ref{thm:adjoint-foliated-S-completeness}, the two DVR families glue and extend to an admissible family $(\cZ,\F_{\cZ})\rightarrow \ST_R$ over $\ST_R$ and the two special fibres $(X_0,\F_0,t)$ and $(X'_0,\F'_0,t)$ are $S$-equivalent, in the sense that there are $\F$ and $\F'$-compatible special test configurations $(\Y,\F_{\Y})\to\A^1$ of $(X_0,\F_0,t)$ and $(\Y',\F_{\Y'})\to\A^1$ of $(X'_0,\F'_0,t)$ whose central fibres are isomorphic as adjoint Fano foliated structures. Moreover, these special test configurations have zero mixed Donaldson--Futaki invariant. 

Since $(X_0,\F_0,t)$ is $t$-K-stable, every
$\F$-compatible special test configuration of
$(X_0,\F_0,t)$ with zero mixed Donaldson--Futaki invariant is
trivial. Hence $(\Y,\F_{\Y})\to\A^1$ is the trivial test configuration, and thus its central fibre is $(X_0,\F_0,t)$. Since the two special test configurations have isomorphic central fibres, we obtain that $(X'_0,\F'_0,t)$ degenerates by a zero-$\DF^{[t]}$ special test configuration to $(X_0,\F_0,t)$.

Recall that by the proof of Theorem \ref{thm:adjoint-foliated-S-completeness} the two $\F$- and $\F'$-compatible special test configurations above are induced by $\F$ and $\F'$-compatible filtrations $\cH^\bullet$ and $\cH'^{\,\bullet}$ on the two special fibres. In particular, since $(\Y,\F_{\Y})\to\A^1$ is trivial, the filtration $\cH^\bullet$ is also trivial up to a shift. After normalising the generic identification by a character, we may assume this shift is zero, i.e., $\cH^0=H^0(X_\kappa,mL_\kappa)$, which implies that $E_m=(E_m\cap E'_m)+\pi E_m$. 

Set $M_m:=E_m/(E_m\cap E'_m)$. The equality above gives $M_m=\pi M_m$. Since $M_m$ is a finite $R$-module, Nakayama's lemma implies $M_m=0$. Therefore $E_m=E_m\cap E'_m$, and hence $E_m\subset E'_m$. Recall that we have an isomorphism $\operatorname{Gr}_\cH R_\kappa\simeq\operatorname{Gr}_{\cH'}R'_\kappa$. Since $\cH^\bullet$ is trivial, this isomorphism implies that $\cH'^{\,\bullet}$ is also trivial (up to a shift). In particular, the test configuration $(\Y',\F_{\Y'})\to\A^1$ of $(X'_0,\F'_0,t)$ is also trivial, and hence we obtain the isomorphism $(X'_0,\F'_0,t)\simeq (X_0,\F_0,t)$.

Furthermore, the same argument as above shows that $E'_m\subset E_m$, and hence we have $E_m= E'_m$. By the Koll\'ar base-change condition in the definition of admissible families, and by relative Proj, the anti-adjoint section algebras recover the two families, and since $E_m= E'_m$ we have 
\[
        \X\simeq \Proj_R\bigoplus_m E_m\simeq \Proj_R\bigoplus_m E'_m\simeq\X'.
\]
We now show that the foliations agree. 

We transport $\F'$ via the isomorphism $\phi:\X\to\X'$, and denote the resulting foliation on $\X$ by $\phi^*\F'$. Thus $T_{\phi^*\F'}=d\phi^{-1}(T_{\F'})$. Hence, we may regard both $T_{\F/R}$ and $T_{\phi^*\F'}$ as saturated subsheaves of the same relative tangent sheaf $T_{\X/R}$. These two saturated subsheaves agree over the generic fibre by construction. 

Since $\F, \phi^{*}\F'\subset T_{\X/R}$ are saturated, the
quotients $Q:=T_{\X/R}/\F$ and $Q':=T_{\X/R}/\phi^{*}\F'$ are torsion-free as sheaves on $\X$. In particular, they have no $R$-torsion, hence they are flat over the DVR $R$.

Because $Q$ and $Q'$ agree over the generic fibre and are flat over $R$, they have the same Hilbert polynomial with respect to $\L$. Thus the two quotients $T_{\X/R}\twoheadrightarrow Q$ and $T_{\X/R}\twoheadrightarrow Q'$ define two $R$-points of the same relative Quot scheme $\operatorname{Quot}_{T_{\X/R}/\X/R}^{P}$. The relative Quot scheme is separated over $R$. Since the two $R$-points agree over the generic point $\operatorname{Spec}K$, they are equal. Hence the two quotients $T_{\X/R}\twoheadrightarrow Q$ and $T_{\X/R}\twoheadrightarrow Q'$ are isomorphic compatibly with the quotient maps from $T_{\X/R}$. Their kernels therefore coincide:
\[
T_{\F}=T_{\phi^*\F'}.
\]
It follows that $(\X,\F,t)\simeq (\X',\F',t)$. Finally, uniqueness follows from Lemma \ref{lem:foliation-extends-over-STR}.
\end{proof}

\begin{corollary}[Finiteness of automorphisms]
\label{cor:finite-aut-t-K-stable}
Let $(X,\F,t)$ be a $t$-K-stable adjoint Fano foliated structure. Then $\Aut(X,\F)$ is finite.
\end{corollary}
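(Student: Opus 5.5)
We deduce finiteness of $G:=\Aut(X,\F)$ from two facts: $G$ is a linear algebraic group with trivial identity component. Linearity was already established at the start of this section, where $G$ appears as a closed subgroup of $\Aut(X,L)\subset\PGL(H^0(X,L))$. Since $G$ has finitely many connected components, it suffices to show that $G^\circ$ is trivial.

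First we show that $G$ is reductive. A $t$-K-stable structure is $t$-K-polystable. Indeed, it is $t$-K-semistable, and any normal $\F$-compatible test configuration with $\DF^{[t]}=0$ is trivial, hence of product type. Theorem \ref{thm:aut-reductive-adjoint-foliated} then gives that $G$ is reductive. So if $G^\circ$ were nontrivial, it would contain a nontrivial torus, and hence a nontrivial one-parameter subgroup $\lambda:\G_m\to G$. Here we use that a connected reductive group with no nontrivial torus is trivial, since a connected unipotent reductive group is trivial.

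Next we rule out such a $\lambda$. It induces an $\F$-compatible product test configuration $(X\times\A^1,\F\times\A^1,L)$ with the $\G_m$-action twisted by $\lambda$. This configuration is non-trivial as a test configuration, because its $\G_m$-action on the central fibre is nontrivial. Its Donaldson--Futaki invariant is the foliated Futaki invariant of $\lambda$. Replacing $\lambda$ by $\lambda^{-1}$ negates this invariant. So stability would force both $\DF^{[t]}>0$ and $-\DF^{[t]}>0$, which is a contradiction.

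An alternative route avoids the Futaki-invariant computation. For $\lambda$ as above, we glue over $\ST_R^\circ$ with $\lambda(\pi)\in G(K)$, where $K\supset R$ for a DVR $R$. This gives two trivial families over $R$ that are isomorphic over $K$ via $\lambda(\pi)$. Theorem \ref{thm:stable-separatedness-adjoint-foliated}, applied with $t$-K-stable central fibre, says this isomorphism extends uniquely over $R$, so $\lambda(\pi)\in G(R)$. But a nontrivial cocharacter has $\lambda(\pi)\notin G(R)$: in a faithful representation it has a nonzero weight, so some matrix entry is a negative power of $\pi$ (after possibly inverting). Either argument shows that $G^\circ$ contains no torus, hence is trivial, and therefore $G$ is finite.

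The main obstacle is checking that the product test configuration given by a nontrivial $\lambda$ counts as non-trivial in Definition \ref{def: k-stability definition}, and that $\DF^{[t]}$ is odd in $\lambda$. This follows from the linearity in the cocharacter used in Lemma \ref{lem:mixed-two-step-degeneration}. I would prefer the separatedness route, since it relies only on Theorem \ref{thm:stable-separatedness-adjoint-foliated} and elementary facts about cocharacters.
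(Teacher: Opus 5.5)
Your argument is correct, but it is organised differently from the paper's. The paper does not use reductivity or the structure theory of $G^\circ$. It observes that $G=\Aut(X,\F)$ is affine and takes an \emph{arbitrary} morphism $g\colon C^\circ\to G$ from a punctured smooth pointed curve. It views $g$ as an isomorphism over $C^\circ$ between two copies of the trivial admissible family $(X,\F,t)\times C$, and applies Theorem \ref{thm:stable-separatedness-adjoint-foliated} to extend it over $C$. This verifies the valuative criterion of properness, and an affine proper group of finite type is finite.

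Your second route is this same argument specialised to $g=\lambda|_{\G_m}$ with $C=\A^1$. You do not need the $\ST_R^\circ$ gluing language, since the theorem is stated directly for two families over $C$ that are isomorphic over $C^\circ$. Because you feed only cocharacters into the separatedness theorem, you must invoke Theorem \ref{thm:aut-reductive-adjoint-foliated} to exclude unipotent directions, i.e.\ a nontrivial $\G_a\subset G^\circ$. Applying the theorem to every $g$ instead handles tori and unipotent subgroups at once. That makes the reductivity input, with its Iwahori-decomposition and equivariant-torsor argument, unnecessary.

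Your first route is genuinely different and is the classical one. A $t$-K-polystable structure has reductive $G$, and $t$-K-stability excludes any $\G_m\subset G$, because the product test configurations of $\lambda$ and $\lambda^{-1}$ are nontrivial with opposite $\DF^{[t]}$. This avoids Theorem \ref{thm:stable-separatedness-adjoint-foliated} and its Nakayama and Quot-scheme steps. The price is that it needs reductivity and the oddness of $\DF^{[t]}$ in the cocharacter. That oddness is the same linearity the paper invokes in Lemma \ref{lem:mixed-two-step-degeneration} and Lemma \ref{lem:comparison-branches-special}, so it is available.

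Both routes ultimately rest on the $S$-completeness theorem.
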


\begin{proof}
The group $\Aut(X,\F)$ is a linear algebraic group, hence affine. It is enough to prove that it is proper.

Let $C$ be a smooth pointed curve and let $g:C^\circ\to \Aut(X,\F)$ be a morphism. The map $g$ induces an isomorphism over $C^\circ$ between the two trivial admissible families $(X,\F,t)\times C$ and $(X,\F,t)\times C$. By Theorem \ref{thm:stable-separatedness-adjoint-foliated}, this isomorphism extends uniquely over $C$. Therefore $g$ extends to a morphism $C\to \Aut(X,\F)$. Thus $\Aut(X,\F)$ satisfies the valuative criterion of properness. Since it is affine and of finite type over $k$, it is finite over $k$.
\end{proof}

\newcommand{\etalchar}[1]{$^{#1}$}

\end{document}